\documentclass[reqno,12pt]{amsart}
\usepackage{amsfonts}
\usepackage{upgreek}
\usepackage{bbm}
\usepackage{} %leqno is the option to put formula numbers on the left side
\numberwithin{equation}{section}
\usepackage{indentfirst}
 \usepackage{color}
\usepackage{amssymb}
\usepackage{mathrsfs}
\usepackage{xy}
\usepackage{hyperref}
\hypersetup{colorlinks=true,linkcolor=blue}
\xyoption{all}
\def\Ext{\mbox{\rm Ext}} \def\Hom{\mbox{\rm Hom}} \def\dim{\mbox{\rm dim}} \def\Iso{\mbox{\rm Iso}\,}
\def\lr#1{\langle #1\rangle}    
\def\Ker{\mbox{\rm Ker}\,}   \def\im{\mbox{\rm Im}\,} \def\Coker{\mbox{\rm Coker}\,}
\def\End{\mbox{\rm End}\,}

\def\rad{\mbox{\rm rad}\,}
\def\Dim{\mbox{\rm \textbf{dim}}}\def\A{\mathcal{A}} 
\def\P{\mathcal{P}}\def\I{\mathcal{I}}\def\X{\mathbb X}

\def\x{{\bf x}}

\theoremstyle{plain} %text of this environment is typesetted in italics
\newtheorem{theorem}{\bf Theorem}[section]
\newtheorem{lemma}[theorem]{\bf Lemma}
\newtheorem{corollary}[theorem]{\bf Corollary}
\newtheorem{proposition}[theorem]{\bf Proposition}

\theoremstyle{definition} %text of this environment is typesetted in roman letters
\newtheorem{definition}[theorem]{\bf Definition}
\newtheorem{remark}[theorem]{\bf Remark}
\newtheorem{example}[theorem]{\bf Example}

\newcommand{\bt}{\begin{theorem}}
\newcommand{\et}{\end{theorem}}
\newcommand{\bl}{\begin{lemma}}
\newcommand{\el}{\end{lemma}}
\newcommand{\bd}{\begin{definition}}
\newcommand{\ed}{\end{definition}}
\newcommand{\bc}{\begin{corollary}}
\newcommand{\ec}{\end{corollary}}
\newcommand{\bp}{\begin{proof}}
\newcommand{\ep}{\end{proof}}
\newcommand{\bx}{\begin{example}}
\newcommand{\ex}{\end{example}}
\newcommand{\br}{\begin{remark}}
\newcommand{\er}{\end{remark}}
\newcommand{\be}{\begin{equation}}
\newcommand{\ee}{\end{equation}}
\newcommand{\ba}{\begin{align}}
\newcommand{\ea}{\end{align}}
\newcommand{\bn}{\begin{enumerate}}
\newcommand{\en}{\end{enumerate}}
\newcommand{\bcs}{\begin{cases}}
\newcommand{\ecs}{\end{cases}}

\makeatletter
\renewcommand{\section}{\@startsection{section}{1}{0mm}
  {-\baselineskip}{0.5\baselineskip}{\bf\leftline}}
\makeatother

\begin{document}
\title[Acyclic quantum cluster algebras via Hall algebras]{Acyclic quantum cluster algebras via Hall algebras\\ associated to cluster categories}
 %title of paper and the running head option

\author{Changjian Fu and Haicheng Zhang}
\address{Department of Mathematics\\ SiChuan University\\ Chengdu 610064, P.~R.~China}
\email{changjianfu@scu.edu.cn (C. Fu)}
\address{Ministry of Education Key Laboratory of NSLSCS, School of Mathematical Sciences, Nanjing Normal University, Nanjing 210023, P.R.China}
\email{zhanghc@njnu.edu.cn (H. Zhang)}

%%%%%%%%%%%%%%% footnote %%%%%%%%%%%%%%%%
\subjclass[2010]{ %2010 MSC numbers
17B37, 16G20, 17B20.
}
%In case \subjclass[2010] command is not effective
%(or the version of amsart.cls is old), write as follows:
%\renewcommand{\thefootnote}{\fnsymbol{footnote}}
%\footnote[0]{2010\textit{ Mathematics Subject Classification}.
%Primary 00; Secondary 00.}
%
\keywords{ %key words and phrases
Quantum cluster algebras; Derived Hall algebras; Cluster multiplication formulas; Cluster categories; BGP-reflections.
}
%\thanks{$*$~Corresponding author.}
%%%%%%%%%%%% Authors addresses %%%%%%%%%%%%%

\begin{abstract}
Let $Q$ be a finite acyclic valued quiver. We establish an alternative cluster multiplication formula in the quantum cluster algebra associated with $Q$, which exhibits certain symmetries similar to 2-Calabi--Yau properties of cluster categories. Motivated by this formula, we introduce a Hall algebra $\mathcal {D}\mathcal {H}_\Lambda^{{cl}}(\widetilde{\A})$ via a certain quotient of a derived Hall subalgebra associated with $Q$, serving as an algebra counterpart to cluster categories of hereditary algebras.

As our main result, we prove that the quantum cluster algebra associated with $Q$ is isomorphic to the exceptional subalgebra of $\mathcal{D}\mathcal{H}_\Lambda^{cl}(\widetilde{\mathcal{A}})$, namely, the subalgebra generated by exceptional objects. Furthermore, when $Q$ is of Dynkin type, the quantum cluster algebra is naturally isomorphic to $\mathcal{D}\mathcal{H}_\Lambda^{{cl}}(\widetilde{\mathcal{A}})$ itself. This establishes a Hall algebra realization for all acyclic quantum cluster algebras. As an application, we introduce the Hall subalgebra of $\mathcal{D}\mathcal{H}_\Lambda^{cl}(\widetilde{\mathcal{A}})$ generated by projectives, yielding a Hall algebra analogue of the lower bound quantum cluster algebras. Then we show that the standard monomials of quantum projective cluster variables form a basis of the quantum cluster algebras via Hall algebra approach.

Moreover, we show that the Auslander--Reiten translation of cluster categories induces an algebra automorphism of $\mathcal {D}\mathcal {H}_\Lambda^{{cl}}(\widetilde{\A})$.
Finally, we apply the BGP-reflections to $\mathcal {D}\mathcal {H}_\Lambda^{{cl}}(\widetilde{\A})$, and prove that the Hall algebra $\mathcal {D}\mathcal {H}_\Lambda^{{cl}}(\widetilde{\A})$ is invariant up to isomorphism under the mutations at sink vertices of $Q$. By considering the restrictions of these algebra isomorphisms, we obtain algebra automorphisms of the quantum cluster algebras.
\end{abstract}

\maketitle
\tableofcontents

%%%%%%%%%%%%%%%%%%%%%%%%%%%%%%%%%%%%%%%%%

\section{Introduction}
\subsection{Background}
Cluster algebras were invented by Fomin and Zelevinsky~\cite{FZ} with the aim of developing a general framework for the study of total positivity in algebraic groups and canonical bases in quantum groups. To provide an additive categorification of cluster algebras, cluster categories were introduced in \cite{BMRRT} as certain orbit categories of bounded derived categories of hereditary abelian categories. By a result of Keller \cite{Keller}, cluster categories are triangulated categories satisfying the $2$-Calabi--Yau property. Since their inception, cluster categories have driven extensive developments in tilting theory and found widespread applications in the theory of cluster algebras.

The explicit connections between cluster algebras and cluster categories are established via the Caldero--Chapoton map \cite{CC} and the Caldero--Keller multiplication formulas \cite{CK2005, CK2}. For any object $M$ in a cluster category, its image under the Caldero--Chapoton map, denoted by $X_M$, is called a cluster character. The cluster characters associated with indecomposable rigid objects realize the cluster variables. The multiplication formulas of cluster characters, abbreviated as cluster multiplication formulas, play a crucial role in constructing ``good'' bases for cluster algebras and in proving various positivity and denominator conjectures (cf. \cite{CK2005, CK2, DXChen, DXX, Yang}).

Let $Q$ be a Dynkin quiver and $\mathcal{C}_Q$ the associated cluster category. Caldero and Chapoton \cite{CC} established the fundamental cluster multiplication formula:
\begin{equation}\label{jiben}
	X_M X_N = X_{M\oplus N}
\end{equation}
for any objects $M, N \in \mathcal{C}_Q$. Subsequently, Caldero and Keller \cite{CK2005} proved the general cluster multiplication formula:
\begin{equation}\label{general}
	\chi (\mathbb{P} \operatorname{Ext}^1(M,N)) X_M X_N = \sum_{[E]} \Big( \chi (\mathbb{P} \operatorname{Ext}^1(M,N)_E) + \chi (\mathbb{P} \operatorname{Ext}^1(N,M)_E) \Big) X_E
\end{equation}
for any objects $M, N \in \mathcal{C}_Q$ with $\operatorname{Ext}^1(M,N) \neq 0$, where $\chi$ denotes the Euler--Poincar\'e characteristic of the \'etale cohomology with proper support. In particular, when $\operatorname{Ext}^1(M,N)$ is one-dimensional, the formula \eqref{general} simplifies to the categorification of the exchange relation:
\begin{equation}\label{one-dim}
	X_M X_N = X_E + X_{E'},
\end{equation}
where $E$ and $E'$ are the unique objects (up to isomorphism) fitting into non-split triangles $N \rightarrow E \rightarrow M \rightarrow N[1]$ and $M \rightarrow E' \rightarrow N \rightarrow M[1]$ in $\mathcal{C}_Q$. As observed in \cite{CK2005}, the multiplication formula \eqref{general} exhibits a subtle connection with the multiplication in dual Hall algebras. Inspired by this connection, Hubery \cite{Hubery1} (for affine type) and Xiao--Xu \cite{XX, Xu} (for general acyclic quivers) successfully generalized the formula \eqref{general} by employing Hall algebra techniques.

As a quantum analogue of cluster algebras, quantum cluster algebras were introduced by Berenstein and Zelevinsky \cite{BZ05}. Since quantum cluster algebras share essential combinatorial structures with classical cluster algebras, it is natural to seek a quantum analogue of the Caldero--Chapoton map and establish the corresponding multiplication formulas.
The quantum analogue of the Caldero--Chapoton map was introduced by Rupel \cite{Rupel1} for acyclic valued quivers, and independently by Qin \cite{Qin} for acyclic equally valued quivers. The quantum version of the exchange formula \eqref{one-dim} was first established by Rupel \cite{Rupel1} for indecomposable rigid objects associated with finite type and rank 2 valued quivers, and by Qin \cite{Qin} for acyclic quivers. This formula was subsequently extended to broader classes of objects by Ding and Xu \cite{DX}, and ultimately generalized to arbitrary acyclic valued quivers by Rupel \cite{Rupel2}. We remark that Hall algebras play a crucial role in their work, and the connection between quantum cluster algebras and Hall algebras has been further developed in various contexts (cf. \cite{BR, CDX,Con, DSC, DX, Fei, FPZ}).

The Hall algebras of finite-dimensional algebras over finite fields were introduced by Ringel \cite{R90, R90a}. He used the Hall algebra of a representation-finite hereditary algebra to give a realization of the positive part of the corresponding quantum group. To\"en \cite{Toen2006} defined Hall algebras for differential graded categories satisfying certain finiteness conditions, called derived Hall algebras, which were generalized by Xiao and Xu \cite{XiaoXu} to triangulated categories satisfying suitable homological finiteness conditions. We remark that the finiteness conditions required to define derived Hall algebras fail for periodic triangulated categories. Later, Xu and Chen \cite{XuChen} defined derived Hall algebras for odd periodic triangulated categories by modifying the construction in \cite{XiaoXu}. However, defining derived Hall algebras for even periodic triangulated categories remains an open problem.

As a quantum version of the cluster multiplication formula \eqref{jiben}, the Hall multiplication formula between the quantum cluster characters $X_M$ and $X_N$ has been established via the Hall algebra approach (cf. \cite{DX, Fei, BR, CDX, DSC, FPZ}). In particular, the Hall algebras of module categories of hereditary algebras and Green's formulas on Hall numbers have played a crucial role. In recent years, derived Hall algebras of hereditary algebras have also found significant applications in the study of quantum cluster multiplication formulas (cf. \cite{CDZ, FPZ}). Explicitly, Fu, Peng, and Zhang \cite{FPZ} investigated a certain subalgebra of the derived Hall algebra of a hereditary algebra and constructed an algebra homomorphism (see Theorem \ref{mainresult}) from this subalgebra to the quantum torus by establishing a bialgebra structure and an integration homomorphism on it. Using this homomorphism, they recovered numerous quantum cluster multiplication formulas via Hall algebra techniques (see Corollary \ref{Hallcfgs}). Subsequently, Chen, Ding, and Zhang \cite{CDZ} extended the approach of \cite{FPZ} to certain quotients of derived Hall subalgebras, obtaining high-dimensional ``mutation'' multiplication formulas between the quantum cluster characters $X_M$ and $X_{P[1]}$ (see Theorem \ref{dyggs}). Furthermore, they succeeded in establishing quantum versions of Caldero--Keller's general multiplication formula \eqref{general}.

As highlighted above, Hall algebras of hereditary algebras play a vital role in the theory of quantum cluster algebras. It has long been expected to provide a direct realization of (quantum) cluster algebras via a suitable Hall algebra associated with cluster categories (cf. \cite{CC, CK2005}). However, because cluster categories are 2-Calabi--Yau triangulated categories and many of them are even periodic, their derived Hall algebras are not well-defined in the standard framework. On the other hand, the Hall multiplication formulas for quantum cluster characters reveal that the operations in quantum cluster algebras closely mirror the multiplications in (derived) Hall algebras of module categories of hereditary algebras, rather than being purely governed by the triangulated structures of cluster categories. This motivates us to define a novel Hall algebra for cluster categories that preserves the structural features of original Hall multiplications while properly incorporating the symmetries of cluster categories.

\subsection{Goal} The main goal of this paper is to provide a Hall algebra realization for all acyclic quantum cluster algebras. To this end, we introduce a certain Hall algebra $\mathcal {D}\mathcal {H}_\Lambda^{{cl}}(\widetilde{\A})$ (see Definition \ref{Halldef}) associated to cluster categories, which is defined by quotients of a derived Hall subalgebra of the hereditary algebra associated to any finite acyclic valued quiver $\widetilde{Q}$. Then we show that the Hall algebra $\mathcal {D}\mathcal {H}_\Lambda^{{cl}}(\widetilde{\A})$ has some similar properties as cluster categories of hereditary algebras. Explicitly, we show that the Auslander--Reiten translation of cluster categories induces an algebra automorphism of $\mathcal {D}\mathcal {H}_\Lambda^{{cl}}(\widetilde{\A})$ (see Theorem \ref{zmainthm}). Moreover, applying the BGP-reflections to $\mathcal {D}\mathcal {H}_\Lambda^{{cl}}(\widetilde{\A})$, we prove that the Hall algebra $\mathcal {D}\mathcal {H}_\Lambda^{{cl}}(\widetilde{\A})$ is invariant up to isomorphism under the mutations at sink vertices of $\widetilde{Q}$ (see Theorem \ref{mutationbubian}).
Thus, the Hall algebra $\mathcal {D}\mathcal {H}_\Lambda^{{cl}}(\widetilde{\A})$ can be viewed as an algebra counterpart to cluster categories of hereditary algebras.

In fact, we provide an alternative quantum cluster multiplication formula (see Theorem \ref{zhihe}) arising from the high-dimensional quantum cluster multiplication formula \eqref{ccfgs}, which exhibits certain symmetries similar to 2-Calabi--Yau properties of cluster categories, and is also
the motivation to define the quotient algebra $\mathcal {D}\mathcal {H}_\Lambda^{{cl}}(\widetilde{\A})$.
On the other hand, this alternative quantum cluster multiplication formula can also be viewed another quantum analogue of the cluster multiplication formula \eqref{jiben}, which keeps the right side of \eqref{jiben} unchanged and quantizes the left side, while the quantization via the Hall multiplication formula \eqref{qre3} keeps the left side of \eqref{jiben} unchanged and quantizes the right side.
For Dynkin quivers, we prove that $\mathcal {D}\mathcal {H}_\Lambda^{{cl}}(\widetilde{\A})$ is isomorphic to the corresponding quantum cluster algebra (see Theorem \ref{thm:hall-quantum-dynkin}). For acyclic quivers, we prove that the quantum cluster algebra is isomorphic to a subalgebra of $\mathcal {D}\mathcal {H}_\Lambda^{{cl}}(\widetilde{\A})$, which is generated by the exceptional objects (see Theorem \ref{chtgthm} and Corollary \ref{congshixian}). Hence, our Hall algebra $\mathcal {D}\mathcal {H}_\Lambda^{{cl}}(\widetilde{\A})$ can be truly viewed as the so-called ``exceptional Hall algebra", which is the name of the Hall-like algebra defined by Caldero--Chapoton \cite{CC} and Caldero--Keller \cite{CK2005} for realizing cluster algebras of finite types.
As an application, we introduce the Hall subalgebra of $\mathcal{D}\mathcal{H}_\Lambda^{cl}(\widetilde{\mathcal{A}})$ generated by projectives, which can be viewed as a Hall algebra analogue of the lower bound quantum cluster algebras. Then using the Hall algebra framework, we show that the standard monomials of quantum projective cluster variables form a basis of the quantum cluster algebras (see Theorem \ref{pcabasis} and Corollary \ref{ttll51}).

\subsection{Organization} The paper is organized as follows: we recall some preliminaries on derived Hall algebras of hereditary algebras and quantum
cluster algebras in Section 2. In Section 3, we give an alternative quantum cluster multiplication formula and apply it to a special compatible pair. We define the Hall algebra $\mathcal {D}\mathcal {H}_\Lambda^{{cl}}(\widetilde{\A})$
by using quotients of the derived Hall subalgebra associated to any finite acyclic valued quiver $\widetilde{Q}$, provide a spanning set for $\mathcal {D}\mathcal {H}_\Lambda^{{cl}}(\widetilde{\A})$, and use the Hall algebra $\mathcal {D}\mathcal {H}_\Lambda^{{cl}}(\widetilde{\A})$ to give a Hall algebra realization for all acyclic quantum cluster algebras in Section 4. We use the Hall subalgebra of $\mathcal{D}\mathcal{H}_\Lambda^{cl}(\widetilde{\mathcal{A}})$ generated by projectives to show that the standard monomials of quantum projective cluster variables form a basis of the quantum cluster algebras in Section 5.
We show that the Auslander--Reiten translation of cluster categories induces an algebra automorphism $\sigma$ of $\mathcal {D}\mathcal {H}_\Lambda^{{cl}}(\widetilde{\A})$ in Section 6, and leave the proof of that $\sigma$ is an isomorphism in Section Appendix A, where we give the inverse map of $\sigma$. Section 7 is devoted to proving that the Hall algebra $\mathcal {D}\mathcal {H}_\Lambda^{{cl}}(\widetilde{\A})$ is invariant up to isomorphism under the mutations at sink vertices of $\widetilde{Q}$.

\subsection{Convention and Notation} Let us fix some notations used throughout the paper. For a finite set $S$, we denote by $|S|$ its cardinality. Let $\mathbb{F}_q$ be a finite field with $q$ elements, and set $v=\sqrt{q}$. Let $Q$ be a finite acyclic valued quiver and $\A$ the category of finite-dimensional representations of $Q$ over $\mathbb{F}_q$, denote by $\P=\mathcal {P}_{\A}$ and $\I=\mathcal {I}_{\A}$ the subcategories of $\A$ consisting of all projective objects and all injective objects, respectively. Denote by $\mathcal{D}^b(\A)$ the bounded derived category of $\A$. For any representation $X\in\A$, we use the corresponding lowercase boldface letter ${\bf x}$ to denote its dimension vector $\Dim X$, and denote by $nX$ the direct sum of $n$ copies of $X$ for any positive integer $n$. For an abelian or a triangulated category $\mathcal {E}$, the Grothendieck group of $\mathcal {E}$ and the set of isomorphism classes $[X]$ of objects in $\mathcal {E}$ are denoted by $K(\mathcal {E})$ and $\Iso(\mathcal {E})$, respectively; for each object $M$ in $\mathcal {E}$, we denote by $\hat{M}$ the image of $M$ in $K(\mathcal {E})$, and denote by ${\rm Aut}_{\mathcal {E}} (M)$ the automorphism group of $M$.
We always assume that all the vectors are column vectors. Denote by $\mathbb{N}$ the set of non-negative integers, and $\mathbb{N}^+$ the set of positive integers.

\section*{Use of AI Tools Declaration}
The authors declare that they have not used Artificial Intelligence (AI) tools in the creation of this article.

\section*{Acknowledgments}
This work was partially supported by the National Natural Science Foundation of China (Nos. 12271257, 12571040) and Natural Science Foundation of Jiangsu Province of China (No. BK20240137).

\section{Preliminaries}
In this section, we recall some preliminaries on derived Hall algebras and quantum cluster algebras.
\subsection{Derived Hall algebras}
For any objects $M,N, X\in \mathcal{D}^b(\A)$, set $$\{M,N\}:=\prod\limits_{i>0}|\Hom_{\mathcal{D}^b(\A)}(M[i],N)|^{(-1)^i}$$ and denote by $\Hom_{\mathcal{D}^b(\A)}(M,N)_X$ the subset of $\Hom_{\mathcal{D}^b(\A)}(M,N)$ consisting of the morphisms $f: M\to N$ whose cone is isomorphic to $X$. According to \cite{Toen2006,XiaoXu}, for any objects $X,Y,L\in \mathcal{D}^b(\A)$, we have
$$\frac{|\Hom_{\mathcal{D}^b(\A)}(L,X)_{Y[1]}|}{|{\rm Aut}_{\mathcal{D}^b(\A)}(X)|}\cdot\frac{\{L,X\}}{\{X,X\}}=
\frac{|\Hom_{\mathcal{D}^b(\A)}(Y,L)_{X}|}{|{\rm Aut}_{\mathcal{D}^b(\A)}(Y)|}\cdot\frac{\{Y,L\}}{\{Y,Y\}}=:F_{X,Y}^L.$$
The derived Riedtmann--Peng formula (cf. \cite{XiaoXu2,WWZ}) states
\begin{equation*}
F_{X,Y}^L=\frac{|\Ext^1_{\mathcal{D}^b(\A)}(X,Y)_{L}|}{|\Hom_{\mathcal{D}^b(\A)}(X,Y)|}\cdot\frac{1}{\{X,Y\}}\cdot\frac{|{\rm Aut}_{\mathcal{D}^b(\A)}(L)|}{|{\rm Aut}_{\mathcal{D}^b(\A)}(X)||{\rm Aut}_{\mathcal{D}^b(\A)}(Y)|}\cdot\frac{\{L,L\}}{\{X,X\}\{Y,Y\}},
\end{equation*}
where $\Ext^1_{\mathcal{D}^b(\A)}(X,Y)_{L}:=\Hom_{\mathcal{D}^b(\A)}(X,Y[1])_{L[1]}$. In particular, if $X,Y,L\in\A$, we have
$$F_{XY}^{L}=\frac{|\Ext^1_{\A}(X,Y)_{L}|}{|\Hom_{\A}(X,Y)|}\cdot \frac{|{\rm Aut}_{\A}(L)|}{|{\rm Aut}_{\A}(X)||{\rm Aut}_{\A}(Y)|},$$
which is just the Hall number used in defining Hall algebras of abelian categories.

The (Drinfeld dual) {\em derived Hall algebra} $\mathcal {D}\mathcal {H}(\A)$ of $\mathcal{D}^b(\A)$ is the $\mathbb{Q}(v)$-linear space with the basis $\{u_{X}~|~X\in \Iso(\mathcal{D}^b(\A))\}$ and the multiplication defined by
$$u_{X}\diamond  u_{Y}=\sum\limits_{L\in {\rm Iso}(\mathcal{D}^b(\A))}H_{X,Y}^Lu_{L},$$ where
$$H_{X,Y}^L=\frac{|\Ext^1_{\mathcal{D}^b(\A)}(X,Y)_{L}|}{|\Hom_{\mathcal{D}^b(\A)}(X,Y)|}\cdot\frac{1}{\{X,Y\}}.$$

For any objects $M,N \in \mathcal{A}$, set $$\lr{M,N}:=\dim_k\Hom_{\A}(M,N)-\dim_k\Ext^1_{\A}(M,N),$$
and it descends to give a bilinear form
$$\lr{\cdot ,\cdot }: K(\mathcal{A})\times K(\mathcal{A})\longrightarrow \mathbb{Z},$$ known as the \emph{Euler form}. The \emph{symmetric Euler form} $$(\cdot ,\cdot ): K(\mathcal{A})\times K(\mathcal{A})\longrightarrow \mathbb{Z}$$
is defined by $(\alpha,\beta):=\lr{\alpha,\beta}+\lr{\beta,\alpha}$ for any $\alpha,\beta\in K(\mathcal{A})$.
For any objects ${X},{Y}\in \mathcal{D}^b(\A)$, define
\begin{equation*}
\lr{{X},{Y}}:=\sum\limits_{i\in\mathbb{Z}}(-1)^i\dim_k\Hom_{\mathcal{D}^b(\A)}({X},{Y}[i]),
\end{equation*}
and it descends to give a bilinear form on the Grothendieck group of $\mathcal{D}^b(\A)$. Moreover, this bilinear form coincides with the Euler form of $K(\A)$ over the objects in $\A$. In particular, for any $M,N\in\A$ and $i,j\in\mathbb{Z}$, we have that $\lr{M[i],N[j]}=(-1)^{i-j}\lr{M,N}$.

The \emph{twisted derived Hall algebra} $\mathcal {D}\mathcal {H}_{q}(\A)$ is the same space as $\mathcal {D}\mathcal {H}(\A)$, but with the twisted multiplication defined by
\begin{equation}u_{X}\ast u_{Y}=q^{\lr{{X},{Y}}} u_{X}\diamond u_{Y}\end{equation}
for any ${X},{Y}\in \Iso(\mathcal{D}^b(\A))$. It is easy to see that \begin{equation*}u_{M[i]}\ast u_{N[i+1]}=u_{M[i]\oplus N[i+1]}\end{equation*}
for any $M,N\in\A$ and $i\in\mathbb{Z}$.

Let $\overline{\tau}$ be the Auslander--Reiten translation of $\mathcal{D}^b(\A)$, which is a triangulated automorphism of $\mathcal{D}^b(\A)$. By the definition of derived Hall algebras, it is easy to see that $\overline{\tau}$ gives an algebra automorphism of $\mathcal {D}\mathcal {H}(\A)$. By \cite[Lemma 6.4]{CDZ}, the Euler form is $\overline{\tau}$-invariant, i.e. $\lr{X,Y}=\lr{\overline{\tau}X,\overline{\tau}Y}$ for any $X,Y\in \mathcal{D}^b(\A)$. Hence, $\overline{\tau}$ also gives an algebra automorphism of $\mathcal {D}\mathcal {H}_q(\A)$.
In what follows, for any objects $M,N,X,Y\in\A$, set
$$_X\Hom_{\A}(M,N)_Y:=\{f: M\rightarrow N~|~\Ker f\cong X~\text{and}~\Coker f\cong Y\}.$$
\begin{proposition}{\rm(\cite{Toen2006})}\label{twistderived}
The twisted derived Hall algebra $\mathcal {D}\mathcal {H}_q(\A)$ is an associative unital algebra generated by the elements in $\{u_{M[i]}~|~M\in{\rm Iso}(\A),~i\in \mathbb{Z}\}$ and the following relations for any $M, N\in{\rm Iso}(\A)$
\begin{flalign}
&u_{M[i]}\ast u_{N[i]}=q^{\lr{M,N}}\sum\limits_{[L]}{\frac{{|\Ext_\mathcal{A}^1{{(M,N)}_L}|}}{{|\Hom_\mathcal{A}(M,N)|}}}u_{L[i]},~i\in\mathbb{Z};\\
&u_{M[i+1]}\ast u_{N[i]}=q^{-\lr{M,N}}\sum\limits_{[X],[Y]}|{}_X\Hom_{\A}(M,N)_Y| u_{Y[i]}\ast u_{X[i+1]},~i\in\mathbb{Z};\label{ydchgx}\\
&u_{M[i]}\ast u_{N[j]}=q^{(-1)^{i-j}\lr{M,N}} u_{N[j]}\ast u_{M[i]},~i-j>1.
\end{flalign}
\end{proposition}

Let $C_{\A}^{e}$ be the subcategory of $\mathcal{D}^b(\A)$ consisting of all objects $I[-1]\oplus M\oplus P[1]$ with $I\in\I,M\in\A$ and $P\in\P$. Since $\A$ is hereditary, it is easy to see $C_{\A}^{e}$ is closed under extensions in $\mathcal{D}^b(\A)$. Hence,
the subspace $\mathcal {D}\mathcal {H}_q^{ec}(\A)$ of $\mathcal {D}\mathcal {H}_q(\A)$ spanned by all elements $u_{X}$ with $X\in \Iso({C_{\A}^{e}})$ is a subalgebra of $\mathcal {D}\mathcal {H}_q(\A)$.

In the following, for any object $X$ in ${\A} $ or $\mathcal{D}^b({\A})$, we denote by $a_X$ the cardinality of its automorphism group. For the need of subsequent calculations, we give the following.
\begin{proposition}\label{hallshujs}
For any $M,N,M_1,N_1,M_2,N_2\in\A$, the following equations hold.
\begin{itemize}
\item[(1)]
\begin{flalign*}
F_{M_1\oplus N_1[-1],M_2\oplus N_2[-1]}^{M\oplus N[-1]}=\sum\limits_{[X],[Y],[L]}q^{-\lr{X,Y}}\frac{a_Xa_Ya_L}{a_{M_1}a_{N_2}}F_{Y,M_2}^MF_{N_1,X}^NF_{L,Y}^{M_1}F_{X,L}^{N_2}.
\end{flalign*}
\item[(2)]
\begin{flalign*}
&H_{M_1\oplus N_1[-1],M_2\oplus N_2[-1]}^{M\oplus N[-1]}=\\&\sum\limits_{[X],[Y],[L]}q^{\lr{\hat{N}_1,\hat{M}-\hat{M}_1-\hat{M}_2}+\lr{\hat{N}-\hat{N}_2,\hat{M}_2}}\frac{a_Xa_Ya_La_{M_2}a_{N_1}}{a_{M}a_{N}}F_{Y,M_2}^MF_{N_1,X}^NF_{L,Y}^{M_1}F_{X,L}^{N_2}.
\end{flalign*}
\end{itemize}
\end{proposition}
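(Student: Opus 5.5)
Part (1) is a counting statement. I will compute $F_{X',Y'}^{L'}$ for $X'=M_1\oplus N_1[-1]$, $Y'=M_2\oplus N_2[-1]$, $L'=M\oplus N[-1]$, using the definition of $F$ as
$$F=\frac{|\Hom(Y',L')_{X'}|}{a_{Y'}}\cdot\frac{\{Y',L'\}}{\{Y',Y'\}}.$$
This expresses it through triangles $Y'\to L'\to X'\to Y'[1]$.

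The first step is to describe such triangles. Since $\A$ is hereditary, a triangle $M_2\oplus N_2[-1]\to M\oplus N[-1]\to M_1\oplus N_1[-1]\to$ in $\mathcal D^b(\A)$ is governed by the long exact cohomology sequence
$$0\to N_2\to N\to N_1\to M_2\to M\to M_1\to 0$$
in $\A$. Here the connecting map $N_1\to M_2$ comes from the component $N_1[-1]\to M_2[1][-1]$.

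Splitting this six-term sequence at its images produces the intermediate objects:
\begin{itemize}
\item $X=$ the image of $N\to N_1$, giving $N_2\hookrightarrow N\twoheadrightarrow X$;
\item $L=$ the image of $N_1\to M_2$, giving $X\hookrightarrow N_1\twoheadrightarrow L$;
\item $Y=$ the image of $M_2\to M$, giving $L\hookrightarrow M_2\twoheadrightarrow Y$ and $Y\hookrightarrow M\twoheadrightarrow M_1$.
\end{itemize}
Counting each short exact sequence by the abelian Riedtmann--Peng formula produces the factors $F_{Y,M_2}^M$, $F_{N_1,X}^N$, $F_{L,Y}^{M_1}$ and $F_{X,L}^{N_2}$. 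The index pattern here has to be matched with the paper's convention $F_{X,Y}^L$ (subobject $Y$, quotient $X$). I expect the stated pairing to reflect a particular grouping, for instance via the octahedral axiom, in which $M_1$ is built from $L$ and $Y$. So rather than literal images, I will match the four short exact sequences by decomposing the triangle through octahedra.

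Concretely, I will write the morphism $Y'\to L'$ as a matrix with components $M_2\to M$, $N_2[-1]\to N[-1]$ and $N_2[-1]\to M$, the last being an $\Ext^1$ class. I then fiber the count over the isoclasses $[X],[Y],[L]$ and compute each fibre with the Riedtmann--Peng formula.

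The second step is the bookkeeping. The factor $a_Xa_Ya_L$ arises because each intermediate object is counted up to isomorphism: choosing an intermediate object with a fixed identification costs its automorphism group. The denominators $a_{M_1}a_{N_2}$ come from $a_{Y'}$ together with the automorphism factors already absorbed into the $F$'s. The $\{\cdot,\cdot\}$ terms give powers of $q$ from $\Hom(N_2[-1][1],M)=\Ext$-free pieces, and these should collapse to $q^{-\lr{X,Y}}$ after using additivity of the Euler form along the exact sequences.

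I expect the main obstacle to be exactly this $q$-power and automorphism bookkeeping. There is also a subtlety: automorphisms of $Y'$ include the off-diagonal part $\Hom(N_2[-1],M_2)=\Ext^1(N_2,M_2)$, so $a_{Y'}\neq a_{M_2}a_{N_2}$. My plan is to use the second (Hom-to-cone) description, whichever side gives the cleaner fibration, and to verify on the case $N_1=N_2=0$ that the formula reduces to the ordinary associativity-type identity.

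For part (2), I will use the relation between $H$ and $F$ from the derived Riedtmann--Peng formula:
$$H_{X',Y'}^{L'}=F_{X',Y'}^{L'}\cdot\frac{a_{X'}a_{Y'}}{a_{L'}}\cdot\frac{\{X',X'\}\{Y',Y'\}}{\{L',L'\}}.$$
I will compute $a_{X'}=a_{M_1}a_{N_1}\,|\Hom(N_1[-1],M_1)|$, and similarly for $Y'$ and $L'$, together with the braces. Substituting (1) then leaves a power of $q$, and I will simplify it by writing $\hat N_1,\hat M_2$ and the rest as sums of classes of $X,Y,L$ along the exact sequences, so that it matches the stated exponent. This is routine once (1) is in hand.
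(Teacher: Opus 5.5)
There is a genuine gap. Directly counting morphisms $Y'\to L'$ with cone $X'$ is a workable route, and more self-contained than the paper's. The paper first reduces, via Zhang's Proposition~2.6, to $\sum F_{M_1,N_2[-1]}^{X[-1]\oplus Y}F_{Y,M_2}^MF_{N_1,X}^N$, and then quotes a lemma of Zhang for $F_{M_1,N_2[-1]}^{X[-1]\oplus Y}$. As written, however, your argument breaks at the first step.

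Since $N_2[-1]$ has its cohomology in degree $1$, the cohomology sequence of $Y'\to L'\to X'\to Y'[1]$ is $0\to M_2\to M\to M_1\to N_2\to N\to N_1\to 0$. The connecting map is the component $M_1\to (N_2[-1])[1]=N_2$ of $X'\to Y'[1]$. The map you name, $N_1[-1]\to M_2$, is an element of $\Ext^1_{\A}(N_1,M_2)$, and it is a component of a map $X'\to Y'$, not $X'\to Y'[1]$. Your sequence already fails for the triangle with $Y'=N_2[-1]$, $L'=0$, $X'=N_2$, where it would require $N_2\hookrightarrow 0$. With the correct order, the literal images $Y=\im(M\to M_1)$, $L=\im(M_1\to N_2)$, $X=\im(N_2\to N)$ give exactly the four sequences behind $F_{Y,M_2}^M$, $F_{L,Y}^{M_1}$, $F_{X,L}^{N_2}$ and $F_{N_1,X}^N$. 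So the index mismatch you noticed is a symptom of the reversed sequence, and no octahedral regrouping is needed.

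Second, what you call bookkeeping is the actual content of (1), and you give no mechanism for it. Write $f$ with components $a\in\Hom_{\A}(M_2,M)$, $b\in\Hom_{\A}(N_2,N)$ and $\eta\in\Ext^1_{\A}(N_2,M)$. The morphisms with cone $X'$ lying over a fixed triple $([X],[Y],[L])$ are those satisfying three conditions:
$a$ is injective with $\Coker a\cong Y$;
$(\Ker b,\im b,\Coker b)\cong(L,X,N_1)$;
$H^0$ of the cone is isomorphic to $M_1$.

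The key fact you need is that this $H^0$ is the extension of $\Ker b$ by $\Coker a$ whose class is the image of $\eta$ under $\Ext^1_{\A}(N_2,M)\to\Ext^1_{\A}(\Ker b,\Coker a)$. To see this, restrict $f$ to $M_2\oplus(\Ker b)[-1]$, where it lands in $M$. Because $\A$ is hereditary, this map is surjective, so each fibre has $|\Ext^1_{\A}(N_2,M)|/|\Ext^1_{\A}(L,Y)|$ elements. Hence $|\Hom_{\mathcal{D}^b(\A)}(Y',L')_{X'}|=\sum a_{M_2}F_{Y,M_2}^M\cdot a_XF_{X,L}^{N_2}F_{N_1,X}^N\cdot|\Ext^1_{\A}(N_2,M)|\,|\Ext^1_{\A}(L,Y)_{M_1}|/|\Ext^1_{\A}(L,Y)|$.

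Now insert three facts: Riedtmann--Peng for $|\Ext^1_{\A}(L,Y)_{M_1}|$; $a_{Y'}=a_{M_2}a_{N_2}|\Ext^1_{\A}(N_2,M_2)|$; and $\{Y',L'\}/\{Y',Y'\}=|\Hom_{\A}(N_2,M_2)|/|\Hom_{\A}(N_2,M)|$. This yields the prefactor $a_Xa_Ya_L/(a_{M_1}a_{N_2})$ and the power $q^{\lr{N_2,M_2}-\lr{N_2,M}+\lr{L,Y}}$. This equals $q^{-\lr{X,Y}}$ because $\hat M-\hat M_2=\hat Y$ and $\hat N_2-\hat L=\hat X$. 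Your plan for (2) is exactly the paper's argument and is fine.
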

\begin{proof}
$(1)$~By \cite[Proposition 2.6]{Zhang}, we have
$$F_{M_1\oplus N_1[-1],M_2\oplus N_2[-1]}^{M\oplus N[-1]}=\sum\limits_{[X],[Y]}F_{M_1,N_2[-1]}^{X[-1]\oplus Y}F_{Y,M_2}^MF_{N_1,X}^N.$$
Then using \cite[Lemma 3.4]{Zhang19}, we finish the proof of $(1)$.

$(2)$~Since for any $X,Y\in\A$, $$a_{X\oplus Y[-1]}=a_{X}a_{Y}|\Ext_{\A}^1(Y,X)|~\text{and}~\{X\oplus Y[-1],X\oplus Y[-1]\}=\frac{1}{|{\rm Hom}_{\A}(Y,X)|},$$ we obtain
$$H_{M_1\oplus N_1[-1],M_2\oplus N_2[-1]}^{M\oplus N[-1]}=q^{\lr{N,M}-\lr{N_1,M_1}-\lr{N_2,M_2}}\frac{a_{M_1}a_{N_1}a_{M_2}a_{N_2}}{a_Ma_N}F_{M_1\oplus N_1[-1],M_2\oplus N_2[-1]}^{M\oplus N[-1]}.$$
Hence, using $(1)$, we finish the proof of $(2)$.
\end{proof}

\begin{proposition}\label{prop:Hall-equality-P-I}
\begin{itemize}
\item[(1)] For any $M,N,Z,Z'\in\A$ and $I\in\I$, we have
\begin{flalign*}
|{}_Z\Hom_{\A}(M,N\oplus I)_{Z'}|=\sum\limits_{\substack{[X],[Y],[I']\\Y\oplus I'=Z'}}q^{\lr{\hat{M}-\hat{X},\hat{I}}}|{}_X\Hom_{\A}(M,N)_{Y}|\cdot|{}_Z\Hom_{\A}(X,I)_{I'}|.
\end{flalign*}
\item[(2)] For any $M,N,Z,Z'\in\A$ and $P\in\P$, we have
\begin{flalign*}
|{}_{Z'}\Hom_{\A}(M\oplus P,N)_{Z}|=\sum\limits_{\substack{[X],[Y],[P']\\X\oplus P'=Z'}}q^{\lr{\hat{P},\hat{M}-\hat{X}}}|{}_X\Hom_{\A}(M,N)_{Y}|\cdot|{}_{P'}\Hom_{\A}(P,Y)_{Z}|.
\end{flalign*}
\end{itemize}
\end{proposition}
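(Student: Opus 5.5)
Part (1) will be proved by a direct fibration count on $\Hom_{\A}(M,N\oplus I)=\Hom_{\A}(M,N)\times\Hom_{\A}(M,I)$; part (2) follows from the dual argument. Write a morphism $f:M\to N\oplus I$ as $f=(g,h)^{T}$ with $g:M\to N$ and $h:M\to I$. We stratify by the isoclasses $[X]=[\Ker g]$ and $[Y]=[\Coker g]$, which accounts for the factor $|{}_X\Hom_{\A}(M,N)_Y|$. For fixed $g$, we need to understand $\Ker f$ and $\Coker f$ as $h$ varies. Clearly $\Ker f=\Ker(h|_{\Ker g})$, so the kernel is governed by the restriction $h':=h|_X:X\to I$. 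Since $I$ is injective, the restriction map $\Hom_{\A}(M,I)\to\Hom_{\A}(X,I)$ is surjective, and its fibres have cardinality $|\Hom_{\A}(M/X,I)|=|\Hom_{\A}(\im g,I)|$. Because $I$ is injective, $\Ext^1_{\A}(\cdot,I)=0$, and therefore
\[
|\Hom_{\A}(\im g,I)|=q^{\lr{\hat M-\hat X,\hat I}}.
\]
This matches exactly the power of $q$ appearing in the formula.

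The key step is to identify $\Coker f$. I will show that $\Coker f\cong Y\oplus \Coker h'$ and that $\Coker h'$ is injective; it is then the $I'$ of the statement, since $\Coker h'$ is a quotient of the injective module $I$ over a hereditary algebra. To see this, first extend $h'$ to $h$ as above. We then have the exact sequence $0\to \im g\to N\oplus I\to \Coker f'\to\cdots$, and I plan to use a pushout/snake lemma comparison between $f$ and $g\oplus h'$. Concretely, consider the diagram with rows $0\to X\to M\to \im g\to 0$ and the induced map to $N\oplus I$, and compare $\Coker f$ with the pushout of $Y\leftarrow N\to\dots$. Alternatively, one can change coordinates: modifying $h$ by $\phi\circ g$ for $\phi\in\Hom_{\A}(N,I)$ is an automorphism of $N\oplus I$, so it does not change $\Ker f$ or $\Coker f$. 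The set of $h$ with a fixed restriction $h'$ is a coset of $\Hom_{\A}(\im g,I)$, and $\Hom_{\A}(N,I)\to\Hom_{\A}(\im g,I)$ is surjective by injectivity of $I$. Hence, up to automorphism of $N\oplus I$, $h$ can be chosen to be any fixed extension of $h'$. We can then choose $h$ so that $f$ factors as $M\to \im g\oplus I$, and compute $\Coker f$ via the exact sequence $0\to\Coker(X\xrightarrow{h'}I)\to\Coker f\to Y\to 0$. This sequence splits because $\Coker h'$ is injective, so $\Ker f\cong\Ker h'$ and $\Coker f\cong Y\oplus\Coker h'$.

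Summing over $g$, over $h'\in{}_Z\Hom_{\A}(X,I)_{I'}$, and over the $q^{\lr{\hat M-\hat X,\hat I}}$ extensions $h$ of each $h'$, subject to the condition $Y\oplus I'\cong Z'$, gives (1). By Krull--Schmidt, the decomposition $Z'\cong Y\oplus I'$ with $I'$ injective is compatible with the indexing. The main obstacle is to verify carefully the exact sequence for $\Coker f$ for the normalized choice of $h$: one has to check that the map $\Coker h'\to\Coker f$ is injective, which amounts to a snake lemma argument on $0\to X\to M\to \im g\to0$ mapped into $0\to I\to N\oplus I\to N\to 0$. For (2), I dualize: write $f=(g,h):M\oplus P\to N$, stratify by $g$ with kernel $X$ and cokernel $Y$, and note that $\Coker f=\Coker(P\xrightarrow{h}N\to Y)$. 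By projectivity of $P$, the lifts of a map $P\to Y$ to $P\to N$ form cosets of size $|\Hom_{\A}(P,\im g)|=q^{\lr{\hat P,\hat M-\hat X}}$, and the kernel splits as $X\oplus P'$ because $P'$, being a submodule of a projective over a hereditary algebra, is projective.
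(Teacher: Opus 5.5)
Your proof is correct, but it takes a genuinely different route from the paper's. The paper works in the twisted derived Hall algebra $\mathcal{DH}_q(\A)$. For (1) it expands $(u_{M[1]}\ast u_N)\ast u_I$ using the commutation relation \eqref{ydchgx} twice. It expands $u_{M[1]}\ast(u_N\ast u_I)=u_{M[1]}\ast u_{N\oplus I}$ using it once. It then compares the coefficients of $u_{Z'\oplus Z[1]}$ via associativity, and the factor $q^{\lr{\hat M-\hat X,\hat I}}$ appears as the ratio $q^{-\lr{M,N}-\lr{X,I}}/q^{-\lr{M,N+I}}$. Part (2) is obtained the same way from $(u_P\ast u_M)\ast u_{N[-1]}=u_P\ast(u_M\ast u_{N[-1]})$. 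That proof is two lines, but it rests on To\"en's associativity and presentation of the derived Hall algebra. Yours is an elementary, self-contained fibration count. It explains where each factor comes from: the canonical identifications $\Ker f\cong\Ker h'$ and $\Coker f\cong Y\oplus\Coker h'$, together with fibres of uniform size $|\Hom_{\A}(\im g,I)|$. It uses only injectivity of $I$ (resp. projectivity of $P$) and heredity.

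The step you flag as the main obstacle is not one, and the normalization of $h$ by automorphisms of $N\oplus I$ is unnecessary. Take the morphism of short exact sequences from $0\to X\to M\to\im g\to0$ to $0\to I\to N\oplus I\to N\to0$, with vertical maps $h'$, $f$, and the inclusion $\im g\hookrightarrow N$. This diagram commutes for every $h$. Since the right vertical map is injective with cokernel $Y$, the snake lemma immediately gives $\Ker f\cong\Ker h'$ and an exact sequence $0\to\Coker h'\to\Coker f\to Y\to0$. This sequence splits because $\Coker h'$ is injective.

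Dually for (2), take the morphism from $0\to M\to M\oplus P\to P\to0$ to $0\to\im g\to N\to Y\to0$, with vertical maps $M\twoheadrightarrow\im g$, $f$, and $\pi h$, where $\pi:N\to Y$ is the projection. Because the left vertical map is surjective, the snake lemma gives $\Coker f\cong\Coker(\pi h)$ and an exact sequence $0\to X\to\Ker f\to\Ker(\pi h)\to0$. This splits because $\Ker(\pi h)\subseteq P$ is projective.
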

\begin{proof}
$(1)$~Let us calculate $(u_{M[1]}\ast u_N)\ast u_I$ in $\mathcal {D}\mathcal {H}_q(\A):$
by the relation \eqref{ydchgx}, we have
\begin{flalign*}
&(u_{M[1]}\ast u_N)\ast u_I=\sum\limits_{[X],[Y]}q^{-\lr{\hat{M},\hat{N}}}|{}_X\Hom_{\A}(M,N)_Y| u_{Y}\ast u_{X[1]}\ast u_I\\&=\sum\limits_{[X],[Y],[Z],[I']}q^{-\lr{\hat{M},\hat{N}}-\lr{\hat{X},\hat{I}}}|{}_X\Hom_{\A}(M,N)_Y|\cdot|{}_Z\Hom_{\A}(X,I)_{I'}| u_{Y}\ast u_{I'}\ast u_{Z[1]}\\&=\sum\limits_{[X],[Y],[Z],[I']}q^{-\lr{\hat{M},\hat{N}}-\lr{\hat{X},\hat{I}}}|{}_X\Hom_{\A}(M,N)_Y|\cdot|{}_Z\Hom_{\A}(X,I)_{I'}| u_{Y\oplus I'\oplus Z[1]}.
 \end{flalign*}

On the other hand,
\begin{flalign*}
u_{M[1]}\ast (u_N\ast u_I)&=u_{M[1]}\ast u_{N\oplus I}
=\sum\limits_{[Z],[Z']}q^{-\lr{\hat{M},\hat{N}+\hat{I}}}|{}_Z\Hom_{\A}(M,N\oplus I)_{Z'}| u_{Z'\oplus Z[1]}.
\end{flalign*}
By the associativity of derived Hall algebras, for each fixed $Z,Z'\in\A$, comparing coefficients, we get the desired formula.

$(2)$~Using the associativity equation $(u_P\ast u_M)\ast u_{N[-1]}=u_P\ast(u_M\ast u_{N[-1]})$, we can similarly get the proof.
\end{proof}

\subsection{Quantum cluster algebras}\label{ss:quantum-cluster-algebras}
In this subsection, we recall some preliminaries on quantum cluster algebras from \cite{BZ05}. Fix two positive integers $n\leq m$.

Let $\Lambda$ be an $m\times m$ skew-symmetric integral matrix, and denote by $\{e_1,\cdots,e_m\}$ the standard basis of $\mathbb{Z}^{m}$. Let $\mathfrak{q}$ be an indeterminate.
Define the {\em quantum torus} to be the $\mathbb{Q}(\mathfrak{q}^{\frac{1}{2}})$-algebra $\mathcal{T}_{\mathfrak{q},\Lambda}$ with
a distinguished basis $\{X^{\alpha}: {\alpha}\in \mathbb{Z}^{m}\}$ and the
multiplication given by
\begin{equation}\label{ljhtorus}X^{\alpha} X^{\beta}=\mathfrak{q}^{\frac{1}{2}\Lambda(\alpha,\beta)}X^{\alpha+\beta},\end{equation}
where we still denote by $\Lambda$ the skew-symmetric bilinear form on $\mathbb{Z}^{m}$ associated to the skew-symmetric matrix $\Lambda$. It is well-known that $\mathcal{T}_{\mathfrak{q}}$ is an Ore domain, and thus is contained in its
skew-field of fractions $\mathcal{F}_{\mathfrak{q}}$. In what follows, we denote by $\mathcal{T}_{\Lambda}$ the quantum torus specialised at $\mathfrak{q}=q$.

Let $\widetilde{B}=(b_{ij})$ be an $m\times n$ integral matrix.  The pair
$(\Lambda, \widetilde{B})$ is called a {\em compatible pair} if $\Lambda\widetilde{B}=-{D_n\choose 0}$ for some diagonal matrix
$D_n=\operatorname{diag}\{d_1,\cdots, d_n\}$, where each $d_i\in\mathbb{N^+}$. An {\em initial  quantum seed} for $\mathcal{F}_{\mathfrak{q}}$ is a triple
$(\Lambda, \widetilde{B}, X)$  consisting of a compatible pair $(\Lambda,\widetilde{B})$ and the set  $X=\{x_1,\cdots,x_m\}$, where each $x_i$ denotes $X^{{ e}_i}$. For any $1\leq k\leq n$, one defines the {\em mutation} of $\Sigma:=(\Lambda, \widetilde{B}, X)$ in direction $k$ to obtain the new quantum seed $\mu_k(\Sigma):=(\mu_k(\Lambda),\mu_k(\widetilde{B}),\mu_k(X))$ as follows:

(1)\ $\mu_k(\Lambda)=G^{tr}\Lambda G$, where the
$m\times m$ matrix $G=(g_{ij})$ is given by
\[g_{ij}=\begin{cases}
\delta_{ij} & \text{if $j\ne k$;}\\
-1 & \text{if $i=j=k$;}\\
[b_{ik}]_{+} & \text{if $i\ne j = k$.}
\end{cases}
\]

(2)\ $\mu_k(\widetilde{B})=(b'_{ij})$ is given by
\[b'_{ij}=\begin{cases}
-b_{ij} & \text{if $i=k$ or $j=k$;}\\
b_{ij}+\frac{|b_{ik}|b_{kj}+b_{ik}|b_{kj}|}{2} & \text{otherwise.}
\end{cases}
\]

(3)\  $\mu_k(X)=\{x'_1,\cdots,x'_m\}$ is given by
\begin{align}
x_k'&=X^{\sum_{1\leq i\leq m}[b_{ik}]_{+} {e}_i -{e}_k}+X^{\sum_{1\leq i\leq m}[-b_{ik}]_{+} {e}_i -{e}_k},\nonumber\\
x_i'&=x_i ,\quad 1\leq i\neq k\leq m,\nonumber
\end{align}
where $[a]_{+}:=max\{0,a\}$ for any integer $a$.

For any two quantum seeds, they are called
{\em mutation-equivalent}, if they can be obtained from each other
by a sequence of mutations. Given an initial quantum seed $(\Lambda, \widetilde{B}, X)$, the variables from all quantum seeds which are mutation-equivalent to $(\Lambda, \widetilde{B}, X)$ are called the {\em quantum cluster
variables}. The {\em quantum cluster algebra} $\mathcal{A}_{\mathfrak{q}}(\Lambda,\widetilde{B})$ is defined to be the
$\mathbb{Q}(\mathfrak{q}^{\frac{1}{2}})$-subalgebra of $\mathcal{F}_{\mathfrak{q}}$ generated by all
quantum cluster variables.

In what follows, we always consider the quantum cluster algebras associated to finite acyclic valued quivers.
Let $Q$ be an acyclic valued quiver (cf. \cite{Rupel1,Rupel2}) with the vertex set $\{1,2,\cdots,n\}$. For each vertex $i$ of $Q$, let $d_i\in\mathbb{N^+}$ be the corresponding valuation.
Define a new acyclic quiver $\widetilde{Q}$ by attaching additional vertices $n+1,\ldots,m$ to $Q$ with the valuations $d_{n+1},\cdots, d_m$, respectively.
Let $\mathcal{A}$ and $\widetilde{\mathcal{A}}$ be always the categories of finite-dimensional representations of $Q$ and $\widetilde{Q}$ over $\mathbb{F}_q$, respectively. We may identify $\mathcal{A}$ with the full subcategory of $\widetilde{\mathcal{A}}$ consisting of representations with supports on $Q$. For each $1\leq i\leq m$, let $S_i$ be the simple representation of $\widetilde{Q}$ corresponding to the vertex $i$, and denote by $\mathcal{D}_i=\End(S_i)$ its endomorphism algebra.

Let $R(\widetilde{Q})$ and $R'(\widetilde{Q})$ be the $m\times m$ matrices with the $i$-th row and $j$-th column elements given respectively by
$$r_{ij}=\dim_{\mathcal {D}_i}\Ext_{\widetilde{\mathcal{A}}}^1(S_j,S_i)$$
and
$$r'_{ij}=\dim_{{\mathcal {D}_i}^{op}}\Ext_{\widetilde{\mathcal{A}}}^1(S_i,S_j).$$
Define $B(\widetilde{Q})=R'(\widetilde{Q})-R(\widetilde{Q})$,
$E(\widetilde{Q})=I_{m}-R'(\widetilde{Q})$ and $E'(\widetilde{Q})=I_{m}-R(\widetilde{Q})$, where $I_{m}$ is the $m\times m$ identity matrix.
We write $^\ast\x=E(\widetilde{Q})\x$ and $\x^\ast=E'(\widetilde{Q})\x$ for any $X\in\widetilde{\mathcal{A}}$.

By expanding appropriately the quiver $Q$ to $\widetilde{Q}$, we always assume that there exists a skew-symmetric $m\times m$ integral matrix $\Lambda$ such that \begin{equation*}\Lambda B(\widetilde{Q})=-\operatorname{diag}\{d_1,\cdots, d_m\}.\end{equation*}
In this case, $\Lambda\widetilde{B}=-{D_n\choose 0}$, where $\widetilde{B}$ is the left $m\times n$ submatrices of $B(\widetilde{Q})$ and $D_n=\operatorname{diag}\{d_1,\cdots, d_n\}$. Thus, we have the quantum cluster algebras ${\A}_{\mathfrak{q}}(\Lambda, B(\widetilde{Q}))$ and ${\A}_{\mathfrak{q}}(\Lambda, \widetilde{B})$.

The following equations are frequently used throughout the paper.
\begin{lemma}{\rm(\cite{FPZ})}\label{sjishu} For any $\alpha,\beta\in\mathbb{Z}^m$, the following equations hold:
\begin{itemize}
\item[$(1)$]$\Lambda(B(\widetilde{Q})\alpha,E(\widetilde{Q})\beta)=\lr{\alpha,\beta}$;~~$(2)$~~$\Lambda(B(\widetilde{Q})\alpha,E'(\widetilde{Q})\beta)=\lr{\beta,\alpha}$;
\item[$(3)$]$\Lambda(B(\widetilde{Q})\alpha,B(\widetilde{Q})\beta)=\lr{\beta,\alpha}-\lr{\alpha,\beta}$;~~$(4)$~~$\Lambda(E'(\widetilde{Q})\alpha,E'(\widetilde{Q})\beta)=\Lambda(E(\widetilde{Q})\alpha,E(\widetilde{Q})\beta)$.
\end{itemize}\end{lemma}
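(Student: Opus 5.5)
The plan is to reduce all four identities to matrix identities. The inputs are the compatibility condition $\Lambda B(\widetilde{Q})=-D$, where $D=\operatorname{diag}\{d_1,\dots,d_m\}$, and a matrix description of the Euler form in terms of $R=R(\widetilde{Q})$ and $R'=R'(\widetilde{Q})$. Write $B=B(\widetilde{Q})$, $E=E(\widetilde{Q})$, $E'=E'(\widetilde{Q})$, and view $\Lambda(\alpha,\beta)=\alpha^{tr}\Lambda\beta$. Throughout I take $d_i=\dim_{\mathbb{F}_q}\mathcal{D}_i$, the valuation convention of \cite{Rupel1,Rupel2}, and $k=\mathbb{F}_q$ in the definition of $\lr{\cdot,\cdot}$.

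\textbf{Step 1: the Euler matrix.} Since $\widetilde{\A}$ is hereditary, $\lr{\cdot,\cdot}$ is determined by its values on simples:
\[\lr{S_i,S_j}=\delta_{ij}d_i-\dim_{\mathbb{F}_q}\Ext^1(S_i,S_j).\]
Regard $\Ext^1(S_i,S_j)$ as a right $\mathcal{D}_i$-space, i.e.\ a left $\mathcal{D}_i^{op}$-space; this gives $\dim_{\mathbb{F}_q}\Ext^1(S_i,S_j)=d_ir'_{ij}$. Regard it instead as a left $\mathcal{D}_j$-space; this gives $\dim_{\mathbb{F}_q}\Ext^1(S_i,S_j)=d_jr_{ji}$. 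Hence $DR'=R^{tr}D$, and the Euler matrix satisfies
\[C=D-DR'=DE=E'^{tr}D,\qquad \lr{\alpha,\beta}=\alpha^{tr}DE\beta .\]
Transposing $DR'=R^{tr}D$ also gives $E^{tr}D=DE'$. This step needs the bimodule-dimension bookkeeping to be checked carefully against the stated definitions of $r_{ij}$ and $r'_{ij}$; it is the only real point of care.

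\textbf{Step 2: identities (1)--(3).} Skew-symmetry of $\Lambda$ gives $B^{tr}\Lambda=-(\Lambda B)^{tr}=D$. Therefore
\[\Lambda(B\alpha,X\beta)=\alpha^{tr}B^{tr}\Lambda X\beta=\alpha^{tr}DX\beta\]
for any matrix $X$. Taking $X=E$ gives $\alpha^{tr}DE\beta=\lr{\alpha,\beta}$, which is (1). Taking $X=E'$ gives $\alpha^{tr}DE'\beta=\alpha^{tr}E^{tr}D\beta=(DE\alpha)^{tr}\beta=\lr{\beta,\alpha}$, which is (2). For (3), note $B=R'-R=E'-E$, so (3) follows from (2) minus (1).

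\textbf{Step 3: identity (4).} Write $E'=E+B$ and expand bilinearly:
\[\Lambda(E'\alpha,E'\beta)=\Lambda(E\alpha,E\beta)+\Lambda(E\alpha,B\beta)+\Lambda(B\alpha,E\beta)+\Lambda(B\alpha,B\beta).\]
By skew-symmetry and (1), $\Lambda(E\alpha,B\beta)=-\Lambda(B\beta,E\alpha)=-\lr{\beta,\alpha}$. The last three terms are therefore
\[-\lr{\beta,\alpha}+\lr{\alpha,\beta}+\bigl(\lr{\beta,\alpha}-\lr{\alpha,\beta}\bigr)=0,\]
which proves (4).

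I expect no obstacle beyond the Step 1 identification $DR'=R^{tr}D$, i.e.\ matching the valuation and endomorphism-ring conventions.
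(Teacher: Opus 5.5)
Your proof is correct. The paper gives no proof of its own (it quotes the lemma from Fu--Peng--Zhang), and your route is the standard direct matrix verification: you derive $B(\widetilde{Q})^{\rm tr}\Lambda=D$ from compatibility, then identify the Euler matrix as $DE(\widetilde{Q})=E'(\widetilde{Q})^{\rm tr}D$ via $d_ir'_{ij}=d_jr_{ji}$, which agrees with the paper's own remark on the Euler matrix in the special-compatible-pair subsection; items (3) and (4) then follow, as you show, from $B(\widetilde{Q})=E'(\widetilde{Q})-E(\widetilde{Q})$ and skew-symmetry of $\Lambda$.
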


Following \cite[Proposition 7.1]{BZ05}, for each $1\leq j\leq n$, we denote by $\mathbf{b}^j$ the $j$-th column vector of $\widetilde{B}$, set
\begin{equation}\label{thjih}
e_j':=-e_j+\sum_{i=1}^m[b_{ij}]_+e_i~\text{and}~e_j^{''}:=-e_j+\sum_{i=1}^m[-b_{ij}]_+e_i.\end{equation}
Then it is easy to see that $e_j'=-{^\ast e}_j$ and $e_j^{''}=-e_j^\ast$.
\begin{lemma}\label{cor:exponent-relation}
For each $1\leq j,k\leq n$, the following equations hold:

$(1)$ $\Lambda(\mathbf{b}^j,\mathbf{b}^k)=\lr{e_k,e_j}-\lr{e_j,e_k}$;
$(2)$ $\Lambda(e_j'-\mathbf{b}^j,e_k^{''}+\mathbf{b}^k)=\Lambda(e_j^\ast, e_k^\ast)+\lr{e_j,e_k}$.
\end{lemma}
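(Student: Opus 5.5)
Both identities follow from Lemma \ref{sjishu} once the vectors involved are rewritten in terms of the matrices $B:=B(\widetilde{Q})$, $E:=E(\widetilde{Q})$ and $E':=E'(\widetilde{Q})$. For $1\le j\le n$, the column $\mathbf{b}^j$ of $\widetilde{B}$ is the $j$-th column of $B$, so $\mathbf{b}^j=Be_j$.

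\emph{Part (1).} Lemma \ref{sjishu}(3) with $\alpha=e_j$ and $\beta=e_k$ gives
$$\Lambda(\mathbf{b}^j,\mathbf{b}^k)=\Lambda(Be_j,Be_k)=\lr{e_k,e_j}-\lr{e_j,e_k},$$
which is exactly the claim.

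\emph{Part (2), rewriting the vectors.} Recall $e_j'=-{}^\ast e_j=-Ee_j$ and $e_k''=-e_k^\ast=-E'e_k$. From the definitions $B=R'-R$, $E=I_m-R'$ and $E'=I_m-R$ we have the matrix identities
$$E+B=E' \quad\text{and}\quad E'-B=E.$$
Hence
$$e_j'-\mathbf{b}^j=-(E+B)e_j=-E'e_j=-e_j^\ast, \qquad e_k''+\mathbf{b}^k=-(E'-B)e_k=-Ee_k.$$
Since $\Lambda$ is bilinear, the two minus signs cancel:
$$\Lambda(e_j'-\mathbf{b}^j,\,e_k''+\mathbf{b}^k)=\Lambda(E'e_j,\,Ee_k).$$

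\emph{Part (2), evaluating.} Substitute $Ee_k=E'e_k-Be_k$ and use bilinearity and skew-symmetry of $\Lambda$:
$$\Lambda(E'e_j,Ee_k)=\Lambda(E'e_j,E'e_k)-\Lambda(E'e_j,Be_k)=\Lambda(E'e_j,E'e_k)+\Lambda(Be_k,E'e_j).$$
Lemma \ref{sjishu}(2) with $\alpha=e_k$ and $\beta=e_j$ gives $\Lambda(Be_k,E'e_j)=\lr{e_j,e_k}$. Since $E'e_j=e_j^\ast$ and $E'e_k=e_k^\ast$, this is the desired identity $\Lambda(e_j^\ast,e_k^\ast)+\lr{e_j,e_k}$.

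The only point requiring care is keeping the sign and argument-order conventions straight: $e_j'=-Ee_j$ while $e_k''=-E'e_k$, and Lemma \ref{sjishu}(2) returns $\lr{\beta,\alpha}$ rather than $\lr{\alpha,\beta}$. Everything else is a direct substitution.
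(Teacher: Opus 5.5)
Your proof is correct and follows essentially the same route as the paper: you rewrite $\mathbf{b}^j$, $e_j'-\mathbf{b}^j$ and $e_k''+\mathbf{b}^k$ in terms of $B(\widetilde{Q})$, $E(\widetilde{Q})$ and $E'(\widetilde{Q})$, then evaluate using the bilinear identities relating $\Lambda$ to the Euler form. The only difference is that you need only Lemma \ref{sjishu}(2),(3) and the identity $E(\widetilde{Q})+B(\widetilde{Q})=E'(\widetilde{Q})$, whereas the paper also cites \cite[Lemma 2.3(1)]{Rupel2}, so your version is self-contained within the paper.
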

\begin{proof}
Noting that $\mathbf{b}^j=e_j^\ast-{^\ast e}_j$ and using \cite[Lemma 2.3(1)]{Rupel2}, we get the proof of $(1)$.
Then using Lemma \ref{sjishu} and \cite[Lemma 2.3(1)]{Rupel2}, we get the proof of $(2)$.
\end{proof}

The following result is essentially due to \cite{BZ05}.
\begin{proposition}\label{prop:quantum-cluster-presentation}
The quantum cluster algebra $\mathcal{A}_{\mathfrak{q}}(\Lambda,\widetilde{B})$ is isomorphic to the $\mathbb{Q}(\mathfrak{q}^{\frac{1}{2}})$-algebra $A_{\mathfrak{q}}(\Lambda,\widetilde{B})$ generated by the formal variables $Z_1,\ldots, Z_m,Y_1,\ldots, Y_n$ together with the following relations\,$:$
\begin{itemize}
    \item the quasi-commutative relations\,$:$
    \begin{equation}\label{gen-rel: quasi-comm-1}
        Z_iZ_j=\mathfrak{q}^{\Lambda(e_i,e_j)}Z_jZ_i~ \text{for~any~}1\leq i,j\leq m;
    \end{equation}
    \begin{equation}\label{gen-rel: quasi-comm-2}
        Z_iY_j=\mathfrak{q}^{\mu_j(\Lambda)(e_i,e_j)}Y_jZ_i ~ \text{for~any~}i\neq j~\text{with}~1\leq i\leq m, 1\leq j\leq n,
    \end{equation}
    \item the exchange relations for $1\leq k\leq n$\,$:$
    \begin{equation}
    \begin{split}\label{gen-rel: exchange-relation-left}
        Z_kY_k=\,&\mathfrak{q}^{\frac{1}{2}\Lambda(e_k,\sum_{1\leq i\leq m}[b_{ik}]_+e_i)}Z^{\sum_{1\leq i\leq m}[b_{ik}]_+e_i}+\\
&\mathfrak{q}^{\frac{1}{2}\Lambda(e_k,\sum_{1\leq i\leq m}[-b_{ik}]_+e_i)}Z^{\sum_{1\leq i\leq m}[-b_{ik}]_+e_i};
    \end{split}
    \end{equation}
    \begin{equation}
        \begin{split}\label{gen-rel: exchange-relation-right}
         Y_kZ_k=\,&\mathfrak{q}^{-\frac{1}{2}\Lambda(e_k,\sum_{1\leq i\leq m}[b_{ik}]_+e_i)}Z^{\sum_{1\leq i\leq m}[b_{ik}]_+e_i}+\\
&\mathfrak{q}^{-\frac{1}{2}\Lambda(e_k,\sum_{1\leq i\leq m}[-b_{ik}]_+e_i)}Z^{\sum_{1\leq i\leq m}[-b_{ik}]_+e_i},
    \end{split}
    \end{equation}
    \item the quasi-commutator relations for $1\leq j\neq k\leq n$ and $b_{jk}\leq 0$\,$:$
    \begin{equation}\label{gen-rel: quasi-commutation}
    \begin{split}
        \mathfrak{q}^{-\frac{1}{2}\Lambda(e_j^\ast,e_k^\ast)}Y_jY_k-\mathfrak{q}^{\frac{1}{2}\Lambda( e_j^\ast,e_k^\ast)}Y_kY_j
        =(\mathfrak{q}^{-\frac{1}{2}\lr{e_k,e_j}}-\mathfrak{q}^{\frac{1}{2}\lr{e_k,e_j}})Z^{-{^\ast} e_j-e_k^\ast},
    \end{split}
    \end{equation}
\end{itemize}
where $Z^\alpha:=\mathfrak{q}^{\frac{1}{2}\sum_{i<j}a_ia_j\lambda_{ji}}Z_{1}^{a_1}\cdots Z_{m}^{a_m}$ for any $\alpha=\sum\limits_{i=1}^ma_ie_i\in \mathbb{N}^m$. Note that, if $b_{jk}=0$, then the right hand side of \eqref{gen-rel: quasi-commutation} is zero, and if $b_{jk}<0$, then $-^\ast e_j-e_k^\ast\in \mathbb{N}^m$ by \eqref{thjih}.
\end{proposition}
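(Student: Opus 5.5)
The plan is to deduce the presentation from Berenstein--Zelevinsky's theory of acyclic seeds and upper/lower bounds. Because $Q$ is acyclic, the initial seed $(\Lambda,\widetilde{B},X)$ is acyclic. By \cite[Theorem 1.20, Corollary 1.21]{BZ05}, $\mathcal{A}_{\mathfrak{q}}(\Lambda,\widetilde{B})$ therefore equals its lower bound. This lower bound is the subalgebra of $\mathcal{F}_{\mathfrak{q}}$ generated by the initial cluster variables $x_1,\dots,x_m$ and the once-mutated variables $x_1',\dots,x_n'$. Moreover, \cite[Theorem 7.3]{BZ05} gives a presentation of it. The generators are the $x_i$ and $x'_k$. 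The relations are the quasi-commutations inside each of the seeds $\Sigma$ and $\mu_k(\Sigma)$, the two exchange relations, and, for $j\neq k$ with $b_{jk}\le 0$, a quasi-commutator relation between $x_j'$ and $x_k'$. So I define the map $Z_i\mapsto x_i$, $Y_k\mapsto x_k'$. The work is then to check that the relations listed in the proposition are exactly BZ's relations rewritten in the present notation.

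\textbf{Quasi-commutation relations.} Relation \eqref{gen-rel: quasi-comm-1} is immediate from \eqref{ljhtorus}. Relation \eqref{gen-rel: quasi-comm-2} expresses that $x_i$ and $x_j'$ lie in the common quantum seed $\mu_j(\Sigma)$, whose quasi-commutation matrix is $\mu_j(\Lambda)$.

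\textbf{Exchange relations.} I write $x_k'=X^{e_k'}+X^{e_k''}$ by \eqref{thjih}. Then I multiply by $x_k=X^{e_k}$ on the left or the right and use \eqref{ljhtorus}. This gives $x_kx_k'=\mathfrak{q}^{\frac12\Lambda(e_k,e_k')}X^{e_k'+e_k}+\cdots$. Next I identify $X^{\sum[b_{ik}]_+e_i}$ with the normalized monomial $Z^{\sum[b_{ik}]_+e_i}$; the normalization factor in the definition of $Z^\alpha$ is precisely what makes this identification hold. Since $\Lambda(e_k,e_k)=0$, the exponent $\Lambda(e_k,e_k')$ equals $\Lambda(e_k,\sum_i[b_{ik}]_+e_i)$. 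This yields \eqref{gen-rel: exchange-relation-left}, and \eqref{gen-rel: exchange-relation-right} follows symmetrically.

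\textbf{Quasi-commutator relation (main obstacle).} The main obstacle is matching BZ's relation for $Y_jY_k$ with \eqref{gen-rel: quasi-commutation}. I would expand $x_j'x_k'$ and $x_k'x_j'$ into four torus monomials each, using $e'_j=-{}^\ast e_j$ and $e''_j=-e_j^\ast$.

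First, when $b_{jk}\le0$, the vectors $e_j'$ and $e_k'$ have disjoint supports in the relevant coordinates, and similarly for the double-primed vectors. I would check that the mixed products $X^{e_j'}X^{e_k'}$ and $X^{e_j''}X^{e_k''}$ commute up to exactly the factor $\mathfrak{q}^{\Lambda(e_j^\ast,e_k^\ast)}$. Here Lemma~\ref{sjishu}(4) shows $\Lambda({}^\ast e_j,{}^\ast e_k)=\Lambda(e_j^\ast,e_k^\ast)$. Consequently these terms cancel in the weighted difference $\mathfrak{q}^{-\frac12\Lambda(e_j^\ast,e_k^\ast)}x_j'x_k'-\mathfrak{q}^{\frac12\Lambda(e_j^\ast,e_k^\ast)}x_k'x_j'$.

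Second, for the cross terms $X^{e_j'}X^{e_k''}$ and $X^{e_j''}X^{e_k'}$, I would use $\mathbf{b}^j=e_j'-e_j''$ together with Lemma~\ref{cor:exponent-relation}(1),(2). These compute the relevant twists as $\Lambda(e_j^\ast,e_k^\ast)\pm\lr{e_j,e_k}$ or $\pm\lr{e_k,e_j}$. The hope is that one cross pair cancels and the surviving pair combines into $(\mathfrak{q}^{-\frac12\lr{e_k,e_j}}-\mathfrak{q}^{\frac12\lr{e_k,e_j}})X^{-{}^\ast e_j-e_k^\ast}$.

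If $b_{jk}=0$, then $\lr{e_k,e_j}=0$ and both sides vanish. Finally, I would identify $X^{-{}^\ast e_j-e_k^\ast}$ with $Z^{-{}^\ast e_j-e_k^\ast}$ as in the exchange step. Tracking signs and which cross pair survives is where errors are likeliest, so I would verify it first on the rank-two case, where $\lr{e_k,e_j}=-b_{jk}d_j$.
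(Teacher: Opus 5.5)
You have the paper's strategy: send $Z_i\mapsto x_i$, $Y_k\mapsto x_k'$ and reduce to the acyclic results of \cite{BZ05}, which are Theorems 7.3 and 7.5 there. Your relation checks can be completed. The gap is the step you hand to the citation, namely that the listed relations are \emph{defining} relations. Berenstein--Zelevinsky work over the Laurent polynomial ring in the frozen variables $x_{n+1},\dots,x_m$. Here neither $A_{\mathfrak q}(\Lambda,\widetilde B)$ nor $\mathcal A_{\mathfrak q}(\Lambda,\widetilde B)$ contains $Z_{n+1}^{-1},\dots,Z_m^{-1}$, resp.\ $x_{n+1}^{-1},\dots,x_m^{-1}$. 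So even if \cite{BZ05} gave a presentation of the lower bound, matching your relations against it would only show that the map becomes an isomorphism after inverting the frozen generators. It would not show the map is injective on $A_{\mathfrak q}(\Lambda,\widetilde B)$. Nor would it show that $\mathcal A_{\mathfrak q}(\Lambda,\widetilde B)$, with non-invertible coefficients, is generated by $x_1,\dots,x_m,x_1',\dots,x_n'$.

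The paper closes this in three steps. First, the relations hold in $\mathcal F_{\mathfrak q}$, so the map is a homomorphism; this is your part. Second, using only \eqref{gen-rel: quasi-comm-1}--\eqref{gen-rel: quasi-commutation}, the standard monomials $Z_1^{a_1}\cdots Z_m^{a_m}Y_1^{b_1}\cdots Y_n^{b_n}$ with $a_ib_i=0$ for $i\le n$ span $A_{\mathfrak q}(\Lambda,\widetilde B)$. You reorder these words, replace each adjacent $Z_kY_k$ or $Y_kZ_k$ by the exchange relation, sort the $Y$'s with the quasi-commutators, and induct on the number of $Y$'s. Third, the polynomial-coefficient version of \cite[Theorems 7.3, 7.5]{BZ05}, recorded in the paper's footnote, says the images of these monomials form a basis of $\mathcal A_{\mathfrak q}(\Lambda,\widetilde B)$. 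Spanning plus independence of the images gives bijectivity. You need this argument, or at least the coefficient caveat, in place of ``the relations match BZ's.''

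On \eqref{gen-rel: quasi-commutation} itself, no support condition is needed. The same-type pairs cancel because $\Lambda(e_j',e_k')=\Lambda({}^\ast e_j,{}^\ast e_k)=\Lambda(e_j^\ast,e_k^\ast)=\Lambda(e_j'',e_k'')$ by Lemma~\ref{sjishu}(4). For the cross pairs, write ${}^\ast e_j=e_j^\ast-\mathbf b^j$; Lemma~\ref{sjishu}(2) then gives $\Lambda(e_j',e_k'')=\Lambda(e_j^\ast,e_k^\ast)-\lr{e_k,e_j}$. Also $\Lambda(e_j'',e_k')=\Lambda(e_j^\ast,e_k^\ast)+\lr{e_j,e_k}$; this is Lemma~\ref{cor:exponent-relation}(2), since $e_j'-\mathbf b^j=e_j''$ and $e_k''+\mathbf b^k=e_k'$. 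The pair $X^{e_j''}X^{e_k'}$ contributes $(\mathfrak q^{\frac12\lr{e_j,e_k}}-\mathfrak q^{-\frac12\lr{e_j,e_k}})X^{e_j''+e_k'}$. This vanishes because $b_{jk}\le 0$ with $j\ne k$ forces $\Ext^1_{\widetilde{\A}}(S_j,S_k)=0$, hence $\lr{e_j,e_k}=0$. The pair $X^{e_j'}X^{e_k''}$ gives exactly $(\mathfrak q^{-\frac12\lr{e_k,e_j}}-\mathfrak q^{\frac12\lr{e_k,e_j}})X^{-{}^\ast e_j-e_k^\ast}$, so the right-hand side of \eqref{gen-rel: quasi-commutation} is the surviving cross pair.
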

\begin{proof}
For each $1\leq k\leq n$, denote by $x_k'$ the new quantum cluster variable obtained from $(\Lambda, \widetilde{B},X)$ by the mutation in direction $k$. According to the definition of quantum cluster algebras together with \cite[Proposition 7.1]{BZ05} and Lemma \ref{cor:exponent-relation}, we conclude that the relations \eqref{gen-rel: quasi-comm-1}--\eqref{gen-rel: quasi-commutation} are preserved under
the correspondence $$\pi:A_{\mathfrak{q}}(\Lambda,\widetilde{B})\longrightarrow \mathcal{A}_{\mathfrak{q}}(\Lambda,\widetilde{B}), Z_i\mapsto x_i~\text{and}~Y_j\mapsto x_{j}'$$
for any $1\leq i\leq m$ and $1\leq j\leq n$. Thus, we get $\pi:A_{\mathfrak{q}}(\Lambda,\widetilde{B})\to \mathcal{A}_{\mathfrak{q}}(\Lambda,\widetilde{B})$ is a homomorphism of algebras.
Using the relations \eqref{gen-rel: quasi-comm-1}--\eqref{gen-rel: quasi-commutation}, we obtain that the set of standard monomials $$\mathscr{S}:=\{Z_1^{a_1}\cdots Z_m^{a_m}Y_1^{b_1}\cdots Y_n^{b_n}\mid \text{all}~a_i,b_j\in \mathbb{N}~\text{and}~a_ib_i=0~\text{for~any}~1\leq i\leq n\}$$ forms a spanning set of $A_{\mathfrak{q}}(\Lambda,\widetilde{B})$.
By \cite[Theorems 7.3, 7.5]{BZ05}, the images of the elements in $\mathscr{S}$ under $\pi$ give a $\mathbb{Q}(\mathfrak{q}^{\frac{1}{2}})$-basis of $\mathcal{A}_{\mathfrak{q}}(\Lambda,\widetilde{B})$\footnote{We need a minor variation of \cite[Theorem 7.5]{BZ05}. Although it was originally proved over the Laurent polynomial ring in $x_{n+1},\ldots, x_m$, the same statement holds over the polynomial ring in $x_{n+1},\ldots, x_m$.}. It follows that $\mathscr{S}$ is a $\mathbb{Q}(\mathfrak{q}^{\frac{1}{2}})$-basis of $A_{\mathfrak{q}}(\Lambda,\widetilde{B})$ and thus $\pi$ is an algebra isomorphism.
\end{proof}

\subsection{$\Lambda$-twisted derived Hall subalgebras}
For any $X=\bigoplus\limits_{i\in I}X_i[i]\in \mathcal{D}^b(\widetilde{\A})$ for some finite subset $I$ of $\mathbb{Z}$, we write ${\bf x}:={\Dim}X=\sum\limits_{i\in I}(-1)^i{\bf x}_i$, called the {\em dimension vector} of $X$; and write $\overline{\tau}{\bf x}$ as the dimension vector of $\overline{\tau}X$.
In order to relate Hall algebras with quantum cluster algebras, we need to twist the multiplications of derived Hall algebras using the bilinear form $\Lambda$.
So, let us twist the multiplication on $\mathcal {D}\mathcal {H}_q(\widetilde{\A})$, and define $\mathcal {D}\mathcal {H}_\Lambda(\widetilde{\A})$ to be the same space as $\mathcal {D}\mathcal {H}_q(\widetilde{\A})$ but with the twisted
multiplication defined on basis elements by
\begin{equation*}
   u_{X}\star u_{Y}:=
   v^{\Lambda({\bf x}^\ast,{\bf y}^\ast)}
   u_{X}\ast u_{Y},
\end{equation*}
for any $X,Y\in \mathcal{D}^b(\widetilde{\A})$. By \cite[Lemma 6.4]{CDZ} and Lemma \ref{sjishu}, we have
\begin{equation}\Lambda((\overline{\tau}{\bf x})^\ast,(\overline{\tau}{\bf y})^\ast)=\Lambda({\bf x}^\ast,{\bf y}^\ast)\end{equation}
for any $X,Y\in \mathcal{D}^b(\widetilde{\A})$. Hence, the Auslander--Reiten translation $\overline{\tau}$ of $\mathcal{D}^b(\widetilde{\A})$ provides an algebra automorphism of $\mathcal {D}\mathcal {H}_\Lambda(\widetilde{\A})$.

Let $\mathcal {D}\mathcal {H}_\Lambda^{ec}(\widetilde{\A})$ be the subalgebra of $\mathcal {D}\mathcal {H}_\Lambda(\widetilde{\A})$, which is
the same space as $\mathcal {D}\mathcal {H}_q^{ec}(\widetilde{\A})$ but with the twisted
multiplication defined on basis elements by
\begin{equation}\label{yizhitwisting1}
\begin{split}
   u_{I[-1]\oplus M\oplus P[1]}\star u_{J[-1]\oplus N\oplus Q[1]}:=
   v^{\Lambda(({\bf m}-{\bf i}-{\bf p})^\ast,(\bf{n}
   -{\bf j}-\bf{q})^\ast)}
   u_{I[-1]\oplus M\oplus P[1]}\ast u_{J[-1]\oplus N\oplus Q[1]}\end{split}
\end{equation}
for any $M, N\in\widetilde{\A}$, $I, J\in\I_{\widetilde{\A}}$ and $P, Q\in\P_{\widetilde{\A}}$.
Using Proposition \ref{twistderived}, we have the following.
\begin{proposition}\label{elsubalgebra}
The algebra $\mathcal {D}\mathcal {H}_\Lambda^{ec}(\widetilde{\A})$ is generated by the elements $$\{u_{P[1]},u_{M},u_{I[-1]}~|~P\in\P_{\widetilde{\A}},M\in\widetilde{\A},I\in\I_{\widetilde{\A}}\}$$ and the following relations
\begin{flalign}\label{lgx2}
u_{P[1]}\star u_{Q[1]}=q^{\frac{1}{2}\Lambda({\bf p}^\ast,{\bf q}^\ast)}
u_{(P\oplus Q)[1]}
=q^{\Lambda({\bf p}^\ast,{\bf q}^\ast)}u_{Q[1]}\star u_{P[1]};\end{flalign}
\begin{flalign}\label{lgx7}u_{P[1]}\star u_{M}=q^{-\frac{1}{2}\Lambda({\bf p}^\ast,{\bf m}^\ast)-\lr{{\bf p},{\bf m}}}
\sum\limits_{[F],[P']}v^{\Lambda({\bf f}^\ast,{{\bf p}'}^\ast)}|{}_{P'}\Hom_{\widetilde{\A}}(P,M)_F|u_{F}\star u_{P'[1]};
\end{flalign}
\begin{flalign}\label{lgx3}
u_{M}\star u_{N}=q^{\frac{1}{2}\Lambda({\bf m}^\ast,{\bf n}^\ast)+\lr{{\bf m},{\bf n}}}\sum_{[L]}\frac{|\mathrm{Ext}_{\widetilde{\A}}^{1}(M,N)_{L}|}{|\mathrm{Hom}_{\widetilde{\A}}(M,N)|}u_L;\end{flalign}
\begin{flalign}\label{lgx1}u_{I[-1]}\star u_{J[-1]}=
q^{\frac{1}{2}\Lambda({\bf i}^\ast,{\bf j}^\ast)}u_{(I\oplus J)[-1]}
=q^{\Lambda({\bf i}^\ast,{\bf j}^\ast)}u_{J[-1]}\star u_{I[-1]};\end{flalign}
\begin{flalign}\label{lgx6}u_{M}\star u_{I[-1]}=q^{-\frac{1}{2}\Lambda({\bf m}^\ast,{\bf i}^\ast)-\lr{{\bf m},{\bf i}}}
\sum\limits_{[G],[I']}v^{\Lambda({{\bf i}'}^\ast,{\bf g}^\ast)}|{}_{G}\Hom_{\widetilde{\A}}(M,I)_{I'}|u_{I'[-1]}\star u_{G};\end{flalign}
\begin{flalign}\label{lgx8}u_{I[-1]}\star u_{P[1]}=q^{\Lambda({\bf i}^\ast,{\bf p}^\ast)-\lr{{\bf p},{\bf i}}}u_{P[1]}\star u_{I[-1]};\end{flalign}
for any $P,Q\in\P_{\widetilde{\A}}$, $M,N\in\widetilde{\A}$ and $I,J\in\I_{\widetilde{\A}}$.
\end{proposition}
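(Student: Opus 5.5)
Since $\mathcal{D}\mathcal{H}_\Lambda^{ec}(\widetilde{\A})$ has the same underlying space as $\mathcal{D}\mathcal{H}_q^{ec}(\widetilde{\A})$ and the two products differ on basis elements only by the scalar $v^{\Lambda(\cdot,\cdot)}$ in \eqref{yizhitwisting1}, I would transport Proposition \ref{twistderived}, restricted to $C^e_{\widetilde{\A}}$, through this twist. The argument has two parts: generation, then verification of each listed relation.

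For generation, I would first note that in $\mathcal{D}\mathcal{H}_q(\widetilde{\A})$ we have $u_{I[-1]}\ast u_M=u_{I[-1]\oplus M}$ and $u_{I[-1]\oplus M}\ast u_{P[1]}=u_{I[-1]\oplus M\oplus P[1]}$. Both follow from the identity $u_{M[i]}\ast u_{N[i+1]}=u_{M[i]\oplus N[i+1]}$ and from the vanishing of $\Hom$ between the relevant shifts. Hence, up to a nonzero power of $v$, every basis element $u_{I[-1]\oplus M\oplus P[1]}$ equals $u_{I[-1]}\star u_M\star u_{P[1]}$, so the listed elements generate.

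For the relations, each one comes from the corresponding relation of Proposition \ref{twistderived}, after which I would recompute the twisting exponents.
\begin{itemize}
\item \eqref{lgx3} is the case $i=0$, twisted by $v^{\Lambda(\mathbf m^\ast,\mathbf n^\ast)}$.
\item \eqref{lgx2} and \eqref{lgx1} come from the split-sum identity. For degree-one objects, $H$ of projectives in $\mathcal{D}^b$ is $1/\{P[1],Q[1]\}$ times the split term. Using $\Ext^1(P,Q)=0$, the exponent $q^{\lr{P,Q}}\cdot|\Hom(P,Q)|^{-1}$ cancels to give $u_{P[1]}\ast u_{Q[1]}=u_{(P\oplus Q)[1]}$. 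The twist is $v^{\Lambda((-\mathbf p)^\ast,(-\mathbf q)^\ast)}=v^{\Lambda(\mathbf p^\ast,\mathbf q^\ast)}$, and skew-symmetry of $\Lambda$ yields the second equality. The injective case is identical.
\item \eqref{lgx7} is \eqref{ydchgx} with $M\to P$, $N\to M$, $i=0$. Here $\Ker$ is forced to be projective $P'$ because the category is hereditary, so every term lies in $C^e$. I would then rewrite $u_F\ast u_{P'[1]}$ as $v^{-\Lambda(\mathbf f^\ast,-\mathbf p'^\ast)}u_F\star u_{P'[1]}$ and combine with the left twist $v^{-\Lambda(\mathbf p^\ast,\mathbf m^\ast)}$.
\item \eqref{lgx6} is \eqref{ydchgx} with $i=-1$, $M\to M$, $N\to I$. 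The cokernel $I'$ is injective as a quotient of an injective.
\item \eqref{lgx8} is the third relation with $i-j=2$, giving $q^{\lr{I[-1],P[1]}}=q^{-\lr{\mathbf p,\mathbf i}}$ after using the sign rule and the $\Lambda$-twist. The sign of the Euler term needs care here.
\end{itemize}

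Completeness (that these relations present the algebra) would follow because they allow any word in the generators to be reordered into the normal form $u_{I[-1]}\star u_M\star u_{P[1]}$. In that form the words match the basis of $C^e$.

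The main obstacle I expect is the bookkeeping of exponents, especially the sign of the $\lr{\cdot,\cdot}$ terms and the appearance of $v^{\Lambda(\mathbf f^\ast,\mathbf p'^\ast)}$. I would check these using $\mathbf p=\mathbf p'+\mathbf m-\mathbf f$ together with bilinearity.
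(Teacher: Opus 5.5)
Your proposal is correct and follows the paper's route, which simply derives each relation from Proposition \ref{twistderived} and recomputes the $\Lambda$-twist. Your normal-form argument also supplies the completeness of the presentation, which the paper leaves implicit: you move $I[-1]$-letters left and $P[1]$-letters right, this terminates because each use of \eqref{lgx7}, \eqref{lgx6}, \eqref{lgx8} removes exactly one out-of-order pair, and the resulting words $u_{I[-1]}\star u_M\star u_{P[1]}$ map to nonzero multiples of distinct basis vectors.

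One cosmetic slip concerns \eqref{lgx8}. The factor comes from the third relation with $(M,i)=(P,1)$ and $(N,j)=(I,-1)$, namely $u_{P[1]}\ast u_{I[-1]}=q^{\lr{\mathbf p,\mathbf i}}u_{I[-1]}\ast u_{P[1]}$. It does not come from $\lr{I[-1],P[1]}$, which equals $\lr{\mathbf i,\mathbf p}$ and not $-\lr{\mathbf p,\mathbf i}$. Your final exponent is nonetheless right.
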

In fact, we also have the following equations in $\mathcal {D}\mathcal {H}_\Lambda^{ec}(\widetilde{\A})$
\begin{flalign}\label{lgx5}
u_{M}\star u_{P[1]}=q^{-\frac{1}{2}\Lambda({\bf m}^\ast,{\bf p}^\ast)}u_{M\oplus
P[1]},\end{flalign}
\begin{flalign}\label{lgx4}u_{I[-1]}\star u_{M}=q^{-\frac{1}{2}\Lambda({\bf i}^\ast,{\bf m}^\ast)}u_{M\oplus
I[-1]}\end{flalign}
for any $P\in\P_{\widetilde{\A}}$, $M\in\widetilde{\A}$ and $I\in\I_{\widetilde{\A}}$.

\begin{proposition}\label{mint27}
In the generating relations of $\mathcal {D}\mathcal {H}_\Lambda^{ec}(\widetilde{\A})$, the relation \eqref{lgx6} can be replaced by the following two relations
\begin{flalign}\label{fgx1}
u_{P}\star u_{I[-1]}=q^{-\frac{1}{2}\Lambda({\bf p}^\ast,{\bf i}^\ast)-\lr{{\bf p},{\bf i}}}
\sum\limits_{[Q],[I']}v^{\Lambda({{\bf i}'}^\ast,{\bf q}^\ast)}|{}_{Q}\Hom_{\widetilde{\A}}(P,I)_{I'}|u_{I'[-1]}\star u_Q,
\end{flalign}
where $P\in\P_{\widetilde{\A}}$~and $I\in\I_{\widetilde{\A}}$;
\begin{flalign}\label{fgx2}
u_{M}\star u_{I[-1]}=q^{-\frac{1}{2}\Lambda({\bf m}^\ast,{\bf i}^\ast)-\lr{{\bf m},{\bf i}}}
\sum\limits_{[G],[I']}v^{\Lambda({{\bf i}'}^\ast,{\bf g}^\ast)}|{}_{G}\Hom_{\widetilde{\A}}(M,I)_{I'}|u_{I'[-1]}\star u_G,
\end{flalign}
where $M\in\widetilde{\A}$~has~nonzero~projective~direct~summands and $I\in\I_{\widetilde{\A}}$.
\end{proposition}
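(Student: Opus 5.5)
The plan is to show that, once \eqref{fgx1} and \eqref{fgx2} are imposed, every instance of \eqref{lgx6} follows from them and the remaining relations of Proposition \ref{elsubalgebra}. Both new relations are special cases of \eqref{lgx6}, so they certainly hold in $\mathcal {D}\mathcal {H}_\Lambda^{ec}(\widetilde{\A})$. What remains is to recover \eqref{lgx6} for those $M\in\widetilde{\A}$ that have \emph{no} nonzero projective direct summand.

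For such an $M$, I would pick a projective object $P\neq0$, for instance any indecomposable projective. Then $M\oplus P$ has a nonzero projective summand, so \eqref{fgx2} applies to $u_{M\oplus P}\star u_{I[-1]}$. Using \eqref{lgx3} we have $u_M\star u_P=q^{\frac12\Lambda({\bf m}^\ast,{\bf p}^\ast)+\lr{{\bf m},{\bf p}}}u_{M\oplus P}$, because $\Ext^1(M,P)$ may be nonzero but we can instead use the order $u_P\star u_M$. Since $\Ext^1_{\widetilde{\A}}(P,M)=0$, \eqref{lgx3} gives
$$u_P\star u_M=q^{\frac12\Lambda({\bf p}^\ast,{\bf m}^\ast)+\lr{{\bf p},{\bf m}}}\frac{1}{|\Hom(P,M)|}\cdot|\Hom(P,M)|\,u_{P\oplus M}$$
up to the scalar, which is the trivial-extension term only. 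So $u_{M\oplus P}$ is an invertible scalar multiple of $u_P\star u_M$. Now I compute $u_P\star(u_M\star u_{I[-1]})$ in two ways, in the spirit of the proof of Proposition \ref{prop:Hall-equality-P-I}.

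The first way uses \eqref{fgx2} on $u_{P\oplus M}\star u_{I[-1]}$. The second way applies the unknown expansion of $u_M\star u_{I[-1]}$ (the true one holds in the algebra) and then \eqref{fgx1} to each $u_P\star u_{I'[-1]}$. This alone, however, is circular: it only verifies consistency and does not derive \eqref{lgx6}.

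The cleaner argument is a spanning/dimension one. Let $\mathcal{B}$ be the algebra given by generators and the modified relations, with the canonical surjection $\mathcal{B}\to\mathcal {D}\mathcal {H}_\Lambda^{ec}(\widetilde{\A})$. I would show that the ordered monomials $u_{I[-1]}\star u_G\star u_{P[1]}$ span $\mathcal{B}$, since they map to a basis by \eqref{lgx5} and \eqref{lgx4}. The only rewriting step that needs \eqref{lgx6} is moving $u_{I[-1]}$ leftward past $u_M$.

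Here I would do the following. Choose a projective cover, or more simply a monomorphism $M\hookrightarrow I_0$ together with a projective presentation, and express $u_M$ inside $\mathcal{B}$ through elements where \eqref{fgx1} and \eqref{fgx2} apply. Concretely, \eqref{lgx7} applied to $u_{P[1]}\star u_{M\oplus P}$-type products, together with \eqref{lgx5}, lets us write $u_M$ as a combination of $u_{P[1]}\star u_{M\oplus P}$ and of terms $u_F\star u_{P'[1]}$ with "smaller" $F$. Here we take $P$ such that $\Hom(P,M)$ realizes $M\oplus P$ through the term $F=M\oplus P$, $P'$ arising from the zero map. The identity map contributions are then peeled off by induction on dimension vector.

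Then I move $u_{I[-1]}$ past $u_{P[1]}$ using \eqref{lgx8}, past $u_{M\oplus P}$ using \eqref{fgx2}, and handle the smaller terms by induction on $\Dim M$. The base case is $M$ projective, which is covered by \eqref{fgx1}.

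The main obstacle is organizing this induction so that the lower-order terms produced by \eqref{lgx7} genuinely have smaller dimension, or fewer non-projective summands, and contain no circular uses. I expect an induction on $\dim M$, ordered by the length of $M$ modulo projective summands, to work.
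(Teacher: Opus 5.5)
\textbf{There is a genuine gap: the proposal treats the opposite case from the one the paper needs, and even on its own terms its key rewriting step is not established.}

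\textbf{Which case is missing.} The hypothesis in \eqref{fgx2} has a dropped ``no'': it should read ``$M$ has \emph{no} nonzero projective direct summands''. Three things show this.
\begin{itemize}
\item Read literally, \eqref{fgx1} would just be \eqref{fgx2} with $M=P$, so it would be redundant.
\item The paper's proof applies \eqref{fgx2} to a module $M'$ without projective summands.
\item The later verifications of \eqref{fgx2}, for $\varrho$ and for $\mathfrak{b}_k^+$, are carried out only for such $M$.
\end{itemize}
So what must be recovered is \eqref{lgx6} for $M=M'\oplus P'$, where $P'\neq0$ is the maximal projective summand. The paper does this by a direct computation, with no spanning argument or induction:
\begin{itemize}
\item Write $u_M=v^{-\Lambda({{\bf p}'}^\ast,{{\bf m}'}^\ast)}u_{P'}\star u_{M'}$.
\item Expand $u_{M'}\star u_{I[-1]}$ by \eqref{fgx2} into terms $u_{J[-1]}\star u_{G_1}$.
\item Expand each $u_{P'}\star u_{J[-1]}$ by \eqref{fgx1} into terms $u_{I'[-1]}\star u_{G_2}$.
\item Merge $u_{G_2}\star u_{G_1}=v^{\Lambda({\bf g}_2^\ast,{\bf g}_1^\ast)}u_{G_1\oplus G_2}$, which is valid because $G_2$ is projective.
\item Recognise $\sum q^{\lr{{\bf p}',{\bf m}'-{\bf g}_1}}|{}_{G_1}\Hom_{\widetilde{\A}}(M',I)_J|\cdot|{}_{G_2}\Hom_{\widetilde{\A}}(P',J)_{I'}|$ as $|{}_{G_1\oplus G_2}\Hom_{\widetilde{\A}}(M,I)_{I'}|$ via Proposition \ref{prop:Hall-equality-P-I}(2). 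After this the exponents of $v$ match.
\end{itemize}
The key input, that associativity identity, never appears in your proposal.

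\textbf{The gap under the literal reading.} Even for the statement as printed, your argument breaks at exactly the step you flag. Expanding $u_{P[1]}\star u_{M\oplus P}$ by \eqref{lgx7} gives terms $u_F\star u_{P''[1]}$ with $P''=\Ker f$ and $\Dim F={\bf m}+{\bf p}''$. So no term is ``smaller'' than $M$, and an induction on $\Dim M$ has nothing to act on. Also, the $u_M$-term comes from the maps $f=(g,h)\colon P\to M\oplus P$ with $h\in\End(P)$ invertible, not from the zero map; the zero map gives $u_{M\oplus P}\star u_{P[1]}$.

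What would rescue this route is a different observation: take $P$ indecomposable, so that $\End(P)$ is a division ring. Then $h$ is one of two kinds.
\begin{itemize}
\item $h$ invertible: the cokernel is $\cong M$ and the kernel is $0$. These maps contribute $u_M$ with coefficient a power of $v$ times $|\Hom_{\widetilde{\A}}(P,M)|(|\End(P)|-1)\neq0$.
\item $h=0$: the cokernel is $\Coker g\oplus P$.
\end{itemize}
Hence every term other than $u_M$ contains the projective summand $P$. After moving $u_{P''[1]}$ to the right with \eqref{lgx8}, the literal \eqref{fgx2} applies directly, and no induction is needed. Your spanning-plus-basis framing would then finish the argument, by comparison with the linearly independent elements $u_{J[-1]\oplus L\oplus Q[1]}$ of $\mathcal {D}\mathcal {H}_\Lambda^{ec}(\widetilde{\A})$, where \eqref{lgx6} holds. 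Without this observation, the rewriting of $u_M\star u_{I[-1]}$ is not established, so the proposal does not prove the proposition.
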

\begin{proof}
We only need to show that the relation \eqref{lgx6} can be implied from the relations \eqref{fgx1}, \eqref{fgx2} together with the other relations in Proposition \ref{elsubalgebra}. For any $M\in\widetilde{\A}$, write $M=M'\oplus P'$ such that $P'$ is the maximal projective direct summand of $M$. Then $u_M=v^{-\Lambda({{\bf p}'}^\ast,{{\bf m}'}^\ast)}u_{P'}\star u_{M'}$. Thus, we have
\begin{flalign*}
u_{M}\star u_{I[-1]}=v^{-\Lambda({{\bf p}'}^\ast,{{\bf m}'}^\ast)}u_{P'}\star u_{M'}\star u_{I[-1]}.
\end{flalign*}
By \eqref{fgx2}, we have
\begin{flalign*}u_{M'}\star u_{I[-1]}=q^{-\frac{1}{2}\Lambda({{\bf m}'}^\ast,{\bf i}^\ast)-\lr{{\bf m}',{\bf i}}}
\sum\limits_{[G_1],[J]}v^{\Lambda({\bf j}^\ast,{\bf g}_1^\ast)}|{}_{G_1}\Hom_{\widetilde{\A}}(M',I)_{J}|u_{J[-1]}\star u_{G_1}.\end{flalign*}
Thus, we obtain
\begin{flalign*}
&u_{M}\star u_{I[-1]}=\\&v^{-\Lambda({{\bf p}'}^\ast,{{\bf m}'}^\ast)-\Lambda({{\bf m}'}^\ast,{\bf i}^\ast)-2\lr{{\bf m}',{\bf i}}}\sum\limits_{[G_1],[J]}v^{\Lambda({\bf j}^\ast,{\bf g}_1^\ast)}|{}_{G_1}\Hom_{\widetilde{\A}}(M',I)_{J}|u_{P'}\star u_{J[-1]}\star u_{G_1}.
\end{flalign*}
Using \eqref{fgx1}, we get
\begin{flalign*}
u_{M}\star u_{I[-1]}&=v^{-\Lambda({{\bf p}'}^\ast,{{\bf m}'}^\ast)-\Lambda({{\bf m}'}^\ast,{\bf i}^\ast)-2\lr{{\bf m}',{\bf i}}}\sum\limits_{[G_1],[J],[G_2],[I']}v^{\Lambda({\bf j}^\ast,{\bf g}_1^\ast)-\Lambda({{\bf p}'}^\ast,{{\bf j}}^\ast)-2\lr{{\bf p}',{\bf j}}}\\&v^{\Lambda({{\bf i}'}^\ast,{\bf g}_2^\ast)}|{}_{G_1}\Hom_{\widetilde{\A}}(M',I)_{J}|\cdot|{}_{G_2}\Hom_{\widetilde{\A}}(P',J)_{I'}| u_{I'[-1]}\star u_{G_2}\star u_{G_1}.
\end{flalign*}
Noting that $G_2$ is projective, we obtain
\begin{equation*}
u_{M}\star u_{I[-1]}=\sum\limits_{[G_1],[J],[G_2],[I']}v^{x_0}|{}_{G_1}\Hom_{\widetilde{\A}}(M',I)_{J}|\cdot|{}_{G_2}\Hom_{\widetilde{\A}}(P',J)_{I'}| u_{I'[-1]}\star u_{G_1\oplus G_2},
\end{equation*}
where \begin{flalign*}x_0=&-\Lambda({{\bf p}'}^\ast,{{\bf m}'}^\ast)-\Lambda({{\bf m}'}^\ast,{\bf i}^\ast)-2\lr{{\bf m}',{\bf i}}+\Lambda({\bf j}^\ast,{\bf g}_1^\ast)\\&-\Lambda({{\bf p}'}^\ast,{{\bf j}}^\ast)-2\lr{{\bf p}',{\bf j}}+\Lambda({{\bf i}'}^\ast,{\bf g}_2^\ast)+\Lambda({{\bf g}_2}^\ast,{\bf g}_1^\ast).\end{flalign*}
Since ${\bf j}+{\bf g}_2={\bf p}'+{\bf i}'$ and ${\bf g}_1-{\bf j}={\bf m}'-{\bf i}$, we get
\begin{flalign*}x_0&=-\Lambda({{\bf m}}^\ast,{\bf i}^\ast)-2\lr{{\bf m}',{\bf i}}-2\lr{{\bf p}',{\bf j}}+\Lambda({{\bf i}'}^\ast,{\bf g}_1^\ast+{\bf g}_2^\ast)\\&=-\Lambda({{\bf m}}^\ast,{\bf i}^\ast)-2\lr{{\bf m},{\bf i}}+\Lambda({{\bf i}'}^\ast,{\bf g}_1^\ast+{\bf g}_2^\ast)+2\lr{{\bf p}',{\bf m}'-{\bf g}_1}.\end{flalign*}
Hence, by Proposition \ref{prop:Hall-equality-P-I}, we obtain the relation \eqref{lgx6}.
\end{proof}

\subsection{Cluster multiplication formulas}
In this subsection, we recall some quantum cluster multiplication formulas from \cite{CDZ} and properties of quantum cluster monomials.

For any $M\in\widetilde{\A}$, $I\in\I_{\widetilde{\A}}$ and $P\in\P_{\widetilde{\A}}$, define the specialised {\em quantum cluster character} $X_{I[-1]\oplus M\oplus P[1]}$ in the quantum torus $\mathcal{T}_\Lambda$ by
\begin{equation}\label{qcltz}X_{I[-1]\oplus M\oplus P[1]}=\sum\limits_{\mathbf{e}}v^{\lr{{\bf p}-\mathbf{e},\mathbf{m}-\mathbf{e}-\mathbf{i}}}|\mathrm{Gr}_{\mathbf{e}}M|
X^{({\bf p}-\mathbf{e})^\ast-^\ast(\mathbf{m}-\mathbf{e}-\mathbf{i})},\end{equation}
where $\mathrm{Gr}_{\bf{e}}M$ denotes the set of all submodules $V$
of $M$ with $\Dim V= \bf{e}$.
\begin{theorem}\textup{(\cite[Theorem 4.8]{CDZ})}\label{mainresult}
The map $\psi:\mathcal {D}\mathcal {H}_\Lambda^{ec}(\widetilde{\A})\longrightarrow\mathcal{T}_\Lambda$ defined by
$$\psi(u_{I[-1]\oplus M\oplus P[1]})=X_{I[-1]\oplus M\oplus P[1]},$$ for any $M\in\widetilde{\A}$, $I\in\I_{\widetilde{\A}}$ and $P\in\P_{\widetilde{\A}}$, is a homomorphism of algebras.
\end{theorem}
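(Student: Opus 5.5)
Since Proposition \ref{elsubalgebra} gives a presentation of $\mathcal {D}\mathcal {H}_\Lambda^{ec}(\widetilde{\A})$ by generators $u_{P[1]},u_M,u_{I[-1]}$ and the relations \eqref{lgx2}--\eqref{lgx8}, my plan is to define $\psi$ on these generators by the quantum cluster characters and check each defining relation in $\mathcal{T}_\Lambda$. Then $\psi$ is an algebra homomorphism on the presented algebra. It remains to see that it agrees with the stated formula $\psi(u_{I[-1]\oplus M\oplus P[1]})=X_{I[-1]\oplus M\oplus P[1]}$ on every basis element. For this I would use \eqref{lgx5} and \eqref{lgx4}, which give
\[
u_{I[-1]\oplus M\oplus P[1]}=c\,u_{I[-1]}\star u_M\star u_{P[1]}
\]
for an explicit power $c$ of $v$. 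So the first concrete step is the factorization identity
\[
X_{I[-1]\oplus M\oplus P[1]}=c\,X_{I[-1]}X_MX_{P[1]}
\]
in $\mathcal{T}_\Lambda$. This follows directly from \eqref{qcltz}. We have $X_{P[1]}=X^{{\bf p}^\ast}$ and $X_{I[-1]}=X^{^\ast{\bf i}}$, and the exponents combine by \eqref{ljhtorus}. The resulting powers of $v$ match $\lr{{\bf p}-{\bf e},{\bf m}-{\bf e}-{\bf i}}$ after using Lemma \ref{sjishu}, together with $\mathbf{x}^\ast-{}^\ast\mathbf{x}=B(\widetilde{Q})\mathbf{x}$.

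Next I verify the relations.

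\textbf{Relations \eqref{lgx2}, \eqref{lgx1}, \eqref{lgx8}.} These are quasi-commutation relations between monomials $X^{{\bf p}^\ast}$ and $X^{^\ast{\bf i}}$. They reduce to identities of $\Lambda$-values. Lemma \ref{sjishu}(4) handles $\Lambda({}^\ast{\bf i},{}^\ast{\bf j})=\Lambda({\bf i}^\ast,{\bf j}^\ast)$. For \eqref{lgx8}, writing ${}^\ast{\bf i}={\bf i}^\ast-B{\bf i}$ and applying Lemma \ref{sjishu}(2) produces the $\lr{{\bf p},{\bf i}}$ term.

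\textbf{Relation \eqref{lgx3}.} This is the quantum Hall multiplication formula $X_MX_N=\sum(\cdots)X_L$ for modules. I would prove it in the standard way. Expand both sides via \eqref{qcltz} and compare coefficients of each monomial. The needed Hall-number identity is Green's formula, in the form counting pairs of submodules of $M$ and $N$ against submodules of extensions $L$, weighted by $|\Ext^1(M,N)_L|/|\Hom(M,N)|$. Concretely, a submodule $V\subseteq L$ induces $V\cap N\subseteq N$ and $V/(V\cap N)\subseteq M$. Counting fibres of this map over all extension classes gives
\[
\sum_L\frac{|\Ext^1(M,N)_L|}{|\Hom(M,N)|}\,|\mathrm{Gr}_{\bf e}L|=\sum_{{\bf e}_1+{\bf e}_2={\bf e}} q^{\text{(explicit)}}|\mathrm{Gr}_{{\bf e}_1}M||\mathrm{Gr}_{{\bf e}_2}N|,
\]
with the exponent $\lr{{\bf m}-{\bf e}_1,{\bf e}_2}$-type correction, as in Hubery and Rupel. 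The exponents of $v$ are then matched using Lemma \ref{sjishu}(1)--(3).

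\textbf{Relations \eqref{lgx7} and \eqref{lgx6}.} These are the mutation-type relations. By Proposition \ref{mint27} I may replace \eqref{lgx6} by \eqref{fgx1} and \eqref{fgx2}. Alternatively, \eqref{lgx6} can be obtained from \eqref{lgx7} by duality, using the $\overline{\tau}$-invariance recorded in the preliminaries. For \eqref{lgx7}, the left side $X^{{\bf p}^\ast}X_M$ is a sum over submodules $V\subseteq M$. The right side sums over morphisms $f:P\to M$ with kernel $P'$ and cokernel $F$, paired with submodules of $F$. The bijective input is that $\Hom(P,M)$ is a vector space of size $q^{\lr{{\bf p},{\bf m}}}$, because $P$ is projective. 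Given a submodule $W\subseteq F$, its preimage $V\subseteq M$ contains $\im f$. Reorganizing the sum by $V$, one must show the following. For fixed $V$, the weighted count of $f$ whose image lies in $V$ (a space of size $q^{\lr{{\bf p},{\bf e}}}$), with the appropriate $v$-weights, collapses to the single monomial coefficient appearing on the left. This is a Möbius/$q$-binomial-type telescoping: summing over $f:P\to V$ with the weights $v^{\Lambda({\bf f}^\ast,{{\bf p}'}^\ast)}$ and the kernel dimension must reproduce $v^{\lr{{\bf p}-{\bf e},{\bf m}-{\bf e}}}$ times the correct monomial.

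I expect the exponent bookkeeping in \eqref{lgx7}, together with the resulting cancellation, to be the main obstacle. I would first try to reduce it, via Proposition \ref{hallshujs} and associativity, to the already known case of \eqref{lgx3} applied inside the derived Hall algebra. Failing that, I would do the direct count, organised by $V$ and using that $\{f:P\to V\}$ is a vector space.
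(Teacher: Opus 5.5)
Your argument is correct, but it is not the route behind the cited result. The paper gives no proof here; it quotes \cite[Theorem 4.8]{CDZ}. There, following \cite{FPZ}, $\psi$ is obtained by putting a bialgebra structure on the derived Hall subalgebra and composing the coproduct with an integration homomorphism to $\mathcal{T}_\Lambda$, so multiplicativity is inherited from those two structures.

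You instead treat Proposition~\ref{elsubalgebra} as a genuine presentation, and it is one. The relations sort any word into $I[-1]$, $M$, $P[1]$ order, with each rewrite lowering the number of inversions, and then merge within each layer. The resulting monomials $u_{I[-1]}\star u_M\star u_{P[1]}$ map to nonzero multiples of distinct basis vectors. You then check the relations in $\mathcal{T}_\Lambda$. Finally you identify $\psi$ on the basis via $X_{I[-1]\oplus M\oplus P[1]}=v^{\Lambda(\mathbf{i}^\ast,(\mathbf{m}-\mathbf{p})^\ast)+\Lambda(\mathbf{m}^\ast,\mathbf{p}^\ast)}X_{I[-1]}X_MX_{P[1]}$. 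This does hold, with the same constant as in $\mathcal{D}\mathcal{H}_\Lambda^{ec}(\widetilde{\A})$.

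This is the same technique the paper uses later for $\varrho$, $\varrho'$ and $\mathfrak{b}_k^+$. Your route is more elementary: it needs no coproduct and no Green's formula, only a heredity count for \eqref{lgx3} and trivial fibre counts for \eqref{lgx7} and \eqref{lgx6}. The bialgebra route is more conceptual: it explains multiplicativity as the integration of a multiplicative coproduct, not as a relation-by-relation check.

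Three points to tighten.

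\textbf{Relation \eqref{lgx3}.} Fix $M_1\subseteq M$ and $N_2\subseteq N$ of dimension vectors $\mathbf{e}_1,\mathbf{e}_2$. The pairs consisting of a class in $\Ext^1_{\widetilde{\A}}(M,N)$ and a submodule $V$ of its middle term with $V\cap N=N_2$ and image $M_1$ number $|\Ext^1_{\widetilde{\A}}(M,N)|\,q^{\langle\mathbf{e}_1,\mathbf{n}-\mathbf{e}_2\rangle}$. This holds because $\Ext^1(M,N)\to\Ext^1(M_1,N/N_2)$ is onto by heredity, and the admissible $V$ form a $\Hom(M_1,N/N_2)$-torsor. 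So the exponent in your displayed identity is $\langle\mathbf{e}_1,\mathbf{n}-\mathbf{e}_2\rangle-\langle\mathbf{m},\mathbf{n}\rangle$, not a $\langle\mathbf{m}-\mathbf{e}_1,\mathbf{e}_2\rangle$-term. It matches because $\Lambda$ of the two torus exponents equals $\Lambda(\mathbf{m}^\ast,\mathbf{n}^\ast)+\langle\mathbf{e}_1,\mathbf{n}-\mathbf{e}_2\rangle-\langle\mathbf{e}_2,\mathbf{m}-\mathbf{e}_1\rangle$.

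\textbf{Relation \eqref{lgx7}.} There is no telescoping; the step you flagged as the main obstacle is routine.
\begin{itemize}
\item By your factorization, the factors $v^{\Lambda(\mathbf{f}^\ast,{\mathbf{p}'}^\ast)}$ cancel exactly, and the right side becomes $v^{-\Lambda(\mathbf{p}^\ast,\mathbf{m}^\ast)-2\langle\mathbf{p},\mathbf{m}\rangle}\sum_{f\in\Hom(P,M)}X_{\Coker f\oplus \Ker f[1]}$.
\item A pair $(f,W\subseteq\Coker f)$ corresponds to $V\supseteq\im f$ with dimension vector $\mathbf{e}$, and it carries the weight $v^{\langle\mathbf{p}-\mathbf{e},\mathbf{m}-\mathbf{e}\rangle}$ independently of $f$.
\item The fibre over $V$ is $\Hom(P,V)$, of size $q^{\langle\mathbf{p},\mathbf{e}\rangle}$.
\item The identity reduces to $-2\langle\mathbf{p},\mathbf{m}\rangle+\langle\mathbf{p}-\mathbf{e},\mathbf{m}-\mathbf{e}\rangle+2\langle\mathbf{p},\mathbf{e}\rangle=-\langle\mathbf{p},\mathbf{m}-\mathbf{e}\rangle-\langle\mathbf{e},\mathbf{m}-\mathbf{e}\rangle$.
\end{itemize}

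\textbf{Relation \eqref{lgx6}.} Use the mirror count: the fibres are $\Hom(M/V,I)$, of size $q^{\langle\mathbf{m}-\mathbf{e},\mathbf{i}\rangle}$, and the weight $v^{-\langle\mathbf{e},\mathbf{m}-\mathbf{i}-\mathbf{e}\rangle}$ is constant. This makes Proposition~\ref{mint27} unnecessary. Your suggested shortcut via $\overline{\tau}$-invariance does not work: $\overline{\tau}$ does not preserve $\mathcal{D}\mathcal{H}_\Lambda^{ec}(\widetilde{\A})$, as the paper itself notes, and nothing makes $\psi$ intertwine it with an automorphism of $\mathcal{T}_\Lambda$.
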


Using the algebra homomorphism $\psi$, one can obtain the following multiplication formulas of quantum cluster characters.
\begin{corollary}\textup{(\cite[Corollary 4.10]{CDZ})}\label{Hallcfgs}
The following cluster multiplication formulas hold:
\begin{flalign}\label{qre2}
X_{P[1]} X_{Q[1]}=q^{\frac{1}{2}\Lambda({\bf p}^\ast,{\bf q}^\ast)}
X_{(P\oplus Q)[1]}
=q^{\Lambda({\bf p}^\ast,{\bf q}^\ast)}X_{Q[1]} X_{P[1]};\end{flalign}
\begin{flalign}\label{qre5}
X_{M} X_{P[1]}=q^{-\frac{1}{2}\Lambda({\bf m}^\ast,{\bf p}^\ast)}X_{M\oplus
P[1]};\end{flalign}
\begin{flalign}\label{qre7}X_{P[1]} X_{M}=q^{-\frac{1}{2}\Lambda({\bf p}^\ast,{\bf m}^\ast)-\lr{{\bf p},{\bf m}}}
\sum\limits_{[F],[P']}|{}_{P'}\Hom_{\widetilde{\A}}(P,M)_F|X_{F\oplus P'[1]};\end{flalign}
\begin{flalign}\label{qre3}X_{M} X_{N}=q^{\frac{1}{2}\Lambda({\bf m}^\ast,{\bf n}^\ast)+\lr{{\bf m},{\bf n}}}\sum_{[L]}\frac{|\mathrm{Ext}_{\widetilde{\A}}^{1}(M,N)_{L}|}{|\mathrm{Hom}_{\widetilde{\A}}(M,N)|}X_L;\end{flalign}
\begin{flalign}\label{qre1}X_{I[-1]}X_{J[-1]}=
q^{\frac{1}{2}\Lambda({\bf i}^\ast,{\bf j}^\ast)}X_{(I\oplus J)[-1]}
=q^{\Lambda({\bf i}^\ast,{\bf j}^\ast)}X_{J[-1]}X_{I[-1]};\end{flalign}
\begin{flalign}\label{qre4}X_{I[-1]} X_{M}=q^{-\frac{1}{2}\Lambda({\bf i}^\ast,{\bf m}^\ast)}X_{M\oplus
I[-1]};\end{flalign}
\begin{flalign}\label{qre6}X_{M} X_{I[-1]}=q^{-\frac{1}{2}\Lambda({\bf m}^\ast,{\bf i}^\ast)-\lr{{\bf m},{\bf i}}}
\sum\limits_{[G],[I']}|{}_{G}\Hom_{\widetilde{\A}}(M,I)_{I'}|X_{G\oplus I'[-1]};\end{flalign}
\begin{flalign}\label{qre8}X_{I[-1]} X_{P[1]}=q^{\Lambda({\bf i}^\ast,{\bf p}^\ast)-\lr{{\bf p},{\bf i}}}X_{P[1]} X_{I[-1]};\end{flalign}
for any $P,Q\in\P_{\widetilde{\A}}$, $M,N\in\widetilde{\A}$ and $I,J\in\I_{\widetilde{\A}}$.
\end{corollary}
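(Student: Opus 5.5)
The statement is Corollary \ref{Hallcfgs}, and the plan is to apply the algebra homomorphism $\psi:\mathcal {D}\mathcal {H}_\Lambda^{ec}(\widetilde{\A})\to\mathcal{T}_\Lambda$ of Theorem \ref{mainresult} to each generating relation of $\mathcal {D}\mathcal {H}_\Lambda^{ec}(\widetilde{\A})$ in Proposition \ref{elsubalgebra}, together with \eqref{lgx5} and \eqref{lgx4}. Since $\psi$ is multiplicative and sends $u_X$ to $X_X$, every relation among the $u$'s transports verbatim to a relation among quantum cluster characters. Formulas \eqref{qre2}, \eqref{qre3}, \eqref{qre1} and \eqref{qre8} are then immediate images of \eqref{lgx2}, \eqref{lgx3}, \eqref{lgx1} and \eqref{lgx8}. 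Likewise \eqref{qre5} and \eqref{qre4} are the images of \eqref{lgx5} and \eqref{lgx4}.

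The only formulas needing an extra step are \eqref{qre7} and \eqref{qre6}. There the right-hand side of \eqref{lgx7} (resp. \eqref{lgx6}) is written as a product $u_F\star u_{P'[1]}$ (resp. $u_{I'[-1]}\star u_G$) with a $\Lambda$-twist in front. For \eqref{qre7}, I will rewrite $u_F\star u_{P'[1]}$ using \eqref{lgx5}:
$$u_F\star u_{P'[1]}=q^{-\frac12\Lambda({\bf f}^\ast,{{\bf p}'}^\ast)}u_{F\oplus P'[1]}.$$
Combined with the prefactor $v^{\Lambda({\bf f}^\ast,{{\bf p}'}^\ast)}=q^{\frac12\Lambda({\bf f}^\ast,{{\bf p}'}^\ast)}$, the twist cancels exactly. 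This gives $u_{P[1]}\star u_M$ as the stated untwisted sum of $u_{F\oplus P'[1]}$, and applying $\psi$ yields \eqref{qre7}.

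Symmetrically for \eqref{qre6}, I will use \eqref{lgx4}:
$$u_{I'[-1]}\star u_G=q^{-\frac12\Lambda({{\bf i}'}^\ast,{\bf g}^\ast)}u_{G\oplus I'[-1]}.$$
This cancels $v^{\Lambda({{\bf i}'}^\ast,{\bf g}^\ast)}$, and applying $\psi$ gives the formula.

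The only point requiring care is the validity of \eqref{lgx5} and \eqref{lgx4} themselves, since those supply the cancellation. I would verify them directly from the definition \eqref{yizhitwisting1}. In $\mathcal{D}\mathcal{H}_q$, $u_M\ast u_{P[1]}=u_{M\oplus P[1]}$ by the identity $u_{M[i]}\ast u_{N[i+1]}=u_{M[i]\oplus N[i+1]}$. The $\Lambda$-twist contributes $v^{\Lambda({\bf m}^\ast,(-{\bf p})^\ast)}=q^{-\frac12\Lambda({\bf m}^\ast,{\bf p}^\ast)}$. The case $u_{I[-1]}\ast u_M=u_{M\oplus I[-1]}$ is analogous. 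With these checks the corollary follows formally, so I expect no genuine obstacle: everything rests on Theorem \ref{mainresult}.
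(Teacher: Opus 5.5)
Your proposal is correct and follows the route the paper indicates: push the relations \eqref{lgx2}--\eqref{lgx8}, together with \eqref{lgx5} and \eqref{lgx4}, through the homomorphism $\psi$ of Theorem~\ref{mainresult}. Using \eqref{lgx5} and \eqref{lgx4} to absorb the factors $v^{\Lambda({\bf f}^\ast,{{\bf p}'}^\ast)}$ and $v^{\Lambda({{\bf i}'}^\ast,{\bf g}^\ast)}$ in \eqref{lgx7} and \eqref{lgx6} is the only extra step needed, and your check of those two identities from \eqref{yizhitwisting1} is correct.
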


Let $\mathfrak{I}_1$ be the two-sided ideal of the derived Hall subalgebra $\mathcal {D}\mathcal {H}_\Lambda^{ec}(\widetilde{\A})$ generated by the elements in $$\mathcal {S}_1:=\{u_{\nu^{-1}(I)[1]}-u_{I[-1]}~|~I\in\I_{\widetilde{\A}}\},$$
where $\nu$ is the Nakayama functor in $\widetilde{\A}$.
Define \begin{equation}\mathcal {D}\mathcal {H}_\Lambda^{{cl}_1}(\widetilde{\A}):=\mathcal {D}\mathcal {H}_\Lambda^{ec}(\widetilde{\A})/\mathfrak{I}_1.\end{equation} In the following, for any element $u$ in an algebra, we use the same notation $u$ to denote its image in a quotient algebra. Thus we have that $u_{P[1]}=u_{I[-1]}$ in $\mathcal {D}\mathcal {H}_\Lambda^{{cl}_1}(\widetilde{\A})$ for any $I\in\I_{\widetilde{\A}}$ and $P=\nu^{-1}(I)$.
Since $\mathfrak{I}_1\subseteq\Ker \psi$, the algebra homomorphism $\psi:\mathcal {D}\mathcal {H}_\Lambda^{ec}(\widetilde{\A})\longrightarrow\mathcal{T}_\Lambda$ induces a homomorphism of algebras \begin{equation}\label{xuyaomap}\varphi_1:\mathcal {D}\mathcal {H}_\Lambda^{{cl}_1}(\widetilde{\A})\longrightarrow\mathcal{T}_\Lambda.\end{equation}

Let $\mathcal{A}\mathcal{H}_\Lambda^{\circ}(\widetilde{Q})$ be the subalgebra of $\mathcal{T}_\Lambda$ generated by all the quantum cluster characters $X_M, X_{I[-1]}$ with $M\in\widetilde{\A}, I\in\I_{\widetilde{\A}}$. By \cite{Rupel2}, the specialised quantum cluster algebra $\A_q(\Lambda, B(\widetilde{Q}))$ is generated by the quantum cluster characters $X_{M}, X_{I[-1]}$, where $M\in \widetilde{\A}$ and $I\in \mathcal{I}_{\widetilde{\A}}$ are indecomposable, and $M$ is rigid (i.e., $\Ext^1_{\widetilde{\A}}(M,M)=0$). Clearly, $\A_q(\Lambda, B(\widetilde{Q}))\subseteq \mathcal{A}\mathcal{H}_\Lambda^{\circ}(\widetilde{Q})$. Thus, we restate the homomorphism $\varphi_1$ as the following.

\begin{proposition}\textup{(\cite[Proposition 5.4]{CDZ})}\label{morphism1}
The map $\varphi_1:\mathcal {D}\mathcal {H}_\Lambda^{{cl}_1}(\widetilde{\A})\longrightarrow\mathcal{A}\mathcal{H}_\Lambda^{\circ}(\widetilde{Q})$ defined by
$\varphi_1(u_{I[-1]\oplus M\oplus P[1]})=X_{I[-1]\oplus M\oplus P[1]},$ for any $M\in\widetilde{\A}$, $I\in\I_{\widetilde{\A}}$ and $P\in\P_{\widetilde{\A}}$, is a surjective homomorphism of algebras.
\end{proposition}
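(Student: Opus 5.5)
Proposition \ref{morphism1} asserts two things: that $\varphi_1$ is a well-defined algebra homomorphism with the stated values, and that its image is exactly $\mathcal{A}\mathcal{H}_\Lambda^{\circ}(\widetilde{Q})$.

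\textbf{Homomorphism.} I would obtain $\varphi_1$ from $\psi$ of Theorem \ref{mainresult}. The steps are as follows.

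First, check that $\mathfrak{I}_1\subseteq\Ker\psi$. Since $\Ker\psi$ is a two-sided ideal, it suffices to show that $\psi$ kills each generator in $\mathcal{S}_1$, i.e. that $X_{\nu^{-1}(I)[1]}=X_{I[-1]}$ for every $I\in\I_{\widetilde{\A}}$. By \eqref{qcltz} with $M=0$ only $\mathbf e=0$ contributes, so
$$X_{P[1]}=X^{{\bf p}^\ast}\quad\text{and}\quad X_{I[-1]}=X^{{}^\ast{\bf i}}.$$
Thus we need ${\bf p}^\ast={}^\ast{\bf i}$ for $P=\nu^{-1}(I)$, that is, $E'(\widetilde{Q})\,\Dim P=E(\widetilde{Q})\,\Dim I$. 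By additivity it is enough to treat $P=P_k$ and $I=I_k$. Here $E'(\widetilde{Q})\x$ is the vector $\big(\lr{S_i,X}\big)_i$ and $E(\widetilde{Q})\x$ is the vector $\big(\lr{X,S_i}\big)_i$, both up to the normalisation by $\dim_k\mathcal{D}_i$. We have $\lr{S_i,I_k}=\dim\Hom(S_i,I_k)$ and $\lr{P_k,S_i}=\dim\Hom(P_k,S_i)$, and both equal $\delta_{ik}$ times the same factor. So the two vectors coincide. This sign and normalisation check is the only place where care is needed, and I would verify it against the conventions $r_{ij},r'_{ij}$ fixed above.

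Second, since $\mathfrak{I}_1\subseteq\Ker\psi$, $\psi$ factors through the quotient, giving a homomorphism $\varphi_1$ on $\mathcal{D}\mathcal{H}_\Lambda^{cl_1}(\widetilde{\A})$ with the stated values on the basis images.

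\textbf{Image contained in $\mathcal{A}\mathcal{H}_\Lambda^{\circ}(\widetilde{Q})$.} The algebra $\mathcal{D}\mathcal{H}_\Lambda^{ec}(\widetilde{\A})$ is generated by $u_M$, $u_{I[-1]}$ and $u_{P[1]}$ (Proposition \ref{elsubalgebra}). In the quotient, every $u_{P[1]}$ equals $u_{\nu(P)[-1]}$, because $\nu$ is an equivalence from $\P_{\widetilde{\A}}$ to $\I_{\widetilde{\A}}$. Hence the quotient is generated by the images of $u_M$ and $u_{I[-1]}$. These are sent to the generators $X_M$ and $X_{I[-1]}$ of $\mathcal{A}\mathcal{H}_\Lambda^{\circ}(\widetilde{Q})$, so the image lies in that subalgebra.

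\textbf{Surjectivity.} The image of $\varphi_1$ is a subalgebra containing every generator $X_M$ and $X_{I[-1]}$ of $\mathcal{A}\mathcal{H}_\Lambda^{\circ}(\widetilde{Q})$, so it is all of $\mathcal{A}\mathcal{H}_\Lambda^{\circ}(\widetilde{Q})$.
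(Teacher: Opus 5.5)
Your proposal is correct and follows the paper's route: the paper also obtains $\varphi_1$ by noting $\mathfrak{I}_1\subseteq\Ker\psi$, factoring $\psi$ through the quotient, and reading off surjectivity from the generators $X_M$, $X_{I[-1]}$; here $\mathfrak{I}_1\subseteq\Ker\psi$ amounts to ${\bf p}^\ast={}^\ast{\bf i}$ for $P=\nu^{-1}(I)$, a fact the paper uses again in Lemma \ref{lem:pointed-vector-iso}. One slip: you have interchanged the descriptions of $E(\widetilde{Q})$ and $E'(\widetilde{Q})$ (Lemma \ref{sjishu} with $\Lambda B(\widetilde{Q})=-{\rm diag}\{d_1,\ldots,d_m\}$ gives $d_i({\bf x}^\ast)_i=\lr{{\bf x},e_i}$ and $d_i({}^\ast{\bf x})_i=\lr{e_i,{\bf x}}$), but the pairings $\lr{P_k,S_i}$ and $\lr{S_i,I_k}$ you actually evaluate are the right ones, so ${\bf p}_k^\ast=e_k={}^\ast{\bf i}_k$ holds as claimed.
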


Using the Hall algebra $\mathcal {D}\mathcal {H}_\Lambda^{{cl}_1}(\widetilde{\A})$ and the algebra homomorphism $\varphi_1$, one can obtain the following quantum cluster multiplication formulas.
\begin{theorem}\textup{(\cite[Theorem 5.7]{CDZ})}\label{dyggs}
For any $M\in\widetilde{\A}, I\in\I_{\widetilde{\A}}$ and $P=\nu^{-1}(I)$, the following cluster multiplication formulas hold
\begin{equation*}\label{xjyan1}\begin{split}
&(q^{\lr{{\bf p},{\bf m}}}-1)X_{P[1]} X_{M}=\\&q^{\frac{1}{2}\Lambda({\bf m}^\ast,{\bf p}^\ast)}(\sum\limits_{\begin{smallmatrix}[F],[P']\\P'\ncong P\end{smallmatrix}}|{}_{P'}\Hom_{\widetilde{\A}}(P,M)_F|X_{F\oplus P'[1]}+q^{-\frac{1}{2}\lr{{\bf m},{\bf i}}}\sum\limits_{\begin{smallmatrix}[G],[I']\\I'\ncong I\end{smallmatrix}}|{}_{G}\Hom_{\widetilde{\A}}(M,I)_{I'}|X_{G\oplus I'[-1]})
\end{split}\end{equation*}
and \begin{flalign*}
&(q^{\lr{{\bf m},{\bf i}}}-1)X_{M} X_{I[-1]}=\\&q^{\frac{1}{2}\Lambda({\bf i}^\ast,{\bf m}^\ast)}(q^{-\frac{1}{2}\lr{{\bf p},{\bf m}}}\sum\limits_{\begin{smallmatrix}[F],[P']\\P'\ncong P\end{smallmatrix}}|{}_{P'}\Hom_{\widetilde{\A}}(P,M)_F|X_{F\oplus P'[1]}+\sum\limits_{\begin{smallmatrix}[G],[I']\\I'\ncong I\end{smallmatrix}}|{}_{G}\Hom_{\widetilde{\A}}(M,I)_{I'}|X_{G\oplus I'[-1]}).
\end{flalign*}
\end{theorem}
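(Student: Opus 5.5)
The plan is to compute the product $u_{P[1]}\star u_M$ in $\mathcal{D}\mathcal{H}_\Lambda^{{cl}_1}(\widetilde{\A})$ in two different ways, using $u_{P[1]}=u_{I[-1]}$ with $I=\nu(P)$, and then apply the surjective homomorphism $\varphi_1$ of Proposition \ref{morphism1}. The second formula is obtained symmetrically from $u_M\star u_{I[-1]}$.

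For the first way, expand $u_{P[1]}\star u_M$ using \eqref{lgx7}. This gives
\[
q^{-\frac{1}{2}\Lambda({\bf p}^\ast,{\bf m}^\ast)-\lr{{\bf p},{\bf m}}}\sum_{[F],[P']}v^{\Lambda({\bf f}^\ast,{{\bf p}'}^\ast)}|{}_{P'}\Hom_{\widetilde{\A}}(P,M)_F|\,u_F\star u_{P'[1]}.
\]
By \eqref{lgx5}, each $u_F\star u_{P'[1]}$ is a scalar multiple of $u_{F\oplus P'[1]}$. Now separate the term with $P'\cong P$. In that case the map $P\to M$ has kernel $P$, so it is zero and $F=M$. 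That term is therefore a scalar multiple of $u_M\star u_{P[1]}$, which after the identification equals $u_M\star u_{I[-1]}$.

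For the second way, expand $u_M\star u_{I[-1]}$ using \eqref{lgx6}, rewrite each $u_{I'[-1]}\star u_G$ through \eqref{lgx4}, and again isolate the zero-map term $I'\cong I$, $G=M$. That term is a scalar multiple of $u_{I[-1]}\star u_M$, that is, of $u_{P[1]}\star u_M$.

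Substituting the second expansion into the isolated term of the first gives a linear equation in $u_{P[1]}\star u_M$. Collecting the two multiples of $u_{P[1]}\star u_M$ produces a prefactor of the form $1-q^{-\lr{{\bf p},{\bf m}}-\lr{{\bf m},{\bf i}}+\ldots}$. To simplify it I will use $\Dim\nu(P)$-relations: $\lr{{\bf m},{\bf i}}=\lr{{\bf p},{\bf m}}$ for $I=\nu(P)$, since $\Hom(P,M)\cong D\Hom(M,\nu P)$ and both are projective/injective. I will also use ${\bf i}^\ast$ versus ${\bf p}^\ast$ relations from Lemma \ref{sjishu}, with $\Lambda$-exponents related via $\Lambda({\bf x}^\ast,{\bf p}^\ast)$ versus $\Lambda({\bf x}^\ast,{\bf i}^\ast)$. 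Multiplying through should yield $(q^{\lr{{\bf p},{\bf m}}}-1)u_{P[1]}\star u_M$ equal to the claimed combination of the $u_{F\oplus P'[1]}$ with $P'\ncong P$ and the $u_{G\oplus I'[-1]}$ with $I'\ncong I$. Applying $\varphi_1$ then gives the first formula.

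The main obstacle is the bookkeeping of the $v$-exponents. The $\Lambda$-twists $v^{\Lambda({\bf f}^\ast,{{\bf p}'}^\ast)}$ from \eqref{lgx7}, the extra factor coming from \eqref{lgx5}, and the analogous twists on the injective side must combine to exactly $q^{\frac12\Lambda({\bf m}^\ast,{\bf p}^\ast)}$ and $q^{-\frac12\lr{{\bf m},{\bf i}}}$. To handle this, I will use ${\bf f}={\bf m}-{\bf p}+{\bf p}'$ and ${\bf g}={\bf m}-{\bf i}+{\bf i}'$ to reduce every exponent to one depending only on ${\bf m},{\bf p},{\bf i}$. I will then verify the key identity $\Lambda({\bf x}^\ast,{\bf p}^\ast-{\bf i}^\ast)$ $=\pm(\lr{{\bf x},{\bf i}}$ or $\lr{{\bf p},{\bf x}})$. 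This identity should follow from Lemma \ref{sjishu}, because ${\bf p}^\ast-{\bf i}^\ast$ lies in the image of $B(\widetilde{Q})$ up to the $E,E'$ relation for $\nu$.
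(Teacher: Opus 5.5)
Your proposal is correct and follows essentially the route the paper indicates via \cite[Lemma 5.1, Theorem 5.7]{CDZ}: in $\mathcal{D}\mathcal{H}_\Lambda^{{cl}_1}(\widetilde{\A})$ you expand with \eqref{lgx7} and \eqref{lgx5}, and with \eqref{lgx6} and \eqref{lgx4}, use $u_{P[1]}=u_{I[-1]}$, isolate the zero-morphism terms to get a linear equation in $u_{P[1]}\star u_M$ (symmetrically in $u_M\star u_{I[-1]}$), and apply $\varphi_1$. The exponent bookkeeping closes as you expect: ${\bf p}^\ast={^\ast{\bf i}}$ gives $\Lambda({\bf m}^\ast,{\bf p}^\ast-{\bf i}^\ast)=\Lambda(B(\widetilde{Q}){\bf i},{\bf m}^\ast)=\lr{{\bf m},{\bf i}}=\lr{{\bf p},{\bf m}}$, so the collected coefficient is exactly $q^{-\lr{{\bf p},{\bf m}}}$ and the remaining exponents reduce to $\frac{1}{2}\Lambda({\bf m}^\ast,{\bf p}^\ast)$ and $-\frac{1}{2}\lr{{\bf m},{\bf i}}$.
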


Let $M,N\in\widetilde{\A}$ and write $M=M'\oplus P'$ such that $P'$ is the maximal projective direct summand of $M$. For each morphism $\theta:N\longrightarrow \tau M$, where $\tau$ is the Auslander--Reiten translation of $\widetilde{\A}$, we have an exact sequence
\begin{equation}\label{zhxl}
\xymatrix{0\ar[r]&D\ar[r]&N\ar[r]^-\theta&\tau M\ar[r]&\tau A'\oplus I\ar[r]&0,}
\end{equation}where $D=\Ker \theta, \tau A'\oplus I=\Coker \theta$, $I\in\I_{\widetilde{\A}}$, and $A'$ has no nonzero projective direct summands. In the following, we set $A=A'\oplus P'$.
For the simplicity of notation, we also set $[M,N]^1:={\rm dim}_{\mathbb{F}_q}\Ext^1_{\widetilde{\A}}(M,N)$ for any $M,N\in\widetilde{\A}$.
The following cluster multiplication formulas have also been obtained in \cite{CDZ}.
\begin{theorem}\textup{(\cite[Theorem 7.4]{CDZ})}
For any $M,N\in\widetilde{\A}$, the following cluster multiplication formulas hold
\begin{equation}\label{ccfgs}
\begin{split}
&(q^{[M,N]^1}-1)X_M X_N=q^{\frac{1}{2}\Lambda({\bf m}^*,{\bf n}^*)}\sum\limits_{[E]\neq [M\oplus N]}
|\Ext_{\widetilde{\A}}^1(M,N)_E|X_E+\\&\sum\limits_{\begin{smallmatrix}[D],[A],[I]\\ [D]\neq [N]\end{smallmatrix}}q^{\frac{1}{2}\Lambda(({\bf m}-{{\bf a}})^*,{({\bf n}+{\bf a})}^*)+\frac{1}{2}\lr{{{\bf m}}-{{\bf a}},{\bf n}}}|_D\Hom_{\widetilde{\A}}(N,\tau M)_{\tau A\oplus I}|X_{A} X_{D\oplus I[-1]},
\end{split}\end{equation}
where each $A$ has the same maximal projective direct summand as $M$, and each $I\in\I_{\widetilde{\A}}$.
\end{theorem}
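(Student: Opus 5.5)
The plan is to obtain \eqref{ccfgs} as the image under a Hall-algebra homomorphism of two different expansions of one product. One expansion uses \eqref{lgx3} and the other uses \eqref{ydchgx}, with the cluster-category identification $\tau M[-1]\cong M$ playing the role that $u_{\nu^{-1}(I)[1]}=u_{I[-1]}$ plays in $\mathcal{D}\mathcal{H}_\Lambda^{cl_1}(\widetilde{\A})$.

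First I reduce to the case where $M$ has no projective summands. Write $M=M'\oplus P'$ with $P'$ the maximal projective summand. Then $u_M=v^{-\Lambda({{\bf p}'}^\ast,{{\bf m}'}^\ast)}u_{P'}\star u_{M'}$, and $\Ext^1(P',-)=0$. Hence the $P'$-part only contributes a left factor $X_{P'}$, which is absorbed into $X_A$ with $A=A'\oplus P'$; this is why every $A$ in \eqref{ccfgs} carries the same projective summand as $M$. The exponent bookkeeping for this step should use Lemma \ref{sjishu}.

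For $M=M'$ non-projective, I work in $\mathcal{D}\mathcal{H}_\Lambda(\widetilde{\A})$ with the element $u_{\tau M[-1]}$. Relation \eqref{ydchgx} with $i=-1$ gives
$$u_N\ast u_{\tau M[-1]}=q^{-\lr{N,\tau M}}\sum_{[D],[C]}|{}_D\Hom_{\widetilde{\A}}(N,\tau M)_C|\,u_{C[-1]}\ast u_D .$$
Writing $C=\tau A'\oplus I$ as in \eqref{zhxl}, the right-hand side splits into two parts. The terms with $\theta\neq0$ (that is, $[D]\neq[N]$) produce the second sum in \eqref{ccfgs}. The term with $\theta=0$ gives $u_{\tau M[-1]}\ast u_N$ with coefficient $1$.

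The key new ingredient is an extension of $\psi$ (equivalently of $\varphi_1$) to the subalgebra generated by $\mathcal{D}\mathcal{H}^{ec}_\Lambda(\widetilde{\A})$ together with the elements $u_{\tau X[-1]}$, $X\in\widetilde{\A}$. Concretely, I would set $\psi(u_{\tau A'[-1]\oplus I[-1]})$ equal to a suitable $v$-power times $X_{A'}X_{I[-1]}$, so in particular $\psi(u_{\tau M[-1]})=X_M$. To check that this is a homomorphism, I would verify that the defining relations of Proposition \ref{elsubalgebra} are preserved. For the new generators, $\overline{\tau}$-invariance of the Euler form and of $\Lambda(\cdot^\ast,\cdot^\ast)$ transports these relations from the ones already known for $u_{M[1]}$-type elements. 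I expect this step to be the main obstacle: the relations mixing $u_{\tau M[-1]}$ with $u_{I[-1]}$ and $u_{P[1]}$ must match Theorem \ref{dyggs}, and that theorem itself must serve as the base case.

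Granting the extension, I apply $\psi$ to the displayed identity. This gives a relation between $X_NX_M$, $X_MX_N$ and the second sum. I then eliminate $X_NX_M$ using \eqref{qre3} for both orders $X_MX_N$ and $X_NX_M$, together with the Auslander--Reiten formula $\Ext^1(M,N)\cong D\Hom(N,\tau M)$, which gives $\lr{N,\tau M}=-\lr{M,N}$ and identifies dimensions. Isolating the split term $E=M\oplus N$ in \eqref{qre3}, which has coefficient $|\Hom(M,N)|^{-1}$ relative to the others, then produces the factor $q^{[M,N]^1}-1$ on the left of \eqref{ccfgs}. The remaining $v$-exponents would be matched using Lemma \ref{sjishu}(1)--(3), and this is routine.
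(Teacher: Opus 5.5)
Your reduction to $M$ without projective summands is fine. The proof breaks at the step you flag as the main obstacle: no extension of $\psi$ with $u_{\tau M[-1]}\mapsto$ (a nonzero multiple of) $X_M$ exists, so it cannot be granted. The reason is structural. Between the lifts $N$ and $\tau M[-1]$, the derived category sees only $\Hom_{\mathcal{D}^b(\widetilde{\A})}(N,\tau M)\cong{\rm D}\Ext^1_{\widetilde{\A}}(M,N)$, and $\Hom_{\mathcal{D}^b(\widetilde{\A})}(\tau M[-1],N[1])=0$. On the other hand, $X_NX_M$ is governed by $\Ext^1_{\widetilde{\A}}(N,M)$ through \eqref{qre3}.

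Here is a concrete counterexample. Take $\widetilde Q$ as in the second example ($1\to 2\to 3$ with frozen vertices $4,5,6$), $M=S_2$ (not projective) and $N=S_1$.
\begin{itemize}
\item Since $\Hom(S_1,\tau S_2)\cong{\rm D}\Ext^1(S_2,S_1)=0$, your displayed identity reduces to the quasi-commutation of $u_{S_1}$ and $u_{\tau S_2[-1]}$. Any multiplicative extension would therefore force $X_{S_1}X_{S_2}\in\mathbb{Q}(v)\,X_{S_2}X_{S_1}$.
\item By \eqref{qre3}, $X_{S_2}X_{S_1}$ is a multiple of $X_{S_1\oplus S_2}$, while $X_{S_1}X_{S_2}$ is a nonzero multiple of $X_{S_1\oplus S_2}+(q-1)X_U$.
\item In the expansion \eqref{qcltz}, the coefficients of $X^{-{}^\ast(e_1+e_2)}$ and of $X^{-{}^\ast(e_1+e_2)-B(\widetilde Q)e_1}$ are $1$ and $v$ in $X_{S_1\oplus S_2}$. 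In $X_U$ they are $1$ and $0$, because $\mathrm{Gr}_{e_1}U=\emptyset$ and $B(\widetilde Q)$ is injective.
\item Hence $X_{S_1\oplus S_2}+(q-1)X_U$ has coefficients $q$ and $v$ there, and cannot be proportional to $X_{S_1\oplus S_2}$.
\end{itemize}

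The generators $u_{P[1]}$ that you keep cause the same failure. In $\mathcal{D}\mathcal{H}_\Lambda(\widetilde{\A})$, $u_{P[1]}$ and $u_{\tau M[-1]}$ quasi-commute, since there are no degree-one morphisms between them in either direction. But $X_{P[1]}$ and $X_M$ do not quasi-commute when $\Hom(P,M)\neq0$; compare \eqref{qre5} with \eqref{qre7}. For $\widetilde Q=Q\colon 1\to 2$ with $\Lambda=\left(\begin{smallmatrix}0&1\\-1&0\end{smallmatrix}\right)$, one finds $X_{P_1[1]}X_{S_1}=1+vX^{e_2}$ but $X_{S_1}X_{P_1[1]}=1+v^{-1}X^{e_2}$, while $\tau S_1=S_2$.

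The relation $u_{\nu^{-1}(I)[1]}-u_{I[-1]}$ does lie in $\ker\psi$. The analogous identification $u_{\tau M[-1]}\sim u_M$ is incompatible with $\psi$, as these examples show. Invoking $\overline{\tau}$-invariance cannot supply the missing relation either. Transporting the relation between $u_N$ and $u_{\tau M[-1]}$ by $[1]\circ\overline{\tau}^{-1}$ turns it into a relation between $u_{\overline{\tau}^{-1}N[1]}$ and $u_M$. That element lies outside $\mathcal{D}\mathcal{H}^{ec}_\Lambda(\widetilde{\A})$, and Theorem \ref{mainresult} says nothing about it, so this step is circular.

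Your final elimination step exposes the same mismatch. Expanding $X_NX_M$ by \eqref{qre3} produces terms $X_L$ with $L$ a non-split extension of $N$ by $M$. These do not occur in \eqref{ccfgs} and cannot be cancelled.

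For context, the paper does not reprove this statement. It quotes it from \cite{CDZ} and then imposes the consequence \eqref{zhihegs} as the extra ideal $\mathfrak{I}$. So the formula has to be established by a direct computation of $X_MX_N$ in $\mathcal{T}_\Lambda$ (equivalently of $X_{M\oplus N}$, cf.\ Theorem \ref{zhihe}), not by enlarging the domain of $\psi$.
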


According to \cite{Rupel2}, the set of quantum cluster monomials of $\A_q(\Lambda, B(\widetilde{Q}))$ is precisely given by the following set \begin{equation}\label{qclumo}
\{X_{M\oplus I[-1]}\mid M\in \widetilde{\A}~\text{is ~rigid~and}~I\in \mathcal{I}_{\widetilde{\A}}~\text{such that}~ \Hom_{\widetilde{\A}}(M,I)=0\}.\end{equation}
It is well-known that the set of quantum cluster monomials is a $\mathbb{Q}(v)$-linearly independent set (cf. \cite{BZ12, KQin}). For convenience, we give a direct proof in the following.

For any $f\in \mathcal{T}_{\Lambda}$, $f$ is said to be {\em pointed} at $g\in \mathbb{Z}^{m}$ with respect to $(\Lambda,B(\widetilde{Q}),X)$ (cf. \cite{Qin2}), if it can be expressed as
\begin{equation}\label{expansion}f=X^g(\sum_{e\in \mathbb{N}^{m}}a_eX^{B(\widetilde{Q})e}),\end{equation}
where $a_e\in \mathbb{Q}(v)$ and $a_0\neq 0$.
\begin{lemma}\label{lem:pointed-function}
    Let $f_1,\ldots, f_t\in \mathcal{T}_{\Lambda}$ be pointed at $g_1,\ldots, g_t\in \mathbb{Z}^{m}$, respectively. If $g_1,\ldots, g_t$ are pairwise different, then $f_1,\ldots, f_t$ are $\mathbb{Q}(v)$-linearly independent.
\end{lemma}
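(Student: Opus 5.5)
The plan is the standard "leading term" argument with respect to a partial order on $\mathbb{Z}^m$ defined by $B(\widetilde{Q})$. Define $g\preceq g'$ if $g-g'\in B(\widetilde{Q})\mathbb{N}^m$. We first need this to be a partial order, which amounts to showing that $B(\widetilde{Q})$ is injective on $\mathbb{N}^m$ and that $B(\widetilde{Q})\mathbb{N}^m$ contains no nonzero pair $\alpha,-\alpha$. For this we use the compatibility $\Lambda B(\widetilde{Q})=-\operatorname{diag}\{d_1,\ldots,d_m\}$. If $B(\widetilde{Q})e=0$ for some $e\in\mathbb{N}^m$, then $-De=\Lambda B(\widetilde{Q})e=0$, so $e=0$ because every $d_i>0$. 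Likewise, if $B(\widetilde{Q})e+B(\widetilde{Q})e'=0$ with $e,e'\in\mathbb{N}^m$, then $D(e+e')=0$, which forces $e=e'=0$. Hence $B(\widetilde{Q})$ is injective on $\mathbb{N}^m$, and $\preceq$ is reflexive, transitive and antisymmetric.

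Now suppose there is a nontrivial relation $\sum_{s} c_s f_s=0$ with $c_s\in\mathbb{Q}(v)$ not all zero. Discard the terms with $c_s=0$ and let $S$ be the remaining index set. Among the finitely many $g_s$ with $s\in S$, pick one, say $g_1$, that is maximal with respect to $\preceq$. Expanding each $f_s$ via \eqref{expansion}, we have $X^{g_s}X^{B(\widetilde{Q})e}=v^{\Lambda(g_s,B(\widetilde{Q})e)}X^{g_s+B(\widetilde{Q})e}$, so $f_s$ is a combination of monomials $X^{h}$ with $h\preceq g_s$. Moreover, since $B(\widetilde{Q})$ is injective on $\mathbb{N}^m$, the coefficient of $X^{g_s}$ in $f_s$ is exactly $a_0\neq 0$.

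Consider the coefficient of $X^{g_1}$ in $\sum_s c_sf_s$. If some $f_s$ with $s\neq 1$ contained $X^{g_1}$, then $g_1\preceq g_s$. By maximality of $g_1$ this gives $g_1=g_s$, which contradicts the assumption that the $g_s$ are pairwise distinct. Therefore only $f_1$ contributes to $X^{g_1}$, and its contribution is $c_1a_0^{(1)}\neq 0$. Since $\{X^\alpha\}$ is a basis of $\mathcal{T}_\Lambda$, this contradicts $\sum_s c_sf_s=0$, and the lemma follows.

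The only genuine obstacle is the antisymmetry of $\preceq$, that is, the "pointedness" cone $B(\widetilde{Q})\mathbb{N}^m$ being strictly convex. This is exactly where the full-rank compatibility $\Lambda B(\widetilde{Q})=-D$ with positive $d_i$ (an $m\times m$ statement, not only the principal $n$ columns) is used. The rest is bookkeeping with the twisted product in the quantum torus, where the powers of $v$ never vanish and so never affect which coefficients are nonzero.
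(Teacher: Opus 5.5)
Your proof is correct and follows the paper's argument: the same partial order ($\beta\le\alpha$ iff $\beta-\alpha\in B(\widetilde{Q})\mathbb{N}^m$), a maximal $g_k$ among the indices with nonzero coefficients, and the observation that only $f_k$ contributes to $X^{g_k}$. The one difference is cosmetic: the paper justifies the partial order just by noting that $B(\widetilde{Q})$ has full rank, and your derivation from $\Lambda B(\widetilde{Q})=-D$ is simply a proof of that fact, for which $d_i\neq 0$ already suffices (positivity is not needed).
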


\begin{proof}
Suppose that $f_1,\ldots, f_t$ are $\mathbb{Q}(v)$-linearly dependent, i.e., there exists a nonzero vector $(b_1,\ldots, b_t)$ of $\mathbb{Q}(v)$  such that
$\sum_{i=1}^t b_i f_i=0$. Let $I=\{i~|~1\leq i\leq t, b_i\neq0\}$, then we have $\sum_{i=1}^t b_i f_i=\sum_{i\in I}b_i f_i=0.$

For any $\alpha,\beta\in \mathbb{Z}^{m}$, we define a relation $\alpha\geq \beta$ if there exists $e\in \mathbb{N}^{m}$ such that $\beta=\alpha+B(\widetilde{Q})e$. Since $B(\widetilde{Q})$ is of full rank, it follows that $\geq$ is a well-defined partial order on $\mathbb{Z}^{m}$.
Let $g_k$ be a maximal element in $\{g_i~|~i\in I\}$ with respect to this partial order.
Then for any $k'\neq k$ in $I$, in the expression \eqref{expansion} of $f_{k'}$, there is no $e\in \mathbb{N}^{m}$ such that $g_{k'}+B(\widetilde{Q})e=g_k$.
Thus, the coefficient of $X^{g_k}$ in the sum $\sum_{i\in I}b_i f_i$ is $b_k a_{k,0}\neq0$, where $a_{k,0}$ is the coefficient of the highest term of $f_k$. Hence, we obtain $\sum_{i\in I}b_i f_i\neq0$. This is a contradiction. Therefore, we finish the proof.
\end{proof}

\begin{lemma}\label{lem:pointed-vector-iso}
    Let $M,N\in \widetilde{\A}$ and $I,J\in \mathcal{I}_{\widetilde{\A}}$. Suppose that $\Hom_{\widetilde{\A}}(M,I)=0$ and $\Hom_{\widetilde{\A}}(N,J)=0$, then ${^\ast\mathbf{i}}-\mathbf{m}^\ast={^\ast\mathbf{j}}-\mathbf{n}^\ast$ if and only if $\mathbf{m}=\mathbf{n}$ and $I\cong J$.
\end{lemma}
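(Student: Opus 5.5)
The "if" direction is immediate, since $^\ast\mathbf{i}$ and $\mathbf{m}^\ast$ depend only on the dimension vectors and $E(\widetilde{Q})$, $E'(\widetilde{Q})$ are fixed matrices. For the converse, I would use the homological interpretation of the linear maps $E(\widetilde{Q})$ and $E'(\widetilde{Q})$. Write $P_k$ and $I_k$ for the indecomposable projective and injective at vertex $k$. For $X\in\widetilde{\A}$, the $k$-th coordinate of $\mathbf{x}^\ast=E'(\widetilde{Q})\mathbf{x}$, suitably weighted by $d_k$, equals $\lr{\mathbf{x},\Dim I_k}$ up to normalization. Likewise, the $k$-th coordinate of $^\ast\mathbf{x}=E(\widetilde{Q})\mathbf{x}$ equals $\lr{\Dim P_k,\mathbf{x}}$ up to normalization. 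This is the standard fact underlying Lemma \ref{sjishu}, and I would check the precise convention on rows versus columns of $R$ and $R'$ against its proof. In any case both $E(\widetilde{Q})$ and $E'(\widetilde{Q})$ are invertible, being unitriangular up to ordering since $\widetilde{Q}$ is acyclic.

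The key step is to express the vector $g:={^\ast\mathbf{i}}-\mathbf{m}^\ast$ coordinatewise as a difference of two nonnegative quantities with disjoint supports. Write $I=\bigoplus_k I_k^{a_k}$.
\begin{enumerate}
\item The contribution from $I$ is $^\ast\mathbf{i}$, whose $k$-th coordinate is $\lr{P_k,I}\sim\dim\Hom(P_k,I)$. Better, I would use $\mathbf{i}=E'(\widetilde{Q})^{-1}$-type identities: $^\ast(\Dim I_k)$ should be a positive multiple of $e_k$, by the identity $\nu P_k=I_k$ and $\lr{\mathbf{x},\Dim I_k}=\lr{\Dim P_k,\mathbf{x}}$ up to $d_k$. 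Hence $^\ast\mathbf{i}$ is essentially $\sum_k d_ka_ke_k$, up to the normalization constant.
\item The contribution from $M$ is $\mathbf{m}^\ast$, whose $k$-th coordinate is (up to $d_k$) $\lr{M,I_k}=\dim\Hom(M,I_k)$, since $I_k$ is injective. This coordinate is nonnegative.
\item The hypothesis $\Hom(M,I)=0$ says exactly that $\dim\Hom(M,I_k)=0$ whenever $a_k>0$.
\end{enumerate}
Thus $\mathbf{m}^\ast$ and $^\ast\mathbf{i}$ are nonnegative vectors with disjoint supports. Consequently the positive part of $g$ recovers $^\ast\mathbf{i}$, hence the multiplicities $a_k$, and so $I$ up to isomorphism. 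The negative part of $g$ recovers $\mathbf{m}^\ast$, and invertibility of $E'(\widetilde{Q})$ then recovers $\mathbf{m}$.

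Applying this to both pairs $(M,I)$ and $(N,J)$, equality of the $g$-vectors gives $I\cong J$ and $\mathbf{m}^\ast=\mathbf{n}^\ast$, hence $\mathbf{m}=\mathbf{n}$.

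The main obstacle is the bookkeeping in step 1. I need to confirm that $E(\widetilde{Q})\Dim I_k$ is indeed supported only at $k$, with a positive coefficient. Equivalently, with the paper's definitions of $r_{ij}$ and $r'_{ij}$ via $\mathcal{D}_i$-dimensions, I need $\mathbf{x}^\ast$ and $^\ast\mathbf{x}$ to compute $\lr{\mathbf{x},\mathbf{i}_k}$ and $\lr{\mathbf{p}_k,\mathbf{x}}$ divided by $d_k$. I would verify this directly on simples: check $(E'(\widetilde{Q})e_j)_k=\lr{S_j,I_k}/d_k$ using $\Hom(S_j,I_k)=\delta_{jk}\mathcal{D}_k$ and $\Ext^1(S_j,S_k)$. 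Then I would obtain $^\ast\mathbf{i}_k$ from $\lr{P_l,I_k}=\dim\Hom(P_l,I_k)=\delta_{lk}d_k$. If the conventions turn out transposed, I would swap the roles of $P_k$ and $I_k$ accordingly; the disjoint-support argument is unchanged.
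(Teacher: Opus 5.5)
\paragraph{Where the argument breaks.} Your step 1 is correct: ${^\ast\mathbf{i}}_k=e_k$, as in \eqref{radsoc}, so ${^\ast\mathbf{i}}=\sum_k a_ke_k$. Step 2, however, rests on a misreading of $E'(\widetilde{Q})$.

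With the paper's conventions the Euler form of $\widetilde{\A}$ is $\lr{\mathbf{x},\mathbf{y}}=(\mathbf{x}^\ast)^{\rm tr}D\mathbf{y}=\mathbf{x}^{\rm tr}D\,{^\ast\mathbf{y}}$, where $D=\operatorname{diag}\{d_1,\ldots,d_m\}$. Hence $d_k(\mathbf{x}^\ast)_k=\lr{\mathbf{x},e_k}$ and $d_k({^\ast\mathbf{x}})_k=\lr{e_k,\mathbf{x}}$. So the coordinates are Euler pairings with the \emph{simple} $S_k$, not with $I_k$ or $P_k$. Pairing with $I_k$ or $P_k$ only returns $d_kx_k$, so your reading would force $E'(\widetilde{Q})=I_m$. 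Likewise, the check on simples you propose fails as soon as some $r_{kj}\neq0$, and swapping the roles of $P_k$ and $I_k$ does not repair this.

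Consequently $(\mathbf{m}^\ast)_k=\bigl(\dim_{\mathbb{F}_q}\Hom(M,S_k)-\dim_{\mathbb{F}_q}\Ext^1(M,S_k)\bigr)/d_k$, which has no fixed sign. Take the first example in Section 3, with arrows $1\to2$, $3\to1$, $4\to2$. There $M=S_1$ gives $\mathbf{m}^\ast=e_1-e_2$. With $I=0$ you get $g=-e_1+e_2$, and your procedure would read the positive part $e_2$ as $I\cong I_2$. Even at the vertices with $a_k>0$ (where $M_k=0$) you get $g_k=a_k+\dim_{\mathbb{F}_q}\Ext^1(M,S_k)/d_k$. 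So neither $I$ nor $\mathbf{m}^\ast$ can be read off from the sign pattern of $g$, and the disjoint-support argument fails.

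\paragraph{The missing idea.} Apply $E'(\widetilde{Q})^{-1}$ first. Since ${^\ast\mathbf{i}}=\mathbf{p}^\ast$ for $P=\nu^{-1}(I)$, the hypothesis becomes $\mathbf{p}-\mathbf{m}=\mathbf{q}-\mathbf{n}$ with $Q=\nu^{-1}(J)$. Now $\mathbf{m}\geq 0$, and $\Hom(P,M)\cong{\rm D}\Hom(M,I)=0$ says that $\mathbf{m}$ vanishes at the top vertices of $P$.

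This still gives no disjoint-support splitting, because $\mathbf{p}$ is supported well beyond the top of $P$. One genuinely needs acyclicity, as in the paper:
\begin{enumerate}
\item Cancel the common indecomposable summands of $P$ and $Q$.
\item Choose, without loss of generality in $P$, a summand $P_{i_1}$ with $\Hom(P_{i_1},P_k)=0$ for every other indecomposable summand $P_k$ of $P\oplus Q$.
\item Compare $i_1$-th coordinates: $(\mathbf{p}-\mathbf{m})_{i_1}=a_1>0$, while $(\mathbf{q}-\mathbf{n})_{i_1}\le 0$.
\end{enumerate}
This contradiction forces $P\cong Q$, hence $I\cong J$ and $\mathbf{m}=\mathbf{n}$.
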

\begin{proof}
The sufficiency is obvious, and we only need to prove the necessity.
Set $P=\nu^{-1}(I)$ and $Q=\nu^{-1}(J)$. Then $^\ast \mathbf{i}=\mathbf{p}^\ast$ and $^\ast \mathbf{j}=\mathbf{q}^\ast$.
Recall that $\mathbf{x}^\ast=E'(\widetilde{Q})\mathbf{x}$ and $E'(\widetilde{Q})$ is invertible over $\mathbb{Z}$. Thus, ${^\ast\mathbf{i}}-\mathbf{m}^\ast={^\ast\mathbf{j}}-\mathbf{n}^\ast$ if and only if $\mathbf{p}-\mathbf{m}=\mathbf{q}-\mathbf{n}$. Let $P_i$ be the projective cover of $S_i$ for each $i\in \widetilde{Q}_0$. By removing the common direct summands of $P$ and $Q$, we can assume that $P$ and $Q$ have no common indecomposable direct summands.
Suppose that $P\not \cong Q$, thus, at least one of $P,Q$ is nonzero.  Assume that $P\neq 0$ and let $P=a_1P_{i_1}\oplus \cdots\oplus a_tP_{i_t}$ for some positive integers $a_1,\ldots, a_t$. Since $\Hom_{\widetilde{\A}}(P,M)\cong\Hom_{\widetilde{\A}}(M,I)=0$, we obtain that the $i_1$-th component of $\mathbf{p}-\mathbf{m}=\mathbf{q}-\mathbf{n}$ is positive. Thus, $Q\neq 0$. Let $Q=b_1P_{j_1}\oplus \cdots\oplus b_sP_{j_s}$ for some positive integers $b_1,\ldots, b_s$, where $\{i_1,\ldots, i_t\}\cap \{j_1,\ldots, j_s\}=\emptyset$. Since $\widetilde{Q}$ is acyclic, without loss of generality, we may assume that $\Hom_{\widetilde{\A}}(P_{i_1},P_{k})=0$ for any $k\in \{i_2,\ldots, i_t,j_1,\ldots, j_s\}$. Thus, the $i_1$-th component of $\mathbf{p}-\mathbf{m}$ is $a_1$, while the $i_1$-th component of $\mathbf{q}-\mathbf{n}$ is non-positive, this is a contradiction. Hence, $P\cong Q$, and then $I\cong J$ and $\mathbf{m}=\mathbf{n}$.
\end{proof}

\begin{proposition}\label{xjlm}
The set \eqref{qclumo} of quantum cluster monomials of $\A_q(\Lambda, B(\widetilde{Q}))$
is $\mathbb{Q}(v)$-linearly independent.
\end{proposition}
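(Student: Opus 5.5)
The plan is to show that each quantum cluster monomial $X_{M\oplus I[-1]}$ in \eqref{qclumo} is pointed, in the sense of \eqref{expansion}, at the vector ${^\ast\mathbf{i}}-\mathbf{m}^\ast$. Lemma \ref{lem:pointed-vector-iso} and Lemma \ref{lem:pointed-function} then finish the argument.

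First I compute the pointedness from \eqref{qcltz} with $P=0$:
$$X_{I[-1]\oplus M}=\sum_{\mathbf{e}}v^{\lr{-\mathbf{e},\mathbf{m}-\mathbf{e}-\mathbf{i}}}|\mathrm{Gr}_{\mathbf{e}}M|\,X^{-\mathbf{e}^\ast-{^\ast(\mathbf{m}-\mathbf{e}-\mathbf{i})}}.$$
The exponent is $-{^\ast\mathbf{m}}+{^\ast\mathbf{i}}+({^\ast\mathbf{e}}-\mathbf{e}^\ast)$. Since $B(\widetilde{Q})=R'-R=E'(\widetilde{Q})-E(\widetilde{Q})$, we have $\mathbf{e}^\ast-{^\ast\mathbf{e}}=B(\widetilde{Q})\mathbf{e}$. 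The exponent therefore equals
$$g-B(\widetilde{Q})\mathbf{e}+B(\widetilde{Q})\mathbf{m},\qquad g:={^\ast\mathbf{i}}-{^\ast\mathbf{m}}.$$
It is cleaner to reindex with $\mathbf{e}'=\mathbf{m}-\mathbf{e}$, the dimension vector of the quotient. This gives exponent ${^\ast\mathbf{i}}-\mathbf{m}^\ast+B(\widetilde{Q})\mathbf{e}'$ with $\mathbf{e}'\in\mathbb{N}^m$, where I used ${^\ast\mathbf{m}}+B\mathbf{m}=\mathbf{m}^\ast$.

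Next I factor each term $X^{g_0+B\mathbf{e}'}$ as $X^{g_0}X^{B\mathbf{e}'}$, with $g_0={^\ast\mathbf{i}}-\mathbf{m}^\ast$. By \eqref{ljhtorus} this only introduces a nonzero power of $v$ into each coefficient. The $\mathbf{e}'=0$ term corresponds to $\mathbf{e}=\mathbf{m}$, and $|\mathrm{Gr}_{\mathbf{m}}M|=1$, so the leading coefficient $a_0$ is a power of $v$ and hence nonzero. Thus $X_{M\oplus I[-1]}$ is pointed at ${^\ast\mathbf{i}}-\mathbf{m}^\ast$.

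Now consider distinct elements of \eqref{qclumo}, say $X_{M\oplus I[-1]}$ and $X_{N\oplus J[-1]}$. Both satisfy $\Hom(M,I)=0=\Hom(N,J)$. If their pointing vectors coincide, Lemma \ref{lem:pointed-vector-iso} gives $\mathbf{m}=\mathbf{n}$ and $I\cong J$. Since $M$ and $N$ are rigid with the same dimension vector, they are isomorphic: a rigid module is determined by its dimension vector, because its orbit is open in the irreducible representation variety. This standard fact holds for valued quivers over finite fields too, for instance via Kac/Ringel or by a Hall-number argument. So distinct cluster monomials have distinct pointing vectors, and Lemma \ref{lem:pointed-function} yields linear independence.

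The main obstacle is the rigidity step, namely that rigid modules with equal dimension vectors are isomorphic over $\mathbb{F}_q$ in the valued setting. I would cite Ringel's result on exceptional modules or the uniqueness of rigid representations, and otherwise only need careful sign bookkeeping in the exponent computation.
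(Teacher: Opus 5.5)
Your proposal is correct and follows the paper's proof essentially verbatim: the paper also rewrites $X_{M\oplus I[-1]}$ as $X^{{^\ast\mathbf{i}}-\mathbf{m}^\ast}\sum_{\mathbf e}(\cdots)X^{B(\widetilde{Q})(\mathbf{m}-\mathbf{e})}$ to read off pointedness at ${^\ast\mathbf{i}}-\mathbf{m}^\ast$, then combines Lemma \ref{lem:pointed-vector-iso}, the fact that rigid modules are determined by their dimension vectors (which the paper invokes without proof, just as you do), and Lemma \ref{lem:pointed-function}. The only blemish is a notational slip: with $g={^\ast\mathbf{i}}-{^\ast\mathbf{m}}$ the exponent is $g-B(\widetilde{Q})\mathbf{e}$, and the extra $+B(\widetilde{Q})\mathbf{m}$ is only correct if $g={^\ast\mathbf{i}}-\mathbf{m}^\ast$; your final formula is nevertheless right.
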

\begin{proof}
Note that
$$X_{M\oplus I[-1]}=X^{{^\ast\mathbf{i}}-\mathbf{m}^\ast}\sum\limits_{\bf e}v^{-\lr{{\bf m}+\mathbf{e},{\bf m}-\mathbf{e}}+\lr{{\bf m},{\bf i}}}|\mathrm{Gr}_{\mathbf{e}}M|
X^{B(\widetilde{Q})(\mathbf{m}-\mathbf{e})}.$$
It follows that $X_{M\oplus I[-1]}$ is pointed at ${^\ast\mathbf{i}}-\mathbf{m}^\ast\in \mathbb{Z}^{m}$. Since rigid modules are determined by their dimension vectors up to isomorphisms, by Lemma \ref{lem:pointed-vector-iso}, we conclude that different $X_{M\oplus I[-1]}$ are pointed at different vectors in $\mathbb{Z}^m$. By Lemma \ref{lem:pointed-function}, we obtain that the set of quantum cluster monomials is $\mathbb{Q}(v)$-linearly independent.
\end{proof}

\section{Cluster multiplication formulas exhibiting 2-Calabi--Yau properties}
In this section, we provide a direct sum decomposition formula of quantum cluster characters, which exhibits certain symmetries similar to 2-Calabi--Yau properties of cluster categories.
\subsection{An alternative cluster multiplication formula}
Using the cluster multiplication formulas \eqref{qre3} and \eqref{ccfgs}, we obtain the following.

\begin{theorem}\label{zhihe}
For any $M,N\in\widetilde{\A}$, we have the following equation in $\mathcal{T}_\Lambda:$
\begin{equation}\label{zhihegs}
\begin{split}
X_{M\oplus N}=\sum\limits_{\begin{smallmatrix}[D],[A],[I]\end{smallmatrix}}v^{\Lambda(({\bf m}+{\bf n})^*,{\bf a}^*)+\lr{{\bf m}-{\bf a},{\bf n}}}|_D\Hom_{\widetilde{\A}}(N,\tau M)_{\tau A\oplus I}|X_{A} X_{D\oplus I[-1]},
\end{split}\end{equation}
where each $A$ has the same maximal projective direct summand as $M$, and each $I\in\I_{\widetilde{\A}}$.
\end{theorem}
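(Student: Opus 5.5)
Using \eqref{qre3} and \eqref{ccfgs} we can eliminate $X_MX_N$ and the extension terms $X_E$ with $[E]\neq[M\oplus N]$. Only the split term $X_{M\oplus N}$ and the terms $X_AX_{D\oplus I[-1]}$ then survive.

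First, I would rewrite \eqref{qre3} as
\[
q^{-\frac{1}{2}\Lambda(\mathbf{m}^*,\mathbf{n}^*)-\lr{\mathbf{m},\mathbf{n}}}|\Hom_{\widetilde{\A}}(M,N)|\,X_MX_N=\sum_{[E]}|\Ext^1_{\widetilde{\A}}(M,N)_E|X_E .
\]
The split term has coefficient $|\Ext^1(M,N)_{M\oplus N}|=1$. Since $|\Hom(M,N)|\,q^{-\lr{\mathbf{m},\mathbf{n}}}=q^{[M,N]^1}$, this reads
\[
q^{[M,N]^1}q^{-\frac12\Lambda(\mathbf{m}^*,\mathbf{n}^*)}X_MX_N=X_{M\oplus N}+\sum_{[E]\neq[M\oplus N]}|\Ext^1(M,N)_E|X_E .
\]
Next, I would multiply \eqref{ccfgs} by $q^{-\frac12\Lambda(\mathbf{m}^*,\mathbf{n}^*)}$ and subtract it from the display above. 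The non-split $X_E$ sums cancel, leaving
\[
q^{-\frac12\Lambda(\mathbf{m}^*,\mathbf{n}^*)}X_MX_N
= X_{M\oplus N}-\sum_{[D]\neq[N]}q^{c(A)}|{}_D\Hom(N,\tau M)_{\tau A\oplus I}|X_AX_{D\oplus I[-1]},
\]
where $c(A)=\frac12\Lambda((\mathbf{m}-\mathbf{a})^*,(\mathbf{n}+\mathbf{a})^*)+\frac12\lr{\mathbf{m}-\mathbf{a},\mathbf{n}}-\frac12\Lambda(\mathbf{m}^*,\mathbf{n}^*)$.

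It remains to identify the left-hand side with the missing term $[D]=[N]$ of the sum in \eqref{zhihegs}. The case $D\cong N$ means $\theta=0$, so $\Coker\theta=\tau M$. Writing $M=M'\oplus P'$, we get $\tau A'\oplus I=\tau M'$; since $M'$ has no projective summands, this forces $I=0$ and $A'\cong M'$, hence $A=M$. The count $|{}_N\Hom(N,\tau M)_{\tau M}|$ equals $1$. So the $D=N$ term of \eqref{zhihegs} is $v^{\Lambda((\mathbf{m}+\mathbf{n})^*,\mathbf{m}^*)}X_MX_N$, which by skew-symmetry is $v^{\Lambda(\mathbf{n}^*,\mathbf{m}^*)}X_MX_N=q^{-\frac12\Lambda(\mathbf{m}^*,\mathbf{n}^*)}X_MX_N$. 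This matches the left-hand side. Moving this term to the right turns the identity into a single sum over all $[D],[A],[I]$, but with the $D\neq N$ terms carrying a minus sign and exponent $c(A)$.

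The main obstacle is therefore to reconcile the remaining sign and exponents. In \eqref{zhihegs} the summands for $D\neq N$ enter with a plus sign and coefficient $v^{\Lambda((\mathbf{m}+\mathbf{n})^*,\mathbf{a}^*)+\lr{\mathbf{m}-\mathbf{a},\mathbf{n}}}$. I would first expand $c(A)$ bilinearly, using $\Lambda(\mathbf{a}^*,\mathbf{a}^*)=0$, to get
\[
2c(A)=\Lambda((\mathbf{m}+\mathbf{n})^*,\mathbf{a}^*)+\lr{\mathbf{m}-\mathbf{a},\mathbf{n}},
\]
which is exactly the exponent of $v$ in \eqref{zhihegs}. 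So the exponents agree. If the sign still does not come out right, I would suspect a sign convention in \eqref{ccfgs} (for instance a term sitting on the other side) and recheck it, either against \cite[Theorem 7.4]{CDZ} or against the case $\Ext^1(M,N)=0$, where \eqref{qre3} gives $X_MX_N=q^{\frac12\Lambda(\mathbf{m}^*,\mathbf{n}^*)}X_{M\oplus N}$. Once the sign is settled, the theorem follows; it also holds when $[M,N]^1=0$, since the combination used above remains valid in that case.
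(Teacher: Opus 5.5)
Your route is the paper's: you combine \eqref{qre3} with \eqref{ccfgs} to cancel the non-split $X_E$ terms, identify the zero-morphism term ($D\cong N$, $A=M$, $I=0$) as $v^{-\Lambda(\mathbf m^*,\mathbf n^*)}X_MX_N$, and check that $2c(A)$ is the exponent in \eqref{zhihegs}. All of these steps are correct. The paper's only difference is the order: it adds the zero-morphism term to both sides of \eqref{ccfgs} first, then subtracts \eqref{qre3} and divides by $v^{\Lambda(\mathbf m^*,\mathbf n^*)}$.

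However, the ``main obstacle'' in your last paragraph does not exist; it comes from a slip in the final rearrangement. Your identity
\[
q^{-\frac12\Lambda(\mathbf m^*,\mathbf n^*)}X_MX_N=X_{M\oplus N}-\sum_{[D]\neq[N]}q^{c(A)}\,|{}_D\Hom_{\widetilde{\A}}(N,\tau M)_{\tau A\oplus I}|\,X_AX_{D\oplus I[-1]}
\]
becomes, after moving the sum to the left,
\[
X_{M\oplus N}=q^{-\frac12\Lambda(\mathbf m^*,\mathbf n^*)}X_MX_N+\sum_{[D]\neq[N]}q^{c(A)}\,|{}_D\Hom_{\widetilde{\A}}(N,\tau M)_{\tau A\oplus I}|\,X_AX_{D\oplus I[-1]}.
\]
So the $D\neq N$ terms carry a plus sign. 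You have already shown that the first term is the $D=N$ summand of \eqref{zhihegs} and that $q^{c(A)}=v^{\Lambda((\mathbf m+\mathbf n)^*,\mathbf a^*)+\lr{\mathbf m-\mathbf a,\mathbf n}}$, so this is exactly \eqref{zhihegs}. There is no sign convention in \eqref{ccfgs} to recheck. As written, your proof ends on a spurious doubt (``once the sign is settled'') instead of a conclusion; replace that paragraph with the two-line rearrangement above.

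Incidentally, the test case $\Ext^1_{\widetilde{\A}}(M,N)=0$ you suggest could never detect a sign. There $\Hom_{\widetilde{\A}}(N,\tau M)\cong{\rm D}\Ext^1_{\widetilde{\A}}(M,N)=0$, so the $D\neq N$ sum is empty.
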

\begin{proof}
Let $M=M'\oplus P'$ such that $P'$ is the maximal projective direct summand of $M$. By \eqref{ccfgs}, we have
\begin{equation}\label{diyibu}
\begin{split}
&(q^{[M,N]^1}-1)X_M X_N=v^{\Lambda({\bf m}^*,{\bf n}^*)}\sum\limits_{[E]\neq [M\oplus N]}
|\Ext_{\widetilde{\A}}^1(M,N)_E|X_E+\\&\sum\limits_{\begin{smallmatrix}[D],[A],[I]\\ [D]\neq [N]\end{smallmatrix}}v^{\Lambda(({\bf m}-{{\bf a}})^*,{({\bf n}+{\bf a})}^*)+\lr{{{\bf m}}-{{\bf a}},{\bf n}}}|_D\Hom_{\widetilde{\A}}(N,\tau M)_{\tau A\oplus I}|X_{A} X_{D\oplus I[-1]},
\end{split}\end{equation}
where each $A=A'\oplus P'$ has the same maximal projective direct summand~as $M$, and each $I$ is injective.

For the zero morphism in $\Hom_{\widetilde{\A}}(N,\tau M)$, we have $D=N, I=0, A'=M'$, and thus $A=M$.
Then $v^{\Lambda(({\bf m}-{{\bf a}})^*,{({\bf n}+{\bf a})}^*)+\lr{{{\bf m}}-{{\bf a}},{\bf n}}}X_{A} X_{D\oplus I[-1]}=X_MX_N$.
Hence, the equation \eqref{diyibu} can be rewritten as
\begin{equation*}
\begin{split}
&q^{[M,N]^1}X_M X_N=v^{\Lambda({\bf m}^*,{\bf n}^*)}\sum\limits_{[E]\neq [M\oplus N]}
|\Ext_{\widetilde{\A}}^1(M,N)_E|X_E+\\&\sum\limits_{\begin{smallmatrix}[D],[A],[I]\end{smallmatrix}}v^{\Lambda(({\bf m}-{{\bf a}})^*,{({\bf n}+{\bf a})}^*)+\lr{{{\bf m}}-{{\bf a}},{\bf n}}}|_D\Hom_{\widetilde{\A}}(N,\tau M)_{\tau A\oplus I}|X_{A} X_{D\oplus I[-1]}.
\end{split}\end{equation*}
Using the equation \eqref{qre3}, we obtain
\begin{equation*}
\begin{split}
X_{M} X_{N}&=q^{\frac{1}{2}\Lambda({\bf m}^\ast,{\bf n}^\ast)+\lr{{\bf m},{\bf n}}}\sum_{[E]}\frac{|\mathrm{Ext}_{\widetilde{\A}}^{1}(M,N)_{E}|}{|\mathrm{Hom}_{\widetilde{\A}}(M,N)|}X_E\\
&=v^{\Lambda({\bf m}^\ast,{\bf n}^\ast)}\sum_{[E]}\frac{|\mathrm{Ext}_{\widetilde{\A}}^{1}(M,N)_{E}|}{|\mathrm{Ext}_{\widetilde{\A}}^1(M,N)|}X_E.
\end{split}
\end{equation*}
That is, \begin{equation}\label{diergx}
q^{[M,N]^1}X_M X_N=v^{\Lambda({\bf m}^*,{\bf n}^*)}\sum\limits_{[E]}
|\Ext_{\widetilde{\A}}^1(M,N)_E|X_E.\end{equation}
Thus, we obtain
\begin{equation}\label{disangx}\begin{split}
&v^{\Lambda({\bf m}^*,{\bf n}^*)}X_{M\oplus N}=\\&\sum\limits_{\begin{smallmatrix}[D],[A],[I]\end{smallmatrix}}v^{\Lambda(({\bf m}-{{\bf a}})^*,{({\bf n}+{\bf a})}^*)+\lr{{{\bf m}}-{{\bf a}},{\bf n}}}|_D\Hom_{\widetilde{\A}}(N,\tau M)_{\tau A\oplus I}|X_{A} X_{D\oplus I[-1]}.\end{split}\end{equation}
Therefore, we complete the proof.
\end{proof}

\begin{corollary}\label{2CY}
For any $M,N\in\widetilde{\A}$, we have the following equation in $\mathcal{T}_\Lambda:$
\begin{equation}\label{mfcgs}
\begin{split}
&\sum\limits_{\begin{smallmatrix}[D],[A],[I]\end{smallmatrix}}v^{\Lambda(({\bf m}+{\bf n})^*,{\bf a}^*)+\lr{{\bf m}-{\bf a},{\bf n}}}|_D\Hom_{\widetilde{\A}}(N,\tau M)_{\tau A\oplus I}|X_{A} X_{D\oplus I[-1]}\\&=
\sum\limits_{\begin{smallmatrix}[C],[B],[J]\end{smallmatrix}}v^{\Lambda(({\bf m}+{\bf n})^*,{\bf b}^*)+\lr{{\bf n}-{\bf b},{\bf m}}}|_C\Hom_{\widetilde{\A}}(M,\tau N)_{\tau B\oplus J}|X_{B} X_{C\oplus J[-1]},
\end{split}\end{equation}
where each $A$ has the same maximal projective direct summand as $M$, each $B$ has the same maximal projective direct summand as $N$, and each $I,J\in\I_{\widetilde{\A}}$.
\end{corollary}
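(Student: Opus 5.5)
Both sides of \eqref{mfcgs} equal the quantum cluster character $X_{M\oplus N}=X_{N\oplus M}$, so the corollary follows from Theorem \ref{zhihe} applied twice, once to the pair $(M,N)$ and once to the pair $(N,M)$.

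First I would apply Theorem \ref{zhihe} to $(M,N)$. This says that the left-hand side of \eqref{mfcgs} equals $X_{M\oplus N}$. Here $A$ ranges over modules with the same maximal projective direct summand as $M$, and the exact sequences \eqref{zhxl} come from morphisms $\theta:N\to\tau M$.

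Second I would apply Theorem \ref{zhihe} with the roles of $M$ and $N$ interchanged. In this application the morphisms are $\theta':M\to\tau N$, with exact sequences
\[0\to C\to M\xrightarrow{\theta'}\tau N\to \tau B'\oplus J\to 0,\]
where $B=B'\oplus P''$ and $P''$ is the maximal projective direct summand of $N$. The twisting exponent becomes $\Lambda((\mathbf{n}+\mathbf{m})^*,\mathbf{b}^*)+\lr{\mathbf{n}-\mathbf{b},\mathbf{m}}$, which is exactly the exponent on the right-hand side of \eqref{mfcgs} since $(\mathbf{n}+\mathbf{m})^*=(\mathbf{m}+\mathbf{n})^*$. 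So the right-hand side of \eqref{mfcgs} equals $X_{N\oplus M}$.

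Finally, $N\oplus M\cong M\oplus N$, so the character \eqref{qcltz} is literally the same element of $\mathcal{T}_\Lambda$: it depends only on the isomorphism class, through the dimension vectors and the Grassmannian counts $|\mathrm{Gr}_{\mathbf e}(M\oplus N)|$. Hence $X_{M\oplus N}=X_{N\oplus M}$, and the two sides of \eqref{mfcgs} agree.

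The only point needing care is bookkeeping. One must check that the index sets and the convention on maximal projective summands match after the swap $M\leftrightarrow N$, so that Theorem \ref{zhihe} with the substitution gives precisely the stated right-hand side, including the summation conditions on $B$ and $J$. This is a direct substitution and involves no further computation.
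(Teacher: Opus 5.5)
Your proposal is correct and is exactly the paper's argument: the paper proves this corollary by applying Theorem \ref{zhihe} to $(M,N)$ and to $(N,M)$ and then using $X_{M\oplus N}=X_{N\oplus M}$. Your check that the swapped exponent $\Lambda((\mathbf{n}+\mathbf{m})^*,\mathbf{b}^*)+\lr{\mathbf{n}-\mathbf{b},\mathbf{m}}$ and the summation conventions on $B$ and $J$ match the stated right-hand side is the only bookkeeping needed.
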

\begin{proof}
Using $X_{M\oplus N}=X_{N\oplus M}$ and Theorem \ref{zhihe}, we finish the proof.
\end{proof}
\begin{remark}\label{zhuji}
$(1)$~The equation \eqref{mfcgs} is similar to the 2-Calabi--Yau property of the cluster category $\mathcal {C}:=\mathcal {C}_{\widetilde{Q}}$ of $\widetilde{Q}$, which states that there exists a bifunctorial isomorphism
$$\Hom_\mathcal {C}(M,\widetilde{\tau} N)\cong {\rm D}\Hom_\mathcal {C}(N,\widetilde{\tau} M)$$ for any $M,N\in\mathcal {C}$, where ${\rm D}$ is the standard linear duality and $\widetilde{\tau}$ is the Auslander--Reiten translation of $\mathcal {C}$.

$(2)$~By \cite[Lemma 4.4]{Rupel2},
if the set $_D\Hom_{\widetilde{\A}}(N,\tau M)_{\tau A\oplus I}$ in Corollary \ref{2CY} is nonempty, then $A$ must be a quotient of $M$. Moreover, we have
\begin{equation}\label{sxhomjs}
|_D\Hom_{\widetilde{\A}}(N,\tau M)_{\tau A\oplus I}|=\sum\limits_{[X],[Y]}a_YF_{AX}^MF_{YD}^NF_{IY}^{\tau X},\end{equation}
where each $X$ has no nonzero projective direct summands.
\end{remark}

\begin{proposition}\label{dhallc}
For any $A,D\in\widetilde{\A}$ and $I\in\I_{\widetilde{\A}}$, we have the following equation in $\mathcal{T}_\Lambda:$
\begin{equation*}\begin{split}
X_AX_{D\oplus I[-1]}=v^{\Lambda({\bf a}^\ast,({\bf d}-{\bf i})^\ast)}\sum\limits_{[L],[R],[S]}q^{\lr{{\bf s},{\bf d}}-\lr{{\bf a},{\bf i}}}|{_S\Hom_{\widetilde{\A}}(A,I)_{R}}|H_{SD}^LX_{L\oplus R[-1]}.
\end{split}
\end{equation*}
\end{proposition}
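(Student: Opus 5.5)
The plan is to compute the product $u_A\star u_{D\oplus I[-1]}$ inside $\mathcal{D}\mathcal{H}_\Lambda^{ec}(\widetilde{\A})$ and then apply the algebra homomorphism $\psi$ of Theorem \ref{mainresult}, which sends each $u_X$ to $X_X$. First, by \eqref{lgx4}, $u_{D\oplus I[-1]}=v^{\Lambda({\bf i}^\ast,{\bf d}^\ast)}u_{I[-1]}\star u_D$. Hence
\[
u_A\star u_{D\oplus I[-1]}=v^{\Lambda({\bf i}^\ast,{\bf d}^\ast)}(u_A\star u_{I[-1]})\star u_D .
\]

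Next, expand $u_A\star u_{I[-1]}$ using \eqref{lgx6}:
\[
u_A\star u_{I[-1]}=q^{-\frac12\Lambda({\bf a}^\ast,{\bf i}^\ast)-\lr{{\bf a},{\bf i}}}\sum_{[S],[R]}v^{\Lambda({\bf r}^\ast,{\bf s}^\ast)}|{}_S\Hom_{\widetilde{\A}}(A,I)_R|\,u_{R[-1]}\star u_S .
\]
Substituting, we need to compute $u_{R[-1]}\star u_S\star u_D$.

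For that product, use \eqref{lgx3}:
\[
u_S\star u_D=v^{\Lambda({\bf s}^\ast,{\bf d}^\ast)}q^{\lr{{\bf s},{\bf d}}}\sum_{[L]}\frac{|\Ext^1(S,D)_L|}{|\Hom(S,D)|}u_L .
\]
For $S,D\in\widetilde{\A}$ we have $\{S,D\}=1$, so the fraction $|\Ext^1(S,D)_L|/|\Hom(S,D)|$ is exactly $H_{SD}^L$. This is where the factor $q^{\lr{{\bf s},{\bf d}}}H_{SD}^L$ in the statement comes from. Then apply \eqref{lgx4} once more:
\[
u_{R[-1]}\star u_L=v^{-\Lambda({\bf r}^\ast,{\bf l}^\ast)}u_{L\oplus R[-1]} .
\]
Applying $\psi$ gives $X_AX_{D\oplus I[-1]}$ as a sum of terms $q^{\lr{{\bf s},{\bf d}}-\lr{{\bf a},{\bf i}}}|{}_S\Hom(A,I)_R|H_{SD}^L X_{L\oplus R[-1]}$, each multiplied by a power of $v$ with exponent
\[
\Lambda({\bf i}^\ast,{\bf d}^\ast)-\Lambda({\bf a}^\ast,{\bf i}^\ast)+\Lambda({\bf r}^\ast,{\bf s}^\ast)+\Lambda({\bf s}^\ast,{\bf d}^\ast)-\Lambda({\bf r}^\ast,{\bf l}^\ast).
\]

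The main (though routine) step is to simplify this exponent to $\Lambda({\bf a}^\ast,({\bf d}-{\bf i})^\ast)$. Two dimension-vector relations do this:
\begin{itemize}
\item ${\bf l}={\bf s}+{\bf d}$, so $\Lambda({\bf r}^\ast,{\bf s}^\ast)-\Lambda({\bf r}^\ast,{\bf l}^\ast)=-\Lambda({\bf r}^\ast,{\bf d}^\ast)$;
\item ${\bf a}-{\bf s}={\bf i}-{\bf r}$ (the image of $A\to I$), so ${\bf s}-{\bf r}={\bf a}-{\bf i}$.
\end{itemize}
With these, the exponent becomes
\[
\Lambda({\bf i}^\ast,{\bf d}^\ast)+\Lambda(({\bf s}-{\bf r})^\ast,{\bf d}^\ast)-\Lambda({\bf a}^\ast,{\bf i}^\ast)=\Lambda({\bf a}^\ast,{\bf d}^\ast)-\Lambda({\bf a}^\ast,{\bf i}^\ast),
\]
which is the desired value. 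One should check carefully the sign conventions in \eqref{lgx6} and \eqref{lgx4}. Alternatively, one can work directly in $\mathcal{T}_\Lambda$ with \eqref{qre6}, \eqref{qre4} and \eqref{qre3} instead of passing through the Hall algebra; the computation is identical.
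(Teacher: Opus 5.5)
Your proposal is correct, and it takes a genuinely different route from the paper.

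\textbf{The paper's route.} The paper computes $u_A\star u_{D\oplus I[-1]}$ as a single derived Hall product,
$v^{\Lambda({\bf a}^\ast,({\bf d}-{\bf i})^\ast)}\sum q^{\lr{{\bf a},{\bf d}-{\bf i}}}H_{A,D\oplus I[-1]}^{L\oplus R[-1]}u_{L\oplus R[-1]}$.
It then evaluates the structure constant $H_{A,D\oplus I[-1]}^{L\oplus R[-1]}$ using the decomposition formula of Proposition \ref{hallshujs}(2), which rests on the external results \cite{Zhang,Zhang19}. Two identities finish the computation: $\frac{a_Sa_D}{a_L}F_{SD}^L=H_{SD}^L$ and $\sum_{[T]}a_TF_{TS}^AF_{RT}^I=|{}_S\Hom_{\widetilde{\A}}(A,I)_R|$. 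The $q$-exponent $\lr{{\bf a},{\bf d}-{\bf i}}+\lr{{\bf r}-{\bf i},{\bf d}}$ then collapses to $\lr{{\bf s},{\bf d}}-\lr{{\bf a},{\bf i}}$, using the same relation ${\bf s}-{\bf r}={\bf a}-{\bf i}$ that you use.

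\textbf{Your route.} You factor $u_{D\oplus I[-1]}=v^{\Lambda({\bf i}^\ast,{\bf d}^\ast)}u_{I[-1]}\star u_D$ and let associativity do the work. This needs only the presentation relations \eqref{lgx6}, \eqref{lgx3} and \eqref{lgx4} from Proposition \ref{elsubalgebra}. Proposition \ref{hallshujs} and the Riedtmann-type identity are bypassed entirely, and the bookkeeping shifts from Euler forms to $\Lambda$-exponents. You carried out that bookkeeping correctly, including the signs in \eqref{lgx4} and \eqref{lgx6}.

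\textbf{Comparison.} Since the $u_{L\oplus R[-1]}$ form a basis, comparing your expansion with the one-step product recovers the closed formula $H_{A,D\oplus I[-1]}^{L\oplus R[-1]}=\sum_{[S]}q^{\lr{{\bf r}-{\bf i},{\bf d}}}|{}_S\Hom_{\widetilde{\A}}(A,I)_R|H_{SD}^L$, which the paper imports. This is the same associativity trick the paper itself uses to prove Proposition \ref{prop:Hall-equality-P-I}. The paper's version states this identity of derived Hall numbers explicitly. Yours is shorter and self-contained given the relations. Your alternative of working directly in $\mathcal{T}_\Lambda$ with Corollary \ref{Hallcfgs} is equally valid and avoids passing through $\psi$.
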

\begin{proof}
In $\mathcal {D}\mathcal {H}_\Lambda^{ec}(\widetilde{\A})$, we have
\begin{flalign*}
u_A\star u_{D\oplus I[-1]}=v^{\Lambda({\bf a}^\ast,({\bf d}-{\bf i})^\ast)}\sum\limits_{[L],[R]}q^{\lr{{\bf a},{\bf d}-{\bf i}}}H_{A,D\oplus I[-1]}^{L\oplus R[-1]}u_{L\oplus R[-1]}.
\end{flalign*}
By Proposition \ref{hallshujs}, we get \begin{flalign*}H_{A,D\oplus I[-1]}^{L\oplus R[-1]}&=\sum\limits_{[S],[T]}q^{\lr{{\bf r}-{\bf i},{\bf d}}}\frac{a_Sa_Ta_D}{a_L}F_{TS}^AF_{RT}^IF_{SD}^L\\&=\sum\limits_{[S],[T]}q^{\lr{{\bf r}-{\bf i},{\bf d}}}{a_T}F_{TS}^AF_{RT}^IH_{SD}^L\\
&=\sum\limits_{[S]}q^{\lr{{\bf r}-{\bf i},{\bf d}}}|{_S\Hom_{\widetilde{\A}}(A,I)_{R}}|H_{SD}^L.\end{flalign*}
Thus, we obtain
\begin{flalign*}
u_A\star u_{D\oplus I[-1]}&=v^{\Lambda({\bf a}^\ast,({\bf d}-{\bf i})^\ast)}\sum\limits_{[L],[R],[S]}q^{\lr{{\bf a}+{\bf r}-{\bf i},{\bf d}}-\lr{{\bf a},{\bf i}}}|{_S\Hom_{\widetilde{\A}}(A,I)_{R}}|H_{SD}^Lu_{L\oplus R[-1]}\\
&=v^{\Lambda({\bf a}^\ast,({\bf d}-{\bf i})^\ast)}\sum\limits_{[L],[R],[S]}q^{\lr{{\bf s},{\bf d}}-\lr{{\bf a},{\bf i}}}|{_S\Hom_{\widetilde{\A}}(A,I)_{R}}|H_{SD}^Lu_{L\oplus R[-1]}.
\end{flalign*}
Therefore, using Theorem \ref{mainresult}, we complete the proof.
\end{proof}

\begin{proposition}\label{2cycfgs}
For any $M,N\in\widetilde{\A}$, we have the following equations in $\mathcal{T}_\Lambda:$
\begin{equation*}
\begin{split}
X_{M\oplus N}=\sum\limits_{[D],[A],[I],[L],[R],[S]}&v^{\Lambda(({\bf m}+{\bf n}-{\bf d}+{\bf i})^\ast,{\bf a}^\ast)+\lr{{\bf m}-{\bf a},{\bf n}}}q^{\lr{{\bf s},{\bf d}}-\lr{{\bf a},{\bf i}}}\\&|_D\Hom_{\widetilde{\A}}(N,\tau M)_{\tau A\oplus I}|\cdot|{_S\Hom_{\widetilde{\A}}(A,I)_{R}}|H_{SD}^LX_{L\oplus R[-1]}\\
=\sum\limits_{[C],[B],[J],[L],[R],[S]}&v^{\Lambda(({\bf m}+{\bf n}-{\bf c}+{\bf j})^\ast,{\bf b}^\ast)+\lr{{\bf n}-{\bf b},{\bf m}}}
q^{\lr{{\bf s},{\bf c}}-\lr{{\bf b},{\bf j}}}\\&|_C\Hom_{\widetilde{\A}}(M,\tau N)_{\tau B\oplus J}|\cdot|{_S\Hom_{\widetilde{\A}}(B,J)_{R}}|H_{SC}^LX_{L\oplus R[-1]},
\end{split}
\end{equation*}
where each $A$ has the same maximal projective direct summand as $M$, each $B$ has the same maximal projective direct summand as $N$, and each $I,J,R\in\I_{\widetilde{\A}}$.
\end{proposition}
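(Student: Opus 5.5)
The plan is to substitute Proposition \ref{dhallc} into Theorem \ref{zhihe} and then combine the exponents. For the first expression, start from \eqref{zhihegs}:
\[
X_{M\oplus N}=\sum_{[D],[A],[I]}v^{\Lambda(({\bf m}+{\bf n})^*,{\bf a}^*)+\lr{{\bf m}-{\bf a},{\bf n}}}\,|_D\Hom_{\widetilde{\A}}(N,\tau M)_{\tau A\oplus I}|\,X_AX_{D\oplus I[-1]}.
\]
Here each $I$ is injective, so Proposition \ref{dhallc} applies to every product $X_AX_{D\oplus I[-1]}$. It expands the product as
\[
v^{\Lambda({\bf a}^\ast,({\bf d}-{\bf i})^\ast)}\sum_{[L],[R],[S]}q^{\lr{{\bf s},{\bf d}}-\lr{{\bf a},{\bf i}}}\,|{_S\Hom_{\widetilde{\A}}(A,I)_{R}}|\,H_{SD}^L\,X_{L\oplus R[-1]}.
\]
In this sum $R$ is the cokernel of a map into the injective $I$. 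Since $\widetilde{\A}$ is hereditary, $R$ is a quotient of an injective and hence injective. This justifies the condition $R\in\I_{\widetilde{\A}}$ in the statement.

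Next I combine the $v$-exponents. The term $\Lambda({\bf a}^\ast,({\bf d}-{\bf i})^\ast)$ is rewritten as $-\Lambda(({\bf d}-{\bf i})^\ast,{\bf a}^\ast)$ using skew-symmetry of $\Lambda$. Bilinearity then merges it with $\Lambda(({\bf m}+{\bf n})^*,{\bf a}^*)$ into $\Lambda(({\bf m}+{\bf n}-{\bf d}+{\bf i})^\ast,{\bf a}^\ast)$. The remaining factors, namely $v^{\lr{{\bf m}-{\bf a},{\bf n}}}$, the power of $q$, and the Hom-counts, are carried along unchanged. This gives exactly the first expression in the statement.

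For the second expression, I swap the roles of $M$ and $N$. Theorem \ref{zhihe} applied to the pair $(N,M)$ gives $X_{N\oplus M}$ as a sum over $[C],[B],[J]$ of
\[
v^{\Lambda(({\bf n}+{\bf m})^*,{\bf b}^*)+\lr{{\bf n}-{\bf b},{\bf m}}}\,|_C\Hom_{\widetilde{\A}}(M,\tau N)_{\tau B\oplus J}|\,X_BX_{C\oplus J[-1]}.
\]
This is the right-hand side of Corollary \ref{2CY}, and $X_{N\oplus M}=X_{M\oplus N}$. I then expand each $X_BX_{C\oplus J[-1]}$ by Proposition \ref{dhallc}, with $(A,D,I)$ replaced by $(B,C,J)$, and run the same exponent bookkeeping.

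The only point needing care is this sign and bilinearity manipulation of $\Lambda$. It is routine, so I do not anticipate any genuine obstacle.
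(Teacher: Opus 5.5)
Your proposal is correct and matches the paper's proof, which likewise substitutes Proposition \ref{dhallc} into Theorem \ref{zhihe} and Corollary \ref{2CY}. The exponents then merge via skew-symmetry of $\Lambda$ and linearity of ${\bf x}\mapsto{\bf x}^\ast$, exactly as you describe.
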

\begin{proof}
It is proved by Corollary \ref{2CY} and Proposition \ref{dhallc}.
\end{proof}

\subsection{Cluster multiplication formulas for a special compatible pair}
In this subsection, we consider the cluster multiplication formula in Proposition \ref{2cycfgs} for a special compatible pair. In this case, the exponent of $v$ in the formula is completely given by the Euler forms.

Let $Q$ be an acyclic valued quiver with the vertex set $\{1,2,\cdots,n\}$ and the valuation $d_i\in\mathbb{N^+}$ for each vertex $i$, take $\widetilde{Q}$ to be the valued quiver obtained from $Q$ by attaching additional vertices $n+1,\ldots, 2n$ to $Q$, adding the arrow $n+i\to i$, and setting the valuation of the $(n+i)$-th vertex to be $d_{n+i}=d_i$ for each $1\leq i\leq n$. In this case, the $2n\times 2n$-matrices $R(\widetilde{Q})$ and $R'(\widetilde{Q})$ are of the forms $\left(\begin{smallmatrix}
    R&I_n\\ 0&0
\end{smallmatrix}\right)$ and $\left(\begin{smallmatrix}
    R'&0\\ I_n&0
\end{smallmatrix}\right)$, respectively.
Set $B=R'-R$, then $B(\widetilde{Q})=\left(\begin{smallmatrix}
    B&-I_n\\ I_n&0
\end{smallmatrix}\right)$.
Set $D=\operatorname{diag}\{d_1,\cdots, d_n\}$ and take $\Lambda_1=\left(\begin{smallmatrix}
    0&-D\\ D&-DB
\end{smallmatrix}\right)$. Then, we have $$\Lambda_1 (B(\widetilde{Q}))=-\left(\begin{smallmatrix}
    D&\\ &D
\end{smallmatrix}\right).$$
We remark that the matrix $E$ representing the Euler form of the representation category $\mathcal{A}$ of $Q$ under the standard basis is equal to $(I_n-R^{\rm tr})D=D(I_n-R')$, and the matrix representing the Euler form of the representation category $\widetilde{\mathcal{A}}$ of $\widetilde{Q}$ under the standard basis is equal to $(E'(\widetilde{Q}))^{\rm tr}\left(\begin{smallmatrix}
    D&\\ &D
\end{smallmatrix}\right)=\left(\begin{smallmatrix}
    D&\\ &D
\end{smallmatrix}\right)E(\widetilde{Q})$,
where \begin{equation*}E(\widetilde{Q})=\left(\begin{smallmatrix}
    D^{-1}E&0\\ -I_n&I_n
\end{smallmatrix}\right)~\text{and}~E'(\widetilde{Q})=\left(\begin{smallmatrix}
    D^{-1}E^{\rm tr}&-I_n\\ 0&I_n
\end{smallmatrix}\right).\end{equation*}

\begin{lemma}\label{zhishu1}
For any $\alpha_1,\alpha_2,\beta_1,\beta_2\in\mathbb{Z}^n$, we have
$$\Lambda_1(\left(\begin{smallmatrix}
    \alpha_1\\ \alpha_2
\end{smallmatrix}\right)^\ast,\left(\begin{smallmatrix}
    \beta_1\\ \beta_2
\end{smallmatrix}\right)^\ast)=\lr{\beta_1-\beta_2,\alpha_2}-\lr{\alpha_1-\alpha_2,\beta_2}.$$
\end{lemma}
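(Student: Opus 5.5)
The statement is a finite-dimensional linear-algebra identity, so I will prove it by direct matrix computation. The plan is to write the $\ast$-map and $\Lambda_1$ in block form, expand, and then match the result against the Euler form. The only inputs are the explicit block matrices recorded just before the lemma:
\begin{itemize}
\item $E'(\widetilde{Q})=\left(\begin{smallmatrix} D^{-1}E^{\rm tr}&-I_n\\ 0&I_n\end{smallmatrix}\right)$;
\item $\Lambda_1=\left(\begin{smallmatrix} 0&-D\\ D&-DB\end{smallmatrix}\right)$;
\item the two descriptions $E=(I_n-R^{\rm tr})D=D(I_n-R')$ of the Euler form matrix of $\mathcal{A}$, so that $\lr{\alpha,\beta}=\alpha^{\rm tr}E\beta$ for $\alpha,\beta\in\mathbb{Z}^n$.
\end{itemize}

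First I compute the starred vectors. Since $\x^\ast=E'(\widetilde{Q})\x$, we get
$\left(\begin{smallmatrix}\alpha_1\\ \alpha_2\end{smallmatrix}\right)^\ast=\left(\begin{smallmatrix} D^{-1}E^{\rm tr}\alpha_1-\alpha_2\\ \alpha_2\end{smallmatrix}\right)$,
and similarly for $\beta$. For $u=(u_1,u_2)$ and $w=(w_1,w_2)$ the block form gives
\[\Lambda_1(u,w)=-u_1^{\rm tr}Dw_2+u_2^{\rm tr}Dw_1-u_2^{\rm tr}DBw_2 .\]
Substituting the starred vectors, I expand term by term using $D^{\rm tr}=D$:
\begin{itemize}
\item the first term is $-\alpha_1^{\rm tr}E\beta_2+\alpha_2^{\rm tr}D\beta_2$;
\item the second term is $\alpha_2^{\rm tr}E^{\rm tr}\beta_1-\alpha_2^{\rm tr}D\beta_2=\beta_1^{\rm tr}E\alpha_2-\alpha_2^{\rm tr}D\beta_2$;
\item the third term is $-\alpha_2^{\rm tr}DB\beta_2$.
\end{itemize}
The $D$-terms cancel, leaving
\[\Lambda_1(\cdot,\cdot)=\lr{\beta_1,\alpha_2}-\lr{\alpha_1,\beta_2}-\alpha_2^{\rm tr}DB\beta_2 .\]

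The claimed right-hand side expands by bilinearity to
\[\lr{\beta_1,\alpha_2}-\lr{\beta_2,\alpha_2}-\lr{\alpha_1,\beta_2}+\lr{\alpha_2,\beta_2}.\]
It therefore remains to show
\[-\alpha_2^{\rm tr}DB\beta_2=\lr{\alpha_2,\beta_2}-\lr{\beta_2,\alpha_2}=\alpha_2^{\rm tr}(E-E^{\rm tr})\beta_2 .\]
This is the one step that needs care, namely choosing the right description of $E$. Taking $E=D(I_n-R')$, and transposing the other description to get $E^{\rm tr}=D(I_n-R)$, I obtain
\[E-E^{\rm tr}=D(R-R')=-DB,\]
since $B=R'-R$. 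This is exactly what is needed, and it completes the proof.
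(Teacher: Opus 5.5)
Your proof is correct and is essentially the paper's proof: both expand $\Lambda_1$ on the starred vectors using the block forms of $E'(\widetilde{Q})$ and $\Lambda_1$ to reach $\lr{\beta_1,\alpha_2}-\lr{\alpha_1,\beta_2}-\alpha_2^{\rm tr}DB\beta_2$, and then finish with $DB=E^{\rm tr}-E$, which comes from the two descriptions of $E$. The only cosmetic difference is that you expand the bilinear form term by term, whereas the paper multiplies out the triple matrix product.
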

\begin{proof}
By definitions, we have
\begin{flalign*}
\Lambda_1(\left(\begin{smallmatrix}
    \alpha_1\\ \alpha_2
\end{smallmatrix}\right)^\ast,\left(\begin{smallmatrix}
    \beta_1\\ \beta_2
\end{smallmatrix}\right)^\ast)&=(\alpha_1^{\rm tr},\alpha_2^{\rm tr})\left(\begin{smallmatrix}
    ED^{-1}~& 0\\
    -I_n~& I_n\\
 \end{smallmatrix}\right)\left(\begin{smallmatrix}
    0&-D\\ D&-DB
\end{smallmatrix}\right)\left(\begin{smallmatrix}
    D^{-1}E^{\rm tr}~& -I_n\\
    0~& I_n\\
 \end{smallmatrix}\right)\left(\begin{smallmatrix}
    \beta_1\\ \beta_2
\end{smallmatrix}\right)\\
&=(\alpha_1^{\rm tr},\alpha_2^{\rm tr})\left(\begin{smallmatrix}
    0~& -E\\
    D~& D-DB\\
 \end{smallmatrix}\right)\left(\begin{smallmatrix}
    D^{-1}E^{\rm tr}~& -I_n\\
    0~& I_n\\
 \end{smallmatrix}\right)\left(\begin{smallmatrix}
    \beta_1\\ \beta_2
\end{smallmatrix}\right)\\
&=(\alpha_1^{\rm tr},\alpha_2^{\rm tr})\left(\begin{smallmatrix}
    0~& -E\\
    E^{\rm tr}~& -DB\\
 \end{smallmatrix}\right)\left(\begin{smallmatrix}
    \beta_1\\ \beta_2
\end{smallmatrix}\right)\\
&=\alpha_2^{\rm tr}E^{\rm tr}\beta_1-\alpha_1^{\rm tr}E\beta_2-\alpha_2^{\rm tr}DB\beta_2\\
&=\lr{\beta_1,\alpha_2}-\lr{\alpha_1,\beta_2}-\alpha_2^{\rm tr}DB\beta_2.
\end{flalign*}
Since $DB=D(I-R)-D(I-R')=E^{\rm tr}-E$, we obtain
$$\alpha_2^{\rm tr}DB\beta_2=\lr{\beta_2,\alpha_2}-\lr{\alpha_2,\beta_2}.$$
Hence, we finish the proof.
\end{proof}

\begin{corollary}\label{1.21}
For any $M,N\in{\A}$, we have the following equation in $\mathcal{T}_{\Lambda_1}:$
\begin{equation*}
\begin{split}
X_{M\oplus N}&=\sum\limits_{[D],[A],[I],[L],[R],[S]}v^{\lr{{\bf m}-{\bf a},{\bf n}-{\bf a}}}q^{\lr{{\bf s},{\bf d}}-\lr{{\bf a},{\bf i}}}|_D\Hom_{\widetilde{\A}}(N,\tau M)_{\tau A\oplus I}|\\&\quad\quad\quad\quad\quad\quad\quad\quad|{_S\Hom_{\widetilde{\A}}(A,I)_{R}}|H_{SD}^LX_{L\oplus R[-1]}\\
&=\sum\limits_{[C],[B],[J],[L],[R],[S]}v^{\lr{{\bf n}-{\bf b},{\bf m}-{\bf b}}}
q^{\lr{{\bf s},{\bf c}}-\lr{{\bf b},{\bf j}}}|_C\Hom_{\widetilde{\A}}(M,\tau N)_{\tau B\oplus J}|\\&\quad\quad\quad\quad\quad\quad\quad\quad|{_S\Hom_{\widetilde{\A}}(B,J)_{R}}|H_{SC}^LX_{L\oplus R[-1]},
\end{split}
\end{equation*}
where each $A$ has the same maximal projective direct summand as $M$, each $B$ has the same maximal projective direct summand as $N$, and each $I,J,R\in\I_{\widetilde{\A}}$.
\end{corollary}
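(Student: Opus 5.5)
The plan is to specialise Proposition \ref{2cycfgs} to the compatible pair $(\Lambda_1,B(\widetilde{Q}))$ and then compute the exponent of $v$ explicitly with Lemma \ref{zhishu1}. Since $M,N\in\A$, all modules involved in the first sum are supported on $Q$ or are determined by it: $A$ is a quotient of $M$ (Remark \ref{zhuji}(2)) and $D$ is a submodule of $N$, so both lie in $\A$. The injective $I$ appears as a summand of $\Coker(N\to\tau M)$, where $\tau M$ is computed in $\widetilde{\A}$. So the only task is to show that, for each nonzero term,
\[
\Lambda_1\big(({\bf m}+{\bf n}-{\bf d}+{\bf i})^\ast,{\bf a}^\ast\big)+\lr{{\bf m}-{\bf a},{\bf n}}=\lr{{\bf m}-{\bf a},{\bf n}-{\bf a}} .
\]
The second equality then follows by the same argument with the roles of $M$ and $N$ swapped.

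First I write each dimension vector in block form $\left(\begin{smallmatrix}\alpha_1\\ \alpha_2\end{smallmatrix}\right)$ with $\alpha_1,\alpha_2\in\mathbb{Z}^n$. Then ${\bf a}$ has $\alpha_2=0$, and Lemma \ref{zhishu1} with $\beta_2=0$ gives
\[
\Lambda_1(\alpha^\ast,{\bf a}^\ast)=\lr{{\bf a},\alpha_2}.
\]
So I only need the second block of ${\bf m}+{\bf n}-{\bf d}+{\bf i}$, which is just the second block of ${\bf i}$, since ${\bf m},{\bf n},{\bf d}$ have zero second block. Here I read $\lr{\cdot,\cdot}$ as the Euler form of $\A$ on $\mathbb{Z}^n$, as in Lemma \ref{zhishu1}.

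The key step is to identify the second block ${\bf i}_2$. Exactness of \eqref{zhxl} gives ${\bf n}-{\bf d}-\tau{\bf m}+\tau{\bf a}+{\bf i}=0$ in $\widetilde{\A}$. Hence the second block of ${\bf i}$ equals the second block of $\tau{\bf m}-\tau{\bf a}=\tau_{\widetilde{\A}}({\bf m}-{\bf a})$ as a vector. I would use the Coxeter transformation for $\widetilde{Q}$, $\tau{\bf x}=-E(\widetilde{Q})^{-1}E'(\widetilde{Q}){\bf x}$ (suitably normalised with $D$), which is valid here because $M'$ and $A'$ have no projective summands and $P'$ cancels. From the explicit block matrices, I expect the second block of $\tau_{\widetilde{\A}}\left(\begin{smallmatrix}x\\0\end{smallmatrix}\right)$ to be $-x$. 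This is plausible: the frozen vertex $n+i$ attaches only to $i$, and one checks that $\Hom(P_{n+i},\tau X)\cong D\Ext^1(X,P_{n+i})$ has dimension equal to the multiplicity of $S_i$ in $X$, up to sign conventions.

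Granting this, the exponent becomes
\[
\pm\lr{{\bf a},{\bf m}-{\bf a}}+\lr{{\bf m}-{\bf a},{\bf n}}.
\]
I then have to reconcile this with $\lr{{\bf m}-{\bf a},{\bf n}-{\bf a}}=\lr{{\bf m}-{\bf a},{\bf n}}-\lr{{\bf m}-{\bf a},{\bf a}}$. This requires the correct sign and the correct argument order, which I would fix by computing $E(\widetilde{Q})^{-1}E'(\widetilde{Q})$ carefully.

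If the order comes out as $\lr{{\bf a},{\bf m}-{\bf a}}$ rather than $\lr{{\bf m}-{\bf a},{\bf a}}$, I would redo the Lemma \ref{zhishu1} computation with the arguments ordered as $\Lambda_1(\cdot,{\bf a}^\ast)$. That lemma gives $\lr{\beta_1,\alpha_2}$ with $\beta_1={\bf a}$ and $\alpha_2={\bf i}_2=-({\bf m}-{\bf a})$, that is $-\lr{{\bf a},{\bf m}-{\bf a}}$. Alternatively, I would use the fact that ${\bf m}-{\bf a}$ is the dimension vector of $X$ in \eqref{sxhomjs}, where terms vanish unless $\Hom(\cdot)$ data align, to swap the order.

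The main obstacle is this sign and order bookkeeping in computing ${\bf i}_2$ via $\tau$ on $\widetilde{Q}$. All other steps are direct substitution into Proposition \ref{2cycfgs}.
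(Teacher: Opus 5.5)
There is a genuine gap. Your reduction is fine up to the step that matters. Lemma \ref{zhishu1} with $\beta_2=0$ turns the exponent into $\lr{{\bf a},{\bf i}_2}+\lr{{\bf m}-{\bf a},{\bf n}}$, and ${\bf i}_2$ is the second block of $\tau{\bf x}$ with ${\bf x}={\bf m}-{\bf a}$. But the value you expect, ${\bf i}_2=\pm{\bf x}$, is false.

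Write ${}^\ast(\tau{\bf x})=-{\bf x}^\ast$ in the block forms of $E(\widetilde{Q})$ and $E'(\widetilde{Q})$. The second block row reads $-(\tau{\bf x})_1+(\tau{\bf x})_2=0$. So the second block of $\tau{\bf x}$ equals its first block, which is the Coxeter transform of ${\bf x}$ for $Q$, i.e. $\Dim\tau_{\A}X$. For $Q\colon 1\to 2$ one has $\tau_{\widetilde{\A}}S_1=P_4$ with $\Dim P_4=e_2+e_4$, so the second block is $e_2$, not $-e_1$. Your heuristic is also off: since no arrows go from $Q$ to the frozen vertices, $\Ext^1(X,P_{n+i})\cong\Ext^1(X,P_i)$, hence $\dim(\tau X)_{n+i}=\dim(\tau_{\A}X)_i$.

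With your guess the exponent would be $\lr{{\bf x},{\bf n}}\mp\lr{{\bf a},{\bf x}}$, and this is genuinely wrong. Take $M=S_1\oplus S_2$, $N=S_2$, and the term $D=0$, $A=S_2$, $I=I_4$, which occurs with count $q-1$. Your formula gives $-1$ for either sign, while the correct exponent is $0$. Neither fallback rescues this. Redoing Lemma \ref{zhishu1} just reproduces $-\lr{{\bf a},{\bf x}}$. Nonvanishing of a term does not make $\lr{\cdot,\cdot}$ symmetric on $({\bf a},{\bf x})$; the term above is a counterexample.

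The missing ingredient is the identity $\lr{{\bf a},\tau{\bf x}}=-\lr{{\bf x},{\bf a}}$. It follows from ${}^\ast(\tau{\bf x})=-{\bf x}^\ast$, because the Euler form of $\widetilde{\A}$ has matrix $\operatorname{diag}(D,D)E(\widetilde{Q})=E'(\widetilde{Q})^{\rm tr}\operatorname{diag}(D,D)$. Since ${\bf a}$ has zero second block and $\tau{\bf x}$ has equal blocks, $\lr{{\bf a},{\bf i}_2}_{\A}=\lr{{\bf a},\tau{\bf x}}_{\widetilde{\A}}=-\lr{{\bf x},{\bf a}}$. This gives exactly $\lr{{\bf m}-{\bf a},{\bf n}-{\bf a}}$.

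The paper reaches the same identity without blocks for ${\bf i}$:
\begin{enumerate}
\item It writes ${\bf i}=\tau{\bf x}-{\bf y}$ with $X,Y\in\A$, using \eqref{sxhomjs}.
\item It passes to $\Lambda_1({}^\ast{\bf i},{}^\ast{\bf a})$ by Lemma \ref{sjishu}(4).
\item It discards the ${\bf y}$-part by Lemma \ref{zhishu1}.
\item It uses ${}^\ast(\tau{\bf x})=-{\bf x}^\ast$ and ${}^\ast{\bf a}={\bf a}^\ast-B(\widetilde{Q}){\bf a}$.
\item Lemma \ref{sjishu}(2) then gives $-\lr{{\bf x},{\bf a}}$.
\end{enumerate}
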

\begin{proof}
By \eqref{sxhomjs}, we have $$|_D\Hom_{\widetilde{\A}}(N,\tau M)_{\tau A\oplus I}|
=\sum\limits_{[X],[Y]}a_YF_{AX}^MF_{YD}^NF_{IY}^{\tau X},$$
where each $X$ has no nonzero projective direct summands.
Since $M,N\in\A$, we obtain that $A,D,X,Y\in\A$. However, $\tau X$ may be in $\widetilde{\A}$ and thus so is $I$. By Lemma \ref{zhishu1}, we get
$\Lambda_1(({\bf m}+{\bf n}-{\bf d})^\ast,{\bf a}^\ast)=0.$ Note that
\begin{flalign*}
\Lambda_1({\bf i}^\ast,{\bf a}^\ast)=\Lambda_1(^\ast{\bf i},{^\ast{\bf a}})=\Lambda_1(^\ast(\tau{\bf x}-{\bf y}),{^\ast{\bf a}})=\Lambda_1(^\ast(\tau{\bf x}),{^\ast{\bf a}})
\end{flalign*}
and $^\ast(\tau{\bf x})=-{\bf x}^\ast$. Thus, we obtain
\begin{flalign*}\Lambda_1({\bf i}^\ast,{\bf a}^\ast)=-\Lambda_1({\bf x}^\ast,{^\ast{\bf a}})&=-\Lambda_1({\bf x}^\ast,{\bf a}^\ast)+\Lambda_1(E'(\widetilde{Q}){\bf x},B(\widetilde{Q}){\bf a})\\
&=-\lr{{\bf x},{\bf a}}=-\lr{{\bf m}-{\bf a},{\bf a}}.\end{flalign*}
Therefore, by Proposition \ref{2cycfgs}, we complete the proof.
\end{proof}

Now, let us provide the following examples to illustrate the equation in Corollary \ref{1.21}.
\begin{example}
    Let $Q:1\to 2$ be the quiver of type $A_2$ and $\widetilde{Q}$ be the following quiver
    \[
    \xymatrix{3\ar[d] &4\ar[d]\\ 1\ar[r]&2.}
    \]
    Then we have 
    $$E(\widetilde{Q})={\tiny\begin{bmatrix}
        1&-1&0&0\\ 0&1&0&0\\ -1&0&1&0\\ 0&-1&0&1
    \end{bmatrix}}~~~~\text{and}~~~~E'(\widetilde{Q})={\tiny\begin{bmatrix}
        1&0&-1&0\\ -1&1&0&-1\\ 0&0&1&0\\ 0&0&0&1
    \end{bmatrix}}.$$

    Let $M=S_1$ and $N=S_2$. It is easy to see that $\tau M=P_4$ and ${\rm dim}_{\mathbb{F}_q}\Hom_{\widetilde{\A}}(N,\tau M)=1$. Thus, $|_D\Hom_{\widetilde{\A}}(N,\tau M)_{\tau A\oplus I}|\neq 0$ if and only if $D=N$, $A=M$, $I=0$ or $D=0,A=0, I=I_4$. Moreover, $|_N\Hom_{\widetilde{\A}}(N,\tau M)_{\tau M}|=1$ and $|_0\Hom_{\widetilde{\A}}(N,\tau M)_{I_4}|=q-1$. Hence, we obtain
    \begin{align*}
        {\rm LHS:}&=\sum\limits_{[D],[A],[I],[L],[R],[S]}v^{\lr{{\bf m}-{\bf a},{\bf n}-{\bf a}}}q^{\lr{{\bf s},{\bf d}}-\lr{{\bf a},{\bf i}}}|_D\Hom_{\widetilde{\A}}(N,\tau M)_{\tau A\oplus I}|\\&\quad\quad\quad\quad\quad\quad\quad\quad|{_S\Hom_{\widetilde{\A}}(A,I)_{R}}|H_{SD}^LX_{L\oplus R[-1]}\\
        &=q^{-1}X_{S_1\oplus S_2}+q^{-1}(q-1)X_{P_1}+v^{-1}(q-1)X_{I_4[-1]}.
    \end{align*}
    Similarly, we have
    \begin{align*}
        {\rm RHS:}&=\sum\limits_{[C],[B],[J],[L],[R],[S]}v^{\lr{{\bf n}-{\bf b},{\bf m}-{\bf b}}}
q^{\lr{{\bf s},{\bf c}}-\lr{{\bf b},{\bf j}}}|_C\Hom_{\widetilde{\A}}(M,\tau N)_{\tau B\oplus J}|\\&\quad\quad\quad\quad\quad\quad\quad\quad|{_S\Hom_{\widetilde{\A}}(B,J)_{R}}|H_{SC}^LX_{L\oplus R[-1]}\\
&=X_{S_1\oplus S_2}.
    \end{align*}
    Recall that for any $L\in\A$ and $R\in\I_{\widetilde{\A}}$,
\begin{flalign*}X_{L\oplus R[-1]}=\sum\limits_{\bf e}v^{-\lr{\mathbf{e},{\bf l}-{\bf r}-\mathbf{e}}}|\mathrm{Gr}_{\mathbf{e}}L|
X^{-\mathbf{e}^\ast-^\ast(\mathbf{l}-\mathbf{r}-\mathbf{e})}.
\end{flalign*}
Thus, we get
\begin{align*}
    &X_{S_1\oplus S_2}=X^{-e_2+e_3+e_4}+vX^{e_4}+X^{-e_1-e_2+e_3}+X^{-e_1},\\
    &X_{P_1}=X^{-e_2+e_3+e_4}+X^{-e_1-e_2+e_3}+X^{-e_1},\\
    &X_{I_4[-1]}=X^{e_4}.
\end{align*}
Therefore, we conclude that ${\rm LHS}={\rm RHS}$.
\end{example}

\begin{example}
    Let $Q:1\to2\to 3$ be the quiver of type $A_3$ and $\widetilde{Q}$ be the following quiver
    \[
    \xymatrix{4\ar[d]&5\ar[d]&6\ar[d]\\ 1\ar[r]&2\ar[r]&3.}
    \]
    Then we have
    $$E(\widetilde{Q})={\tiny\begin{bmatrix}
        1&-1&0&0&0&0\\
        0&1&-1&0&0&0\\
        0&0&1&0&0&0\\
        -1&0&0&1&0&0\\
        0&-1&0&0&1&0\\
        0&0&-1&0&0&1
    \end{bmatrix}}~~~~\text{and}~~~~E'(\widetilde{Q})={\tiny\begin{bmatrix}
        1&0&0&-1&0&0\\
        -1&1&0&0&-1&0\\
        0&-1&1&0&0&-1\\
        0&0&0&1&0&0\\
        0&0&0&0&1&0\\
        0&0&0&0&0&1
    \end{bmatrix}}.$$
    Let $M=S_1\oplus S_3$ and $N=S_2$. A direct computation shows that $\tau M$ is the unique indecomposable representation with the dimension vector $e_2+e_5$ and $\tau N$ is the unique indecomposable representation with the dimension vector $e_3+e_6$. Thus, it is easy to see that $|_D\Hom(N,\tau M)_{\tau A\oplus I}|\neq 0$ if and only if $D=N, A=M, I=0$ or $D=0,A=P_3=S_3,I=I_5=S_5$. Moreover, $|_N\Hom(N,\tau M)_{\tau M}|=1$ and $|_0\Hom(N,\tau M)_{I_5}|=q-1$. Hence, we obtain
    \begin{align*}
        {\rm LHS}:&=\sum\limits_{[D],[A],[I],[L],[R],[S]}v^{\lr{{\bf m}-{\bf a},{\bf n}-{\bf a}}}q^{\lr{{\bf s},{\bf d}}-\lr{{\bf a},{\bf i}}}|_D\Hom_{\widetilde{\A}}(N,\tau M)_{\tau A\oplus I}|\\&\quad\quad\quad\quad\quad\quad\quad\quad|{_S\Hom_{\widetilde{\A}}(A,I)_{R}}|H_{SD}^LX_{L\oplus R[-1]}\\
&=q^{-1}X_{S_1\oplus S_2\oplus S_3 }+q^{-1}(q-1)X_{S_3\oplus U}+v^{-1}(q-1)X_{S_3\oplus I_5[-1]},
    \end{align*}
    where $U$ is the unique indecomposable representation with the dimension vector $e_1+e_2$. Similarly, we have
    \begin{align*}
        {\rm RHS:}&=\sum\limits_{[C],[B],[J],[L],[R],[S]}v^{\lr{{\bf n}-{\bf b},{\bf m}-{\bf b}}}
q^{\lr{{\bf s},{\bf c}}-\lr{{\bf b},{\bf j}}}|_C\Hom_{\widetilde{\A}}(M,\tau N)_{\tau B\oplus J}|\\&\quad\quad\quad\quad\quad\quad\quad\quad|{_S\Hom_{\widetilde{\A}}(B,J)_{R}}|H_{SC}^LX_{L\oplus R[-1]}\\
&=q^{-1}X_{S_1\oplus S_2\oplus S_3}+q^{-1}(q-1)X_{S_1\oplus V}+v^{-1}(q-1)X_{S_1\oplus I_6[-1]},
    \end{align*}
   where $V$ is the unique indecomposable representation with the dimension vector $e_2+e_3$.  According to
   \begin{align*}
       X_{L\oplus R[-1]}=\sum\limits_{\bf e}v^{-\lr{\mathbf{e},{\bf l}-{\bf r}-\mathbf{e}}}|\mathrm{Gr}_{\mathbf{e}}L|
X^{-\mathbf{e}^\ast-^\ast(\mathbf{l}-\mathbf{r}-\mathbf{e})},
\end{align*}
  we get
  \begin{align*}
  &X_{S_3\oplus U}=X^{e_4+e_5+e_6-e_3}+vX^{e_4+e_6-e_1}+X^{e_4+e_5-e_2-e_3}+X^{e_4-e_1-e_2}
     +vX^{e_2+e_6-e_1}+X^{-e_1},\\
     & X_{S_3\oplus I_5[-1]}=X^{e_2-e_3+e_5+e_6}+X^{e_5-e_3},~~X_{S_1\oplus I_6[-1]}=X^{e_4+e_6-e_1}+X^{e_2+e_6-e_1},\\
     & X_{S_1\oplus V}=X^{e_4+e_5+e_6-e_3}+vX^{e_2-e_3+e_5+e_6}+X^{e_4+e_5-e_2-e_3}+X^{e_4-e_1-e_2}+vX^{e_5-e_3}+X^{-e_1}.
\end{align*}
Therefore, we conclude that ${\rm LHS}={\rm RHS}$.
\end{example}

\section{Realizations of quantum cluster algebras via Hall algebras}
In this section, we introduce a Hall algebra associated to cluster categories of hereditary algebras
by using certain quotients of derived Hall subalgebras of hereditary algebras, and then provide Hall algebra realizations of acyclic quantum cluster algebras by applying such Hall algebras.
\subsection{Hall algebras associated to cluster categories of hereditary algebras}
Let $\mathfrak{I}$ be the two-sided ideal of the Hall algebra $\mathcal {D}\mathcal {H}_\Lambda^{{cl}_1}(\widetilde{\A})$ generated by the elements $$\{u_{M\oplus N}-\sum\limits_{\begin{smallmatrix}[D],[A],[I]\end{smallmatrix}}v^{\Lambda(({\bf m}+{\bf n})^*,{\bf a}^*)+\lr{{\bf m}-{\bf a},{\bf n}}}|_D\Hom_{\widetilde{\A}}(N,\tau M)_{\tau A\oplus I}|u_{A}\star u_{D\oplus I[-1]}~|~M,N\in{\widetilde{\A}}\},$$
where each $A$ has the same maximal projective direct summand as $M$, and each $I\in\I_{\widetilde{\A}}$.

\begin{definition}\label{Halldef}
The {\em Hall algebra associated to cluster categories} is defined to be the algebra \begin{equation*}\mathcal {D}\mathcal {H}_\Lambda^{{cl}}(\widetilde{\A}):=\mathcal {D}\mathcal {H}_\Lambda^{{cl}_1}(\widetilde{\A})/\mathfrak{I}.\end{equation*}
\end{definition}
Then we have the following.
\begin{proposition}\label{prop:surjective-hom}
The map $\varphi:\mathcal {D}\mathcal {H}_\Lambda^{{cl}}(\widetilde{\A})\longrightarrow\mathcal{A}\mathcal{H}_\Lambda^{\circ}(\widetilde{Q})$ defined by
$$\varphi(u_{I[-1]\oplus M\oplus P[1]})=X_{I[-1]\oplus M\oplus P[1]},$$ for any $M\in\widetilde{\A}$, $I\in\I_{\widetilde{\A}}$ and $P\in\P_{\widetilde{\A}}$, is a surjective homomorphism of algebras.
\end{proposition}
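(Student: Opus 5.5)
The plan is to obtain $\varphi$ by factoring the surjective homomorphism $\varphi_1$ of Proposition \ref{morphism1} through the quotient by $\mathfrak{I}$. Recall that $\varphi_1:\mathcal{D}\mathcal{H}_\Lambda^{cl_1}(\widetilde{\A})\to\mathcal{A}\mathcal{H}_\Lambda^{\circ}(\widetilde{Q})$ sends $u_{I[-1]\oplus M\oplus P[1]}$ to $X_{I[-1]\oplus M\oplus P[1]}$ and is a surjective algebra homomorphism. Since $\mathcal{D}\mathcal{H}_\Lambda^{cl}(\widetilde{\A})=\mathcal{D}\mathcal{H}_\Lambda^{cl_1}(\widetilde{\A})/\mathfrak{I}$ by Definition \ref{Halldef}, it suffices to check that $\mathfrak{I}\subseteq\Ker\varphi_1$. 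Because $\mathfrak{I}$ is the two-sided ideal generated by the listed elements and $\Ker\varphi_1$ is a two-sided ideal, it is enough to show that each generator is killed by $\varphi_1$.

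For fixed $M,N\in\widetilde{\A}$, apply $\varphi_1$ to the generator. Since $\varphi_1$ is multiplicative and $\varphi_1(u_A)=X_A$, $\varphi_1(u_{D\oplus I[-1]})=X_{D\oplus I[-1]}$, the image is
\[
X_{M\oplus N}-\sum_{[D],[A],[I]}v^{\Lambda(({\bf m}+{\bf n})^*,{\bf a}^*)+\lr{{\bf m}-{\bf a},{\bf n}}}|_D\Hom_{\widetilde{\A}}(N,\tau M)_{\tau A\oplus I}|\,X_{A}X_{D\oplus I[-1]},
\]
where the index conventions are the same as in the generator. By Theorem \ref{zhihe}, this difference is exactly zero in $\mathcal{T}_\Lambda$. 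Hence $\mathfrak{I}\subseteq\Ker\varphi_1$, and $\varphi_1$ descends to an algebra homomorphism $\varphi$ on $\mathcal{D}\mathcal{H}_\Lambda^{cl}(\widetilde{\A})$. On the images of the basis elements $u_{I[-1]\oplus M\oplus P[1]}$, $\varphi$ is given by the stated formula.

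Surjectivity is inherited directly. The quotient map $\mathcal{D}\mathcal{H}_\Lambda^{cl_1}(\widetilde{\A})\to\mathcal{D}\mathcal{H}_\Lambda^{cl}(\widetilde{\A})$ composed with $\varphi$ equals $\varphi_1$, which is surjective onto $\mathcal{A}\mathcal{H}_\Lambda^{\circ}(\widetilde{Q})$, so $\varphi$ is surjective as well. This could also be seen directly, since the generators $X_M$ and $X_{I[-1]}$ lie in the image.

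The only point needing care is that the generators of $\mathfrak{I}$ are stated with the same index conventions as Theorem \ref{zhihe}: $A$ has the same maximal projective summand as $M$, and $I$ is injective. One also needs that the sums are finite, so that each generator is a genuine element of the algebra. Both hold because the generators were defined verbatim from Theorem \ref{zhihe}, and because $\Hom_{\widetilde{\A}}(N,\tau M)$ is a finite set. With these checked, there is no real obstacle: all the substance lies in Theorem \ref{zhihe}.
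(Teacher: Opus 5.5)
Your proposal is correct and follows the paper's own argument. The paper likewise obtains $\varphi$ by descending the surjective homomorphism $\varphi_1$ of Proposition \ref{morphism1} through the quotient by $\mathfrak{I}$, with Theorem \ref{zhihe} showing that the generators of $\mathfrak{I}$ lie in $\Ker\varphi_1$; surjectivity is then inherited from $\varphi_1$.
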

\begin{proof}
It is proved by Theorem \ref{zhihe} and Proposition \ref{morphism1}.
\end{proof}

The following equation is the presentation of the high-dimensional quantum cluster multiplication formula \eqref{zhxl} in the Hall algebra $\mathcal {D}\mathcal {H}_\Lambda^{{cl}}(\widetilde{\A})$.
\begin{proposition}\label{hccfgs}
For any $M,N\in\widetilde{\A}$, we have the following equation in $\mathcal {D}\mathcal {H}_\Lambda^{{cl}}(\widetilde{\A}):$
\begin{equation}
\begin{split}
&(q^{[M,N]^1}-1)u_M\star u_N=q^{\frac{1}{2}\Lambda({\bf m}^*,{\bf n}^*)}\sum\limits_{[E]\neq [M\oplus N]}
|\Ext_{\widetilde{\A}}^1(M,N)_E|u_E+\\&\sum\limits_{\begin{smallmatrix}[D],[A],[I]\\ [D]\neq [N]\end{smallmatrix}}q^{\frac{1}{2}\Lambda(({\bf m}-{{\bf a}})^*,{({\bf n}+{\bf a})}^*)+\frac{1}{2}\lr{{{\bf m}}-{{\bf a}},{\bf n}}}|_D\Hom_{\widetilde{\A}}(N,\tau M)_{\tau A\oplus I}|u_{A}\star u_{D\oplus I[-1]},
\end{split}\end{equation}
where each $A$ has the same maximal projective direct summand as $M$, and each $I\in\I_{\widetilde{\A}}$.
\end{proposition}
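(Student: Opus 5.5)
The plan is to reverse the derivation in the proof of Theorem \ref{zhihe}, now inside $\mathcal {D}\mathcal {H}_\Lambda^{{cl}}(\widetilde{\A})$ instead of $\mathcal{T}_\Lambda$. There are two inputs. The first is the Hall product \eqref{lgx3}, which holds in $\mathcal {D}\mathcal {H}_\Lambda^{ec}(\widetilde{\A})$ and hence in every quotient. The second is the defining relation of $\mathfrak{I}$, which says that in $\mathcal {D}\mathcal {H}_\Lambda^{{cl}}(\widetilde{\A})$
\begin{equation*}
u_{M\oplus N}=\sum\limits_{[D],[A],[I]}v^{\Lambda(({\bf m}+{\bf n})^*,{\bf a}^*)+\lr{{\bf m}-{\bf a},{\bf n}}}|_D\Hom_{\widetilde{\A}}(N,\tau M)_{\tau A\oplus I}|\,u_{A}\star u_{D\oplus I[-1]}.
\end{equation*}

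First, I rewrite \eqref{lgx3} using $\lr{{\bf m},{\bf n}}=\dim\Hom-[M,N]^1$. This gives
\begin{equation*}
q^{[M,N]^1}u_M\star u_N=v^{\Lambda({\bf m}^*,{\bf n}^*)}\sum_{[E]}|\Ext^1_{\widetilde{\A}}(M,N)_E|u_E,
\end{equation*}
which is exactly the Hall analogue of \eqref{diergx}. The $[E]=[M\oplus N]$ summand has coefficient $1$, since only the zero extension yields the split object. Separating it off, I get
\begin{equation*}
(q^{[M,N]^1}-1)u_M\star u_N=v^{\Lambda({\bf m}^*,{\bf n}^*)}\sum_{[E]\neq[M\oplus N]}|\Ext^1(M,N)_E|u_E+\big(v^{\Lambda({\bf m}^*,{\bf n}^*)}u_{M\oplus N}-u_M\star u_N\big).
\end{equation*}

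Next, I substitute the $\mathfrak{I}$ relation for $u_{M\oplus N}$ in the bracket. I isolate the term coming from the zero morphism $N\to\tau M$, which is $D=N$, $I=0$, $A=M$ with coefficient $1$. I then check that $v^{\Lambda({\bf m}^*,{\bf n}^*)}\cdot v^{\Lambda(({\bf m}+{\bf n})^*,{\bf m}^*)}u_M\star u_N=u_M\star u_N$, which holds since $\Lambda(({\bf m}+{\bf n})^*,{\bf m}^*)=\Lambda({\bf n}^*,{\bf m}^*)$ by skew-symmetry. So this term cancels the $-u_M\star u_N$. I also need that $D\cong N$ forces the morphism to be zero; this is clear, because $\Ker\theta\cong N$ with $N$ finite-dimensional gives $\theta=0$. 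Hence the remaining sum runs exactly over $[D]\neq[N]$.

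The last step is matching exponents. For the remaining terms I need
\begin{equation*}
\Lambda({\bf m}^*,{\bf n}^*)+\Lambda(({\bf m}+{\bf n})^*,{\bf a}^*)=\Lambda(({\bf m}-{\bf a})^*,({\bf n}+{\bf a})^*),
\end{equation*}
and this follows from bilinearity and skew-symmetry, since $\Lambda({\bf a}^*,{\bf a}^*)=0$. The Euler-form part $\lr{{\bf m}-{\bf a},{\bf n}}$ passes through unchanged. The only potential obstacle is this sign and exponent bookkeeping, in particular confirming that the exponent conventions in \eqref{ccfgs} agree with those in the definition of $\mathfrak{I}$. Since that computation is formally identical to the one in the proof of Theorem \ref{zhihe}, I expect no genuine difficulty.
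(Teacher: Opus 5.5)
Your proposal is correct and follows essentially the same route as the paper. You multiply the defining relation of $\mathfrak{I}$ by $v^{\Lambda({\bf m}^*,{\bf n}^*)}$, combine it with \eqref{lgx3} written as $q^{[M,N]^1}u_M\star u_N=v^{\Lambda({\bf m}^*,{\bf n}^*)}\sum_{[E]}|\Ext^1_{\widetilde{\A}}(M,N)_E|u_E$, split off the split extension, and cancel the zero-morphism term; your exponent identity $\Lambda({\bf m}^*,{\bf n}^*)+\Lambda(({\bf m}+{\bf n})^*,{\bf a}^*)=\Lambda(({\bf m}-{\bf a})^*,({\bf n}+{\bf a})^*)$ is exactly what the paper uses implicitly, and the comparison with \eqref{ccfgs} is unnecessary since the argument never uses it.
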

\begin{proof}
By the ideal $\mathfrak{I}$, we have
\begin{equation*}
\begin{split}
v^{\Lambda({\bf m}^*,{\bf n}^*)}u_{M\oplus N}=\sum\limits_{\begin{smallmatrix}[D],[A],[I]\end{smallmatrix}}v^{\Lambda(({\bf m}-{{\bf a}})^*,{({\bf n}+{\bf a})}^*)+\lr{{{\bf m}}-{{\bf a}},{\bf n}}}|_D\Hom_{\widetilde{\A}}(N,\tau M)_{\tau A\oplus I}|u_{A}\star u_{D\oplus I[-1]},\end{split}\end{equation*}
where each $A$ has the same maximal projective direct summand as $M$, and each $I\in\I_{\widetilde{\A}}$.
By \eqref{lgx3}, we have
\begin{equation}\label{hhmgx}
\begin{split}
&q^{[M,N]^1}u_M\star u_N=q^{\frac{1}{2}\Lambda({\bf m}^*,{\bf n}^*)}\sum\limits_{[E]}
|\Ext_{\widetilde{\A}}^1(M,N)_E|u_E\\
&=q^{\frac{1}{2}\Lambda({\bf m}^*,{\bf n}^*)}\sum\limits_{[E]\neq [M\oplus N]}
|\Ext_{\widetilde{\A}}^1(M,N)_E|u_E+\\&\sum\limits_{\begin{smallmatrix}[D],[A],[I]\end{smallmatrix}}q^{\frac{1}{2}\Lambda(({\bf m}-{{\bf a}})^*,{({\bf n}+{\bf a})}^*)+\frac{1}{2}\lr{{{\bf m}}-{{\bf a}},{\bf n}}}|_D\Hom_{\widetilde{\A}}(N,\tau M)_{\tau A\oplus I}|u_{A}\star u_{D\oplus I[-1]}.\end{split}\end{equation}
Note that for the zero morphism in $\Hom_{\widetilde{\A}}(N,\tau M)$, we have $D=N, A=M$ and $I=0$.
Then $v^{\Lambda(({\bf m}-{{\bf a}})^*,{({\bf n}+{\bf a})}^*)+\lr{{{\bf m}}-{{\bf a}},{\bf n}}}u_{A}\star u_{D\oplus I[-1]}=u_M\star u_N$.
Hence, moving the term corresponding to the zero morphism in $\Hom_{\widetilde{\A}}(N,\tau M)$ to the left hand side of \eqref{hhmgx}, we finish the proof.
\end{proof}

Using \cite[Lemma 7.3]{CDZ},  we reformulate Proposition \ref{hccfgs} as the following.
\begin{corollary}\label{maincor}
For any $M,N\in\widetilde{\A}$, we have the following equation in $\mathcal {D}\mathcal {H}_\Lambda^{{cl}}(\widetilde{\A}):$
\begin{equation*}
\begin{split}
&(q^{[M,N]^1}-1)u_M\star u_N=q^{\frac{1}{2}\Lambda({\bf m}^*,{\bf n}^*)}\sum\limits_{[E]\neq [M\oplus N]}
|\Ext_{\widetilde{\A}}^1(M,N)_E|u_E+\\&q^{\frac{1}{2}\Lambda({\bf m}^*,{\bf n}^*)+\frac{1}{2}\lr{{\bf m},{\bf n}}}\sum\limits_{\begin{smallmatrix}[D],[A],[I]\\ [D]\neq [N]\end{smallmatrix}}q^{-\frac{1}{2}\lr{{\bf a},{\bf d}-{\bf i}}-\frac{1}{2}\Lambda({\bf a}^*,({\bf d}-{\bf i})^*)}|_D\Hom_{\widetilde{\A}}(N,\tau M)_{\tau A\oplus I}|u_{A}\star u_{D\oplus I[-1]},
\end{split}\end{equation*}
where each $A$ has the same maximal projective direct summand as $M$, and each $I\in\I_{\widetilde{\A}}$.
\end{corollary}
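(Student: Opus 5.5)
The plan is to derive Corollary \ref{maincor} directly from Proposition \ref{hccfgs}. The first sum and the coefficients $|{}_D\Hom_{\widetilde{\A}}(N,\tau M)_{\tau A\oplus I}|$ are identical in the two statements. So the only work is to rewrite, term by term, the exponent
\[
\tfrac12\Lambda(({\bf m}-{\bf a})^*,({\bf n}+{\bf a})^*)+\tfrac12\lr{{\bf m}-{\bf a},{\bf n}}
\]
appearing in Proposition \ref{hccfgs} as
\[
\tfrac12\Lambda({\bf m}^*,{\bf n}^*)+\tfrac12\lr{{\bf m},{\bf n}}-\tfrac12\lr{{\bf a},{\bf d}-{\bf i}}-\tfrac12\Lambda({\bf a}^*,({\bf d}-{\bf i})^*),
\]
for every triple $([D],[A],[I])$ with ${}_D\Hom_{\widetilde{\A}}(N,\tau M)_{\tau A\oplus I}\neq\emptyset$. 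This is exactly the content I expect \cite[Lemma 7.3]{CDZ} to supply, namely an identity among dimension vectors and bilinear forms valid whenever the exact sequence \eqref{zhxl} exists.

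The key input is the dimension-vector relation coming from \eqref{zhxl}: ${\bf d}-{\bf n}+\tau{\bf m}-\tau{\bf a}-{\bf i}=0$. Write $A=A'\oplus P'$ and $M=M'\oplus P'$. Then $\tau{\bf m}-\tau{\bf a}=\tau({\bf m}'-{\bf a}')$, and since $M',A'$ have no projective summands, the Coxeter transformation gives $\tau{\bf x}=\Phi{\bf x}$ on these. Combined with $^\ast(\tau{\bf x})=-{\bf x}^\ast$ (used in the proof of Corollary \ref{1.21}), we get
\[
{}^\ast({\bf d}-{\bf i})={}^\ast{\bf n}+({\bf m}-{\bf a})^\ast .
\]

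Next I would expand $\Lambda(({\bf m}-{\bf a})^*,({\bf n}+{\bf a})^*)$ bilinearly and subtract the target expression. The difference reduces to terms of the form $\Lambda({\bf a}^*,({\bf d}-{\bf i})^*)-\Lambda({\bf a}^*,{\bf n}^*)-\Lambda({\bf m}^*,{\bf a}^*)+\ldots$ together with Euler-form terms. I would convert every $\Lambda(\cdot^\ast,\cdot^\ast)$ involving a ${}^\ast$-vector using ${\bf x}^\ast={}^\ast{\bf x}+B(\widetilde Q){\bf x}$ and Lemma \ref{sjishu}(1)--(3). This turns each mismatch between ${}^\ast$ and $\cdot^\ast$ into Euler forms, and Lemma \ref{sjishu}(4) handles the pure $\Lambda({}^\ast\cdot,{}^\ast\cdot)$ pieces. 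Substituting ${}^\ast({\bf d}-{\bf i})$ from the previous paragraph should make all $\Lambda$-terms cancel. What remains is an Euler-form identity: $\lr{{\bf m}-{\bf a},{\bf n}}$ should equal $\lr{{\bf m},{\bf n}}-\lr{{\bf a},{\bf d}-{\bf i}}$ plus correction terms of the shape $\lr{{\bf a},{\bf m}-{\bf a}}$. These corrections are absorbed via $\lr{{\bf x},\tau{\bf y}}=-\lr{{\bf y},{\bf x}}$ for ${\bf y}$ without projective part, applied with ${\bf y}={\bf m}-{\bf a}$.

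The main obstacle is bookkeeping the sign conventions in these conversions, especially the $\Lambda({\bf a}^*,{\bf a}^*)=0$ cancellations and the correct use of $\tau$ on ${\bf m}-{\bf a}$ when projective summands are present. This is why it matters that $A$ and $M$ share the same maximal projective summand $P'$, so that ${\bf m}-{\bf a}={\bf m}'-{\bf a}'$ lies in the range where $\tau$ acts as the Coxeter matrix. Once the exponent identity is established, factoring $q^{\frac12\Lambda({\bf m}^*,{\bf n}^*)+\frac12\lr{{\bf m},{\bf n}}}$ out of the second sum in Proposition \ref{hccfgs} gives the stated formula.
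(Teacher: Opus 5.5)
Your proposal is correct and takes the paper's route: the paper obtains Corollary \ref{maincor} from Proposition \ref{hccfgs} by exactly this term-by-term rewriting of the exponent, citing \cite[Lemma 7.3]{CDZ} for the identity rather than proving it. Your own derivation via ${}^\ast({\bf d}-{\bf i})={}^\ast{\bf n}+({\bf m}-{\bf a})^\ast$, ${\bf x}^\ast={}^\ast{\bf x}+B(\widetilde{Q}){\bf x}$ and Lemma \ref{sjishu}(2) goes through, and in fact the $\lr{{\bf a},{\bf d}-{\bf i}}$ terms cancel outright, so no further Euler-form manipulation is needed.
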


\begin{corollary}\label{mutatione}
Let $M,N\in\widetilde{\A}$ with a unique (up to scalar) non-trivial extension $E\in\Ext_{\widetilde{\A}}^1(M,N)$; in particular $\dim_{{\rm End}\,(M)}\Ext_{\widetilde{\A}}^1(M,N)=1$.
Let $\theta\in\Hom_{\widetilde{\A}}(N,\tau M)$ be the equivalent morphism with $A,D,I$ as before. Then we have in $\mathcal {D}\mathcal {H}_\Lambda^{{cl}}(\widetilde{\A}):$
\begin{equation}
u_M\star u_N=q^{\frac{1}{2}\Lambda({\bf m}^*,{\bf n}^*)}u_E+q^{\frac{1}{2}\Lambda({\bf m}^*,{\bf n}^*)+\frac{1}{2}\lr{{\bf m},{\bf n}}-\frac{1}{2}\lr{{\bf a},{\bf d}-{\bf i}}-\frac{1}{2}\Lambda({\bf a}^*,({\bf d}-{\bf i})^*)}u_{A}\star u_{D\oplus I[-1]}.
\end{equation}
In particular, if $\Hom_{\widetilde{\A}}(A,I)=0=\Ext^1_{\widetilde{\A}}(A,D)$, then
\begin{equation}
u_M\star u_N=q^{\frac{1}{2}\Lambda({\bf m}^*,{\bf n}^*)}u_E+q^{\frac{1}{2}\Lambda({\bf m}^*,{\bf n}^*)+\frac{1}{2}\lr{{\bf m},{\bf n}}-\frac{1}{2}\lr{{\bf a},{\bf d}}} u_{A\oplus D\oplus I[-1]}.
\end{equation}
\end{corollary}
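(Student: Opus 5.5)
The plan is to specialize Corollary \ref{maincor} to the case where $\Ext_{\widetilde{\A}}^1(M,N)$ is one-dimensional over $\End(M)$, and then simplify both sides.

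First, the left-hand side and the first sum on the right. Write $d=\dim_{\mathbb{F}_q}\End(M)$ in the appropriate sense, so that $|\Ext^1_{\widetilde{\A}}(M,N)|=q^{[M,N]^1}$. By hypothesis every nonzero element of $\Ext^1_{\widetilde{\A}}(M,N)$ has middle term $E$, and the zero element gives $M\oplus N$. Hence $|\Ext^1_{\widetilde{\A}}(M,N)_E|=q^{[M,N]^1}-1$ and no other $[E']\neq[M\oplus N]$ occurs. The first sum on the right of Corollary \ref{maincor} is therefore $q^{\frac12\Lambda(\mathbf m^*,\mathbf n^*)}(q^{[M,N]^1}-1)u_E$.

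Next, the second sum. By the Auslander--Reiten formula, $\Ext^1(M,N)\cong {\rm D}\Hom(N,\tau M)$ as $\End(M)$-modules, up to the projective-summand issue handled by $\tau$. So $\Hom_{\widetilde{\A}}(N,\tau M)$ is also one-dimensional over the corresponding division algebra. Consequently every nonzero $\theta$ is a unit multiple of the distinguished one. All such $\theta$ have isomorphic kernel and cokernel, namely $D$ and $\tau A'\oplus I$. Hence $|{}_D\Hom(N,\tau M)_{\tau A\oplus I}|=q^{[M,N]^1}-1$ for this single triple, and the count vanishes for all other triples with $[D]\neq[N]$. The zero morphism is the excluded term $D=N$. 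Here I must check that $\theta\neq0$ forces $D\not\cong N$. This is clear since $\dim D<\dim N$ when $\theta\ne 0$. I also use that $|\Hom(N,\tau M)|=|\Ext^1(M,N)|$ via the duality.

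Dividing both sides by the nonzero scalar $q^{[M,N]^1}-1$ (we are over $\mathbb{Q}(v)$ and the extension is nonzero) then yields the first displayed formula.

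For the particular case, I apply \eqref{lgx6}/Proposition \ref{dhallc}-type computations in $\mathcal{D}\mathcal{H}^{ec}_\Lambda$, whose proof works at Hall-algebra level. The product $u_A\star u_{D\oplus I[-1]}$ equals $v^{\Lambda(\mathbf a^*,(\mathbf d-\mathbf i)^*)}\sum q^{\lr{\mathbf s,\mathbf d}-\lr{\mathbf a,\mathbf i}}|{}_S\Hom(A,I)_R|H^L_{SD}u_{L\oplus R[-1]}$. If $\Hom(A,I)=0$, only $S=A$, $R=I$ survives, with count $1$. If moreover $\Ext^1(A,D)=0$, then $H^L_{AD}=\delta_{L,A\oplus D}/|\Hom(A,D)|$. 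The product thus collapses to a scalar times $u_{A\oplus D\oplus I[-1]}$, with exponent
\[
\tfrac12\Lambda(\mathbf a^*,(\mathbf d-\mathbf i)^*)+\lr{\mathbf a,\mathbf d}-\lr{\mathbf a,\mathbf i}-\dim\Hom(A,D).
\]
Using $\lr{\mathbf a,\mathbf d}=\dim\Hom(A,D)$ and $\lr{\mathbf a,\mathbf i}=\dim\Hom(A,I)=0$, the exponent reduces to $\frac12\Lambda(\mathbf a^*,(\mathbf d-\mathbf i)^*)$. Substituting this into the first formula cancels the $\Lambda(\mathbf a^*,(\mathbf d-\mathbf i)^*)$ term. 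Writing the remaining exponent $-\frac12\lr{\mathbf a,\mathbf d-\mathbf i}$ as $-\frac12\lr{\mathbf a,\mathbf d}$ (since $\lr{\mathbf a,\mathbf i}=0$) gives the stated expression.

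The main obstacle is the second step: justifying rigorously that the one-dimensionality of $\Ext^1$ transfers to the count of $\Hom(N,\tau M)$ with fixed kernel and cokernel. This requires care with the projective summand $P'$ of $M$, since $\tau P'=0$ and the AR duality is $\Ext^1(M,N)\cong{\rm D}\Hom(N,\tau M)$ only modulo this. I would handle it by noting that $\Ext^1(P',N)=0$, so the duality holds exactly. The fact that the nonzero morphisms form a single orbit under $\End(\tau M)^\times$ then gives isomorphic kernels and cokernels.
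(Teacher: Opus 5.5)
Your proposal is correct and takes the paper's intended route. The paper gives no separate proof; the corollary is meant as a direct specialization of Corollary~\ref{maincor}. In that specialization, both $|\Ext^1_{\widetilde{\A}}(M,N)_E|$ and $|{}_D\Hom_{\widetilde{\A}}(N,\tau M)_{\tau A\oplus I}|$ equal $q^{[M,N]^1}-1$; you then divide by this scalar, and for the special case collapse $u_A\star u_{D\oplus I[-1]}$ to $q^{\frac{1}{2}\Lambda({\bf a}^*,({\bf d}-{\bf i})^*)}u_{A\oplus D\oplus I[-1]}$ using $\Hom_{\widetilde{\A}}(A,I)=0=\Ext^1_{\widetilde{\A}}(A,D)$. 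The paper takes the uniqueness of $\theta$ for granted, whereas you justify it via Auslander--Reiten duality; the one point you use tacitly is that a non-split extension has middle term $E\not\cong M\oplus N$.
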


\subsection{A spanning set of the Hall algebra $\mathcal {D}\mathcal {H}_\Lambda^{{cl}}(\widetilde{\A})$}
In this subsection, let us give the following spanning set of the Hall algebra $\mathcal {D}\mathcal {H}_\Lambda^{{cl}}(\widetilde{\A})$.
\begin{proposition}\label{prop:span-set}
The algebra $\mathcal {D}\mathcal {H}_\Lambda^{{cl}}(\widetilde{\A})$ is spanned by the elements in
\begin{flalign*}\mathfrak{\widetilde{B}}:=&\{u_{M\oplus I[-1]}~|~I\in\I_{\widetilde{\A}},~M=M_1\oplus \cdots \oplus M_t\in\widetilde{\A},~M_1,\ldots,M_t~\text{are~indecomposable}\\
&\text{such that}~\Ext^1_{\widetilde{\A}}(M_i,M_j)=0~\text{for~any}~ i\neq j, t\in\mathbb{N}\,  \text{and}~\Hom_{\widetilde{\A}}(M,I)=0\}.\end{flalign*}
\end{proposition}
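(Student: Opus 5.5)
We first reduce to elements of the form $u_{M\oplus I[-1]}$, then run an induction that removes self-extensions and nonzero maps to $I$.

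\textbf{Step 1: reduction to $u_{M\oplus I[-1]}$.} The algebra $\mathcal{D}\mathcal{H}_\Lambda^{cl}(\widetilde{\A})$ is a quotient of $\mathcal{D}\mathcal{H}_\Lambda^{ec}(\widetilde{\A})$, which is spanned by the $u_{I[-1]\oplus M\oplus P[1]}$. Write $P=\nu^{-1}(J)$. By \eqref{lgx5}, $u_{M\oplus I[-1]\oplus P[1]}$ is a scalar multiple of $u_{M\oplus I[-1]}\star u_{P[1]}$. In $\mathcal{D}\mathcal{H}_\Lambda^{cl_1}(\widetilde{\A})$ we have $u_{P[1]}=u_{J[-1]}$. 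By \eqref{lgx4} and \eqref{lgx1}, the product $u_{M\oplus I[-1]}\star u_{J[-1]}$ is a scalar multiple of $u_{I[-1]}\star u_M\star u_{J[-1]}$. Expanding $u_M\star u_{J[-1]}$ by \eqref{lgx6} and recombining with \eqref{lgx1} and \eqref{lgx4} gives a linear combination of elements $u_{G\oplus I''[-1]}$ with $G\in\widetilde{\A}$ and $I''\in\I_{\widetilde{\A}}$. Hence the algebra is spanned by the set $\{u_{M\oplus I[-1]}\}$.

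\textbf{Step 2: the induction.} We induct on a well-founded order on pairs $(M,I)$. First order by $\dim M+\dim I$ (total dimension vector), descending. Then, at fixed size, order by the degeneration order on $M$, meaning the dimension of the orbit in the representation variety, or equivalently $\dim\End(M)$, with larger orbits counting as smaller. Two cases need to be reduced.

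\emph{Case (a): $\Hom(M,I)\neq 0$ with $I\neq0$.} By \eqref{lgx4}, $u_{M\oplus I[-1]}$ is proportional to $u_{I[-1]}\star u_M$. Relation \eqref{lgx6} rewrites $u_M\star u_{I[-1]}$ as $q^{c}u_{I[-1]}\star u_M$, using the zero-morphism term $G=M$, $I'=I$, plus terms $u_{I'[-1]}\star u_G$ coming from nonzero $f$, which satisfy $\dim G+\dim I'<\dim M+\dim I$. This would let us exchange $u_{I[-1]}\star u_M$ for smaller terms, but only if we also know $u_M\star u_{I[-1]}$ in terms of smaller things, so this route is circular. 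I would instead use the quotient relation $u_{P[1]}=u_{I[-1]}$ together with Theorem \ref{dyggs}, lifted to the Hall algebra as in \cite{CDZ}. That identity expresses $(q^{\lr{{\bf p},{\bf m}}}-1)u_{P[1]}\star u_M$, with $\lr{{\bf p},{\bf m}}=\dim\Hom(M,I)>0$, through terms with $P'\ncong P$ and $I'\ncong I$, all of strictly smaller total dimension. When $\Hom(M,I)\neq0$ the coefficient is invertible, so $u_{I[-1]}\star u_M$, and hence $u_{M\oplus I[-1]}$, lies in the span of smaller elements.

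\emph{Case (b): $\Hom(M,I)=0$ but $\Ext^1(M_i,M_j)\neq0$ for some $i\neq j$.} Set $M=M_i\oplus N'$ with $N'$ containing $M_j$ and $\Ext^1(M_i,N')\neq0$. By definition of $\mathfrak{I}$ (the formula of Theorem \ref{zhihe} in the quotient), $u_{M_i\oplus N'}$ equals the zero-morphism term $u_{M_i}\star u_{N'}$, which is a scalar times $u_{M}$, plus terms $u_A\star u_{D\oplus I[-1]}$ with $\dim D<\dim N'$. That identity alone is tautological, so I would instead use Proposition \ref{hccfgs}. It gives $(q^{[M_i,N']^1}-1)u_{M_i}\star u_{N'}$ as a combination of $u_E$ with $E\ncong M_i\oplus N'$, which are proper degenerations of larger orbit, hence smaller in the order, plus terms $u_A\star u_{D\oplus I[-1]}$ with $D\ncong N'$ that have smaller total dimension after expansion. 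Since $[M_i,N']^1>0$, we can solve for $u_{M_i}\star u_{N'}\propto u_M$. Finally we multiply back by $u_{I[-1]}$ and re-expand with \eqref{lgx6}/\eqref{lgx4}, keeping track that the order still drops.

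\textbf{Main obstacle.} The hardest part is making the induction order strictly decrease across all these re-expansions. In particular, when $u_{A}\star u_{D\oplus I[-1]}$ is expanded via Proposition \ref{dhallc}-type formulas, the terms $u_{L\oplus R[-1]}$ can have the same total dimension as the original. I expect to need the invariant ${\bf m}-{\bf i}$, the $g$-vector-type quantity from Lemma \ref{lem:pointed-vector-iso}, combined with total dimension, checking in each case that either the total dimension drops or it stays equal while the orbit strictly grows.
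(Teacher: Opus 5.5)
Your reduction moves are the right ones and match the paper's. Step~1 is its Claim~1. Your case~(b) is its Claim~2: the $\mathfrak{I}$-relation together with \eqref{lgx3}, which is what Proposition~\ref{hccfgs} packages. Your case~(a) is its Claim~3: the identity behind Theorem~\ref{dyggs}, obtained by expanding with \eqref{lgx7} and \eqref{lgx6} and using $u_{P[1]}=u_{I[-1]}$.

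\textbf{The gap is termination of the induction.} You leave it open, and the measure you propose, $\dim M+\dim I$, can go up under both reductions.

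\emph{Case~(a).} A term $u_{F\oplus P'[1]}$ with $P'\subsetneq P$ must be rewritten via $u_{P'[1]}=u_{\nu(P')[-1]}$ before your hypothesis on pairs $(M,I)$ applies. But $\dim\nu(P')$ is not bounded by $\dim\nu(P)$. Take $\widetilde{Q}\colon 3\to1\to2\leftarrow4$, $M=S_1$, $I=I_1$, so $P=P_1$ and $\dim M+\dim I=3$. The surjection $P_1\to S_1$ contributes $u_{P_2[1]}=u_{I_2[-1]}$, and $\dim I_2=4$.

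\emph{Case~(b).} The injective summand of $\Coker\theta$ is a quotient of $\tau M$, whose dimension is not controlled by $\dim M$. For the Kronecker quiver with principal coefficients ($3\to1$, $4\to2$), $u_{S_1\oplus S_2}$ produces the term $u_{(\tau S_1/S_2)[-1]}$ of Lemma~\ref{lem:gen-rel-quasi-commutation}(2). Here $\tau S_1/S_2$ has dimension vector $(3,1,3,2)$, total $9$, against $\dim(S_1\oplus S_2)=2$.

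So your ``strictly smaller total dimension'' claims are false in both cases. Adding an extra invariant such as ${\bf m}-{\bf i}$ does not fix this.

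\textbf{The missing idea is that the injective part is inert and should not be measured at all.} By \eqref{lgx1} and \eqref{lgx4}, $u_{J[-1]}\star u_{M\oplus I[-1]}$ is a nonzero multiple of $u_{M\oplus(I\oplus J)[-1]}$. So only $d_M=\dim M$ matters, and the two reductions should be run as separate inductions.

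\emph{First induction (the paper's Claim~2).} With no Hom hypothesis at all, show that every $u_{G\oplus J[-1]}$ is a combination of elements $u_{M\oplus I[-1]}$ whose indecomposable summands are pairwise Ext-orthogonal and with $d_M\le d_G$. Induct lexicographically on $(d_G,\dim\Ext^1(G,G))$. Nonsplit extensions $L$ have $d_L=d_G$ and smaller self-extension. The terms coming from $\theta\neq0$ have module part $L$ with $d_L\le d_A+d_D<d_G$, whatever injective part they carry.

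\emph{Second induction (the paper's Claim~3).} Induct on $d_M$ alone. Expand $u_{P[1]}\star u_M$ into elements $u_{F\oplus P'[1]}$, and each of these into elements $u_{G\oplus I''[-1]}$. The only term with $d_G=d_M$ comes from two zero morphisms and has coefficient $q^{-\lr{{\bf m},{\bf i}}}\neq1$, so you can solve for it. Every other term has $d_G<d_M$, and Claim~2 never raises $d$.

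Incidentally, the $\mathfrak{I}$-relation is not tautological. When $\Ext^1(M_i,N')\neq0$, $u_{M_i}\star u_{N'}$ is not proportional to $u_{M_i\oplus N'}$. The zero-morphism term is $q^{-[M_i,N']^1}\sum_{[L]}|\Ext^1(M_i,N')_L|\,u_L$, and this is exactly how the paper uses the relation.
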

\begin{proof}
Set $\mathfrak{\widetilde{B}}_1:=\{u_{M\oplus I[-1]}~|~I\in\I_{\widetilde{\A}},~M\in\widetilde{\A}\}$.
Firstly, let us prove the first claim.

\noindent{\bf Claim 1:} The algebra $\mathcal {D}\mathcal {H}_\Lambda^{{cl}}(\widetilde{\A})$ is spanned by the elements in $\mathfrak{\widetilde{B}}_1$.

For any $M\in\widetilde{\A}$, $I\in\I_{\widetilde{\A}}$ and $P\in\P_{\widetilde{\A}}$, according to the proof of \cite[Corollary 4.12]{CDZ}, in $\mathcal {D}\mathcal {H}_\Lambda^{{cl}}(\widetilde{\A})$ we have
$$u_{I[-1]\oplus M\oplus P[1]}=v^{\Lambda({\bf i}^\ast,({\bf m}-{\bf p})^\ast)}u_{I[-1]}\star u_{M\oplus P[1]}.$$
By \cite[Lemma 5.1]{CDZ}, in $\mathcal {D}\mathcal {H}_\Lambda^{{cl}}(\widetilde{\A})$ we have
$$u_{M\oplus P[1]}=v^{-\lr{{\bf m},{\bf j}}}\sum\limits_{[G],[I']}|{}_{G}\Hom_{\widetilde{\A}}(M,J)_{I'}|u_{G\oplus I'[-1]},$$
where $J=\nu(P)\in\I_{\widetilde{\A}}$. Then using \eqref{lgx4} and \eqref{lgx1}, we obtain
$$u_{I[-1]\oplus M\oplus P[1]}=v^{-\lr{{\bf m},{\bf j}}}\sum\limits_{[G],[I']}v^{\Lambda({\bf i}^\ast,({\bf m}-{\bf p}+{\bf i}'-{\bf g})^\ast)}|{}_{G}\Hom_{\widetilde{\A}}(M,J)_{I'}|u_{G\oplus (I\oplus I')[-1]}.$$
Thus, we finish the proof of Claim $1$.

For any $G\in\widetilde{\A}$, set $d_G={\rm dim}_{\mathbb{F}_q}\,G$ and $l_G={\rm dim}_{\mathbb{F}_q}\,\Ext_{\widetilde{\A}}^1(G,G)$.
We order the pairs of nonnegative integers $(d,l)$ lexicographically, i.e.
\begin{equation}\label{order}(d,l)<(d^{\prime},l^{\prime})\iff d<d^{\prime} \text{ or } d=d' \text{ and } l < l^{\prime}.\end{equation}
\begin{flalign*}\text{Set}~\mathfrak{\widetilde{B}}_2:=&\{u_{M\oplus I[-1]}~|~I\in\I_{\widetilde{\A}},~M=M_1\oplus \cdots \oplus M_t\in\widetilde{\A},~M_1,\ldots,M_t~\text{are~indecomposable}\\
&\text{such that}~\Ext^1_{\widetilde{\A}}(M_i,M_j)=0~\text{for~any}~ i\neq j\}.\end{flalign*}
Secondly, let us prove the second claim.

\noindent{\bf Claim 2:} For any $G\in \widetilde{\A}$ and $J\in \mathcal{I}_{\widetilde{\A}}$, $u_{G\oplus J[-1]}\in \mathcal {D}\mathcal {H}_\Lambda^{{cl}}(\widetilde{\A})$ is a $\mathbb{Q}(v)$-linear combination of the elements $u_{M\oplus I[-1]}$ in $\mathfrak{\widetilde{B}}_2$ with $d_M\leq d_G$.

By \eqref{lgx1} and \eqref{lgx4}, it suffices to prove that $u_G$ is a $\mathbb{Q}(v)$-linear combination of the elements $u_{M\oplus I[-1]}$ in $\mathfrak{\widetilde{B}}_2$ with $d_M\leq d_G$ for any $G\in\widetilde{\A}$.

We use the order in \eqref{order} to proceed induction on $(d_G,l_G)$ to prove Claim $2$. The claim holds trivially if $G \cong 0$, so assume $G \not\cong 0$.
Let $G=M_1\oplus \cdots \oplus M_t\in\widetilde{\A}$ such that $M_1,\ldots,M_t$~{are~indecomposable}.

Suppose that $t=1$, i.e. $G$ is indecomposable, then $u_G\in\mathfrak{\widetilde{B}}_2$.

Suppose that $t>1$ and $\Ext^1_{\widetilde{\A}}(M_i,M_j)\neq0$ for some $1\leq i\neq j\leq t$. Let us set $M=\bigoplus\limits_{1\leq k\neq j\leq t}M_k~\text{and}~N=M_j.$
Then in $\mathcal {D}\mathcal {H}_\Lambda^{{cl}}(\widetilde{\A})$ we have
\begin{flalign*}
u_G=u_{M\oplus N}&=\sum\limits_{\begin{smallmatrix}[D],[A],[I]\end{smallmatrix}}v^{\Lambda(({\bf m}+{\bf n})^*,{\bf a}^*)+\lr{{\bf m}-{\bf a},{\bf n}}}|_D\Hom_{\widetilde{\A}}(N,\tau M)_{\tau A\oplus I}|u_{A}\star u_{D\oplus I[-1]}\\
&=q^{-[M,N]^1}u_{M\oplus N}+q^{-[M,N]^1}\sum\limits_{[L]:~l_L<l_{G}}|\Ext^1_{\widetilde{\A}}(M,N)_L|u_L+\\
&\sum\limits_{\begin{smallmatrix}[D],[A],[I],[L],[R]\\ d_D<d_N,d_A<d_M\end{smallmatrix}}v^{x_0+\Lambda({\bf r}^\ast,{\bf l}^\ast)}|_D\Hom_{\widetilde{\A}}(N,\tau M)_{\tau A\oplus I}|H_{A,D\oplus I[-1]}^{L\oplus R[-1]}u_{R[-1]}\star u_{L}
\end{flalign*}
where $x_0=\Lambda(({\bf m}+{\bf n})^*,{\bf a}^*)+\Lambda({\bf a}^\ast,({\bf d}-{\bf i})^\ast)+\lr{{\bf m}-{\bf a},{\bf n}}+2\lr{{\bf a},{\bf d}-{\bf i}}.$
Thus, we get \begin{flalign*}
u_{G}&=
\frac{q^{-[M,N]^1}}{1-q^{-[M,N]^1}}\sum\limits_{[L]:~l_L<l_{G}}|\Ext^1_{\widetilde{\A}}(M,N)_L|u_L+\\&\frac{1}{1-q^{-[M,N]^1}}
\sum\limits_{\begin{smallmatrix}[D],[A],[I],[L],[R]\\ d_D<d_N,d_A<d_M\end{smallmatrix}}v^{x_0+\Lambda({\bf r}^\ast,{\bf l}^\ast)}|_D\Hom_{\widetilde{\A}}(N,\tau M)_{\tau A\oplus I}|H_{A,D\oplus I[-1]}^{L\oplus R[-1]}u_{R[-1]}\star u_{L},
\end{flalign*}
where $d_L\leq d_G$ and $(d_L,l_L)<(d_{G},l_{G})$ for each $L$ in both sums as above.
Hence, by induction, we finish the proof of Claim $2$.

In order to finish the proof of Proposition \ref{prop:span-set}, by Claim $1$ and Claim $2$, it suffices to show the following third claim.

\noindent{\bf Claim 3:} Each $u_{M\oplus I[-1]}\in \mathfrak{\widetilde{B}}_2$ is a $\mathbb{Q}(v)$-linear combination of the elements in $\mathfrak{\widetilde{B}}$.

For any $u_{M\oplus I[-1]}\in \mathfrak{\widetilde{B}}_2$, if $\Hom_{\widetilde{\A}}(M,I)=0$, then $u_{M\oplus I[-1]}\in \mathfrak{\widetilde{B}}$.
Hence, we assume that $\Hom_{\widetilde{\A}}(M,I)\neq 0$. We proceed the proof by induction on $d_M$.

By \cite[Lemma 5.1]{CDZ}, in $\mathcal {D}\mathcal {H}_\Lambda^{{cl}}(\widetilde{\A})$ we have
\begin{flalign*}
    u_{M\oplus I[-1]}&=v^{-\langle \mathbf{p},\mathbf{m}\rangle}\sum_{[F],[P']}|_{P'}\Hom_{\widetilde{\A}}(P,M)_F|u_{F\oplus P'[1]}\\
    &=\sum_{[F],[G],[P'],[I'']}v^{-\langle \mathbf{p},\mathbf{m}\rangle-\langle \mathbf{f},\mathbf{i}'\rangle}|_{P'}\Hom_{\widetilde{\A}}(P,M)_F|\cdot|_G\Hom_{\widetilde{\A}}(F,I')_{I''}|u_{G\oplus I''[-1]}\\
    &=q^{-\langle \mathbf{m},\mathbf{i}\rangle}u_{M\oplus I[-1]}+\\
    &\quad\sum_{\substack{[F],[G],[P'],[I'']\\ d_G<d_M}}v^{-\langle \mathbf{p},\mathbf{m}\rangle-\langle \mathbf{f},\mathbf{i}'\rangle}|_{P'}\Hom_{\widetilde{\A}}(P,M)_F|\cdot|_G\Hom_{\widetilde{\A}}(F,I')_{I''}|u_{G\oplus I''[-1]},
\end{flalign*}
where $P=\nu^{-1}I$ and $I'=\nu P'$. It follows that
\begin{equation}\label{eq:M-I-decreasing}
\begin{split}
   & u_{M\oplus I[-1]}=\\&\frac{1}{1-q^{-\langle \mathbf{m},\mathbf{i}\rangle}}\sum_{\substack{[F],[G],[P'],[I'']\\ d_G<d_M}}v^{-\langle \mathbf{p},\mathbf{m}\rangle-\langle \mathbf{f},\mathbf{i}'\rangle}
    |_{P'}\Hom_{\widetilde{\A}}(P,M)_F|\cdot|_G\Hom_{\widetilde{\A}}(F,I')_{I''}|u_{G\oplus I''[-1]}.
    \end{split}
\end{equation}
By applying Claim 2 to the terms $u_{G\oplus I''[-1]}$ in \eqref{eq:M-I-decreasing}, we obtain that each such term  is a $\mathbb{Q}(v)$-linear combination of the elements $u_{N\oplus J[-1]}$ in $\mathfrak{\widetilde{B}}_2$ with $d_N\leq d_G< d_M$. Then by induction, we obtain that each $u_{N\oplus J[-1]}$ is a $\mathbb{Q}(v)$-linear combination of the elements in $\mathfrak{\widetilde{B}}$. Therefore, we complete the proof.
\end{proof}

\begin{remark}
 The condition that $\Ext^1_{\widetilde{\A}}(M_i, M_j) = 0$ for any $i \neq j$ appearing in the spanning set $\mathfrak{\widetilde{B}}$ is intimately related to the theory of canonical decomposition of dimension vectors for hereditary algebras. By the fundamental results of Kac \cite{Kac} and Schofield \cite{Sc}, any dimension vector $\alpha$ of a quiver admits a unique canonical decomposition $\alpha = \sum \alpha_i$ such that a generic representation $M$ of the dimension vector $\alpha$ decomposes as $M \cong \bigoplus M_i$ with ${\bf dim}\, M_i = \alpha_i$ and $\Ext^1(M_i, M_j) = 0$ for any $i \neq j$.
\end{remark}
Let $\mathcal {D}\mathcal {H}_\Lambda^{{\tilde{c}l}}({\A})$ be the subspace of $\mathcal {D}\mathcal {H}_\Lambda^{{cl}}(\widetilde{\A})$ spanned by the elements $u_{I[-1]\oplus M\oplus P[1]}$ with $M\in\A$, $I\in\mathcal{I}_{\widetilde{\A}}$ and $P\in\mathcal{P}_{\widetilde{\A}}$. It is easy to see that the subcategory consisting of all the objects $I[-1]\oplus M\oplus P[1]$, where $M\in\A$, $I\in\mathcal{I}_{\widetilde{\A}}$ and $P\in\mathcal{P}_{\widetilde{\A}}$, is  closed under extensions in $\mathcal{D}^b(\widetilde{\A})$. Thus, $\mathcal {D}\mathcal {H}_\Lambda^{{\tilde{c}l}}({\A})$ is a subalgebra of $\mathcal {D}\mathcal {H}_\Lambda^{{cl}}(\widetilde{\A})$.
Using a similar proof as proving Proposition \ref{prop:span-set}, we have the following.
\begin{proposition}\label{sprop:span-set}
The algebra $\mathcal {D}\mathcal {H}_\Lambda^{{\tilde{c}l}}({\A})$ is spanned by the elements in
\begin{flalign*}\mathfrak{B}:=&\{u_{M\oplus I[-1]}~|~I\in\I_{\widetilde{\A}},~M=M_1\oplus \cdots \oplus M_t\in\A,~M_1,\ldots,M_t~\text{are~indecomposable}\\
&\text{such that}~\Ext^1_{\A}(M_i,M_j)=0~\text{for~any}~ i\neq j, t\in\mathbb{N}\, \text{and}~\Hom_{\widetilde{\A}}(M,I)=0\}.\end{flalign*}
\end{proposition}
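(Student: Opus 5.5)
I will mirror the three-claim structure of the proof of Proposition \ref{prop:span-set}, checking at each stage that every object produced by the relations stays in the extension-closed subcategory of objects $I[-1]\oplus M\oplus P[1]$ with $M\in\A$, $I\in\I_{\widetilde{\A}}$, $P\in\P_{\widetilde{\A}}$. The key fact I will use repeatedly is that $\A$ is closed under submodules, quotients and extensions in $\widetilde{\A}$. Hence kernels, cokernels and extensions of modules supported on $Q$ are again supported on $Q$.

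\textbf{Step 1} (analogue of Claim 1). For $M\in\A$, I rewrite $u_{I[-1]\oplus M\oplus P[1]}$ using \cite[Lemma 5.1]{CDZ}, \eqref{lgx4} and \eqref{lgx1}. This gives a combination of $u_{G\oplus (I\oplus I')[-1]}$ with $G$ the kernel of a map $M\to J=\nu(P)$. Since $G\subseteq M$, we have $G\in\A$. Thus $\mathcal {D}\mathcal {H}_\Lambda^{{\tilde{c}l}}({\A})$ is spanned by the elements $u_{M\oplus I[-1]}$ with $M\in\A$ and $I\in\I_{\widetilde{\A}}$.

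\textbf{Step 2} (analogue of Claim 2). Take $G=M_1\oplus\cdots\oplus M_t\in\A$. I induct on $(d_G,l_G)$ in the order \eqref{order}. If $\Ext^1_{\A}(M_i,M_j)\neq0$, which equals $\Ext^1_{\widetilde{\A}}(M_i,M_j)$ because $\A$ is extension closed, I apply the defining relation of $\mathfrak{I}$ with $M=\bigoplus_{k\neq j}M_k$ and $N=M_j$, exactly as before. I then need to check that the resulting terms lie in the subalgebra:
\begin{itemize}
\item the extensions $L$ of $M$ by $N$ lie in $\A$;
\item in $|_D\Hom_{\widetilde{\A}}(N,\tau M)_{\tau A\oplus I}|$, Remark \ref{zhuji}(2) shows that $A$ is a quotient of $M$ and $D$ a submodule of $N$, so $A,D\in\A$ (this is the observation already used in Corollary \ref{1.21});
\item only $I$ may leave $\A$, and $I$ sits in the shifted injective part, which is allowed.
\end{itemize}
Expanding $u_A\star u_{D\oplus I[-1]}$ via Proposition \ref{dhallc}/Proposition \ref{hallshujs} produces $u_{L\oplus R[-1]}$ with $L$ an extension of a submodule $S$ of $A$ by $D$. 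So $L\in\A$, and the induction closes within the subalgebra.

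\textbf{Step 3} (analogue of Claim 3). Here the expansion \eqref{eq:M-I-decreasing} passes through $u_{F\oplus P'[1]}$. In it, $F$ is the cokernel of a map $P\to M$ with $P=\nu^{-1}I\in\P_{\widetilde{\A}}$, so $F$ is a quotient of $M$ and lies in $\A$. Then $G$ is the kernel of $F\to I'$, so $G\subseteq F$ lies in $\A$. Combined with Step 2 and induction on $d_M$, every $u_{M\oplus I[-1]}$ with $M\in\A$ in the Step 2 spanning set is a combination of elements of $\mathfrak{B}$.

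The main obstacle is the bookkeeping in Steps 2 and 3: confirming that every intermediate module obtained from the Hall-number expansions and from \cite[Lemma 5.1]{CDZ} is a sub-quotient or extension of modules in $\A$. The only components allowed outside $\A$ are the (shifted) projectives and injectives. Once this is checked, the coefficient computations are identical to those in Proposition \ref{prop:span-set}.
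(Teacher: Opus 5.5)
Your proposal is correct and takes the same route as the paper, which proves this statement only by invoking ``a similar proof'' to Proposition~\ref{prop:span-set}. Your check that every intermediate object stays within $\A$ is exactly the verification that remark leaves implicit: $\A$ is closed under submodules, quotients and extensions in $\widetilde{\A}$, the lemma recalled in Remark~\ref{zhuji}(2) shows that $A$ is a quotient of $M$, and the shifted injectives/projectives are the only parts that may leave $\A$.
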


Define $\mathcal{A}\mathcal{H}_\Lambda(Q)$ to be the subalgebra of $\mathcal{T}_\Lambda$ generated by all the quantum cluster characters $X_M, X_{I[-1]}$ with $M\in\A, I\in\mathcal{I}_{\widetilde{\A}}$. By \cite{Rupel2}, the specialised quantum cluster algebra $\A_q(\Lambda,\widetilde{B})$ is the subalgebra of $\mathcal{T}_\Lambda$ generated by the quantum cluster characters $X_{M}, X_{I[-1]}$, where $M\in \A$ is indecomposable and rigid, and $I\in \mathcal{I}_{\widetilde{\A}}$ is indecomposable.

By considering the restriction of $\varphi$ on the subalgebra $\mathcal {D}\mathcal {H}_\Lambda^{{\tilde{c}l}}({\A})$, we obtain a surjective homomorphism of algebras $\varphi_0:\mathcal {D}\mathcal {H}_\Lambda^{{\tilde{c}l}}({\A})\to \mathcal{A}\mathcal{H}_\Lambda(Q)$ defined by
$\varphi(u_{M\oplus I[-1]})=X_{M\oplus I[-1]}$ for any $M\in\A$ and $I\in\I_{\widetilde{\A}}$.
\begin{corollary}
The algebra $\mathcal{A}\mathcal{H}_\Lambda(Q)$ is spanned by the elements
\begin{flalign*}&\{X_{M\oplus I[-1]}~|~I\in\I_{\widetilde{\A}},~M=M_1\oplus \cdots \oplus M_t\in\A,~M_1,\ldots,M_t~\text{are~indecomposable}\\
&\text{such that}~\Ext^1_{\A}(M_i,M_j)=0~\text{for~any}~ i\neq j, t\in\mathbb{N}\, \text{and}~\Hom_{\widetilde{\A}}(M,I)=0\}.\end{flalign*}
\end{corollary}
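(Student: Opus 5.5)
The plan is to push the spanning set of Proposition \ref{sprop:span-set} forward along the surjective algebra homomorphism $\varphi_0:\mathcal {D}\mathcal {H}_\Lambda^{{\tilde{c}l}}({\A})\to \mathcal{A}\mathcal{H}_\Lambda(Q)$. Since $\varphi_0$ is surjective, every element of $\mathcal{A}\mathcal{H}_\Lambda(Q)$ is the image of some element of $\mathcal {D}\mathcal {H}_\Lambda^{{\tilde{c}l}}({\A})$. That element is a $\mathbb{Q}(v)$-linear combination of elements $u_{M\oplus I[-1]}$ of $\mathfrak{B}$, and applying $\varphi_0$ gives the same linear combination of the $X_{M\oplus I[-1]}$. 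These are exactly the elements in the displayed set, so the corollary follows once the two ingredients below are in place.

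First, I would check that $\varphi_0$ is well defined and surjective. It is the restriction of $\varphi$ from Proposition \ref{prop:surjective-hom} to the subalgebra $\mathcal {D}\mathcal {H}_\Lambda^{{\tilde{c}l}}({\A})$, which is an algebra homomorphism. Its image contains $X_M=\varphi(u_M)$ for all $M\in\A$ and $X_{I[-1]}=\varphi(u_{I[-1]})$ for all $I\in\I_{\widetilde{\A}}$. These are the generators of $\mathcal{A}\mathcal{H}_\Lambda(Q)$, and since the image of an algebra homomorphism is a subalgebra, the image contains $\mathcal{A}\mathcal{H}_\Lambda(Q)$. Conversely, the image is contained in $\mathcal{A}\mathcal{H}_\Lambda(Q)$. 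Every basis element $u_{I[-1]\oplus M\oplus P[1]}$ of $\mathcal {D}\mathcal {H}_\Lambda^{{\tilde{c}l}}({\A})$ can be rewritten as a combination of products of $u_{I'[-1]}$ and $u_G$ with $G\in\A$, using Claim 1 in the proof of Proposition \ref{prop:span-set} together with \eqref{lgx4} and \eqref{lgx1}. When $M\in\A$, the modules $G$ appearing there are submodules of $M$ (kernels of maps $M\to J$), so they also lie in $\A$.

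Second, I would invoke Proposition \ref{sprop:span-set} for the spanning set $\mathfrak{B}$ and transport it through $\varphi_0$, as described above.

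The only delicate point is the support check in the first step: Claim 1 must produce only modules $G\in\A$ rather than arbitrary objects of $\widetilde{\A}$. Since $G$ is a submodule of $M$ and $\A$ is closed under submodules in $\widetilde{\A}$, this holds. Everything else is formal.
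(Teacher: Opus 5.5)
Your proposal is correct and is the paper's argument: the paper proves the corollary by applying the surjective homomorphism $\varphi_0$ to the spanning set $\mathfrak{B}$ of Proposition~\ref{sprop:span-set}. Your additional check that the image of $\varphi_0$ lies in $\mathcal{A}\mathcal{H}_\Lambda(Q)$ (via Claim~1, using that the kernels $G\subseteq M$ stay in $\A$) supplies a detail the paper only asserts when it introduces $\varphi_0$.
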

\begin{proof}
It is proved by Proposition \ref{sprop:span-set} together with the algebra homomorphism $\varphi_0$.
\end{proof}

\subsection{Realizing quantum cluster algebras via Hall algebras: Dynkin quivers}
First of all, for valued quivers of Dynkin types, we show that $\varphi_0$ is an algebra isomorphism in the following.

\begin{theorem}\label{thm:hall-quantum-dynkin}
Assume that $Q$ is of Dynkin type. Then we have the following:
    \begin{itemize}
        \item[(1)] The set $\{u_{M\oplus I[-1]}\mid I\in \mathcal{I}_{\widetilde{\A}}, M\in{\A}~\text{is rigid such that}~ \Hom_{\widetilde{\A}}(M,I)=0 \}$ is a $\mathbb{Q}(v)$-basis of $\mathcal {D}\mathcal {H}_\Lambda^{{\tilde{c}l}}({\A})$.
        \item[(2)]  The homomorphism $\varphi_0:\mathcal {D}\mathcal {H}_\Lambda^{{\tilde{c}l}}({\A})\to \mathcal{A}\mathcal{H}_\Lambda(Q)$ is an isomorphism of algebras.
    \end{itemize}
    Consequently, $\mathcal{A}\mathcal{H}_\Lambda(Q)=\A_q(\Lambda,\widetilde{B})$ and the set of quantum cluster monomials $$\{X_{M\oplus I[-1]}\mid I\in \mathcal{I}_{\widetilde{\A}}, M\in{\A}~\text{is rigid such that}~ \Hom_{\widetilde{\A}}(M,I)=0 \}$$ forms a basis of $\A_q(\Lambda,\widetilde{B})$.
\end{theorem}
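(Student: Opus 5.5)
The plan is to compare the spanning set of Proposition \ref{sprop:span-set} with the linearly independent set of quantum cluster monomials from Proposition \ref{xjlm}, using the surjection $\varphi_0$ to pass between them.

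\textbf{Step 1: the spanning set $\mathfrak{B}$ consists of rigid objects.} Since $Q$ is of Dynkin type, every indecomposable $M_i\in\A$ is rigid. Indeed, indecomposables of a representation-finite hereditary algebra are exceptional, so $\Ext^1_{\A}(M_i,M_i)=0$. Take $M=M_1\oplus\cdots\oplus M_t$ with $\Ext^1_{\A}(M_i,M_j)=0$ for $i\neq j$. Then $\Ext^1_{\A}(M,M)=0$, so $M$ is rigid. Moreover $\A$ is extension closed in $\widetilde{\A}$, so $\Ext^1_{\widetilde{\A}}(M,M)=\Ext^1_{\A}(M,M)=0$. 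Conversely, every rigid $M$ decomposes into indecomposables with vanishing mutual $\Ext^1$. Hence $\mathfrak{B}$ is exactly the set in (1), and by Proposition \ref{sprop:span-set} this set spans $\mathcal{D}\mathcal{H}_\Lambda^{\tilde{c}l}(\A)$.

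\textbf{Step 2: linear independence and (2).} Under $\varphi_0$, each $u_{M\oplus I[-1]}$ in this set maps to $X_{M\oplus I[-1]}$. Since $M\in\A\subseteq\widetilde{\A}$ is rigid and $\Hom_{\widetilde{\A}}(M,I)=0$, this image lies in the set \eqref{qclumo} of quantum cluster monomials. Distinct pairs $([M],[I])$ give distinct monomials. This follows from Lemma \ref{lem:pointed-vector-iso} together with the fact that rigid modules are determined by their dimension vectors. By Proposition \ref{xjlm}, these images are $\mathbb{Q}(v)$-linearly independent, so the elements $u_{M\oplus I[-1]}$ are linearly independent as well. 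This proves (1).

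Then $\varphi_0$ sends a basis bijectively onto a linearly independent set, so it is injective. It is surjective by construction, which proves (2).

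\textbf{Step 3: the consequence.} The image of $\varphi_0$ is $\mathcal{A}\mathcal{H}_\Lambda(Q)$, and it is spanned by the quantum cluster monomials $X_{M\oplus I[-1]}$ with $M\in\A$ rigid and $\Hom_{\widetilde{\A}}(M,I)=0$. Each such monomial lies in $\A_q(\Lambda,\widetilde{B})$, since it is a product of the generators described via \cite{Rupel2}. Hence $\mathcal{A}\mathcal{H}_\Lambda(Q)\subseteq\A_q(\Lambda,\widetilde{B})$. The reverse inclusion holds because $\A_q(\Lambda,\widetilde{B})$ is generated by certain $X_M$ and $X_{I[-1]}$, all of which lie in $\mathcal{A}\mathcal{H}_\Lambda(Q)$. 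So the two algebras are equal, and the basis statement follows from (1) and (2).

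\textbf{Main obstacle.} The delicate point is matching the quantum cluster monomials of $\A_q(\Lambda,\widetilde{B})$, whose principal part is $Q$, with the description \eqref{qclumo}, which is stated for $B(\widetilde{Q})$. We need the monomials $X_{M\oplus I[-1]}$ with $M\in\A$ to be quantum cluster monomials of $\A_q(\Lambda,\widetilde{B})$. If this identification is not immediate, I would avoid it entirely. Linear independence only requires Lemma \ref{lem:pointed-function} and the pointedness computation in the proof of Proposition \ref{xjlm}, and neither uses the cluster-monomial interpretation. Membership in $\A_q(\Lambda,\widetilde{B})$ then follows from the generator description of \cite{Rupel2}.
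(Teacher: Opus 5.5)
Your proposal is correct and follows the paper's proof essentially verbatim. You use the Dynkin hypothesis to identify the spanning set $\mathfrak{B}$ of Proposition \ref{sprop:span-set} with the set of rigid $M$, obtain linear independence by pulling back Proposition \ref{xjlm} through $\varphi_0$, and deduce that $\varphi_0$ is injective; your Step 3, which checks $\mathcal{A}\mathcal{H}_\Lambda(Q)\subseteq\A_q(\Lambda,\widetilde{B})$ using the generator description from \cite{Rupel2}, simply spells out the ``consequently'' part that the paper leaves implicit.
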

\begin{proof}
Since $Q$ is of Dynkin type, each indecomposable representation $M\in \A$ is rigid. Thus, the set $\mathfrak{B}$ can be rewritten as
    \[
    \mathfrak{B}=\{u_{M\oplus I[-1]}\mid I\in \mathcal{I}_{\widetilde{\A}}, M\in \A~\text{is rigid such that}~ \Hom_{\widetilde{\A}}(M,I)=0 \}.
    \]
    By Lemma \ref{xjlm}, the set $\{\varphi_0(u_{M\oplus I[-1]})\mid u_{M\oplus I[-1]}\in \mathfrak{B}\}$ is $\mathbb{Q}(v)$-linearly independent. Thus, $\mathfrak{B}$ is $\mathbb{Q}(v)$-linearly independent.
    By Proposition \ref{sprop:span-set}, $\mathfrak{B}$ is a $\mathbb{Q}(v)$-basis of $\mathcal {D}\mathcal {H}_\Lambda^{{\tilde{c}l}}({\A})$. Since $\varphi_0$ sends the basis $\mathfrak{B}$ to a set of linearly independent elements, we get that $\varphi_0$ is injective, and then is an algebra isomorphism. Hence, we finish the proof.
\end{proof}
\begin{remark}
For Dynkin quivers, the equation $\mathcal{A}\mathcal{H}_\Lambda(Q)=\A_q(\Lambda,\widetilde{B})$ has been proved in \cite{D}; it has been known in \cite{DX} that the quantum cluster monomials provide a basis of the quantum cluster algebra.
\end{remark}
\begin{remark}
  For Dynkin quivers, since the Hall polynomials always exist, one may define a generic version of the Hall algebra $\mathcal {D}\mathcal {H}_\Lambda^{{\tilde{c}l}}({\A})$. Consequently, Theorem \ref{thm:hall-quantum-dynkin} can be extended to the quantum cluster algebra for the indeterminate $\mathfrak{q}$ without requiring the specialization at $\mathfrak{q}=q$.
\end{remark}
\begin{remark}
   For quantum cluster algebras of finite types, the quantum cluster monomials give a basis, and the proof of Proposition \ref{prop:span-set} provides a constructive algorithm to express the products of quantum cluster monomials as linear combinations of this basis.
\end{remark}
\subsection{Realizing quantum cluster algebras via Hall algebras: acyclic quivers}
In this subsection, we show that the (specialised) quantum cluster algebra $\A_q(\Lambda,\widetilde{B})$ is a subalgebra of the Hall algebra $\mathcal {D}\mathcal {H}_\Lambda^{{\tilde{c}l}}({\A})$ up to isomorphism.

\begin{lemma}\label{injlem}
    Let $1\leq j\neq k\leq m$ such that $\Hom_{\widetilde{{\A}}}(S_j,\tau S_k)\neq 0$, then $\tau S_k/\im f\in\mathcal{I}_{\widetilde{\A}}$ for any nonzero morphism $f:S_j\to \tau S_k$. Moreover, the representation $\tau S_k/\im f$ is independent of the choice of $f$ up to isomorphism, thus can be denoted by $\tau S_k/S_j$.
\end{lemma}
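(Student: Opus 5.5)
The plan is to work entirely with dimension vectors and the structure of $\tau S_k$ in the hereditary category $\widetilde{\A}$. Since $f:S_j\to\tau S_k$ is nonzero and $S_j$ is simple, $f$ is injective. Its image is therefore a simple submodule of $\tau S_k$ isomorphic to $S_j$, lying in the socle of $\tau S_k$. Thus $\tau S_k/\im f$ has dimension vector $\tau\mathbf{e}_k-\mathbf{e}_j$, which does not depend on $f$. Two facts remain to prove: this quotient is injective, and it is unique up to isomorphism.

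\textbf{Structure of $\tau S_k$.} The hypothesis $\Hom(S_j,\tau S_k)\neq0$ forces $S_k$ to be non-projective, so $\tau S_k\neq 0$ and $\tau S_k$ is indecomposable. I would use the minimal projective resolution
\[0\to \bigoplus_i P_i^{r_{ik}}\to P_k\to S_k\to 0.\]
Applying the Nakayama functor gives the exact sequence
\[0\to\tau S_k\to \bigoplus_i I_i^{r_{ik}}\to I_k\to 0.\]
Its exponents are the $r_{ik}$ attached to arrows $i\to k$, read with the appropriate valued multiplicities. Hence $\tau S_k$ is the kernel of the natural map from the direct sum of the injectives $I_i$, over arrows $i\to k$, onto $I_k$. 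In particular its socle is $\bigoplus_i S_i^{r_{ik}}$, so $S_j$ occurs in the socle exactly when there is an arrow $j\to k$.

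\textbf{Injectivity of the quotient.} By the identity $\Ext^1(X,\tau S_k)\cong D\Hom(S_k,X)$ (via $\tau$-duality), I would show that $\tau S_k/S_j$ is injective by checking $\Ext^1(S_l,\tau S_k/S_j)=0$ for all $l$. Since $\widetilde{\A}$ is hereditary, $\Ext^1(S_l,-)$ is right exact. Applying $\Hom(S_l,-)$ to the sequence
\[0\to S_j\to \tau S_k\to \tau S_k/S_j\to 0\]
gives
\[\Ext^1(S_l,S_j)\to\Ext^1(S_l,\tau S_k)\to\Ext^1(S_l,\tau S_k/S_j)\to 0.\]
The middle term equals $D\Hom(S_k,S_l)$, which vanishes unless $l=k$. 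It then suffices to show that for $l=k$ the connecting map $\Ext^1(S_k,S_j)\to\Ext^1(S_k,\tau S_k)\cong D\End(S_k)$ is surjective.

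\textbf{Main obstacle.} The step I expect to be hardest is this surjectivity, because it must hold for every nonzero $f$. The natural route is Auslander--Reiten theory. The AR sequence ending at $S_k$ is
\[0\to\tau S_k\to E\to S_k\to 0.\]
Pushing it out along $\tau S_k\to\tau S_k/S_j$ must kill the almost split class, and this happens exactly when the pushout splits. Equivalently, the AR sequence must lie in the image of $\Ext^1(S_k,S_j)$, i.e.\ the AR class is "concentrated" on the socle summand $S_j$. The difficulty is that with valuations, $\Ext^1(S_k,S_j)$ is a $\mathcal{D}_j$–$\mathcal{D}_k$ bimodule, and the relevant surjectivity is a statement about the simple $\End(S_k)$-socle of $\Ext^1(S_k,\tau S_k)$. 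I would try to use the almost split property directly: the map $\Ext^1(S_k,S_j)\to\Ext^1(S_k,\tau S_k)$ is a nonzero map of $\End(S_k)$-modules, and its target $D\End(S_k)$ is simple as an $\End(S_k)$-module, so any nonzero such map is onto. Nonzeroness would follow because $f$ composed with the canonical socle inclusion detects the arrow $j\to k$, and the extension $0\to S_j\to U\to S_k\to0$ maps to a nonsplit extension of $S_k$ by $\tau S_k$.

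\textbf{Uniqueness.} Once injectivity is established, uniqueness is immediate. An injective module over a hereditary algebra is determined by its dimension vector, since the $\mathbf{i}_l$ are linearly independent and $\mathbf{dim}$ of $\bigoplus I_l^{a_l}$ determines the $a_l$. The dimension vector $\tau\mathbf{e}_k-\mathbf{e}_j$ does not depend on $f$, so neither does $\tau S_k/\im f$.
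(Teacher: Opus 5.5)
Your cases $l\neq k$ (via $\Ext^1_{\widetilde{\A}}(S_l,\tau S_k)\cong{\rm D}\Hom_{\widetilde{\A}}(S_k,S_l)=0$ and right exactness of $\Ext^1_{\widetilde{\A}}(S_l,-)$) and your uniqueness step (injectives are determined by dimension vectors) are correct and match the paper. The gap is the case $l=k$. You correctly note that an $\End(S_k)$-linear map into $\Ext^1_{\widetilde{\A}}(S_k,\tau S_k)\cong{\rm D}\End(S_k)$ is onto once it is nonzero. However, you give no argument that $f_*\colon\Ext^1_{\widetilde{\A}}(S_k,S_j)\to\Ext^1_{\widetilde{\A}}(S_k,\tau S_k)$ is nonzero. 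Put $C=\tau S_k/\im f$. In the exact sequence $\Ext^1_{\widetilde{\A}}(S_k,S_j)\to\Ext^1_{\widetilde{\A}}(S_k,\tau S_k)\to\Ext^1_{\widetilde{\A}}(S_k,C)\to 0$, whose first map is $f_*$, the middle term is simple. Hence $f_*\neq0$ is \emph{equivalent} to $\Ext^1_{\widetilde{\A}}(S_k,C)=0$, which is the claim itself. Your justification, that some nonsplit $0\to S_j\to U\to S_k\to 0$ pushes out along $f$ to a nonsplit extension, just restates this: it says $f$ does not extend over $U$. The remark that $f$ composed with the socle inclusion ``detects the arrow'' gives no reason for it. The argument also has to work for every nonzero $f$, and there may be many non-proportional ones.

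The missing input is that $\End(\tau S_k)\cong\End(S_k)$ is a division ring, used as in the paper. By the Auslander--Reiten formula, $\Ext^1_{\widetilde{\A}}(S_k,C)\cong{\rm D}\Hom_{\widetilde{\A}}(C,\tau S_k)$. Let $\pi\colon\tau S_k\to C$ be the projection. A nonzero $g\colon C\to\tau S_k$ would give a nonzero $h=g\pi\in\End(\tau S_k)$ with $hf=0$. This is impossible, since $h$ is invertible and $f\neq 0$.

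Alternatively, you can finish the Auslander--Reiten route you started and then abandoned. The map $\pi$ is not a split monomorphism, because its kernel $\im f$ is nonzero. So by the left almost split property of $\tau S_k\to E$, $\pi$ factors through it, and $\pi_*$ kills the almost split class. Since $\pi_*$ is $\End(S_k)$-linear with simple source $\Ext^1_{\widetilde{\A}}(S_k,\tau S_k)$, this forces $\pi_*=0$. As $\pi_*$ is also surjective, $\Ext^1_{\widetilde{\A}}(S_k,C)=0$. You wrote that the pushout ``must'' kill the almost split class as a requirement; deriving it from this factorization is exactly the step you skipped.

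A minor point: since $\Hom_{\widetilde{\A}}(S_j,\tau S_k)\cong{\rm D}\Ext^1_{\widetilde{\A}}(S_k,S_j)$, $S_j$ lies in the socle of $\tau S_k$ iff there is an arrow $k\to j$, not $j\to k$. Your structure paragraph has the orientation reversed, though you do not use it later.
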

\begin{proof}
Let $f:S_j\to \tau S_k$ be a nonzero morphism. It suffices to show that $$\Ext^1_{\widetilde{\A}}(S_i,\tau S_k/\im f)=0~\text{~for~any~}1\leq i\leq m.$$

For $i\neq k$, applying $\Hom_{\widetilde{\A}}(S_i,-)$ to the exact sequence \begin{equation}\label{ylzhll}0\longrightarrow S_j\stackrel{f}{\longrightarrow} \tau S_k\longrightarrow\tau S_k/\im f\longrightarrow 0,\end{equation} we get the exact sequence \begin{equation*}\xymatrix{\Ext^1_{\widetilde{\A}}(S_i,S_j)\ar[r]&\Ext^1_{\widetilde{\A}}(S_i,\tau S_k)\ar[r]&\Ext^1_{\widetilde{\A}}(S_i,\tau S_k/\im f)\ar[r]&0.}\end{equation*}
Since $\Ext^1_{\widetilde{\A}}(S_i,\tau S_k)\cong{\rm D}\Hom_{\widetilde{\A}}(S_k,S_i)=0$, we get $\Ext^1_{\widetilde{\A}}(S_i,\tau S_k/\im f)=0$.

For $i=k$, applying $\Hom_{\widetilde{\A}}(-,\tau S_k)$ to \eqref{ylzhll}, we get the exact sequence \begin{equation*}\xymatrix{0\ar[r]&\Hom_{\widetilde{\A}}(\tau S_k/\im f,\tau S_k)\ar[r]&\Hom_{\widetilde{\A}}(\tau S_k,\tau S_k)\ar[r]^-{f^*}&\Hom_{\widetilde{\A}}(S_j,\tau S_k).}\end{equation*}
Note that $\Hom_{\widetilde{\A}}(\tau S_k,\tau S_k)\cong{\rm End}_{\widetilde{\A}}(S_k)$ is a division algebra, that is, each nonzero $h\in\Hom_{\widetilde{\A}}(\tau S_k,\tau S_k)$ is an isomorphism. Then it is easy to see that $\ker f^*$ is zero, i.e. $\Hom_{\widetilde{\A}}(\tau S_k/\im f,\tau S_k)=0$. Thus, $\Ext^1_{\widetilde{\A}}(S_k,\tau S_k/\im f)\cong{\rm D}\Hom_{\widetilde{\A}}(\tau S_k/\im f,\tau S_k)=0.$

Hence, we conclude that $\tau S_k/\im f\in\mathcal{I}_{\widetilde{\A}}$. Since each injective representation is determined by its dimension vector up to isomorphism,  $\tau S_k/ \im f$ is independent of the choice of $f$ up to isomorphism. Therefore, we complete the proof.
\end{proof}

\begin{lemma}\label{lem:gen-rel-quasi-commutation}
For any $1\leq j\neq k\leq n$, the following statements hold in $\mathcal {D}\mathcal {H}_\Lambda^{{\tilde{c}l}}({\A})$\,$:$
    \begin{itemize}
        \item[(1)] If $b_{jk}=0$, then
        \[
        v^{-\Lambda(e_j^\ast,e_k^\ast)}u_{S_j}\star u_{S_k}= v^{-\Lambda(e_k^\ast,e_j^\ast)}u_{S_k}\star u_{S_j}.
        \]
        \item[(2)] If $b_{jk}<0$, then
        \[
        v^{-\Lambda(e_j^\ast,e_k^\ast)}u_{S_j}\star u_{S_k}-v^{\Lambda(e_j^\ast,e_k^\ast)}u_{S_k}\star u_{S_j}=(v^{-\lr{e_k,e_j}}-v^{\lr{e_k,e_j}})u_{I [-1]},
        \text {where}~I=\tau S_k/S_j.
        \]
    \end{itemize}
\end{lemma}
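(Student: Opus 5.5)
We compute both products $u_{S_j}\star u_{S_k}$ and $u_{S_k}\star u_{S_j}$ in $\mathcal{D}\mathcal{H}_\Lambda^{\tilde{c}l}(\A)$. We use \eqref{lgx3} for the Hall product and the defining relation of the ideal $\mathfrak{I}$ (equivalently Corollary \ref{maincor} / Corollary \ref{mutatione}) for the non-split part. Since $Q$ is acyclic, at most one of $\Ext^1_{\widetilde{\A}}(S_j,S_k)$, $\Ext^1_{\widetilde{\A}}(S_k,S_j)$ is nonzero, and $b_{jk}=0$ exactly when both vanish. When $b_{jk}<0$ we have $r'_{jk}=0$ and $r_{jk}\neq 0$, i.e.\ $\Ext^1(S_k,S_j)\neq0$ and $\Ext^1(S_j,S_k)=0$. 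This sign convention must be checked against $B(\widetilde{Q})=R'-R$; if it comes out reversed, swap the roles of $j$ and $k$ below.

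\textbf{Case (1).} Both Ext groups vanish and $\Hom(S_j,S_k)=0=\Hom(S_k,S_j)$, so $\lr{e_j,e_k}=\lr{e_k,e_j}=0$. By \eqref{lgx3}, $u_{S_j}\star u_{S_k}=v^{\Lambda(e_j^\ast,e_k^\ast)}u_{S_j\oplus S_k}$, and symmetrically for $u_{S_k}\star u_{S_j}$. Using $\Lambda(e_k^\ast,e_j^\ast)=-\Lambda(e_j^\ast,e_k^\ast)$, both sides of (1) equal $u_{S_j\oplus S_k}$.

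\textbf{Case (2).} Here $\Ext^1(S_j,S_k)=0$, so \eqref{lgx3} gives $u_{S_j}\star u_{S_k}=v^{\Lambda(e_j^\ast,e_k^\ast)}u_{S_j\oplus S_k}$. Thus the first term of the left side is $u_{S_j\oplus S_k}$. We then expand $u_{S_j\oplus S_k}$ using the generator of $\mathfrak{I}$ with $M=S_k$, $N=S_j$ (Theorem \ref{zhihe} in the Hall algebra). The sum runs over morphisms $\theta\colon S_j\to\tau S_k$. Note that $\Hom(S_j,\tau S_k)\cong \mathrm{D}\Ext^1(S_k,S_j)\neq0$, and $S_k$ is not projective because $S_k$ has a nontrivial extension by $S_j$.

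\begin{itemize}
\item For $\theta=0$ we get $D=S_j$, $A=S_k$, $I=0$. This term equals $v^{\Lambda((e_j+e_k)^\ast,e_k^\ast)}u_{S_k}\star u_{S_j}$, which by \eqref{lgx3} is an explicit multiple of the Hall product $\sum_E |\Ext^1(S_k,S_j)_E|\,u_E/|\Hom|$.
\item For $\theta\neq0$, $\theta$ is injective since $S_j$ is simple, so $D=0$. By Lemma \ref{injlem}, the cokernel $\tau S_k/S_j$ is injective, so $A'=0$ and $A=0$ (as $S_k$ has no projective summand), and $I=\tau S_k/S_j$. There are $|\Hom(S_j,\tau S_k)|-1=q^{[S_k,S_j]^1}-1$ such morphisms, and together they contribute $(q^{[S_k,S_j]^1}-1)v^{\lr{e_k,e_j}}u_{I[-1]}$.
\end{itemize}

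Substituting, $u_{S_j\oplus S_k}$ equals a multiple of $u_{S_k}\star u_{S_j}$ plus this $u_{I[-1]}$ term. Comparing with the required identity reduces to an exponent check. Put $d=[S_k,S_j]^1$, so that $\lr{e_k,e_j}=-d$. The zero-morphism coefficient should be $v^{\Lambda(e_j^\ast,e_k^\ast)}$, which we can verify with $\Lambda(e_k^\ast,e_k^\ast)=0$. The remaining term should become $v^{-d}(q^d-1)u_{I[-1]}=(v^{d}-v^{-d})u_{I[-1]}=(v^{-\lr{e_k,e_j}}-v^{\lr{e_k,e_j}})u_{I[-1]}$, as stated. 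A simpler route is to apply Corollary \ref{mutatione} directly to $u_{S_k}\star u_{S_j}$, with $\Ext^1(S_k,S_j)$ one-dimensional over $\End(S_k)$ and $A=D=0$. This gives
\[
u_{S_k}\star u_{S_j}=v^{\Lambda(e_k^\ast,e_j^\ast)}u_E+v^{\Lambda(e_k^\ast,e_j^\ast)+\lr{e_k,e_j}}u_{I[-1]},
\]
and the second relation $u_E$ is then eliminated using the expression for $u_{S_j\oplus S_k}$.

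The main obstacle is the bookkeeping of the $v$-exponents, specifically the $\Lambda$-terms involving $e_k^\ast$, $(e_j+e_k)^\ast$ and the dimension vector of $I$, and making the sign convention for $b_{jk}<0$ match which Ext group is nonzero. The valued case requires care because $\Ext^1(S_k,S_j)$ may be more than one-dimensional over $\mathbb{F}_q$. For that reason we prefer the route through the ideal relation over Corollary \ref{mutatione}: it only counts the nonzero morphisms, all of which give the same $I$ by Lemma \ref{injlem}.
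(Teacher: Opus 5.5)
Your proposal is correct and is essentially the paper's proof. Part (1) follows from \eqref{lgx3} since both $\Ext^1$ groups vanish. For part (2), you write $v^{-\Lambda(e_j^\ast,e_k^\ast)}u_{S_j}\star u_{S_k}=u_{S_j\oplus S_k}$ and expand it by the $\mathfrak{I}$-relation with $M=S_k$, $N=S_j$. The zero morphism then contributes $v^{\Lambda(e_j^\ast,e_k^\ast)}u_{S_k}\star u_{S_j}$, and the $q^{d}-1$ nonzero (hence injective) morphisms contribute $v^{-d}(q^{d}-1)u_{I[-1]}$ with $I=\tau S_k/S_j$, by Lemma \ref{injlem}.

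Your sign convention ($b_{jk}<0$ iff $\Ext^1(S_k,S_j)\neq 0$) matches the paper's. You are also right to prefer this route over Corollary \ref{mutatione}, which needs $\Ext^1(S_k,S_j)$ to be one-dimensional over $\End(S_k)$.
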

\begin{proof}
$(1)$~Since $b_{jk}=0$, we have $\Ext^1_{\A}(S_j,S_k)=\Ext^1_{\A}(S_k,S_j)=0.$ Then the first statement follows from \eqref{lgx3}.

$(2)$ Since $b_{jk}<0$, we have $\Ext^1_{\A}(S_k,S_j)\neq 0$ and $\Ext^1_{\A}(S_j,S_k)=0$.
Using the ideal $\mathcal{J}$ relation, we obtain
    \begin{align*}
        u_{S_j}\star u_{S_k}&=v^{\Lambda(e_j^\ast,e_k^\ast)}u_{S_j\oplus S_k}\\
        &=v^{\Lambda(e_j^\ast,e_k^\ast)}\sum_{[D],[A],[I]}v^{\Lambda(e_j^\ast+e_k^\ast,{\bf a}^\ast)+\lr{e_k-{\bf a},e_j}}|_D\Hom_{\widetilde{\A}}(S_j,\tau S_k)_{\tau A\oplus I}|u_A\star u_{D\oplus I[-1]}.
    \end{align*}  Note that $D=0$ or $D=S_j$, since $S_j$ is simple. If $D=0$, then $A=0$ and $I=\tau S_k/S_j$ by Lemma \ref{injlem}; if $D=S_j$, then $A=S_k$ and $I=0$. Thus, the second
statement follows by noting that \[|_0\Hom_{\widetilde{\A}}(S_j,\tau S_k)_{\tau S_k/S_j}|=q^{{\rm dim} {\rm Hom}_{\widetilde{\A}}(S_j,\tau S_k)}-1=q^{-\lr{e_k,e_j}}-1.\]
\end{proof}

It is easy to see that the quantum cluster variables $x_i$, $1\leq i\leq m$ and $x'_j$, $1\leq j\leq n$ are equal to
$X_{I_i[-1]}$, $1\leq i\leq m$ and $X_{S_j}$, $1\leq j\leq n$, respectively (cf. \cite[Proposition 3.2]{FuZh}). Then applying Proposition \ref{prop:quantum-cluster-presentation}, we give the following.

\begin{theorem}\label{chtgthm}
The map $\varphi_0':\A_q(\Lambda,\widetilde{B})\longrightarrow\mathcal {D}\mathcal {H}_\Lambda^{{\tilde{c}l}}({\A})$ defined by $$X_{I_i[-1]}\mapsto u_{I_i[-1]}~\text{and}~X_{S_j}\mapsto u_{S_j}$$ for any $1\leq i\leq m, 1\leq j\leq n$, is an injective homomorphism of algebras.
\end{theorem}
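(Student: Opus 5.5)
The plan is to use the presentation of Proposition \ref{prop:quantum-cluster-presentation}. Identify $\A_q(\Lambda,\widetilde{B})$ (specialised at $\mathfrak{q}=q$) with $A_q(\Lambda,\widetilde{B})$ via $\pi$, sending $Z_i\mapsto x_i=X_{I_i[-1]}$ and $Y_j\mapsto x_j'=X_{S_j}$. Then define $\Phi: A_q(\Lambda,\widetilde{B})\to \mathcal{D}\mathcal{H}_\Lambda^{\tilde{c}l}(\A)$ on generators by $Z_i\mapsto u_{I_i[-1]}$ and $Y_j\mapsto u_{S_j}$. Set $\varphi_0'=\Phi\circ\pi^{-1}$. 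Two things remain: $\Phi$ respects the defining relations, and the composite is injective.

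\textbf{Relations.} We check each family in turn.
\begin{itemize}
\item The quasi-commutation relations \eqref{gen-rel: quasi-comm-1} follow from \eqref{lgx1}, since $\Lambda(\mathbf{i}_i^\ast,\mathbf{i}_j^\ast)=\Lambda({}^\ast\mathbf{p}_i,{}^\ast\mathbf{p}_j)=\Lambda(e_i,e_j)$. Here we use ${}^\ast\mathbf{i}=\mathbf{p}^\ast$ and ${}^\ast\mathbf{p}_i=e_i$, together with Lemma \ref{sjishu}(4).
\item For \eqref{gen-rel: quasi-comm-2} with $i\neq j$, we have $\Hom(S_j,I_i)=0$. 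Hence \eqref{lgx6} reduces to a single term, and \eqref{lgx4} gives $u_{I_i[-1]}\star u_{S_j}$ and $u_{S_j}\star u_{I_i[-1]}$ as scalar multiples of $u_{S_j\oplus I_i[-1]}$. The exponent must then be matched with $\mu_j(\Lambda)(e_i,e_j)$, which is a bilinear-form computation using Lemma \ref{sjishu}.
\item For the exchange relations \eqref{gen-rel: exchange-relation-left} and \eqref{gen-rel: exchange-relation-right}, apply \eqref{lgx6} to $u_{S_k}\star u_{I_k[-1]}$. The morphisms $S_k\to I_k$ are either zero (cokernel $I_k$, kernel $S_k$) or injective (kernel $0$, cokernel $I_k/S_k$). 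The zero term is $q^{-\lr{e_k,\mathbf{i}_k}}u_{I_k[-1]}\star u_{S_k}$. In $\mathcal{D}\mathcal{H}^{cl}$ we rewrite it using the identification $u_{I_k[-1]}=u_{P_k[1]}$ and \eqref{lgx7}. The morphisms $P_k\to S_k$ are likewise zero or surjective, with kernel $\rad P_k$. This produces a linear system relating $u_{S_k}\star u_{I_k[-1]}$, $u_{I_k[-1]}\star u_{S_k}$, $u_{(I_k/S_k)[-1]}$ and $u_{\rad P_k}$. This is essentially Theorem \ref{dyggs} at the Hall level; it was proved in \cite{CDZ} within $\mathcal{D}\mathcal{H}^{cl_1}$, so it holds here.
\item $\rad P_k$ and $I_k/S_k$ are projective, respectively injective, modules. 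Using \eqref{lgx1}, \eqref{lgx5} and \eqref{lgx4}, together with $u_{P[1]}=u_{\nu P[-1]}$, we express $u_{(I_k/S_k)[-1]}$ and $u_{\rad P_k}$ as the monomials $Z^{\sum[\mp b_{ik}]_+e_i}$. The vectors are matched via $\mathbf{b}^k=e_k^\ast-{}^\ast e_k$ and \eqref{thjih}.
\item The relation \eqref{gen-rel: quasi-commutation} is exactly Lemma \ref{lem:gen-rel-quasi-commutation}, after identifying $u_{I[-1]}$ with $I=\tau S_k/S_j$ as the monomial $Z^{-{}^\ast e_j-e_k^\ast}$ by a dimension-vector check.
\end{itemize}
So $\Phi$ is an algebra homomorphism.

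\textbf{Injectivity.} Compose with the surjection $\varphi_0:\mathcal{D}\mathcal{H}_\Lambda^{\tilde{c}l}(\A)\to\mathcal{A}\mathcal{H}_\Lambda(Q)\subseteq\mathcal{T}_\Lambda$. The map $\varphi_0\circ\varphi_0'$ sends $X_{I_i[-1]}\mapsto X_{I_i[-1]}$ and $X_{S_j}\mapsto X_{S_j}$. It is therefore the inclusion $\A_q(\Lambda,\widetilde{B})\hookrightarrow\mathcal{T}_\Lambda$, which is injective. Hence $\varphi_0'$ is injective.

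\textbf{Main obstacle.} I expect the hardest step to be the exchange relation. It requires carefully solving the two-term system coming from \eqref{lgx6} and \eqref{lgx7}, glued by $\mathfrak{I}_1$. Matching all $v$-exponents with $\tfrac12\Lambda(e_k,\sum[\pm b_{ik}]_+e_i)$ and the $\mathfrak{q}$-normalisation of $Z^\alpha$ is delicate. I would first try to import Theorem \ref{dyggs} verbatim at the Hall level, since its proof in \cite{CDZ} takes place in $\mathcal{D}\mathcal{H}_\Lambda^{cl_1}$. Then I would check the exponents using Lemma \ref{sjishu} and Lemma \ref{cor:exponent-relation}.
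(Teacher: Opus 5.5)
Your proposal is correct and is essentially the paper's proof. The paper also checks the relations of Proposition \ref{prop:quantum-cluster-presentation} family by family: via \eqref{lgx1}, via \eqref{lgx6} with $\Hom_{\widetilde{\A}}(S_k,I_j)=0$, via \eqref{gx48}--\eqref{gx49} imported from \cite[Proposition 5.3]{CDZ}, which are exactly what your $2\times 2$ system from \eqref{lgx6}, \eqref{lgx7} and $u_{I_k[-1]}=u_{P_k[1]}$ produces, and via Lemma \ref{lem:gen-rel-quasi-commutation}; it then gets injectivity from $\varphi_0\varphi_0'=\mathrm{Id}$. A few slips need fixing: the first relation should be justified by ${}^\ast\mathbf{i}_i=\mathbf{p}_i^\ast=e_i$ and Lemma \ref{sjishu}(4), not by ${}^\ast\mathbf{p}_i=e_i$, which is false in general; the zero-morphism term carries the extra factor $q^{-\Lambda(e_k^\ast,\mathbf{i}_k^\ast)}$; and the projective-side monomial is $u_{\rad P_k[1]}$, not $u_{\rad P_k}$.
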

\begin{proof}
By Proposition \ref{prop:quantum-cluster-presentation}, the generators $X_{I_i[-1]}, X_{S_j}$ of $\A_q(\Lambda,\widetilde{B})$ satisfy the defining relations given by \eqref{gen-rel: quasi-comm-1}--\eqref{gen-rel: quasi-commutation}. In order to prove $\varphi_0'$ is an algebra homomorphism, we need to show that the relations \eqref{gen-rel: quasi-comm-1}--\eqref{gen-rel: quasi-commutation} of the generators $X_{I_i[-1]}, X_{S_j}$  are preserved under $\varphi_0'$.

\noindent{\bf Relation \eqref{gen-rel: quasi-comm-1}:}
It follows from \eqref{lgx1} directly.

\noindent{\bf Relation \eqref{gen-rel: quasi-comm-2}:}
For each $1\leq k\leq n$, let $G_k=(g_{ij})$ be the $m\times m$ matrix defined by
\[g_{ij}=\begin{cases}
\delta_{ij} & \text{if $j\ne k$;}\\
-1 & \text{if $i=j=k$;}\\
[b_{ik}]_{+} & \text{if $i\ne j = k$.}
\end{cases}
\]
Then $\mu_k(\Lambda)=G_k^{\rm tr}\Lambda G_k$
and $G_ke_k=-e_k+\sum_{i=1}^m[b_{ik}]_+e_i=-^\ast e_k$.
For any $1\leq j\leq m$ with $j\neq k$, we have
\begin{align*}
    \mu_k(\Lambda)(e_j,e_k)=e_j^{\rm tr}G_k^{\rm tr}\Lambda G_ke_k
    =e_j^{\rm tr}\Lambda G_ke_k
    =-\Lambda(e_j,{^\ast e_k}).
\end{align*}
In the Hall algebra $\mathcal {D}\mathcal {H}_\Lambda^{{\tilde{c}l}}({\A})$, since $\Hom_{\widetilde{\A}}(S_k, I_j)=0$, by \eqref{lgx6}, we obtain
\[
u_{I_j[-1]}\star u_{S_k}=q^{\Lambda(e_k^\ast,\mathbf{i}_j^\ast)}u_{S_k}\star u_{I_{j}[-1]}=q^{\Lambda({^\ast e}_k,e_j)}u_{S_k}\star u_{I_{j}[-1]}=q^{\mu_k(\Lambda)(e_j,e_k)}u_{S_k}\star u_{I_{j}[-1]}.\]

\noindent{\bf Relations \eqref{gen-rel: exchange-relation-left} and \eqref{gen-rel: exchange-relation-right}:}

By \eqref{lgx1}, it is easy to see
\[
q^{\frac{1}{2}\sum_{i<j}a_ia_j\lambda_{ji}}u_{I_1[-1]}^{a_1}\star\cdots \star u_{I_m[-1]}^{a_m}=u_{(a_1I_1\oplus \cdots a_mI_m)[-1]}=u_{(a_1P_1\oplus \cdots a_mP_m)[1]}
\]
for any $(a_1,\ldots, a_m)\in \mathbb{N}^m$.
For each $1\leq k\leq n$, according to \cite[Proposition 5.3]{CDZ}, we have
\begin{flalign}&u_{P_k[1]}\star u_{S_k}=q^{\frac{1}{2}\Lambda(e_k^\ast,\mathbf{p}_k^\ast)}u_{\operatorname{rad}P_k[1]}+q^{\frac{1}{2}\Lambda(e_k^\ast,\mathbf{p}_k^\ast)-\frac{1}{2}\lr{e_k,\mathbf{i}_k}}u_{(I_k/S_k)[-1]},\label{gx48}\\
&u_{S_k}\star u_{I_k[-1]}=q^{\frac{1}{2}\Lambda(\mathbf{i}_k^\ast,e_k^\ast)-\frac{1}{2}\lr{\mathbf{p}_k,e_k}}u_{\operatorname{rad}P_k[1]}+q^{\frac{1}{2}\Lambda(\mathbf{i}_k^\ast,e_k^\ast)}u_{(I_k/S_k)[-1]}\label{gx49}.
\end{flalign}
Note that
\begin{equation}\label{radsoc}
\begin{split}&E'(\widetilde{Q})\Dim\operatorname{rad} P_k={\bf p}_k^\ast-e_k^\ast=e_k-e_k^\ast=\sum_{i=1}^m[-b_{ik}]_+e_i,\\
&E(\widetilde{Q})\Dim I_k/S_k={^\ast{\bf i}}_k-{^\ast e}_k=e_k-{^\ast e}_k=\sum_{i=1}^m[b_{ik}]_+e_i.\end{split}\end{equation}
Thus, we get ${\rad} P_k=\oplus_{i=1}^ma_iP_i$ and $I_k/S_k=\oplus_{i=1}^ma'_iI_i$, where $a_i=[-b_{ik}]_+$ and $a'_i=[b_{ik}]_+$ for $1\leq i\leq m$.
Hence, we obtain \begin{flalign*}&\varphi_0'(X^{\sum_{i=1}^m[-b_{ik}]_+e_i})=q^{\frac{1}{2}\sum_{i<j}a_ia_j\lambda_{ji}}u_{I_1[-1]}^{a_1}\cdots u_{I_m[-1]}^{a_m}
=u_{(a_1P_1\oplus \cdots a_mP_m)[1]}=u_{\operatorname{rad}P_k[1]},\\
&\varphi_0'(X^{\sum_{i=1}^m[b_{ik}]_+e_i})=q^{\frac{1}{2}\sum_{i<j}a'_ia'_j\lambda_{ji}}u_{I_1[-1]}^{a'_1}\cdots u_{I_m[-1]}^{a'_m}=u_{(a'_1I_1\oplus \cdots a'_mI_m)[-1]}=u_{(I_k/S_k)[-1]}.\end{flalign*}

Now, for finishing the proof of \eqref{gen-rel: exchange-relation-left}, it remains to show that
\[
\Lambda(e_k,\sum_{1=1}^m[-b_{ik}]_+e_i)=\Lambda(e_k^\ast,\mathbf{p}_k^\ast)\quad \text{and}\quad \Lambda(e_k,\sum_{1=1}^m[b_{ik}]_+e_i)=\Lambda(e_k^\ast,\mathbf{p}_k^\ast)-\lr{e_k,\mathbf{i}_k}.
\]
In fact,
\begin{align*}
    \Lambda(e_k^\ast, \mathbf{p}_k^\ast)&=\Lambda(e_k^\ast,e_k)
    =\Lambda(e_k-\sum_{i=1}^m[-b_{ik}]_+e_i,e_k)
    =\Lambda(e_k,\sum_{1=1}^m[-b_{ik}]_+e_i).
\end{align*}
Since $\mathbf{b}^k=e_k^\ast-{^\ast e}_k=\sum_{i=1}^m[b_{ik}]_+e_i-\sum_{i=1}^m[-b_{ik}]_+e_i$, we get
\begin{align*}
    \Lambda(e_k,\sum_{1=1}^m[b_{ik}]_+e_i)&=\Lambda(e_k,\mathbf{b}^k)+\Lambda(e_k,\sum_{1=1}^m[-b_{ik}]_+e_i)\\
    &=-d_k+\Lambda(e_k^\ast, \mathbf{p}_k^\ast)\\
    &=\Lambda(e_k^\ast, \mathbf{p}_k^\ast)-\lr{e_k,\mathbf{i}_k}.
\end{align*}
For finishing the proof of \eqref{gen-rel: exchange-relation-right}, it remains to show that
\[
-\Lambda(e_k,\sum_{1=1}^m[b_{ik}]_+e_i)=\Lambda(\mathbf{i}_k^\ast,e_k^\ast)\quad \text{and}\quad -\Lambda(e_k,\sum_{1=1}^m[-b_{ik}]_+e_i)=\Lambda(\mathbf{i}_k^\ast,e_k^\ast)-\lr{\mathbf{p}_k,e_k}.
\]
In fact,
\begin{align*}
\Lambda(\mathbf{i}_k^\ast,e_k^\ast)=\Lambda(B(\widetilde{Q})\mathbf{i}_k+{^\ast\mathbf{i}}_k,e_k^\ast)
    =\lr{e_k,\mathbf{i}_k}+\Lambda(\mathbf{p}_k^\ast,e_k^\ast)
    =-\Lambda(e_k,\sum_{i=1}^m[b_{ik}]_+e_i)
\end{align*}
and
\begin{align*}
    -\Lambda(e_k,\sum_{i=1}^m[-b_{ik}]_+e_i)&=\Lambda(e_k,\mathbf{b}^k)-\Lambda(e_k,\sum_{i=1}^m[b_{ik}]_+e_i)\\
    &=-d_k+\Lambda(\mathbf{i}_k^\ast,e_k^\ast)\\
    &=\Lambda(\mathbf{i}_k^\ast,e_k^\ast)-\lr{\mathbf{p}_k,e_k}.
\end{align*}

\noindent{\bf Relation \eqref{gen-rel: quasi-commutation}:}
If $b_{jk}=0$, then it follows from Lemma \ref{lem:gen-rel-quasi-commutation}(1) directly.
If $b_{jk}<0$, then $\lr{e_j,e_k}=0$. Set $I=\tau S_k/S_j\in\mathcal{I}_{\widetilde{\A}}$ and write $I=\oplus_{i=1}^ma_iI_i$ for some $a_1,\ldots,a_m\in\mathbb{N}$. Note that ${^\ast}{\bf i}={^\ast(\tau S_k)}-{^\ast e_j}=-e_k^\ast-{^\ast e_j}$ and ${^\ast}{\bf i}=\sum\limits_{i=1}^ma_ie_i$. Thus, we obtain
$$\varphi_0'(X^{-e_k^\ast-{^\ast e_j}})=q^{\frac{1}{2}\sum_{i<j}a_ia_j\lambda_{ji}}u_{I_1[-1]}^{a_1}\cdots u_{I_m[-1]}^{a_m}=u_{(a_1I_1\oplus \cdots a_mI_m)[-1]}=u_{I[-1]}.$$
Then the relation \eqref{gen-rel: quasi-commutation} follows from Lemma \ref{lem:gen-rel-quasi-commutation}(2).

Hence, we conclude that $\varphi'_0$ is an algebra homomorphism. Noting that $\varphi_0\varphi'_0={\rm Id}$, we get that $\varphi'_0$ is injective. Therefore, we complete the proof.
\end{proof}

\begin{corollary}
For any $1\leq k\leq n$, the following equations hold in $\mathcal {D}\mathcal {H}_\Lambda^{{\tilde{c}l}}({\A})$\,$:$
\begin{flalign}&v^{\Lambda(e_k^\ast,e_k)}u_{S_k}\star u_{I_k[-1]}-v^{-\Lambda(e_k^\ast,e_k)}u_{P_k[1]}\star u_{S_k}=(v^{d_k}-v^{-d_k})u_{(I_k/S_k)[-1]};\\
&v^{\Lambda(e_k,{^\ast}e_k)}u_{P_k[1]}\star u_{S_k}-v^{-\Lambda(e_k,{^\ast}e_k)}u_{S_k}\star u_{I_k[-1]}=(v^{d_k}-v^{-d_k})u_{{\rad}P_k[1]}.\end{flalign}
\end{corollary}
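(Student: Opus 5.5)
The plan is to obtain both equations as the images under $\varphi_0'$ of the exchange relations \eqref{gen-rel: exchange-relation-left} and \eqref{gen-rel: exchange-relation-right}. Alternatively, we can combine \eqref{gx48} and \eqref{gx49} directly inside $\mathcal {D}\mathcal {H}_\Lambda^{{\tilde{c}l}}({\A})$, which avoids any issue about where $\varphi_0'$ lands. Recall that $u_{P_k[1]}=u_{I_k[-1]}$ in the quotient. Throughout, abbreviate $\alpha:=\Lambda(e_k^\ast,\mathbf{p}_k^\ast)$ and $\beta:=\Lambda(\mathbf{i}_k^\ast,e_k^\ast)$.

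Step 1 is to rewrite the exponents. The proof of Theorem \ref{chtgthm} already gives
\[
\alpha=\Lambda(e_k^\ast,e_k), \qquad \beta=-\Lambda(e_k,\textstyle\sum_i[b_{ik}]_+e_i)=d_k-\alpha, \qquad \lr{e_k,\mathbf{i}_k}=\lr{\mathbf{p}_k,e_k}=d_k.
\]
Here the last equality holds because $\Hom(P_k,S_k)$ and $\Hom(S_k,I_k)$ are both $\mathcal D_k$, which is $d_k$-dimensional. With these, \eqref{gx48} and \eqref{gx49} become
\begin{align*}
u_{P_k[1]}\star u_{S_k}&=v^{\alpha}u_{\rad P_k[1]}+v^{\alpha-d_k}u_{(I_k/S_k)[-1]},\\
u_{S_k}\star u_{I_k[-1]}&=v^{\beta-d_k}u_{\rad P_k[1]}+v^{\beta}u_{(I_k/S_k)[-1]}.
\end{align*}

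Step 2 is to eliminate $u_{\rad P_k[1]}$ to get the first equation. Multiply the first line by $v^{-\alpha}$ and the second by $v^{\alpha}$, then subtract. Since $\beta=d_k-\alpha$, the coefficient of $u_{\rad P_k[1]}$ in the second line becomes $v^{\alpha+\beta-d_k}=1$, matching the first, so it cancels. The coefficient of $u_{(I_k/S_k)[-1]}$ is $v^{\alpha+\beta}-v^{-d_k}=v^{d_k}-v^{-d_k}$, as required.

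Step 3 is to eliminate $u_{(I_k/S_k)[-1]}$ to get the second equation. Here one must check that $\Lambda(e_k,{^\ast}e_k)$ equals the appropriate combination. Using ${^\ast}e_k=e_k^\ast-\mathbf{b}^k$ and $\Lambda(e_k,\mathbf{b}^k)=-d_k$, we get $\Lambda(e_k,{^\ast}e_k)=\Lambda(e_k,e_k^\ast)+d_k=d_k-\alpha=\beta$. Multiply the first line by $v^{\beta}$ and the second by $v^{-\beta}$, then subtract. The $u_{(I_k/S_k)[-1]}$ coefficients are $v^{\alpha+\beta-d_k}=1$ and $1$, so they cancel. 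The coefficient of $u_{\rad P_k[1]}$ is $v^{\alpha+\beta}-v^{-d_k}=v^{d_k}-v^{-d_k}$.

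The only delicate point is the bookkeeping of signs in $\Lambda(e_k,\mathbf{b}^k)=-d_k$, which comes from the compatibility $\Lambda\widetilde B=-\binom{D_n}{0}$. It is also necessary to confirm that the $d_k$ appearing as $\lr{e_k,\mathbf i_k}$ coincides with this valuation; everything else is linear algebra.
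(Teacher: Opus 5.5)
Your proof is correct and is essentially the paper's argument: the paper also obtains both identities by taking $v$-weighted differences of \eqref{gx48} and \eqref{gx49}, using $\Lambda({}^\ast e_k,e_k)=\Lambda(e_k^\ast,e_k)-d_k$, which is your $\beta=d_k-\alpha$. Your exponent bookkeeping ($\alpha=\Lambda(e_k^\ast,e_k)$, $\lr{e_k,\mathbf i_k}=\lr{\mathbf p_k,e_k}=d_k$, and $\Lambda(e_k,{}^\ast e_k)=\beta$ via $\Lambda(e_k,\mathbf b^k)=-d_k$) is correct, and the passage through $\varphi_0'$ that you mention is not needed.
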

\begin{proof}
It is proved by the equations \eqref{gx48} and \eqref{gx49}, and $\Lambda({^\ast}e_k,e_k)=\Lambda(e_k^\ast,e_k)-d_k$.
\end{proof}

Let $\mathcal {C}\mathcal {H}_\Lambda^{{\tilde{c}l}}({\A})$ be the subalgebra of $\mathcal {D}\mathcal {H}_\Lambda^{{\tilde{c}l}}({\A})$ generated by the elements $u_{I_i[-1]}$ and $u_{S_j}$, where $1\leq i\leq m$ and $1\leq j\leq n$, called the {\em composition subalgebra} of $\mathcal {D}\mathcal {H}_\Lambda^{{\tilde{c}l}}({\A})$.
It is well-known that $u_M\in\mathcal {C}\mathcal {H}_\Lambda^{{\tilde{c}l}}({\A})$ for any rigid $M\in\A$ (cf. \cite[Theorem 2]{ZhangP}). That is, the composition subalgebra $\mathcal {C}\mathcal {H}_\Lambda^{{\tilde{c}l}}({\A})$ is generated by the elements $u_M$ and $u_{I[-1]}$, where $M\in \A$ is indecomposable and rigid, and $I\in \mathcal{I}_{\widetilde{\A}}$ is indecomposable.
\begin{corollary}\label{congshixian}
The map $\varphi'_0$ gives an algebra isomorphism from $\A_q(\Lambda,\widetilde{B})$ to $\mathcal {C}\mathcal {H}_\Lambda^{{\tilde{c}l}}({\A})$.
\end{corollary}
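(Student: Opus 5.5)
We will deduce the corollary from Theorem \ref{chtgthm}, using the fact that an injective algebra homomorphism is an isomorphism onto its image. The whole argument therefore reduces to identifying the image of $\varphi'_0$ with $\mathcal{C}\mathcal{H}_\Lambda^{\tilde{c}l}(\A)$.

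First, by Theorem \ref{chtgthm}, $\varphi'_0:\A_q(\Lambda,\widetilde{B})\to\mathcal{D}\mathcal{H}_\Lambda^{\tilde{c}l}(\A)$ is an injective algebra homomorphism. We recall that $\A_q(\Lambda,\widetilde{B})$ is generated as an algebra by $X_{I_i[-1]}$ ($1\leq i\leq m$) and $X_{S_j}$ ($1\leq j\leq n$). This is exactly the presentation used in Proposition \ref{prop:quantum-cluster-presentation}, since these elements are the initial and once-mutated cluster variables $x_i$ and $x'_j$. Hence the image of $\varphi'_0$ is the subalgebra generated by $\varphi'_0(X_{I_i[-1]})=u_{I_i[-1]}$ and $\varphi'_0(X_{S_j})=u_{S_j}$. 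By definition, that subalgebra is the composition subalgebra $\mathcal{C}\mathcal{H}_\Lambda^{\tilde{c}l}(\A)$.

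It follows that $\varphi'_0$ corestricts to a bijective algebra homomorphism $\A_q(\Lambda,\widetilde{B})\to\mathcal{C}\mathcal{H}_\Lambda^{\tilde{c}l}(\A)$, which is the required isomorphism. For the inverse, we can use the restriction of $\varphi_0$: since $\varphi_0\varphi'_0={\rm Id}$, $\varphi_0$ sends $\mathcal{C}\mathcal{H}_\Lambda^{\tilde{c}l}(\A)$ onto $\A_q(\Lambda,\widetilde{B})$ and inverts $\varphi'_0$ there.

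We can also identify images explicitly, using the fact quoted before the corollary that $u_M\in\mathcal{C}\mathcal{H}_\Lambda^{\tilde{c}l}(\A)$ for rigid $M$. For indecomposable rigid $M\in\A$, the element $X_M$ is a quantum cluster variable (by \cite{Rupel2}) and lies in $\A_q(\Lambda,\widetilde{B})$. Its image $\varphi'_0(X_M)$ is then the unique preimage of $X_M$ under $\varphi_0$ inside the composition subalgebra. Since $u_M$ is also such a preimage, we get $\varphi'_0(X_M)=u_M$.

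The only point requiring care is the generating-set claim, namely that $\A_q(\Lambda,\widetilde{B})$ is generated by the $X_{I_i[-1]}$ and $X_{S_j}$. This comes from the lower/upper bound presentation in Proposition \ref{prop:quantum-cluster-presentation}, valid for acyclic seeds via \cite{BZ05}. Once that is granted, the rest is formal.
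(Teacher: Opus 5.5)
Your proposal is correct and follows the paper's own argument: $\varphi'_0$ is injective by Theorem \ref{chtgthm}, and its image is the subalgebra generated by $u_{I_i[-1]}$ and $u_{S_j}$, which is by definition $\mathcal{C}\mathcal{H}_\Lambda^{\tilde{c}l}(\A)$. Your extra identification $\varphi'_0(X_M)=u_M$ for rigid $M$ is not needed for this statement; it is exactly the content of the paper's next result, Corollary \ref{congshixianimg}.
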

\begin{proof}
Clearly, the composition subalgebra $\mathcal {C}\mathcal {H}_\Lambda^{{\tilde{c}l}}({\A})$ is just the image of the algebra homomorphism $\varphi'_0:\A_q(\Lambda,\widetilde{B})\rightarrow\mathcal {D}\mathcal {H}_\Lambda^{{\tilde{c}l}}({\A})$. Thus, $\varphi'_0:\A_q(\Lambda,\widetilde{B})\rightarrow \mathcal {C}\mathcal {H}_\Lambda^{{\tilde{c}l}}({\A})$ gives an algebra isomorphism.
\end{proof}
\begin{corollary}\label{congshixianimg}
For any rigid $M\in{\A}$ and $I\in \mathcal{I}_{\widetilde{\A}}$, it holds that $\varphi'_0(X_{M\oplus I[-1]})=u_{M\oplus I[-1]}$.
\end{corollary}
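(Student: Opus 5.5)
The idea is to use that $\varphi_0\varphi_0'={\rm Id}$ on $\A_q(\Lambda,\widetilde{B})$ and that $\varphi_0'$ is an isomorphism onto $\mathcal{C}\mathcal{H}_\Lambda^{\tilde{c}l}(\A)$ (Corollary \ref{congshixian}). So it suffices to show two things: $u_{M\oplus I[-1]}$ lies in the composition subalgebra $\mathcal{C}\mathcal{H}_\Lambda^{\tilde{c}l}(\A)$, and $\varphi_0(u_{M\oplus I[-1]})=X_{M\oplus I[-1]}$. The second is the definition of $\varphi_0$. Granting the first, write $u_{M\oplus I[-1]}=\varphi_0'(y)$ for some $y\in\A_q(\Lambda,\widetilde{B})$. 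Then
\[
y=\varphi_0\varphi_0'(y)=\varphi_0(u_{M\oplus I[-1]})=X_{M\oplus I[-1]},
\]
since $\varphi_0$ restricted to the image of $\varphi_0'$ is the inverse of $\varphi_0'$. This gives $\varphi_0'(X_{M\oplus I[-1]})=u_{M\oplus I[-1]}$.

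For the membership claim I use \eqref{lgx4}, which gives
\[
u_{M\oplus I[-1]}=q^{\frac12\Lambda({\bf i}^\ast,{\bf m}^\ast)}\,u_{I[-1]}\star u_M .
\]
By \eqref{lgx1}, $u_{I[-1]}$ is, up to a power of $v$, a product of the generators $u_{I_i[-1]}$, so it lies in $\mathcal{C}\mathcal{H}_\Lambda^{\tilde{c}l}(\A)$. For rigid $M\in\A$, $u_M$ lies in the composition subalgebra by the cited result \cite[Theorem 2]{ZhangP}, stated just before Corollary \ref{congshixian}. The product is therefore in $\mathcal{C}\mathcal{H}_\Lambda^{\tilde{c}l}(\A)$.

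The main obstacle is the applicability of \cite{ZhangP}. That result is formulated for the twisted Ringel–Hall algebra of $\A$. Here we need that the corresponding identity expressing $u_M$ via the $u_{S_j}$ persists in the quotient $\mathcal{D}\mathcal{H}_\Lambda^{\tilde{c}l}(\A)$ with the $\Lambda$-twisted product. Restricted to objects of $\A$, the product \eqref{lgx3} is the Ringel–Hall product twisted by $q^{\lr{\cdot,\cdot}}$ and the bilinear factor $v^{\Lambda(\cdot^\ast,\cdot^\ast)}$. A bilinear twist only rescales monomials, so the subalgebra generated by the $u_{S_j}$ is unchanged. Passing to the quotient preserves any identity already valid upstairs, so the membership holds in $\mathcal{D}\mathcal{H}_\Lambda^{\tilde{c}l}(\A)$.
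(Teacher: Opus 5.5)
Your proposal is correct and follows essentially the same argument as the paper. You first place $u_{M\oplus I[-1]}$ in $\mathcal{C}\mathcal{H}_\Lambda^{\tilde{c}l}(\A)$ using \eqref{lgx4}, \eqref{lgx1} and \cite[Theorem 2]{ZhangP}, then combine $\varphi_0\varphi_0'={\rm Id}$ with Corollary \ref{congshixian}. The paper phrases the last step as injectivity of $\varphi_0$ on $\mathcal{C}\mathcal{H}_\Lambda^{\tilde{c}l}(\A)$, whereas you use that $\varphi_0'$ maps onto it; these are equivalent. Your extra remark on why the cited composition-algebra result survives the bilinear twist and the quotient is a justification the paper leaves implicit.
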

\begin{proof}
For any rigid $M\in{\A}$ and $I\in \mathcal{I}_{\widetilde{\A}}$, $X_{M\oplus I[-1]}=v^{\Lambda({\bf i}^\ast,{\bf m}^\ast)}X_{I[-1]} X_{M}\in \A_q(\Lambda,\widetilde{B})$; similarly, $u_{M\oplus I[-1]}=v^{\Lambda({\bf i}^\ast,{\bf m}^\ast)}u_{I[-1]} u_{M}\in \mathcal {C}\mathcal {H}_\Lambda^{{\tilde{c}l}}({\A})$.

Suppose that $\varphi'_0(X_{M\oplus I[-1]})=x$ for some element $x\in\mathcal {C}\mathcal {H}_\Lambda^{{\tilde{c}l}}({\A})$.
Since $\varphi_0\varphi'_0={\rm Id}$, we get $\varphi_0(x)=X_{M\oplus I[-1]}$.
On the other hand, $\varphi_0(u_{M\oplus I[-1]})=X_{M\oplus I[-1]}$. By Corollary \ref{congshixian}, $\varphi_0:\mathcal {C}\mathcal {H}_\Lambda^{{\tilde{c}l}}({\A})\rightarrow\A_q(\Lambda,\widetilde{B})$ is an isomorphism. Thus we obtain $x=u_{M\oplus I[-1]}$, i.e. $\varphi'_0(X_{M\oplus I[-1]})=u_{M\oplus I[-1]}$.
\end{proof}
\begin{corollary}\label{xxwguan}
The following subset of the spanning set $\mathfrak{B}$ of the Hall algebra $\mathcal {D}\mathcal {H}_\Lambda^{{\tilde{c}l}}({\A})$\[\{u_{M\oplus I[-1]}\mid M\in \A~\text{is ~rigid~and}~I\in \mathcal{I}_{\widetilde{\A}}~\text{such that}~ \Hom_{\widetilde{\A}}(M,I)=0\}\]
is $\mathbb{Q}(v)$-linearly independent.
\end{corollary}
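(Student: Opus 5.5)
The plan is to transport linear independence from the quantum torus back to the Hall algebra along the surjective homomorphism $\varphi_0:\mathcal {D}\mathcal {H}_\Lambda^{{\tilde{c}l}}({\A})\to \mathcal{A}\mathcal{H}_\Lambda(Q)$. The key point is that $\varphi_0$ sends each element $u_{M\oplus I[-1]}$ of the displayed subset to the quantum cluster character $X_{M\oplus I[-1]}$. This holds by the very definition of $\varphi$ (Proposition \ref{prop:surjective-hom}) restricted to $\mathcal {D}\mathcal {H}_\Lambda^{{\tilde{c}l}}({\A})$. No use of Corollary \ref{congshixianimg} is needed for this direction.

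Concretely, suppose we have a relation $\sum_{s} c_s u_{M_s\oplus I_s[-1]}=0$ in $\mathcal {D}\mathcal {H}_\Lambda^{{\tilde{c}l}}({\A})$. Here the pairs $(M_s,I_s)$ are pairwise non-isomorphic, each $M_s\in\A$ is rigid, $\Hom_{\widetilde{\A}}(M_s,I_s)=0$, and $c_s\in\mathbb{Q}(v)$. Applying $\varphi_0$ gives $\sum_s c_s X_{M_s\oplus I_s[-1]}=0$ in $\mathcal{T}_\Lambda$. Since $\A$ embeds into $\widetilde{\A}$ as a full subcategory closed under extensions, a rigid $M\in\A$ is rigid in $\widetilde{\A}$. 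Hence each $X_{M_s\oplus I_s[-1]}$ lies in the set \eqref{qclumo} of quantum cluster monomials.

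Next I check that distinct pairs give distinct monomials. By Proposition \ref{xjlm} (via Lemma \ref{lem:pointed-vector-iso}), $X_{M\oplus I[-1]}$ is pointed at ${^\ast\mathbf{i}}-\mathbf{m}^\ast$, and this vector determines $\mathbf{m}$ and $I$. Since rigid modules are determined by their dimension vectors, it also determines $M$. So distinct pairs are pointed at distinct vectors, and Lemma \ref{lem:pointed-function} (equivalently Proposition \ref{xjlm}) forces all $c_s=0$.

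The only point needing care is this injectivity of $(M,I)\mapsto X_{M\oplus I[-1]}$ on the index set, together with the rigidity transfer from $\A$ to $\widetilde{\A}$. Both are settled by the cited lemmas, so I expect no real obstacle. The argument is the same as the one used for $\mathfrak{B}$ in the proof of Theorem \ref{thm:hall-quantum-dynkin}, with the Dynkin hypothesis used there only to identify $\mathfrak{B}$ with this subset.
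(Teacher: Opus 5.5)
Your argument is correct, but it runs in the opposite direction from the paper's.

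\textbf{The paper's route (push forward).} It starts from the linearly independent quantum cluster monomials $X_{M\oplus I[-1]}$ of Proposition~\ref{xjlm}. It then applies the injective homomorphism $\varphi'_0$ of Corollary~\ref{congshixian}, using Corollary~\ref{congshixianimg} to identify $\varphi'_0(X_{M\oplus I[-1]})$ with $u_{M\oplus I[-1]}$.

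\textbf{Your route (pull back).} You use only two facts. First, $\varphi_0$ is a well-defined linear map on the quotient with $\varphi_0(u_{M\oplus I[-1]})=X_{M\oplus I[-1]}$ (Proposition~\ref{prop:surjective-hom}). Second, a family whose image under a linear map is independent is itself independent. This is exactly the argument in the proof of Theorem~\ref{thm:hall-quantum-dynkin}.

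\textbf{Comparison.} Your route is more economical. It bypasses the presentation of $\A_q(\Lambda,\widetilde{B})$ in Proposition~\ref{prop:quantum-cluster-presentation}, which rests on the Berenstein--Zelevinsky basis theorem. It also bypasses Theorem~\ref{chtgthm} and the cited fact that $u_M$ lies in the composition subalgebra for rigid $M$. All of these feed into Corollaries~\ref{congshixian} and~\ref{congshixianimg}. What the paper's route buys is information the bare independence statement does not need: these elements lie in $\mathcal {C}\mathcal {H}_\Lambda^{{\tilde{c}l}}({\A})$ and are precisely the images of the quantum cluster monomials under $\varphi'_0$. The remark following the corollary relies on that.

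Your two side checks are exactly what is needed:
\begin{itemize}
\item rigidity in $\A$ agrees with rigidity in $\widetilde{\A}$, because $\A$ is extension-closed;
\item $(M,I)\mapsto X_{M\oplus I[-1]}$ is injective, via distinct pointed vectors.
\end{itemize}
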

\begin{proof}
It is proved by Lemma \ref{xjlm}, Corollaries \ref{congshixianimg} and \ref{congshixian}.
\end{proof}
\begin{remark}
For Dynkin quivers, the spanning set $\mathfrak{B}$ of the Hall algebra $\mathcal {D}\mathcal {H}_\Lambda^{{\tilde{c}l}}({\A})$ coincides with the set
$\{u_{M\oplus I[-1]}\mid M\in \A~\text{is ~rigid~and}~I\in \mathcal{I}_{\widetilde{\A}}~\text{such that}~ \Hom_{\widetilde{\A}}(M,I)=0\}.$
Then by Corollary \ref{xxwguan}, we also get that $\mathfrak{B}$ is a basis of $\mathcal {D}\mathcal {H}_\Lambda^{{\tilde{c}l}}({\A})$. Noting that each $u_{M\oplus I[-1]}\in\mathfrak{B}$ belongs to $\mathcal {C}\mathcal {H}_\Lambda^{{\tilde{c}l}}({\A})$, we obtain that $\mathcal {C}\mathcal {H}_\Lambda^{{\tilde{c}l}}({\A})=\mathcal {D}\mathcal {H}_\Lambda^{{\tilde{c}l}}({\A})$. Thus,
the map $\varphi'_0$ gives the algebra isomorphism between $\A_q(\Lambda,\widetilde{B})$ and $\mathcal {D}\mathcal {H}_\Lambda^{{\tilde{c}l}}({\A})$. These coincide with the main results in Theorem \ref{thm:hall-quantum-dynkin}.
\end{remark}

\section{Standard monomials of quantum projective cluster variables}
In this section, we give an application of the Hall algebra $\mathcal{D}\mathcal{H}_\Lambda^{\tilde{c}l}(\mathcal{A})$. We introduce the Hall subalgebra of $\mathcal{D}\mathcal{H}_\Lambda^{\tilde{c}l}(\mathcal{A})$ generated by projectives, yielding a Hall algebra analogue of the lower bound quantum cluster algebras from \cite{HCDX} (see also \cite{BNI}). Then we give a proof of the main result of \cite{HCDX} via Hall algebra approach, which simplifies the calculations in \cite{HCDX} and provides representation theory interpretations of the corresponding results.

Throughout this section, let $\widetilde{Q}$ be still a finite acyclic valued quiver as given in the subsection 2.2. By the acyclicity of $\widetilde{Q}$, we label the vertices of $\widetilde{Q}$ such that if there exists an arrow $i\to j$, then $i>j$. In this case, there are no arrows from the vertices $\{1,\ldots, n\}$ to $\{n+1,\ldots, m\}$. As a consequence, for each $1\leq i\leq n$, the projective representation $P_i$ of $Q$ can be identified with the corresponding projective representation of $\widetilde{Q}$.

\subsection{Hall subalgebras generated by projectives}
Let $\mathcal {P}\mathcal {H}_\Lambda^{{\tilde{c}l}}({\A})$ be the subalgebra of $\mathcal {D}\mathcal {H}_\Lambda^{{\tilde{c}l}}({\A})$ generated by the elements $u_{P_i[1]}$ and $u_{P_j}$, where $1\leq i\leq m$ and $1\leq j\leq n$. In this subsection, we show that the Hall subalgebra $\mathcal {P}\mathcal {H}_\Lambda^{{\tilde{c}l}}({\A})$ is the same as the composition subalgebra $\mathcal {C}\mathcal {H}_\Lambda^{{\tilde{c}l}}({\A})$.

\begin{lemma}\label{lem:simple-in-projective}
Given $P, Q\in\mathcal {P}_{\A}$, we have $\sum_{[M]}F_{M,Q}^{P}u_M\in \mathcal {P}\mathcal {H}_\Lambda^{{\tilde{c}l}}({\A})$. In particular, we have $u_{S_k}\in \mathcal {P}\mathcal {H}_\Lambda^{{\tilde{c}l}}({\A})$ for any $1\leq k\leq n$.
\end{lemma}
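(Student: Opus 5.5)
The natural approach is to compute the product $u_{Q[1]}\star u_P$ inside $\mathcal{D}\mathcal{H}_\Lambda^{\tilde{c}l}(\A)$ using relation \eqref{lgx7}, and then identify the resulting sum. Since $u_{Q[1]}$ and $u_P$ are products of generators of $\mathcal{P}\mathcal{H}_\Lambda^{\tilde{c}l}(\A)$ (by \eqref{lgx2} and \eqref{lgx3}, a direct sum of projectives is, up to a power of $v$, a product of $u_{P_i}$'s, because projectives have no extensions between them), this product lies in $\mathcal{P}\mathcal{H}_\Lambda^{\tilde{c}l}(\A)$.

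By \eqref{lgx7},
\[
u_{Q[1]}\star u_P=q^{-\frac12\Lambda({\bf q}^\ast,{\bf p}^\ast)-\lr{{\bf q},{\bf p}}}\sum_{[F],[P']}v^{\Lambda({\bf f}^\ast,{{\bf p}'}^\ast)}|{}_{P'}\Hom_{\widetilde{\A}}(Q,P)_F|\,u_F\star u_{P'[1]}.
\]
For $\A$ hereditary, the kernel $P'$ of a morphism $Q\to P$ is a projective direct summand-type submodule of $Q$, and the image is a projective submodule of $P$. One then stratifies the sum by the kernel $P'$. The term with $P'=0$ consists of the injective maps $Q\hookrightarrow P$ with cokernel $F$. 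Counting these gives $|{}_0\Hom(Q,P)_F|=a_Q F^{P}_{F,Q}$ via the Riedtmann--Peng formula, since $\Ext^1(F,Q)_P$ divided by $\Hom(F,Q)$ times automorphisms gives $F^P_{F,Q}$. Up to an explicit power of $v$, which depends only on dimension vectors because ${\bf f}={\bf p}-{\bf q}$ is fixed, this term is therefore a nonzero scalar multiple of $\sum_{[M]}F^P_{M,Q}u_M$.

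The remaining terms, with $P'\neq 0$, I would handle by induction on $\dim Q$ (or on ${\bf q}$). A morphism with kernel $P'$ factors as $Q\to Q/P'=:Q''$ followed by an injection $Q''\hookrightarrow P$, where $Q''$ is projective and $Q\cong P'\oplus Q''$. Hence
\[
|{}_{P'}\Hom(Q,P)_F|=\#\{\text{splittings}\}\cdot|{}_0\Hom(Q'',P)_F|,
\]
and the corresponding sum becomes, up to scalars, $\big(\sum_{[M]}F^P_{M,Q''}u_M\big)\star u_{P'[1]}$ with $\dim Q''<\dim Q$. By induction each $\sum_M F^P_{M,Q''}u_M$ is in $\mathcal{P}\mathcal{H}_\Lambda^{\tilde{c}l}(\A)$, and $u_{P'[1]}$ is a product of the generators $u_{P_i[1]}$. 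Subtracting these terms and dividing by the nonzero scalar gives the claim. The main obstacle is the careful bookkeeping of the powers of $v$ and of the counts relating $|{}_{P'}\Hom(Q,P)_F|$ to $F^P_{F,Q''}$, showing that the factor is uniform in $F$ so that the sum reassembles exactly. I would first try to get this uniformity from Proposition \ref{prop:Hall-equality-P-I}(2) with $M=0$.

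For the particular case, I would take $P=P_k$ and $Q=\rad P_k$, which is projective and supported on $Q$ since no arrows go from $\{1,\dots,n\}$ to $\{n+1,\dots,m\}$. The only $M$ with $F^{P_k}_{M,\rad P_k}\neq 0$ is $M=S_k$, because the image of $\rad P_k$ must be a submodule of $P_k$ of dimension vector ${\bf p}_k-e_k$, and the only such submodule is $\rad P_k$. The coefficient there is nonzero, so $u_{S_k}\in\mathcal{P}\mathcal{H}_\Lambda^{\tilde{c}l}(\A)$.
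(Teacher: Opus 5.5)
Your proposal is correct and follows the paper's proof essentially step by step: induction on $\dim Q$, expanding $u_{Q[1]}\star u_P$ by \eqref{lgx7}, isolating the injective part via $|{}_0\Hom_{\A}(Q,P)_M|=a_QF_{M,Q}^{P}$, factoring the other terms through the projective image $Q/Q'$ (the zero morphism is your case $Q''=0$), and taking $P=P_k$, $Q=\rad P_k$ for $u_{S_k}$. The uniformity in $F$ that you flag as the main obstacle is already settled by your own factorization, which gives $|{}_{P'}\Hom_{\widetilde{\A}}(Q,P)_F|=a_{Q''}F^{Q}_{Q'',P'}F^{P}_{F,Q''}$ exactly as in the paper; Proposition \ref{prop:Hall-equality-P-I}(2) with $M=0$ reduces to a tautology, so you can drop that reference.
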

\begin{proof}
We first remark that $\dim_{\mathbb{F}_q}Q\leq \dim_{\mathbb{F}_q}P$ whenever $F_{M,Q}^{P}$ is nonzero.
We proceed the proof by induction on the dimension of $Q$.

If $\dim_{\mathbb{F}_q}Q=0$, i.e. $Q=0$, then the sum becomes $u_{P}$, which is clearly in $\mathcal {P}\mathcal {H}_\Lambda^{{\tilde{c}l}}({\A})$.

Assume that the statement holds for any projective representation of $\A$ which has the dimension less than $\dim_{\mathbb{F}_q}Q$.
By \eqref{lgx7}, we obtain
\begin{align}\label{eq:q1pk}
    u_{Q[1]}\star u_{P}&=v^{-\Lambda(\mathbf{q}^\ast,\mathbf{p}^\ast)-2\lr{\mathbf{q},\mathbf{p}}}\sum_{[G],[Q']}v^{\Lambda(\mathbf{g}^\ast,\mathbf{q'}^\ast)}|_{Q'}\Hom_{\A}(Q,P)_G|u_{G}\star u_{Q'[1]}\notag\\
    &=v^{-2\Lambda(\mathbf{q}^\ast,\mathbf{p}^\ast)-2\lr{\mathbf{q},\mathbf{p}}}u_{P}\star u_{Q[1]}+\sum_{[M]}v^{-\Lambda(\mathbf{q}^\ast,\mathbf{p}^\ast)-2\lr{\mathbf{q},\mathbf{p}}}|_0\Hom_{\A}(Q,P)_M|u_M\notag\\
    \quad &+\sum_{[0]\neq[Q']\neq[ Q],[G]}v^{-\Lambda(\mathbf{q}^\ast,\mathbf{p}^\ast)-2\lr{\mathbf{q},\mathbf{p}}+\Lambda(\mathbf{g}^\ast,\mathbf{q'}^\ast)}|_{Q'}\Hom_{\A}(Q,P)_G|u_G\star u_{Q'[1]}.
\end{align}
Note that \begin{equation}\label{diyihom}|_0\Hom_{\A}(Q,P)_M|=a_QF_{M,Q}^{P},\end{equation} where we recall that $a_X$ denotes the cardinality of the automorphism group of $X$ for any $X\in\A$.
On the other hand, given $Q'$, for any morphism $f$ in ${_{Q'}\Hom_{\A}(Q,P)_G}$, $\im f$ is projective and has the fixed dimension vector ${\bf q}-{\bf q}'$, thus all morphisms $f$ in ${_{Q'}\Hom_{\A}(Q,P)_G}$ have the same image up to isomorphism, denoted by $Q/Q'$. Hence, we obtain
\begin{equation}\label{dierhom}|_{Q'}\Hom_{\A}(Q,P)_G|=a_{Q/Q'}F_{Q/Q',Q'}^QF_{G,Q/Q'}^{P}.\end{equation}
Substituting \eqref{diyihom}, \eqref{dierhom} to \eqref{eq:q1pk}, we obtain
\begin{align}\label{eq:q1pk-rewirtten}
    &u_{Q[1]}\star u_{P}=q^{-\Lambda(\mathbf{q}^\ast,\mathbf{p}^\ast)-\lr{\mathbf{q},\mathbf{p}}}u_{P}\star u_{Q[1]}+v^{-\Lambda(\mathbf{q}^\ast,\mathbf{p}^\ast)-2\lr{\mathbf{q},\mathbf{p}}}{a_Q}\sum_{[M]}F_{M,Q}^{P}u_M\notag\\
    &+\sum_{[0]\neq[Q']\neq[ Q]}v^{-\Lambda(\mathbf{q}^\ast,\mathbf{p}^\ast)-2\lr{\mathbf{q},\mathbf{p}}+\Lambda(\mathbf{p}^\ast-\mathbf{q}^\ast,\mathbf{q'}^\ast)}a_{Q/Q'}F_{Q/Q',Q'}^Q(\sum_{[G]}F_{G,Q/Q'}^{P}u_G)\star u_{Q'[1]}.
\end{align}
Note that in the right hand side of \eqref{eq:q1pk-rewirtten}, each $Q/Q'$ belongs to $\mathcal {P}_{\A}$ and has $\dim_{\mathbb{F}_q} Q/Q'<\dim_{\mathbb{F}_q} Q$. By inductions, we have $\sum_{[G]}F_{G,Q/Q'}^{P}u_G\in \mathcal {P}\mathcal {H}_\Lambda^{{\tilde{c}l}}({\A})$. Hence, by \eqref{eq:q1pk-rewirtten}, we get that $\sum_{[M]}F_{M,Q}^{P}u_M\in \mathcal {P}\mathcal {H}_\Lambda^{{\tilde{c}l}}({\A}).$
In particular, take $P=P_k$ and $Q=\rad P_k\in\mathcal {P}_{\A}$. Then the $M$ in the sum $\sum_{[M]}F_{M,Q}^{P}u_M$ is unique and isomorphic to $S_k$. Thus, we get $u_{S_k}\in \mathcal {P}\mathcal {H}_\Lambda^{{\tilde{c}l}}({\A})$. Therefore, we complete the proof.
\end{proof}
\begin{remark}
In the equation \eqref{eq:q1pk-rewirtten}, $Q,Q'\in\mathcal {P}_{\A}$, and we need the condition $\mathcal {P}_{\A}\subseteq\mathcal {P}_{\widetilde{\A}}$ to get $u_{Q[1]}, u_{Q'[1]}\in\mathcal {P}\mathcal {H}_\Lambda^{{\tilde{c}l}}({\A})$, which is guaranteed by the convention of orientations of $\widetilde{Q}$ at the beginning of this section. Of course, the convention of orientations is also needed in the next subsection.
\end{remark}

\begin{proposition}\label{hlbxd}
It holds that  $\mathcal {P}\mathcal {H}_\Lambda^{{\tilde{c}l}}({\A})=\mathcal {C}\mathcal {H}_\Lambda^{{\tilde{c}l}}({\A})$.
\end{proposition}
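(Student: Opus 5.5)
We prove the two inclusions separately. Recall that $\mathcal {C}\mathcal {H}_\Lambda^{{\tilde{c}l}}({\A})$ is generated by $u_{I_i[-1]}$ $(1\leq i\leq m)$ and $u_{S_j}$ $(1\leq j\leq n)$. The algebra $\mathcal {P}\mathcal {H}_\Lambda^{{\tilde{c}l}}({\A})$ is generated by $u_{P_i[1]}$ $(1\leq i\leq m)$ and $u_{P_j}$ $(1\leq j\leq n)$.

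\emph{The inclusion $\mathcal {C}\mathcal {H}_\Lambda^{{\tilde{c}l}}({\A})\subseteq\mathcal {P}\mathcal {H}_\Lambda^{{\tilde{c}l}}({\A})$.} In the quotient $\mathcal {D}\mathcal {H}_\Lambda^{{cl}_1}(\widetilde{\A})$ we have the identification $u_{I_i[-1]}=u_{P_i[1]}$, because $P_i=\nu^{-1}(I_i)$ and the ideal $\mathfrak{I}_1$ gives exactly this relation. Hence each generator $u_{I_i[-1]}$ of the composition subalgebra is a generator of $\mathcal {P}\mathcal {H}_\Lambda^{{\tilde{c}l}}({\A})$. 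For the other generators, Lemma \ref{lem:simple-in-projective} gives $u_{S_k}\in \mathcal {P}\mathcal {H}_\Lambda^{{\tilde{c}l}}({\A})$ for every $1\leq k\leq n$. Since all generators of $\mathcal {C}\mathcal {H}_\Lambda^{{\tilde{c}l}}({\A})$ lie in the subalgebra $\mathcal {P}\mathcal {H}_\Lambda^{{\tilde{c}l}}({\A})$, this inclusion follows.

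\emph{The inclusion $\mathcal {P}\mathcal {H}_\Lambda^{{\tilde{c}l}}({\A})\subseteq\mathcal {C}\mathcal {H}_\Lambda^{{\tilde{c}l}}({\A})$.} By the same identification, each $u_{P_i[1]}=u_{I_i[-1]}$ already lies in $\mathcal {C}\mathcal {H}_\Lambda^{{\tilde{c}l}}({\A})$. It remains to treat $u_{P_j}$ for $1\leq j\leq n$. By the convention on the orientation of $\widetilde{Q}$ fixed at the start of this section, $P_j$ is a projective representation of $Q$, so $P_j\in\A$. Moreover $P_j$ is rigid, since $\Ext^1_{\A}(P_j,P_j)=0$. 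The paper recalls (via \cite[Theorem 2]{ZhangP}) that $u_M\in\mathcal {C}\mathcal {H}_\Lambda^{{\tilde{c}l}}({\A})$ for every rigid $M\in\A$, so $u_{P_j}\in\mathcal {C}\mathcal {H}_\Lambda^{{\tilde{c}l}}({\A})$.

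Alternatively, we could argue through Corollary \ref{congshixianimg}. It gives $u_{P_j}=\varphi'_0(X_{P_j})$ with $X_{P_j}\in\A_q(\Lambda,\widetilde{B})$, and by Corollary \ref{congshixian} the image of $\varphi'_0$ is exactly $\mathcal {C}\mathcal {H}_\Lambda^{{\tilde{c}l}}({\A})$.

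The only nontrivial step is the membership $u_{S_k}\in\mathcal {P}\mathcal {H}_\Lambda^{{\tilde{c}l}}({\A})$, and Lemma \ref{lem:simple-in-projective} already provides it. The remaining point to check is that the relation $u_{I[-1]}=u_{\nu^{-1}(I)[1]}$ survives in the subalgebra $\mathcal {D}\mathcal {H}_\Lambda^{{\tilde{c}l}}({\A})$. This holds, since that subalgebra sits inside the quotient $\mathcal {D}\mathcal {H}_\Lambda^{{cl}}(\widetilde{\A})$, where the relation holds.
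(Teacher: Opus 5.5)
Your proof is correct and follows essentially the same route as the paper. You get $u_{P_j}\in\mathcal{C}\mathcal{H}_\Lambda^{\tilde{c}l}(\A)$ from the rigidity of $P_j$, which is the fact the paper recalls, and you get the reverse inclusion from Lemma \ref{lem:simple-in-projective}. You also spell out the identification $u_{P_i[1]}=u_{I_i[-1]}$ coming from the ideal $\mathfrak{I}_1$, which the paper's two-line proof uses without comment.
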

\begin{proof}
    Since $u_{P_k}\in \mathcal {C}\mathcal {H}_\Lambda^{{\tilde{c}l}}({\A})$ for each $1\leq k\leq n$, we get that $\mathcal {P}\mathcal {H}_\Lambda^{{\tilde{c}l}}({\A})\subseteq \mathcal {C}\mathcal {H}_\Lambda^{{\tilde{c}l}}({\A})$. The inverse inclusion follows from Lemma \ref{lem:simple-in-projective}.
\end{proof}
\begin{definition}
The {\em lower bound quantum cluster algebra} $\mathcal {L}_q(\Lambda,\widetilde{B})$ is the subalgebra of $\mathcal{T}_\Lambda$ generated by the quantum cluster characters $X_{P_i[1]}$ and $X_{P_j}$, where $1\leq i\leq m$ and $1\leq j\leq n$.
\end{definition}
\begin{corollary}\label{tuil5.6}
It holds that  $\mathcal {L}_q(\Lambda,\widetilde{B})=\A_q(\Lambda,\widetilde{B})$.
\end{corollary}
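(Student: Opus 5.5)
The plan is to transport Proposition \ref{hlbxd} through the isomorphism of Corollary \ref{congshixian}. Write $\varphi_0:\mathcal{D}\mathcal{H}_\Lambda^{\tilde{c}l}(\A)\to\mathcal{A}\mathcal{H}_\Lambda(Q)$ for the surjective algebra homomorphism obtained by restricting $\varphi$. Corollary \ref{congshixian} says that $\varphi'_0$ is an isomorphism $\A_q(\Lambda,\widetilde{B})\to\mathcal{C}\mathcal{H}_\Lambda^{\tilde{c}l}(\A)$. Since $\varphi_0\varphi'_0={\rm Id}$, the restriction of $\varphi_0$ to $\mathcal{C}\mathcal{H}_\Lambda^{\tilde{c}l}(\A)$ is the inverse isomorphism onto $\A_q(\Lambda,\widetilde{B})$. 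In particular, $\varphi_0$ maps $\mathcal{C}\mathcal{H}_\Lambda^{\tilde{c}l}(\A)$ onto $\A_q(\Lambda,\widetilde{B})$.

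Next I would compute $\varphi_0(\mathcal{P}\mathcal{H}_\Lambda^{\tilde{c}l}(\A))$. By definition, $\mathcal{P}\mathcal{H}_\Lambda^{\tilde{c}l}(\A)$ is generated by $u_{P_i[1]}$ for $1\leq i\leq m$ and $u_{P_j}$ for $1\leq j\leq n$. Here $P_j\in\A$ by the orientation convention of this section, so these elements do lie in $\mathcal{D}\mathcal{H}_\Lambda^{\tilde{c}l}(\A)$. Their images are $\varphi_0(u_{P_i[1]})=X_{P_i[1]}$ and $\varphi_0(u_{P_j})=X_{P_j}$. Because $\varphi_0$ is an algebra homomorphism, the image of the subalgebra generated by these elements is the subalgebra of $\mathcal{T}_\Lambda$ generated by their images, which is exactly $\mathcal{L}_q(\Lambda,\widetilde{B})$. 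It is worth noting that $X_{P_i[1]}=X_{I_i[-1]}$, since $u_{P_i[1]}=u_{I_i[-1]}$ already holds in $\mathcal{D}\mathcal{H}_\Lambda^{cl_1}(\widetilde{\A})$.

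Finally, Proposition \ref{hlbxd} gives $\mathcal{P}\mathcal{H}_\Lambda^{\tilde{c}l}(\A)=\mathcal{C}\mathcal{H}_\Lambda^{\tilde{c}l}(\A)$. Applying $\varphi_0$ to both sides yields
$$\mathcal{L}_q(\Lambda,\widetilde{B})=\varphi_0(\mathcal{P}\mathcal{H}_\Lambda^{\tilde{c}l}(\A))=\varphi_0(\mathcal{C}\mathcal{H}_\Lambda^{\tilde{c}l}(\A))=\A_q(\Lambda,\widetilde{B}).$$

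None of these steps is really an obstacle, since the substantive work is contained in Lemma \ref{lem:simple-in-projective}. The only point that needs checking is the identification $\varphi_0(\mathcal{C}\mathcal{H}_\Lambda^{\tilde{c}l}(\A))=\A_q(\Lambda,\widetilde{B})$. I would verify it directly on generators: $\mathcal{C}\mathcal{H}_\Lambda^{\tilde{c}l}(\A)$ is generated by $u_{I_i[-1]}$ and $u_{S_j}$, and $\varphi_0$ sends these to the initial variables $X_{I_i[-1]}=x_i$ and the once-mutated variables $X_{S_j}=x'_j$. By Proposition \ref{prop:quantum-cluster-presentation} these elements generate $\A_q(\Lambda,\widetilde{B})$.
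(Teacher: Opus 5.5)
Your proposal is correct and follows the paper's argument: you transport the equality $\mathcal{P}\mathcal{H}_\Lambda^{\tilde{c}l}(\A)=\mathcal{C}\mathcal{H}_\Lambda^{\tilde{c}l}(\A)$ of Proposition \ref{hlbxd} through the isomorphism of Corollary \ref{congshixian}, checking that generators go to generators. You phrase it via $\varphi_0$, which is the inverse of $\varphi'_0$ on $\mathcal{C}\mathcal{H}_\Lambda^{\tilde{c}l}(\A)$; this is the direction the paper's one-line proof actually needs but leaves implicit, and it lets you read off $u_{P_j}\mapsto X_{P_j}$ directly from the definition of $\varphi_0$.
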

\begin{proof}
Note that for each $1\leq i\leq m$ and $1\leq j\leq n$, the isomorphism $\varphi'_0$ in Corollary \ref{congshixian} sends $u_{P_i[1]}$, $u_{P_j}$ and $u_{S_j}$ to $X_{P_i[1]}$, $X_{P_j}$ and $X_{S_j}$, respectively. Then by Proposition \ref{hlbxd}, we finish the proof.
\end{proof}
\begin{corollary}
Given $P, Q\in\mathcal {P}_{\A}$, we have $\sum_{[M]}F_{M,Q}^{P}X_M\in \A_q(\Lambda,\widetilde{B})$.
\end{corollary}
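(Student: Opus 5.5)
The statement asks us to show that $\sum_{[M]}F_{M,Q}^{P}X_M\in \A_q(\Lambda,\widetilde{B})$ for $P,Q\in\mathcal{P}_{\A}$. The plan is to transport Lemma \ref{lem:simple-in-projective} from the Hall algebra to the quantum torus through the homomorphism $\varphi_0$.

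First, Lemma \ref{lem:simple-in-projective} gives that the element $x:=\sum_{[M]}F_{M,Q}^{P}u_M$ lies in $\mathcal {P}\mathcal {H}_\Lambda^{{\tilde{c}l}}({\A})$. By Proposition \ref{hlbxd}, this subalgebra equals the composition subalgebra $\mathcal {C}\mathcal {H}_\Lambda^{{\tilde{c}l}}({\A})$.

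Next, I would apply $\varphi_0$, the restriction of $\varphi$ from Proposition \ref{prop:surjective-hom}. By its definition, $\varphi_0(u_M)=X_M$ for every $M\in\A$, so linearity gives
\[
\varphi_0(x)=\sum_{[M]}F_{M,Q}^{P}X_M .
\]
It then suffices to know that $\varphi_0$ maps $\mathcal {C}\mathcal {H}_\Lambda^{{\tilde{c}l}}({\A})$ into $\A_q(\Lambda,\widetilde{B})$. This holds because $\varphi_0$ is an algebra homomorphism, the generators $u_{I_i[-1]}$ and $u_{S_j}$ of $\mathcal {C}\mathcal {H}_\Lambda^{{\tilde{c}l}}({\A})$ go to $X_{I_i[-1]}$ and $X_{S_j}$, and these are quantum cluster variables in $\A_q(\Lambda,\widetilde{B})$. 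Equivalently, one can invoke Corollary \ref{congshixian}, using $\varphi_0\varphi_0'={\rm Id}$ so that $\varphi_0|_{\mathcal{C}\mathcal{H}}$ is the inverse of $\varphi'_0$. Either way, $\varphi_0(x)\in\A_q(\Lambda,\widetilde{B})$.

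There is essentially no obstacle. The only point needing care is that the sum is finite and all $M$ lie in $\A$, so each $u_M$ is defined in $\mathcal {D}\mathcal {H}_\Lambda^{{\tilde{c}l}}({\A})$. This is clear because $M$ is a quotient of $P\in\A$.
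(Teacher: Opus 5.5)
Your argument is correct and is essentially the paper's proof: both push Lemma~\ref{lem:simple-in-projective} forward along $\varphi_0$ and then identify the target with $\A_q(\Lambda,\widetilde{B})$. The only difference is bookkeeping. The paper restricts $\varphi_0$ to a map $\mathcal{P}\mathcal{H}_\Lambda^{\tilde{c}l}(\A)\to\mathcal{L}_q(\Lambda,\widetilde{B})$ and then invokes $\mathcal{L}_q(\Lambda,\widetilde{B})=\A_q(\Lambda,\widetilde{B})$ (Corollary~\ref{tuil5.6}), whereas you use $\mathcal{P}\mathcal{H}_\Lambda^{\tilde{c}l}(\A)=\mathcal{C}\mathcal{H}_\Lambda^{\tilde{c}l}(\A)$ (Proposition~\ref{hlbxd}) on the Hall side first; that is exactly the input from which Corollary~\ref{tuil5.6} is derived.
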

\begin{proof}
Clearly, the homomorphism $\varphi_0:\mathcal {D}\mathcal {H}_\Lambda^{{\tilde{c}l}}({\A})\to \mathcal{A}\mathcal{H}_\Lambda(Q)$ can be restricted to be a homomorphism $\varphi_0:\mathcal {P}\mathcal {H}_\Lambda^{{\tilde{c}l}}({\A})\to \mathcal {L}_q(\Lambda,\widetilde{B})$. Then by Lemma \ref{lem:simple-in-projective}, we get $\sum_{[M]}F_{M,Q}^{P}X_M=\varphi_0(\sum_{[M]}F_{M,Q}^{P}u_M)\in\mathcal {L}_q(\Lambda,\widetilde{B})$. Hence, by Corollary \ref{tuil5.6}, we finish the proof.
\end{proof}

\subsection{Standard monomial basis}
In this subsection, we use the standard monomials of the generators of $\mathcal {P}\mathcal {H}_\Lambda^{{\tilde{c}l}}({\A})$ to construct a basis of the composition subalgebra $\mathcal {C}\mathcal {H}_\Lambda^{{\tilde{c}l}}({\A})$, which corresponds to a basis of the quantum cluster algebra $\A_q(\Lambda,\widetilde{B})$.

Let $\mathfrak{A}$ be a $\mathbb{Q}(v)$-algebra and $x_1,\ldots, x_m, y_1,\ldots, y_n\in \mathfrak{A}$. Assume there exists an $m\times m$ skew-symmetric integral matrix $\Lambda=(\lambda_{ij})$ such that the following conditions hold:
\begin{itemize}
    \item $x_1,\ldots, x_m$ are quasi-commutative: $x_ix_j=v^{\lambda_{ij}}x_jx_i$~for any $1\leq i,j\leq m$;
    \item $y_1,\ldots, y_n$ are quasi-commutative: $y_iy_j=v^{\lambda_{ij}}y_jy_i$~for any $1\leq i,j\leq n$;
    \item For each $1\leq k\leq n$, $x_ky_k=f_k(x_{k+1},\ldots, x_m,y_1,\ldots, y_{k-1})$, where $f_k$ is a finite linear combination of some monomials of the form $x_{k+1}^{a_{k+1}}\cdots x_m^{a_m}y_1^{b_1}\cdots y_{k-1}^{b_{k-1}}.$
\end{itemize}
A monomial $x_1^{a_1}\cdots x_m^{a_m}y_1^{b_1}\cdots y_n^{b_n}$ is called a {\em standard monomial} of $x_1,\ldots, x_m,y_1,\ldots, y_n$ provided that $a_ib_i=0$ for any $1\leq i\leq n$.

For any monomial $M=x_1^{a_1}\cdots x_m^{a_m}y_1^{b_1}\cdots y_n^{b_n}$, denote the exponent vector of $M$ by $\mathbf{v}(M):=(a_1,\ldots, a_m,b_1,\ldots, b_n)$. For any two monomials $M, N$ of $x_1,\ldots, x_m,y_1,\ldots, y_n$, we write $M>_{\rm lex}N$ if $\mathbf{v}(M)>_{\rm lex} \mathbf{v}(N)$, where the later $>_{\rm lex}$ is the lexicographical order on $\mathbb{N}^{m+n}$. Note that for a given $\alpha\in \mathbb{N}^{m+n}$,  a strictly decreasing sequence $\alpha>_{\rm lex}\alpha_1>_{\rm lex}\alpha_2>_{\rm lex}\cdots$ must terminate after finitely many steps.
\begin{lemma}\label{lem:rewritting-system}
In the algebra $\mathfrak{A}$, every monomial $x_1^{a_1}\cdots x_m^{a_m}y_1^{b_1}\cdots y_n^{b_n}$ is a linear combination of standard monomials of $x_1,\ldots, x_m,y_1,\ldots, y_n$.
\end{lemma}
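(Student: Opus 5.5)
We argue by well-founded induction on the exponent vector $\mathbf{v}(M)\in\mathbb{N}^{m+n}$ with respect to $>_{\rm lex}$. A decreasing chain in this order terminates, as noted above. So it suffices to show the following: every non-standard monomial $M=x_1^{a_1}\cdots x_m^{a_m}y_1^{b_1}\cdots y_n^{b_n}$ equals a linear combination of monomials (in the fixed order $x_1,\dots,x_m,y_1,\dots,y_n$) that are all strictly smaller than $M$ in $>_{\rm lex}$.

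Let $M$ be non-standard. Choose the smallest index $k\le n$ with $a_kb_k\neq 0$. We write
\[
M=x_1^{a_1}\cdots x_{k-1}^{a_{k-1}}\; x_k^{a_k-1}\;\bigl(x_k\, x_{k+1}^{a_{k+1}}\cdots x_m^{a_m}y_1^{b_1}\cdots y_{k-1}^{b_{k-1}}\bigr)\; y_k\; y_k^{b_k-1}\cdots y_n^{b_n}.
\]
We need $x_k$ to sit immediately next to $y_k$. To arrange this, we move the factor $x_k$ to the right past $x_{k+1}^{a_{k+1}}\cdots x_m^{a_m}$, which only produces a power of $v$ by quasi-commutativity of the $x$'s. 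We also move $y_k$ to the left past $y_1^{b_1}\cdots y_{k-1}^{b_{k-1}}$, which again only produces a power of $v$. The result is, up to a power of $v$,
\[
x_1^{a_1}\cdots x_{k-1}^{a_{k-1}}x_k^{a_k-1}x_{k+1}^{a_{k+1}}\cdots x_m^{a_m}\,(x_ky_k)\,y_1^{b_1}\cdots y_{k-1}^{b_{k-1}}y_k^{b_k-1}y_{k+1}^{b_{k+1}}\cdots y_n^{b_n}.
\]
Now we substitute $x_ky_k=f_k(x_{k+1},\dots,x_m,y_1,\dots,y_{k-1})$. Each monomial of $f_k$ has the form $x_{k+1}^{c_{k+1}}\cdots x_m^{c_m}y_1^{d_1}\cdots y_{k-1}^{d_{k-1}}$. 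In each resulting term, we move these new $x$-factors to the left through $x_{k+1}^{a_{k+1}}\cdots x_m^{a_m}$, and the new $y$-factors to the right into the $y$-block. Both moves use only quasi-commutativity, so the $y$-factors of $f_k$ pass no $x$'s, and each term becomes, up to a scalar, an ordered monomial.

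That ordered monomial has exponent vector $(a_1,\dots,a_{k-1},a_k-1,a_{k+1}+c_{k+1},\dots,a_m+c_m,b_1+d_1,\dots,b_{k-1}+d_{k-1},b_k-1,b_{k+1},\dots,b_n)$. The first $k-1$ coordinates agree with those of $\mathbf{v}(M)$ and the $k$-th coordinate drops from $a_k$ to $a_k-1$. Hence each term is strictly smaller than $M$ in $>_{\rm lex}$. By the induction hypothesis each such term is a linear combination of standard monomials, and so is $M$. The base case is immediate: minimal elements, or more generally standard monomials, need nothing.

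The main obstacle is the bookkeeping when the $f_k$-monomials are reinserted, namely verifying that only quasi-commutations are needed. This works because $f_k$ involves only $x_j$ with $j>k$ and $y_j$ with $j<k$. The new $x$'s then land among the $x$-block to the right of position $k$, and the new $y$'s land in the $y$-block. No $x$ ever has to pass a $y$, which is the one commutation for which we have no relation. The lexicographic decrease is visible already in coordinate $k$, which makes the induction work.
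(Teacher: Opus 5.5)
Your proof is correct and follows the paper's argument. Like the paper, you pick the smallest $k$ with $a_kb_k\neq 0$, use quasi-commutativity within the $x$'s and within the $y$'s to isolate $x_ky_k$, substitute $f_k$, note that every resulting ordered monomial is strictly smaller in $>_{\rm lex}$ (first $k-1$ coordinates unchanged, $k$-th dropping by one), and conclude by well-founded induction; your explicit exponent vector and your remark that no $x$ ever passes a $y$ just spell out details the paper leaves implicit.
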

\begin{proof}
Take any monomial $M=x_1^{a_1}\cdots x_m^{a_m}y_1^{b_1}\cdots y_n^{b_n}$, if it is not standard, then there exists $1\leq k\leq n$ such that $a_kb_k>0$ and $a_ib_i=0$ for all $i<k$. Since $x_1,\ldots, x_m$ (resp. $y_1,\ldots, y_n$) are quasi-commutative, there exists an integer $c_k$ such that
\begin{align*}
M&=v^{c_k}x_1^{a_1}\cdots x_{k-1}^{a_{k-1}}x_{k}^{a_k-1}\cdots x_m^{a_m}(x_ky_k)y_1^{b_1}\cdots y_{k-1}^{b_{k-1}}y_{k}^{b_k-1}\cdots y_n^{b_n}\notag\\
&=v^{c_k}x_1^{a_1}\cdots x_{k-1}^{a_{k-1}}x_{k}^{a_k-1}\cdots x_m^{a_m}f_k(x_{k+1},\ldots, x_m,y_1,\ldots, y_{k-1})y_1^{b_1}\cdots y_{k-1}^{b_{k-1}}y_{k}^{b_k-1}\cdots y_n^{b_n}.
\end{align*}
By the form of $f_k$, we conclude that each monomial $N$ appearing in the right hand side of the above equation satisfies $M>_{\rm lex}N$. By inductions, we get that $M$ is a linear combination of standard monomials of $x_1,\ldots, x_m,y_1,\ldots, y_n$.
\end{proof}

\begin{lemma}\label{lem:monomial-to-special-monomial}
    For any $M_1,\ldots, M_s\in I_{\widetilde{\A}}[-1]\cup \mathcal{P}_{\A}$, $u_{M_1}\star \cdots\star u_{M_s}$ is a linear combination of $u_{P\oplus I[-1]}$ with $P\in \mathcal{P}_{\A}$ and $I\in \mathcal{I}_{\widetilde{\A}}$.
\end{lemma}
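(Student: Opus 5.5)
We argue by induction on the number $s$ of factors. The aim is to show that the $\mathbb{Q}(v)$-span $W$ of $\{u_{P\oplus I[-1]}\mid P\in\mathcal{P}_{\A},\ I\in\mathcal{I}_{\widetilde{\A}}\}$ inside $\mathcal{D}\mathcal{H}_\Lambda^{\tilde{c}l}(\A)$ is stable under right multiplication by $u_{Q}$ with $Q\in\mathcal{P}_{\A}$ and by $u_{J[-1]}$ with $J\in\mathcal{I}_{\widetilde{\A}}$. Since $u_{M_1}\in W$, this gives the statement immediately.

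\emph{Right multiplication by $u_{J[-1]}$.} By \eqref{lgx4} we have $u_{P\oplus I[-1]}=v^{\Lambda({\bf i}^\ast,{\bf p}^\ast)}u_{I[-1]}\star u_P$. We then use \eqref{lgx6}, or more precisely \eqref{fgx1} since $P$ is projective, to rewrite $u_P\star u_{J[-1]}$ as a combination of terms $u_{I'[-1]}\star u_{G}$. For $f\in{}_G\Hom_{\widetilde{\A}}(P,J)_{I'}$, the kernel $G$ is a submodule of the projective module $P$. Because $\widetilde{\A}$ is hereditary, $G$ is projective, and since $G\subseteq P$ with $P$ supported on $Q$, we get $G\in\mathcal{P}_{\A}$. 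Here we use the orientation convention of this section, which gives $\mathcal{P}_{\A}\subseteq\mathcal{P}_{\widetilde{\A}}$. The cokernel $I'$ is a quotient of an injective module, hence injective. We finish this case with \eqref{lgx1} and \eqref{lgx4}: the product $u_{I[-1]}\star u_{I'[-1]}\star u_G$ is a power of $v$ times $u_{G\oplus (I\oplus I')[-1]}\in W$.

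\emph{Right multiplication by $u_Q$.} Write $u_{P\oplus I[-1]}\star u_Q=v^{c}u_{I[-1]}\star u_P\star u_Q$. By \eqref{lgx3}, $u_P\star u_Q$ is a combination of $u_L$ with $L$ an extension of $Q$ by $P$. We have $\Ext^1_{\widetilde{\A}}(P,Q)=0$, so in fact $u_P\star u_Q$ is a scalar multiple of $u_{P\oplus Q}$. Then \eqref{lgx4} gives $u_{I[-1]}\star u_{P\oplus Q}\in W$. Note that the ideal $\mathfrak{I}$ is never needed in either case. The quotient only makes the relations weaker to verify, and all the identities used hold already in $\mathcal{D}\mathcal{H}_\Lambda^{ec}(\widetilde{\A})$, hence in the quotient.

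The main point needing care is the first case. One must check that the kernel $G$ really lies in $\mathcal{P}_{\A}$ and not merely in $\mathcal{P}_{\widetilde{\A}}$, which follows from the support argument above. One must also check that no terms involving shifted projectives $u_{P'[1]}$ appear. This holds because \eqref{fgx1} only produces terms of the form $u_{I'[-1]}\star u_Q$. Everything else is bookkeeping of powers of $v$, which do not affect membership in the span.
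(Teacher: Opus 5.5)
Your proof is correct and follows the paper's argument. The paper simply invokes \eqref{lgx6} (your use of \eqref{fgx1} is its special case for projective $M$), together with the facts that $\mathcal{P}_{\A}$ is closed under subobjects and $\mathcal{I}_{\widetilde{\A}}$ is closed under quotients; your induction via right multiplication by $u_Q$ and $u_{J[-1]}$ just writes out that reordering explicitly. As a small aside, if you apply \eqref{lgx6} directly (it holds for every $M\in\widetilde{\A}$), you do not need the orientation convention at all, because a submodule of $P\in\mathcal{P}_{\A}$ is already projective in the hereditary category $\A$.
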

\begin{proof}
   It follows from \eqref{lgx6} by noting that $\mathcal {P}_{\A}$ is closed under subobjects, and $I_{\widetilde{\A}}$ is closed under quotients.
\end{proof}

For any $P\in \mathcal{P}_{\A}$ and $I\in \mathcal{I}_{\widetilde{\A}}$, there exist unique vectors $(b_1,\ldots,b_n)\in \mathbb{N}^n$ and $(a_1,\ldots, a_m)\in \mathbb{N}^m$ such that
$P\cong b_1P_1\oplus \cdots b_nP_n$ and $I\cong a_1I_1\oplus \cdots a_mI_m.$
Then the element $u_{P\oplus I[-1]}$ is called {\em standard} if $a_ib_i=0$ for any $1\leq i\leq n$.
Clearly, the standard elements are just the standard monomials of $u_{I_1[-1]},\ldots, u_{I_m[-1]}, u_{P_1},\ldots, u_{P_n}$ up to scalars. Set $$\mathfrak{S}:=\{u_{P\oplus I[-1]}\mid \text{$u_{P\oplus I[-1]}$ is standard},~\text{where}~P\in \mathcal{P}_{\A}~\text{and}~I\in \mathcal{I}_{\widetilde{\A}}\}.$$
In what follows, we use Lemma \ref{lem:rewritting-system} to prove that the set $\mathfrak{S}$ is a basis of the composition subalgebra.

\begin{lemma}\label{yl5.9}
    The set $\mathfrak{S}$ is a spanning set of $\mathcal {P}\mathcal {H}_\Lambda^{{\tilde{c}l}}({\A})$.
\end{lemma}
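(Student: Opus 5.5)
The plan is to apply Lemma \ref{lem:rewritting-system} to the algebra $\mathfrak{A}=\mathcal {P}\mathcal {H}_\Lambda^{{\tilde{c}l}}({\A})$ with $x_i=u_{I_i[-1]}=u_{P_i[1]}$ for $1\leq i\leq m$ and $y_j=u_{P_j}$ for $1\leq j\leq n$. These elements generate $\mathcal {P}\mathcal {H}_\Lambda^{{\tilde{c}l}}({\A})$, so every element is a linear combination of words in them. The proof then splits into two steps. First, show that every word is a linear combination of ordered monomials $x_1^{a_1}\cdots x_m^{a_m}y_1^{b_1}\cdots y_n^{b_n}$. Second, show that the hypotheses of Lemma \ref{lem:rewritting-system} hold, so that each ordered monomial is a combination of standard monomials. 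Since the standard monomials are, up to nonzero scalars, exactly the elements of $\mathfrak{S}$ (by \eqref{lgx1} and the analogous quasi-commutation for direct sums of projectives), this proves the claim.

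For the first step I use Lemma \ref{lem:monomial-to-special-monomial}: any word in the generators is a linear combination of elements $u_{P\oplus I[-1]}$ with $P\in\mathcal{P}_{\A}$ and $I\in\mathcal{I}_{\widetilde{\A}}$. By \eqref{lgx4} and \eqref{lgx1}, each such element equals, up to a power of $v$, $u_{I[-1]}\star u_P$, which is an ordered monomial.

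For the second step, the quasi-commutativity of the $x_i$ is \eqref{lgx1}. The $y_j$ quasi-commute because $\Ext^1(P_i,P_j)=0$: by \eqref{lgx3}, $u_{P_i}\star u_{P_j}=v^{\Lambda(\mathbf{p}_i^\ast,\mathbf{p}_j^\ast)}u_{P_i\oplus P_j}$. The exponent here must be matched against $\lambda_{ij}$; since $\mathbf{p}_i^\ast=e_i$, this gives $\Lambda(e_i,e_j)$ up to the convention with $q=v^2$, and I will adjust the convention if necessary.

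The key hypothesis, and the main obstacle, is the exchange-type relation $x_ky_k=f_k(x_{k+1},\ldots,x_m,y_1,\ldots,y_{k-1})$. I compute $u_{P_k[1]}\star u_{P_k}$ with \eqref{lgx7}:
\[u_{P_k[1]}\star u_{P_k}=\sum |{}_{P'}\Hom(P_k,P_k)_F|\,(\cdots)\,u_F\star u_{P'[1]}.\]
Since $\End(P_k)$ is a division ring, a morphism $P_k\to P_k$ is either $0$, giving $u_{P_k}\star u_{P_k[1]}$, or an isomorphism, giving the scalar term $u_0$. This is not yet of the required form, because $u_{P_k}\star u_{P_k[1]}$ still contains $x_k$ and $y_k$. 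I will instead rewrite $u_{P_k}\star u_{P_k[1]}=u_{P_k}\star u_{I_k[-1]}$ with \eqref{fgx1}. A morphism $P_k\to I_k$ is either zero, giving $u_{I_k[-1]}\star u_{P_k}$ (the original monomial up to scalar), or nonzero with image $S_k$, kernel $\rad P_k=\oplus_{i<k}c_iP_i$ and cokernel $I_k/S_k=\oplus_{i>k}c'_iI_i$; this uses the labelling convention that arrows $i\to j$ imply $i>j$. Comparing the two expressions, the coefficients of $u_{I_k[-1]}\star u_{P_k}$ should differ by a factor $q^{\pm d_k}\neq1$, so I can solve for $x_ky_k$. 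The result is a combination of $1$ and of $u_{(I_k/S_k)[-1]}\star u_{\rad P_k}$, which is a monomial in $x_{k+1},\ldots,x_m,y_1,\ldots,y_{k-1}$. The points to check are the exact scalar and the fact that the factor is not $1$; if \eqref{fgx1} does not close up directly, I would fall back on combining \eqref{gx48} and \eqref{gx49}.
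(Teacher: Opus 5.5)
Your proposal is correct and follows essentially the same route as the paper: reduce via Lemma \ref{lem:monomial-to-special-monomial} to ordered monomials $u_{I_1[-1]}^{a_1}\star\cdots\star u_{I_m[-1]}^{a_m}\star u_{P_1}^{b_1}\star\cdots\star u_{P_n}^{b_n}$, then verify the hypotheses of Lemma \ref{lem:rewritting-system}, obtaining the exchange-type condition by combining \eqref{lgx7} for $u_{P_k[1]}\star u_{P_k}$ with \eqref{fgx1} for $u_{P_k}\star u_{I_k[-1]}$. The computation closes up directly. In the \eqref{fgx1} expansion the coefficient of $u_{I_k[-1]}\star u_{P_k}$ is $q^{\Lambda(\mathbf{i}_k^\ast,\mathbf{p}_k^\ast)-d_k}=1$, so the factor $q^{-d_k}\neq 1$ comes from \eqref{lgx7}, and one gets $u_{I_k[-1]}\star u_{P_k}=v^{-d_k}u_{(I_k/S_k)[-1]}\star u_{\rad P_k}+1$ without needing the fallback \eqref{gx48}--\eqref{gx49}.
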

\begin{proof}
By Lemma \ref{lem:monomial-to-special-monomial}, it suffices to show $u_{P\oplus I[-1]}$ is a linear combination of $\mathfrak{S}$ for any $P\in \mathcal{P}_{\A}$ and $\mathcal{I}_{\widetilde{\A}}$.
Assume $P\cong b_1P_1\oplus \cdots \oplus b_nP_n$ and $I\cong a_1I_1\oplus \cdots \oplus a_mI_m$ for some $(b_1,\ldots,b_n)\in \mathbb{N}^n$ and $(a_1,\ldots, a_m)\in \mathbb{N}^m$. Then we have
\begin{equation}\label{upi-1}
u_{P\oplus I[-1]}=v^ru_{I_1[-1]}^{a_1}\star \cdots\star u_{I_m[-1]}^{a_m}\star u_{P_1}^{b_1}\star \cdots \star u_{P_n}^{b_n}\end{equation} for some $r\in\mathbb{Z}$.

In order to use Lemma \ref{lem:rewritting-system} to show that
\eqref{upi-1} is a linear combination of standard monomials of $u_{I_1[-1]},\ldots, u_{I_m[-1]}, u_{P_1},\ldots, u_{P_n}$, we need to prove the elements of  $\mathcal {P}\mathcal {H}_\Lambda^{{\tilde{c}l}}({\A})$ $$u_{I_1[-1]},\ldots, u_{I_m[-1]}, u_{P_1},\ldots, u_{P_n}$$ satisfy the conditions as those of $x_1,\ldots, x_m, y_1,\ldots, y_n$ in $\mathfrak{A}$. By the equations \eqref{lgx2} and \eqref{lgx1}, we easily obtain the quasi-commutative relations of $u_{I_1[-1]},\ldots, u_{I_m[-1]}$ and $u_{P_1},\ldots, u_{P_n}$. Now, let us prove the third condition as follows: for any $1\leq k\leq n$,
\begin{flalign}\label{uppp1}
u_{P_k[1]}\star u_{P_k}=q^{-d_k}u_{P_k}\star u_{P_k[1]}+q^{-d_k}(q^{d_k}-1)
\end{flalign}
and
\begin{equation}\label{upii-1}
\begin{split}
u_{P_k}\star u_{P_k[1]}&=u_{P_k}\star u_{I_k[-1]}\\&=q^{\Lambda({\bf i}_k^\ast,{\bf p}_k^\ast)-d_k}u_{I_k[-1]}\star u_{P_k}+q^{-\frac{1}{2}\Lambda({\bf p}_k^\ast,{\bf i}_k^\ast)-d_k+\frac{1}{2}\Lambda({\bf i}^\ast,{\bf g}^\ast)}(q^{d_k}-1)u_{I[-1]}\star u_G,
\end{split}\end{equation}
where $I=I_k/S_k$ and $G={\rad} P_k$. Noting that
\begin{flalign*}&\Lambda({\bf i}_k^\ast,{\bf p}_k^\ast)=\Lambda(^\ast{\bf i}_k+B(\widetilde{Q}){\bf i}_k,{\bf p}_k^\ast)=\lr{{\bf p}_k,{\bf i}_k}=d_k,\\
&\Lambda({\bf i}^\ast,{\bf g}^\ast)=\Lambda({\bf i}_k^\ast-e_k^\ast,{\bf p}_k^\ast-e_k^\ast)=0
\end{flalign*}
and substituting \eqref{upii-1} to \eqref{uppp1}, we obtain
\begin{flalign}
u_{I_k[-1]}\star u_{P_k}=v^{-d_k}u_{(I_k/S_k)[-1]}\star u_{{\rm rad} P_k}+1.
\end{flalign}
Suppose ${\rm rad} P_k\cong t_1P_1\oplus \cdots \oplus t_nP_n$ and $I_k/S_k\cong s_1I_1\oplus \cdots \oplus s_mI_m$ for some $(t_1,\ldots,t_n)\in \mathbb{N}^n$ and $(s_1,\ldots, s_m)\in \mathbb{N}^m$, then
\begin{flalign*}&(t_1,\ldots,t_n,0,\ldots,0)^{\rm tr}=e_k-e_k^\ast=R(\widetilde{Q})e_k=(r_{1k},\ldots,r_{mk})^{\rm tr}\\
&(s_1,\ldots,s_m)^{\rm tr}=e_k-{^\ast e}_k=R'(\widetilde{Q})e_k=(r'_{1k},\ldots,r'_{mk})^{\rm tr}.
\end{flalign*}
By the convention of orientations of $\widetilde{Q}$, we get $r_{ik}=0$ for any $i\geq k$, and $r'_{jk}=0$ for any $j\leq k$. That is, ${\rm rad} P_k\cong t_1P_1\oplus \cdots \oplus t_{k-1}P_{k-1}$ and $I_k/S_k\cong s_{k+1}I_{k+1}\oplus \cdots \oplus s_mI_m$. Thus, we obtain
\begin{flalign*}
u_{I_k[-1]}\star u_{P_k}=v^{r'}u_{I_{k+1}[-1]}^{s_{k+1}}\star\cdots\star u_{I_{m}[-1]}^{s_{m}}\star u_{P_1}^{t_1}\star\cdots\star u_{P_{k-1}}^{t_{k-1}}+1
\end{flalign*}
for some $r'\in\mathbb{Z}$, which is just the third condition.

Hence, by Lemma \ref{lem:rewritting-system} and \eqref{upi-1}, we get that $u_{P\oplus I[-1]}$ is a linear combination of standard monomials of $u_{I_1[-1]},\ldots, u_{I_m[-1]}, u_{P_1},\ldots, u_{P_n}$. Thus, $u_{P\oplus I[-1]}$ is a linear combination of elements in $\mathfrak{S}$. Therefore, we complete the proof.
\end{proof}

\begin{theorem}\label{pcabasis}
The set  $\mathfrak{S}$ is a $\mathbb{Q}(v)$-basis of $\mathcal {C}\mathcal {H}_\Lambda^{{\tilde{c}l}}({\A})$.
\end{theorem}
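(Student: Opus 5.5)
Spanning is already available: by Lemma \ref{yl5.9} the set $\mathfrak{S}$ spans $\mathcal {P}\mathcal {H}_\Lambda^{{\tilde{c}l}}({\A})$, and Proposition \ref{hlbxd} identifies this algebra with $\mathcal {C}\mathcal {H}_\Lambda^{{\tilde{c}l}}({\A})$. So $\mathfrak{S}$ spans $\mathcal {C}\mathcal {H}_\Lambda^{{\tilde{c}l}}({\A})$, and the remaining task is linear independence.

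For independence I plan to push $\mathfrak{S}$ forward along the homomorphism $\varphi_0$, whose restriction to the composition subalgebra is an isomorphism onto $\A_q(\Lambda,\widetilde{B})$ by Corollary \ref{congshixian}. It then suffices to show that the elements $\varphi_0(u_{P\oplus I[-1]})=X_{P\oplus I[-1]}$, for standard $u_{P\oplus I[-1]}$, are $\mathbb{Q}(v)$-linearly independent in $\mathcal{T}_\Lambda$. The natural tool is Lemma \ref{lem:pointed-function}. The displayed expansion in the proof of Proposition \ref{xjlm} holds for arbitrary $M\in\widetilde{\A}$ and $I\in\I_{\widetilde{\A}}$; it does not use rigidity. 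Hence $X_{P\oplus I[-1]}$ is pointed at $g(P,I):={^\ast\mathbf{i}}-\mathbf{p}^\ast$, with leading coefficient $1$ coming from $\mathbf{e}=0$.

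The key step, and the only real obstacle, is to show that distinct standard pairs $(P,I)$ give distinct vectors $g(P,I)$. Lemma \ref{lem:pointed-vector-iso} needs $\Hom(P,I)=0$, which fails here, so I will argue directly instead. Write $P=\bigoplus_i b_iP_i$ and $I=\bigoplus_j a_jI_j$. Since $\nu^{-1}(I_j)=P_j$, we have ${^\ast\mathbf{i}_j}=\mathbf{p}_j^\ast$, and $E'(\widetilde{Q})\mathbf{p}_j=e_j$. It follows that
\[
g(P,I)=\sum_{j=1}^m a_je_j-\sum_{i=1}^n b_ie_i .
\]
In each coordinate $i\leq n$ this equals $a_i-b_i$, and standardness ($a_ib_i=0$) makes the pair $(a_i,b_i)$ recoverable from the value $a_i-b_i$. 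In each coordinate $j>n$ the entry is just $a_j$. Thus $g$ is injective on standard pairs.

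The argument then closes as follows. By Lemma \ref{lem:pointed-function}, the images are linearly independent. Since $\varphi_0$ is injective on the composition subalgebra, $\mathfrak{S}$ itself is linearly independent, and together with spanning it is a basis. The step to check carefully is the identity $E'(\widetilde{Q})\mathbf{p}_j=e_j$, that is, $\mathbf{p}_j^\ast=e_j$; this is exactly what makes the $g$-vector computation collapse to the coordinate form above, and it is also used in the paper's own equation \eqref{radsoc}.
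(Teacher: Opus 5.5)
Your proposal is correct and is essentially the paper's own proof: spanning comes from Lemma \ref{yl5.9} and Proposition \ref{hlbxd}, and independence of the images $X_{P\oplus I[-1]}$ comes from Lemma \ref{lem:pointed-function}, using that $X_{P\oplus I[-1]}$ is pointed at ${^\ast\mathbf{i}}-\mathbf{p}^\ast=\sum_{j}a_je_j-\sum_{i}b_ie_i$ and that $a_ib_i=0$ lets this vector determine $(P,I)$. One small correction: the leading term at ${^\ast\mathbf{i}}-\mathbf{p}^\ast$ comes from $\mathbf{e}=\mathbf{p}$ (the submodule $P$ itself, with $|\mathrm{Gr}_{\mathbf{p}}P|=1$), not from $\mathbf{e}=0$, so its coefficient is a nonzero power of $v$ rather than $1$; that is still all Lemma \ref{lem:pointed-function} needs, and you do not actually need injectivity of $\varphi_0$, since linear independence of the images under any linear map already forces linear independence of $\mathfrak{S}$.
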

\begin{proof}

By Corollaries \ref{congshixian}, \ref{congshixianimg} and Lemma \ref{yl5.9}, it suffices to show that the set $$\mathscr{T}:=\{X_{P\oplus I[-1]}\mid u_{P\oplus I[-1]}\in \mathfrak{S}\}$$ is $\mathbb{Q}(v)$-linearly independent.
Assume $P\cong b_1P_1\oplus \cdots\oplus b_nP_n$ and $I\cong a_1I_1\oplus \cdots \oplus a_mI_m$ such that $a_ib_i=0$ for any $1\leq i\leq n$, where $(b_1,\ldots,b_n)\in \mathbb{N}^n$ and $(a_1,\ldots, a_m)\in \mathbb{N}^m$.
Recall that $X_{P\oplus I[-1]}$ is pointed at ${^\ast{\bf i}}-{\bf p}^\ast=\sum_{i=1}^ma_ie_i-\sum_{j=1}^nb_je_j$.

Suppose $P'=b_1'P_1\oplus \cdots \oplus b_n'P_n$ and $I'=a_1'I_1\oplus \cdots \oplus a_mI_m$ such that $a_i'b_i'=0$ for any $1\leq i\leq n$. If $\sum_{i=1}^ma_ie_i-\sum_{j=1}^nb_je_j=\sum_{i=1}^ma_i'e_i-\sum_{j=1}^nb_j'e_j$, then $a_i-b_i=a_i'-b_i'$~for any $1\leq i\leq n$, and $a_i=a_i'$ for any $n<i\leq m$. Since $a_ib_i=a_i'b_i'=0$ for any $1\leq i\leq n$, we easily get $a_i=a_i'$ for $1\leq i\leq m$ and $b_j=b_j'$ for $1\leq j\leq n$. Thus, $P\cong P'$ and $I\cong I'$. By Lemma \ref{lem:pointed-function}, we obtain that $\mathscr{T}$ is $\mathbb{Q}(v)$-linearly independent.
\end{proof}
\begin{corollary}\label{ttll51}
The standard monomials of $X_{I_1[-1]},\ldots, X_{I_m[-1]}, X_{P_1},\ldots, X_{P_n}$ form a $\mathbb{Q}(v)$-basis of $\A_q(\Lambda,\widetilde{B})$.
\end{corollary}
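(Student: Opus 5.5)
The plan is to transport Theorem \ref{pcabasis} through the algebra isomorphism $\varphi'_0:\A_q(\Lambda,\widetilde{B})\to\mathcal {C}\mathcal {H}_\Lambda^{{\tilde{c}l}}({\A})$ of Corollary \ref{congshixian}, or equivalently through its inverse $\varphi_0$ restricted to the composition subalgebra.

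First I check how $\varphi_0$ acts on standard monomials. Consider a standard monomial $X_{I_1[-1]}^{a_1}\cdots X_{I_m[-1]}^{a_m}X_{P_1}^{b_1}\cdots X_{P_n}^{b_n}$ with $a_ib_i=0$. Since $\varphi_0$ is an algebra homomorphism, it is the image under $\varphi_0$ of $u_{I_1[-1]}^{a_1}\star\cdots\star u_{I_m[-1]}^{a_m}\star u_{P_1}^{b_1}\star\cdots\star u_{P_n}^{b_n}$. By \eqref{upi-1}, this product equals $v^{-r}u_{P\oplus I[-1]}$, where $P=\bigoplus b_jP_j$ and $I=\bigoplus a_iI_i$. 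The element $u_{P\oplus I[-1]}$ is standard, so it lies in $\mathfrak{S}$. Hence each standard monomial is $v^{-r}X_{P\oplus I[-1]}$. Conversely, every element of $\mathscr{T}=\varphi_0(\mathfrak{S})$ is a nonzero scalar multiple of a standard monomial. So the standard monomials and $\mathscr{T}$ agree up to invertible scalars in $\mathbb{Q}(v)$, and the correspondence between them is bijective.

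Next, Theorem \ref{pcabasis} says $\mathfrak{S}$ is a basis of $\mathcal {C}\mathcal {H}_\Lambda^{{\tilde{c}l}}({\A})$. Because $\varphi_0$ restricts to an isomorphism $\mathcal {C}\mathcal {H}_\Lambda^{{\tilde{c}l}}({\A})\to\A_q(\Lambda,\widetilde{B})$ (Corollary \ref{congshixian}), $\mathscr{T}$ is a basis of $\A_q(\Lambda,\widetilde{B})$. Rescaling basis elements by nonzero scalars preserves being a basis, so the standard monomials form a basis.

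The only point needing care is the identification of generators. The elements $X_{P_j}$ and $X_{I_i[-1]}$ must lie in $\A_q(\Lambda,\widetilde{B})$ and correspond to $u_{P_j}$ and $u_{I_i[-1]}$ under $\varphi'_0$. This holds because $P_j$ is rigid, so Corollary \ref{congshixianimg} applies; it is also recorded in the proof of Corollary \ref{tuil5.6}. No other obstacle is expected.
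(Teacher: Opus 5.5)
Your proposal is correct and follows the paper's own proof, which simply combines Theorem \ref{pcabasis} with the isomorphism $\varphi'_0:\A_q(\Lambda,\widetilde{B})\to\mathcal{C}\mathcal{H}_\Lambda^{\tilde{c}l}(\A)$ of Corollary \ref{congshixian}. Your check via \eqref{upi-1} fills in the step the paper leaves implicit: each standard monomial is a nonzero power of $v$ times the image of the corresponding element of $\mathfrak{S}$.
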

\begin{proof}
It is proved by Theorem \ref{pcabasis} and Corollary \ref{congshixian}.
\end{proof}

\section{Algebra automorphisms of $\mathcal {D}\mathcal {H}_\Lambda^{{cl}}(\widetilde{\A})$ given by Auslander-Reiten translations}
In this section, let $\widetilde{Q}$ be any finite acyclic valued quiver as given in the subsection 2.2. We give an algebra automorphism of $\mathcal {D}\mathcal {H}_\Lambda^{{cl}}(\widetilde{\A})$, which is induced by the Auslander--Reiten translations of cluster categories.

First of all, let us give an equivalent characterization of the ideal $\mathfrak{I}$.
For any $M,N\in \widetilde{\A}$, set
\[
r_{M,N}:=u_{M\oplus N}-\sum\limits_{\begin{smallmatrix}[D],[A],[I]\end{smallmatrix}}v^{\Lambda(({\bf m}+{\bf n})^*,{\bf a}^*)+\lr{{\bf m}-{\bf a},{\bf n}}}|_D\Hom_{\widetilde{\A}}(N,\tau M)_{\tau A\oplus I}|u_{A}\star u_{D\oplus I[-1]}
\]
in $ \mathcal {D}\mathcal {H}_\Lambda^{{cl}_1}(\widetilde{\A})$,
where each $A$  has the same maximal projective direct summand as $M$, and each $I\in\I_{\widetilde{\A}}$.
Let $\mathfrak{I}'$ be the two sided ideal of $\mathcal {D}\mathcal {H}_\Lambda^{{cl}_1}(\widetilde{\A})$ generated by the elements
$$\{r_{M,N}, r_{M,P}~|~M,N\in{\widetilde{\A}}, P\in \mathcal{P}_{\widetilde{\A}}\ \text{and $N$ has no nonzero projective direct summands}\}.$$

\begin{lemma}\label{lxdjkh}
It holds that $\mathfrak{I}=\mathfrak{I}'$.
\end{lemma}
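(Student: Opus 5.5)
Since $\mathfrak{I}'\subseteq\mathfrak{I}$ trivially, the plan is to show $r_{M,N}\in\mathfrak{I}'$ for all $M,N\in\widetilde{\A}$. Write $N=N'\oplus P$ with $P$ the maximal projective direct summand of $N$ and $N'$ without projective summands. The idea is to reduce $r_{M,N}$ to the two admissible types $r_{M,N'}$ and $r_{M\oplus N',P}$ (or $r_{\cdot,P}$-type relations). The key observation is that a projective summand of the second argument should split off via the relations of type $r_{X,P}$ with $P$ projective.

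Concretely, I would first apply the generator $r_{M\oplus N',P}\in\mathfrak{I}'$ to write $u_{M\oplus N'\oplus P}$ modulo $\mathfrak{I}'$ as
\[
\sum_{[D],[A],[I]}v^{\Lambda(({\bf m}+{\bf n}'+{\bf p})^\ast,{\bf a}^\ast)+\lr{{\bf m}+{\bf n}'-{\bf a},{\bf p}}}|_D\Hom_{\widetilde{\A}}(P,\tau(M\oplus N'))_{\tau A\oplus I}|\,u_A\star u_{D\oplus I[-1]}.
\]
Then expand $u_A$, which ranges over quotients of $M\oplus N'$ with fixed projective part, using $r_{\cdot,\cdot}$ with second argument non-projective-free pieces, and separately expand $u_{M\oplus N}$ through $r_{M,N}$ itself. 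The target identity is that the two expressions of $u_{M\oplus N}$ (the defining one from $r_{M,N}$, and the one obtained by first splitting $P$ and then $N'$) coincide in $\mathcal{D}\mathcal{H}_\Lambda^{cl_1}(\widetilde{\A})$ modulo $\mathfrak{I}'$. To compare, I would decompose $\Hom_{\widetilde{\A}}(N'\oplus P,\tau M)=\Hom(N',\tau M)\oplus\Hom(P,\tau M)$ and count morphisms with prescribed kernel and cokernel via formula \eqref{sxhomjs} and the Hall-number factorizations of Proposition \ref{prop:Hall-equality-P-I} (part (2) for the projective summand), matching the resulting sums of products $u_A\star u_{D\oplus I[-1]}$ with those produced by multiplying out using \eqref{lgx3}, \eqref{lgx6} and Proposition \ref{dhallc}-type expansions (now at the Hall level, as in its proof via Proposition \ref{hallshujs}).

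An alternative, probably cleaner, route I would try first: use induction on $\dim_{\mathbb{F}_q}P$, and observe that modulo $\mathfrak{I}_1$ one has $u_{P}$ related to $u_{P[1]}$-type elements; Theorem \ref{zhihe} is an identity in $\mathcal{T}_\Lambda$, so its Hall lift is a formal consequence of associativity identities in $\mathcal{D}\mathcal{H}_\Lambda^{ec}(\widetilde{\A})$ once the relation is known for projective-free $N$ and for projective $N$. In either form, the main obstacle is the bookkeeping step: verifying that the exponents of $v$ (combinations of $\Lambda(\cdot^\ast,\cdot^\ast)$ and Euler forms) agree term by term after the two-stage expansion, using Lemma \ref{sjishu} and the dimension-vector identities ${\bf d}+\tau{\bf a}+{\bf i}={\bf n}+\tau{\bf m}$; the combinatorial matching of morphism counts should follow from Proposition \ref{prop:Hall-equality-P-I}.
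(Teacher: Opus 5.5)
Your proposal has a genuine gap: the concrete route as you describe it does not lead to $r_{M,N'\oplus P}$, and the alternative route rests on an invalid principle.

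\textbf{Why the concrete route stalls.} Your first step is legitimate, since $r_{M\oplus N',P}$ is a generator of $\mathfrak{I}'$. But it leads away from the target, and nothing in the plan brings you back. It produces the counts $|{}_D\Hom_{\widetilde{\A}}(P,\tau M\oplus\tau N')_{\tau A\oplus I}|$ and elements $u_A$ with $A$ a quotient of $M\oplus N'$.
\begin{itemize}
\item This Hom space contains $\Hom_{\widetilde{\A}}(P,\tau N')$, which has no counterpart in $r_{M,N'\oplus P}$. It does not contain $\Hom_{\widetilde{\A}}(N',\tau M)$, which does.
\item The image of $P$ in $\tau M\oplus\tau N'$ need not split compatibly with this decomposition. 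So such an $A$ has no ``$M$-part'' and ``$N'$-part'', and ``expand $u_A$ and then split off $N'$'' has no definite meaning.
\end{itemize}
The only evident way back from these terms is to undo the step.

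\textbf{Why the alternative route fails.} Every $r_{M,N}$ already lies in $\Ker\varphi_1$; that is how Proposition \ref{prop:surjective-hom} is proved. So the identity \eqref{zhihegs} in $\mathcal{T}_\Lambda$ carries no information about which elements lie in $\mathfrak{I}'$. Membership has to be established by computation inside $\mathcal{D}\mathcal{H}_\Lambda^{{cl}_1}(\widetilde{\A})$. Also, $\mathfrak{I}_1$ identifies $u_{\nu^{-1}(I)[1]}$ with $u_{I[-1]}$; it says nothing about $u_P$.

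\textbf{The missing idea.} Keep $M$, and afterwards its quotients, as the \emph{first} argument. Peel off the projective-free part of the second argument before the projective part. Modulo $\mathfrak{I}'$ the steps are:
\begin{enumerate}
\item By \eqref{lgx3}, $u_{M\oplus N'\oplus P}=v^{\Lambda((\mathbf{m}+\mathbf{n}')^\ast,\mathbf{p}^\ast)}u_P\star u_{M\oplus N'}$.
\item Expanding $u_{M\oplus N'}$ by $r_{M,N'}$ gives terms $u_A\star u_{D\oplus J[-1]}$ with $A$ a quotient of $M$.
\item $u_P\star u_A$ is a multiple of $u_{A\oplus P}$. Expand this by $r_{A,P}$ to get terms $|{}_Q\Hom_{\widetilde{\A}}(P,\tau A)_{\tau B\oplus J'}|\,u_B\star u_{J'[-1]}\star u_Q\star u_{J[-1]}\star u_D$.
\item Rewrite $u_Q\star u_{J[-1]}$ by \eqref{lgx6}. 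The kernels $G$ there are projective, so everything recombines into $u_B\star u_{(G\oplus D)\oplus(J'\oplus J'')[-1]}$.
\end{enumerate}

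This order mirrors the factorization of a morphism $N'\oplus P\to\tau M$. The $N'$-component has cokernel $\tau A\oplus J$. The $P$-component, composed with the projection $\tau M\to\tau A\oplus J$, splits into a $\tau A$-part (governed by $r_{A,P}$) and a $J$-part (governed by \eqref{lgx6}). Correspondingly, Proposition \ref{prop:Hall-equality-P-I}(2), followed by (1) applied to the cokernel, gives
\begin{align*}
&|{}_W\Hom_{\widetilde{\A}}(P\oplus N',\tau M)_{\tau B\oplus I}|\\
&=\sum q^{\lr{\mathbf{p}-\mathbf{q},\mathbf{j}}+\lr{\mathbf{p},\mathbf{n}'-\mathbf{d}}}|{}_D\Hom_{\widetilde{\A}}(N',\tau M)_{\tau A\oplus J}|\cdot|{}_Q\Hom_{\widetilde{\A}}(P,\tau A)_{\tau B\oplus J'}|\cdot|{}_G\Hom_{\widetilde{\A}}(Q,J)_{J''}|,
\end{align*}
summed over $G\oplus D\cong W$ and $J'\oplus J''\cong I$. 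What remains is the exponent check you anticipated.
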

\begin{proof}
It suffices to show that $\mathfrak{I}\subseteq \mathfrak{I}'$. Take any $M,N\in \widetilde{\A}$ and $P\in \mathcal{P}_{\widetilde{\A}}$ such that $N$ has no nonzero projective direct summands. It remains to show $r_{M,N\oplus P}$ belongs to $\mathfrak{I}'$. In the quotient algebra $\mathcal {D}\mathcal {H}_\Lambda^{{cl}_1}(\widetilde{\A})/\mathfrak{I}'$, we have
\begin{equation}\label{eq:MNP-decom}
\begin{split}
&u_{M\oplus N\oplus P}=v^{\Lambda((\mathbf{m}+\mathbf{n})^\ast, \mathbf{p}^\ast)} u_P\star u_{M\oplus N}\\
&=v^{\Lambda((\mathbf{m}+\mathbf{n})^\ast, \mathbf{p}^\ast)} u_P\star(\sum_{[D],[A],[J]}v^{\Lambda((\mathbf{m}+\mathbf{n})^\ast,\mathbf{a}^\ast)+\langle \mathbf{m}-\mathbf{a},\mathbf{n}\rangle}|_D\Hom_{\widetilde{\A}}(N,\tau M)_{\tau A\oplus J}|u_A\star u_{D\oplus J[-1]})\\
&=\sum_{[D],[A],[J]}v^{\Lambda((\mathbf{m}+\mathbf{n})^\ast,(\mathbf{p}+\mathbf{a})^\ast)+\Lambda(\mathbf{p}^\ast,\mathbf{a}^\ast)+\langle \mathbf{m}-\mathbf{a},\mathbf{n}\rangle}|_D\Hom_{\widetilde{\A}}(N,\tau M)_{\tau A\oplus J}|u_{A\oplus P}\star u_{D\oplus J[-1]}\\
&=\sum_{\substack{[D],[A],[J]\\ [Q],[B],[J']}}v^{\Lambda((\mathbf{m}+\mathbf{n})^\ast,(\mathbf{p}+\mathbf{a})^\ast)+\Lambda(\mathbf{p}^\ast,\mathbf{a}^\ast)+\Lambda((\mathbf{p}+\mathbf{a})^\ast,\mathbf{b}^\ast)+\Lambda(\mathbf{j'}^\ast,\mathbf{q}^\ast)+\Lambda(\mathbf{j}^\ast,\mathbf{d}^\ast)+\langle \mathbf{m}-\mathbf{a},\mathbf{n}\rangle+\langle\mathbf{a}-\mathbf{b},\mathbf{p}\rangle}\\
&\quad\quad |_D\Hom_{\widetilde{\A}}(N,\tau M)_{\tau A\oplus J}|\cdot |_Q\Hom_{\widetilde{\A}}(P,\tau A)_{\tau B\oplus J'}|u_B\star u_{J'[-1]}\star u_Q\star  u_{J[-1]}\star u_D,
\end{split}\end{equation}
where $A,B$ have the same maximal projective direct summands as $M$.

By \eqref{lgx6} and \eqref{lgx4}, we obtain
\begin{flalign}\label{eq:QJ-decom}
u_Q\star u_{J[-1]}=\sum_{[G],[J'']}v^{-\Lambda(\mathbf{q}^\ast,\mathbf{j}^\ast)-2\langle \mathbf{q},\mathbf{j}\rangle}|_G\Hom_{\widetilde{\A}}(Q,J)_{J''}|u_{G\oplus J''[-1]}.
\end{flalign}
Substituting \eqref{eq:QJ-decom} into \eqref{eq:MNP-decom} and noting that $G$ is projective, we obtain
\begin{flalign*}
u_{M\oplus(N\oplus P)}=\sum_{\substack{[D],[A],[J], [Q]\\ [B],[J'], [G],[J'']}}v^{x_0}&|_D\Hom_{\widetilde{\A}}(N,\tau M)_{\tau A\oplus J}|\cdot |_Q\Hom_{\widetilde{\A}}(P,\tau A)_{\tau B\oplus J'}|\\& |_G\Hom_{\widetilde{\A}}(Q,J)_{J''}|u_B\star u_{(G\oplus D)\oplus (J'\oplus J'')[-1]},
\end{flalign*}
where
\begin{flalign*}
x_0&=\Lambda((\mathbf{m}+\mathbf{n})^\ast,(\mathbf{p}+\mathbf{a})^\ast)+\Lambda(\mathbf{p}^\ast,\mathbf{a}^\ast)+\Lambda((\mathbf{p}+\mathbf{a})^\ast,\mathbf{b}^\ast)
+\Lambda(\mathbf{j'}^\ast,\mathbf{q}^\ast)\\&\quad+\Lambda(\mathbf{j}^\ast,\mathbf{d}^\ast)+\langle\mathbf{m}-\mathbf{a},\mathbf{n}\rangle+\langle\mathbf{a}-\mathbf{b},\mathbf{p}\rangle-\Lambda(\mathbf{q}^\ast,\mathbf{j}^\ast)-2\langle \mathbf{q},\mathbf{j}\rangle\\
&\quad+\Lambda(\mathbf{j''}^\ast,\mathbf{g}^\ast)+\Lambda(\mathbf{g}^\ast,\mathbf{d}^\ast)+\Lambda(\mathbf{j'}^\ast,\mathbf{j''}^\ast)-\Lambda((\mathbf{j'}+\mathbf{j''})^\ast,(\mathbf{g}+\mathbf{d})^\ast).
\end{flalign*}
Note that
\[\mathbf{g}+\mathbf{j}=\mathbf{q}+\mathbf{j''},
\mathbf{q}-\mathbf{j'}=\mathbf{p}+\tau\mathbf{b}-\tau\mathbf{a}~\text{and}~
\mathbf{d}-\mathbf{j}=\mathbf{n}+\tau\mathbf{a}-\tau\mathbf{m}.
\]
Using \cite[Lemma 6.4]{CDZ}, $B(\widetilde{Q})\mathbf{x}=\mathbf{x}^\ast-{^\ast\mathbf{x}}$ and Lemma \ref{sjishu},
we obtain
\begin{flalign*}
&\Lambda(\mathbf{j'}^\ast,\mathbf{q}^\ast)+\Lambda(\mathbf{j}^\ast,\mathbf{d}^\ast)+\Lambda(\mathbf{j''}^\ast,\mathbf{g}^\ast)+\Lambda(\mathbf{g}^\ast,\mathbf{d}^\ast)+\Lambda(\mathbf{j'}^\ast,\mathbf{j''}^\ast)-\Lambda((\mathbf{j'}+\mathbf{j''})^\ast,(\mathbf{g}+\mathbf{d})^\ast)-\Lambda(\mathbf{q}^\ast,\mathbf{j}^\ast)\\
&=\Lambda((\mathbf{g}+\mathbf{j})^\ast,\mathbf{d}^\ast)+\Lambda(\mathbf{j'}^\ast,\mathbf{q}^\ast)+\Lambda(\mathbf{j''}^\ast,\mathbf{g}^\ast)+\Lambda(\mathbf{j'}^\ast,\mathbf{j''}^\ast)-\Lambda((\mathbf{j'}+\mathbf{j''})^\ast,(\mathbf{g}+\mathbf{d})^\ast)-\Lambda(\mathbf{q}^\ast,\mathbf{j}^\ast)\\
&=\Lambda((\mathbf{q}+\mathbf{j''})^\ast,\mathbf{d}^\ast)+\Lambda(\mathbf{j'}^\ast,\mathbf{q}^\ast)+\Lambda(\mathbf{j''}^\ast,\mathbf{g}^\ast)+\Lambda(\mathbf{j'}^\ast,\mathbf{j''}^\ast)-\Lambda((\mathbf{j'}+\mathbf{j''})^\ast,(\mathbf{g}+\mathbf{d})^\ast)-\Lambda(\mathbf{q}^\ast,\mathbf{j}^\ast)\\
&=\Lambda(\mathbf{q}^\ast,\mathbf{d}^\ast)-\Lambda(\mathbf{j'}^\ast,\mathbf{g}^\ast)-\Lambda(\mathbf{j'}^\ast,\mathbf{d}^\ast)+\Lambda(\mathbf{j'}^\ast,\mathbf{q}^\ast)+\Lambda(\mathbf{j'}^\ast,\mathbf{j''}^\ast)-\Lambda(\mathbf{q}^\ast,\mathbf{j}^\ast)\\
&=\Lambda(\mathbf{q}^\ast,\mathbf{d}^\ast)+\Lambda(\mathbf{j'}^\ast,(\mathbf{j}-\mathbf{j''})^\ast)-\Lambda(\mathbf{j'}^\ast,\mathbf{d}^\ast)+\Lambda(\mathbf{j'}^\ast,\mathbf{j''}^\ast)-\Lambda(\mathbf{q}^\ast,\mathbf{j}^\ast)\\
&=\Lambda(\mathbf{q}^\ast,\mathbf{d}^\ast)+\Lambda(\mathbf{j'}^\ast,\mathbf{j}^\ast)-\Lambda(\mathbf{j'}^\ast,\mathbf{d}^\ast)-\Lambda(\mathbf{q}^\ast,\mathbf{j}^\ast)\\
&=\Lambda((\mathbf{p}+\tau\mathbf{b}-\tau\mathbf{a})^\ast, (\mathbf{n}+\tau\mathbf{a}-\tau\mathbf{m})^\ast),\end{flalign*}
which is equal to
\begin{flalign*}&\Lambda(\mathbf{p}^\ast,\mathbf{n}^\ast)-\Lambda(\mathbf{p}^\ast-B(\widetilde{Q})\mathbf{p},(\mathbf{a}-\mathbf{m})^\ast)-\Lambda((\mathbf{b}-\mathbf{a})^\ast,\mathbf{n}^\ast-B(\widetilde{Q})\mathbf{n})+\Lambda((\mathbf{b}-\mathbf{a})^\ast,(\mathbf{a}-\mathbf{m})^\ast)\\
&=\Lambda(\mathbf{p}^\ast,\mathbf{n}^\ast)-\Lambda(\mathbf{p}^\ast,(\mathbf{a}-\mathbf{m})^\ast)-\Lambda((\mathbf{b}-\mathbf{a})^\ast,\mathbf{n}^\ast-(\mathbf{a}-\mathbf{m})^\ast)+\langle \mathbf{a}-\mathbf{m},\mathbf{p}\rangle-\langle \mathbf{b}-\mathbf{a},\mathbf{n}\rangle.
\end{flalign*}
It follows that
\begin{flalign*}
x_0&=\Lambda((\mathbf{m}+\mathbf{n}+\mathbf{p})^\ast, \mathbf{b}^\ast)+\langle\mathbf{a}-\mathbf{m},\mathbf{p}\rangle-\langle \mathbf{b}-\mathbf{a},\mathbf{n}\rangle+\langle\mathbf{m}-\mathbf{a},\mathbf{n}\rangle+\langle\mathbf{a}-\mathbf{b},\mathbf{p}\rangle-2\langle\mathbf{q},\mathbf{j}\rangle\\
&=\Lambda((\mathbf{m}+\mathbf{n}+\mathbf{p})^\ast, \mathbf{b}^\ast)+\langle \mathbf{m}-\mathbf{b},\mathbf{n}+\mathbf{p}\rangle+2\langle \mathbf{a}-\mathbf{m},\mathbf{p}\rangle-2\langle\mathbf{q},\mathbf{j}\rangle.
\end{flalign*}

Using Proposition \ref{prop:Hall-equality-P-I}, for any $W\in \widetilde{\A}$ and $I\in \mathcal{I}_{\widetilde{\A}}$, we have
\begin{flalign*}
&|_W\Hom_{\widetilde{\A}}(P\oplus N,\tau M)_{\tau B\oplus I}|=\\
&\sum_{\substack{[D],[A],[Q],[G]\\ [J],[J'],[J'']\\G\oplus D\cong W\\ J'\oplus J''\cong I}}q^{\lr{ \mathbf{p}-\mathbf{q},\mathbf{j}}+\lr{ \mathbf{p},\mathbf{n}-{\bf d}}}|_D\Hom_{\widetilde{\A}}(N,\tau M)_{\tau A\oplus J}|\cdot |_Q\Hom_{\widetilde{\A}}(P,\tau A)_{\tau B\oplus J'}|\cdot |_G\Hom_{\widetilde{\A}}(Q,J)_{J''}|.
\end{flalign*}
Noting that
\begin{flalign*}\langle \mathbf{p}-\mathbf{q},\mathbf{j}\rangle+\langle\mathbf{p},\mathbf{n}-\mathbf{d}\rangle&=\lr{\mathbf{p},\mathbf{n}+\mathbf{j}}-\lr{\mathbf{p},\mathbf{d}}-\lr{\mathbf{q},\mathbf{j}}\\
&=\lr{\mathbf{p},\mathbf{d}+\tau(\mathbf{m}-\mathbf{a})}-\lr{\mathbf{p},\mathbf{d}}-\lr{\mathbf{q},\mathbf{j}}\\
&=\langle \mathbf{a}-\mathbf{m},\mathbf{p}\rangle-\langle\mathbf{q},\mathbf{j}\rangle,
\end{flalign*}
we obtain
\begin{flalign*}
u_{M\oplus(N\oplus P)}=\sum_{[W],[B],[I]}v^{\Lambda((\mathbf{m}+\mathbf{n}+\mathbf{p})^\ast, \mathbf{b}^\ast)+\langle \mathbf{m}-\mathbf{b},\mathbf{n}+\mathbf{p}\rangle}|_W\Hom_\mathcal{A}(P\oplus N,\tau M)_{\tau B\oplus I}|u_B\star u_{W\oplus I[-1]}.
\end{flalign*}
That is, $r_{M,N\oplus P}\in \mathfrak{I}'$. Therefore, we complete the proof.
\end{proof}

As we mentioned before, the Auslander--Reiten translation $\overline{\tau}$ of the derived category $\mathcal{D}^b(\widetilde{\A})$ provides an algebra automorphism $\overline{\tau}$ of $\mathcal {D}\mathcal {H}_\Lambda(\widetilde{\A})$, but it cannot be restricted to be an algebra endomorphism of $\mathcal {D}\mathcal {H}_\Lambda^{ec}(\widetilde{\A})$, since the image of $\overline{\tau}$ on $\mathcal {D}\mathcal {H}_\Lambda^{ec}(\widetilde{\A})$ may be outside the algebra $\mathcal {D}\mathcal {H}_\Lambda^{ec}(\widetilde{\A})$.

Consider the linear map $\rho:\mathcal {D}\mathcal {H}_\Lambda^{ec}(\widetilde{\A})\rightarrow\mathcal {D}\mathcal {H}_\Lambda^{ec}(\widetilde{\A})$ defined by
$$\rho(u_{I[-1]\oplus M\oplus P[1]})=v^{\Lambda({\bf i}^\ast,({\bf m}-{\bf p})^\ast)+\Lambda({\bf m}^\ast,{\bf p}^\ast)}u_I\star u_{\tau M'\oplus \nu(P')[-1]}\star u_{\nu(P)}$$ for any $M=M'\oplus P'\in\widetilde{\A}$, $I\in\I_{\widetilde{\A}}$ and $P\in\P_{\widetilde{\A}}$, where $P'$ is the maximal projective direct summand of $M$. It is easy to see the image of $\rho$ is determined by
the Auslander--Reiten translation of the cluster category of $\widetilde{\A}$.

Thus, we get a linear map $\varrho:\mathcal {D}\mathcal {H}_\Lambda^{ec}(\widetilde{\A})\rightarrow\mathcal {D}\mathcal {H}_\Lambda^{{cl}}(\widetilde{\A})$ by considering the composition of the map $\rho$ and the natural epimorphism $\pi: \mathcal {D}\mathcal {H}_\Lambda^{ec}(\widetilde{\A})\rightarrow\mathcal {D}\mathcal {H}_\Lambda^{{cl}}(\widetilde{\A})$.
\begin{proposition}
The map $\varrho:\mathcal {D}\mathcal {H}_\Lambda^{ec}(\widetilde{\A})\longrightarrow\mathcal {D}\mathcal {H}_\Lambda^{{cl}}(\widetilde{\A})$ defined by
$$\varrho(u_{I[-1]\oplus M\oplus P[1]})=v^{\Lambda({\bf i}^\ast,({\bf m}-{\bf p})^\ast)+\Lambda({\bf m}^\ast,{\bf p}^\ast)}u_I\star u_{\tau M'\oplus \nu(P')[-1]}\star u_{\nu(P)}$$ for any $M=M'\oplus P'\in\widetilde{\A}$, $I\in\I_{\widetilde{\A}}$ and $P\in\P_{\widetilde{\A}}$, where $P'$ is the maximal projective direct summand of $M$, is a homomorphism of algebras.
\end{proposition}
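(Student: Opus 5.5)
Since $\mathcal {D}\mathcal {H}_\Lambda^{ec}(\widetilde{\A})$ is presented by generators and relations (Proposition \ref{elsubalgebra}, with \eqref{lgx6} replaced by \eqref{fgx1} and \eqref{fgx2} as in Proposition \ref{mint27}), I would check that $\varrho$ is multiplicative in two steps. First I would define a candidate homomorphism on the generators $u_{P[1]}, u_M, u_{I[-1]}$. Then I would show that the images satisfy every defining relation, and that the induced homomorphism agrees with $\varrho$ on the whole basis $u_{I[-1]\oplus M\oplus P[1]}$.

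The values on generators are as follows: $\varrho(u_{P[1]})=u_{\nu(P)}$, $\varrho(u_{I[-1]})=u_I$, and, for $M=M'\oplus P'$, $\varrho(u_M)=u_{\tau M'\oplus\nu(P')[-1]}$. For the agreement on basis elements, I would factor $u_{I[-1]\oplus M\oplus P[1]}=v^{\Lambda({\bf i}^\ast,({\bf m}-{\bf p})^\ast)+\Lambda({\bf m}^\ast,{\bf p}^\ast)}u_{I[-1]}\star u_M\star u_{P[1]}$ using \eqref{lgx5} and \eqref{lgx4}. This factorization carries exactly the scalar appearing in the definition of $\varrho$, so the homomorphism defined on generators reproduces $\varrho$.

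The main work is verifying the relations. The guiding principle is that on the part of $\mathcal{D}^b(\widetilde{\A})$ where it stays inside $C^e_{\widetilde{\A}}$, $\rho$ coincides with the derived automorphism $\overline{\tau}$, and both the Hall numbers and the twist $\Lambda(\cdot^\ast,\cdot^\ast)$ are $\overline{\tau}$-invariant (\cite[Lemma 6.4]{CDZ}). Relations \eqref{lgx1} and \eqref{lgx2} become the multiplicativity of $u_I$ and $u_{\nu(P)}$ in $\widetilde{\A}$. Relation \eqref{lgx8} becomes a quasi-commutation $u_I\star u_{\nu(P)}$ versus $u_{\nu(P)}\star u_I$. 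This is not automatic in $\widetilde{\A}$, because $\Ext^1(I,\nu P)$ may be nonzero, so here I would apply the ideal $\mathfrak{I}$ (the relation $r_{M,N}$ with $M$ and $N$ injective) together with the identification $u_{\nu^{-1}(I)[1]}=u_{I[-1]}$ from $\mathfrak{I}_1$. Relations \eqref{lgx3}, \eqref{lgx7}, \eqref{fgx1} and \eqref{fgx2} I would split according to whether the objects have projective summands. For non-projective $M,N$, $\overline{\tau}$ sends $\Ext$-data to $\Ext$-data, so \eqref{lgx3} transports directly. When projective summands occur, $\overline{\tau}P'=\nu(P')[-1]$, and the transported relation is precisely one of \eqref{lgx6}, \eqref{lgx7} read in the shifted degree. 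Relation \eqref{lgx7} for $u_{P[1]}\star u_M$ maps to a product $u_{\nu P}\star u_{\tau M}$ that must be rewritten with the exchange-type expansion. For this I would use Corollary \ref{maincor} together with the description \eqref{sxhomjs} of $|{}_D\Hom(N,\tau M)_{\tau A\oplus I}|$ and the symmetry of Corollary \ref{2CY}.

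I expect the main obstacle to be the relations that pass through the boundary of $C^e_{\widetilde{\A}}$, namely \eqref{lgx7} and \eqref{fgx1}. There $\overline{\tau}$ would send objects to $\nu(P)[0]$ rather than to a shifted projective, so the relation holds only modulo $\mathfrak{I}_1$ and $\mathfrak{I}$. For these I would reduce, by Lemma \ref{lxdjkh}, to the generators $r_{M,N}$ and $r_{M,P}$ with $N$ having no projective summands, and match the coefficients using Proposition \ref{prop:Hall-equality-P-I} and the exponent identities of Lemma \ref{sjishu}. The exponent bookkeeping would follow the pattern of the computation in Lemma \ref{lxdjkh}.
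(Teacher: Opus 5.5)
The generators-and-relations framework is right: verify the presentation of Proposition \ref{elsubalgebra}, with \eqref{lgx6} replaced by \eqref{fgx1} and \eqref{fgx2}, then match scalars on the basis. Your basis-matching step is also correct. However, your relation-by-relation analysis puts the difficulty in the wrong places and rests on a false claim.

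\textbf{Relations \eqref{lgx2}, \eqref{lgx7}, \eqref{lgx3} are free.} We have $\varrho(u_M)=\overline{\tau}(u_M)$ for every $M\in\widetilde{\A}$; this includes projective summands, since $\overline{\tau}P'=\nu(P')[-1]$. Also $\varrho(u_{P[1]})=\overline{\tau}(u_{P[1]})=u_{\nu(P)}$, and all these images lie in $\mathcal{D}\mathcal{H}_\Lambda^{ec}(\widetilde{\A})$. Since $\overline{\tau}$ is an automorphism of $\mathcal{D}\mathcal{H}_\Lambda(\widetilde{\A})$, these three relations are preserved automatically. $\nu(P)$ is a module, so there is no boundary problem, and Corollaries \ref{maincor}, \ref{2CY} and \eqref{sxhomjs} are not needed.

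\textbf{Relation \eqref{lgx8} needs no ideal.} Your reason for invoking $\mathfrak{I}$ is false: $\Ext^1_{\widetilde{\A}}(I,\nu P)=0=\Ext^1_{\widetilde{\A}}(\nu P,I)$ because both objects are injective. Hence $u_I\star u_{\nu(P)}$ and $u_{\nu(P)}\star u_I$ are scalar multiples of $u_{I\oplus\nu(P)}$. The relation reduces to $\Lambda({\bf i}^\ast,{\bf j}^\ast)=\Lambda({\bf i}^\ast,{\bf p}^\ast)-\lr{{\bf p},{\bf i}}$ with ${\bf j}=\Dim\,\nu(P)$, which follows from ${}^\ast{\bf j}={\bf p}^\ast$ and Lemma \ref{sjishu}.

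\textbf{Relation \eqref{fgx1} needs only $\mathfrak{I}_1$.} We have $\varrho(u_P)\star\varrho(u_{J[-1]})=u_{\nu(P)[-1]}\star u_J=u_{P[1]}\star u_J$, which \eqref{lgx7} expands. On the other side use $u_{\nu(Q)[-1]}=u_{Q[1]}$.

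\textbf{Lemma \ref{lxdjkh} is beside the point here.} It trims the generators of $\mathfrak{I}$ in order to show that a map \emph{out of} $\mathcal{D}\mathcal{H}_\Lambda^{{cl}_1}(\widetilde{\A})$ kills $\mathfrak{I}$; that is the descent step of Theorem \ref{zmainthm}. In this proposition $\mathfrak{I}$ lives in the target, and you only \emph{use} one of its generators.

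\textbf{The gap: relation \eqref{fgx2}, with $M$ having no projective summands.} This is the one relation that genuinely needs $\mathfrak{I}$, and your plan handles it by transport along $\overline{\tau}$, which is unavailable here because $\varrho(u_{J[-1]})=u_J\neq\overline{\tau}(u_{J[-1]})$. The paper argues as follows.

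Since $\Ext^1(\tau M,J)=0$, we get $\varrho(u_M)\star\varrho(u_{J[-1]})=u_{\tau M}\star u_J=v^{\Lambda((\tau{\bf m})^\ast,{\bf j}^\ast)}u_{J\oplus\tau M}$. One then expands $u_{J\oplus\tau M}$ using the generator $r_{J,\tau M}$ of $\mathfrak{I}$. This produces terms $|{}_{\tau D}\Hom_{\widetilde{\A}}(\tau M,\tau J)_{\tau J'\oplus I}|\,u_{J'}\star u_{\tau D\oplus I[-1]}$, where $J'$ contains the projective-injective part $P'$ of $J$.

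The key identity is
$|{}_{\tau D}\Hom_{\widetilde{\A}}(\tau M,\tau J)_{\tau J'\oplus I}|=|{}_{G}\Hom_{\widetilde{\A}}(M,J)_{J'}|$ with $G=D\oplus\nu^{-1}(I)$. To prove it:
\begin{enumerate}
\item Write $J=P'\oplus\widetilde{J}$ and $J'=P'\oplus I'$, and use $\tau P'=0$.
\item Read the left side as a count of morphisms $\tau M\to\tau\widetilde{J}$ in $\mathcal{D}^b(\widetilde{\A})$ with cone $\tau D[1]\oplus\tau I'\oplus I$.
\item Apply $\overline{\tau}^{-1}$ to obtain $|{}_G\Hom_{\widetilde{\A}}(M,\widetilde{J})_{I'}|$.
\item Reinsert $P'$ via Proposition \ref{prop:Hall-equality-P-I}(1), using $\Hom_{\widetilde{\A}}(M,P')=0$.
\end{enumerate}

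On the other side, $\varrho(u_{G\oplus J'[-1]})$ is a scalar multiple of $u_{J'}\star u_{\tau D\oplus I[-1]}$. Both sides then have the same terms, and what remains is an exponent check via Lemma \ref{sjishu} and the $\overline{\tau}$-invariance of $\Lambda$. As written, your proposal does not contain this argument.
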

\begin{proof}
It suffices to prove the relations \eqref{lgx2}-\eqref{lgx8} are preserved under the map $\varrho$.

Noting that $\varrho(u_M)=\overline{\tau}(u_M)\in\mathcal {D}\mathcal {H}_\Lambda^{ec}(\widetilde{\A})$ and $\varrho(u_{P[1]})=\overline{\tau}(u_{P[1]})\in\mathcal {D}\mathcal {H}_\Lambda^{ec}(\widetilde{\A})$ for any $M\in\widetilde{\A}$ and $P\in\P_{\widetilde{\A}}$, and $\overline{\tau}$ is an algebra automorphism of $\mathcal {D}\mathcal {H}_\Lambda(\widetilde{\A})$,
we conclude that the relations \eqref{lgx2}-\eqref{lgx3} are preserved under the map $\varrho$. Since $u_I\star u_J=v^{\Lambda({\bf i}^\ast,{\bf j}^\ast)}u_{I\oplus J}$ for any $I,J\in\I_{\widetilde{\A}}$, we get that the relation \eqref{lgx1} is preserved under the map $\varrho$.

\noindent{\bf Relation \eqref{lgx8}:} For any $P\in\P_{\widetilde{\A}}$ and $I\in\I_{\widetilde{\A}}$, set $J=\nu(P)$, then
\begin{flalign*}
\varrho(u_{I[-1]})\star \varrho(u_{P[1]})=u_I\star u_J=v^{\Lambda({\bf i}^\ast,{\bf j}^\ast)}u_{I\oplus J}
\end{flalign*}
and
\begin{flalign*}
\varrho(u_{P[1]})\star \varrho(u_{I[-1]})=u_J\star u_I=v^{\Lambda({\bf j}^\ast,{\bf i}^\ast)}u_{I\oplus J}.
\end{flalign*}
Thus, we obtain
$$\varrho(u_{I[-1]})\star \varrho(u_{P[1]})=q^{\Lambda({\bf i}^\ast,{\bf j}^\ast)}\varrho(u_{P[1]})\star \varrho(u_{I[-1]}).$$
Since \begin{flalign*}\Lambda({\bf i}^\ast,{\bf j}^\ast)=\Lambda({^*{\bf i}},{^*{\bf j}})=\Lambda({^*{\bf i}},{\bf p}^\ast)
&=\Lambda({\bf i}^*,{\bf p}^\ast)-\Lambda(B(\widetilde{Q})(\bf i),{\bf p}^\ast)\\&=\Lambda({\bf i}^*,{\bf p}^\ast)-\lr{{\bf p},{\bf i}},\end{flalign*}
we get the relation \eqref{lgx8} is preserved under the map $\varrho$.

In order to finish proving that $\varrho$ is an algebra homomorphism, by Proposition \ref{mint27}, we need to prove the relations \eqref{fgx1} and \eqref{fgx2} are preserved under the map $\varrho$.

\noindent{\bf Relation \eqref{fgx1}:} Let us show that the relation \eqref{fgx1}
\begin{flalign*}
u_{P}\star u_{J[-1]}=q^{-\frac{1}{2}\Lambda({\bf p}^\ast,{\bf j}^\ast)-\lr{{\bf p},{\bf j}}}
\sum\limits_{[Q],[J']}|{}_{Q}\Hom_{\widetilde{\A}}(P,J)_{J'}|u_{Q\oplus J'[-1]},
\end{flalign*}
where $P\in\P_{\widetilde{\A}}$~and $J\in\I_{\widetilde{\A}}$, is preserved under the map $\varrho$. In fact, set $I=\nu(P)$, then
\begin{flalign*}
&\varrho(u_P)\star\varrho(u_{J[-1]})=u_{I[-1]}\star u_J=u_{P[1]}\star u_J\\
&=q^{-\frac{1}{2}\Lambda({\bf p}^\ast,{\bf j}^\ast)-\lr{{\bf p},{\bf j}}}\sum\limits_{[Q],[J']}|{}_{Q}\Hom_{\widetilde{\A}}(P,J)_{J'}|u_{J'\oplus Q[1]}.
\end{flalign*}

On the other hand, we have
\begin{flalign*}
&q^{-\frac{1}{2}\Lambda({\bf p}^\ast,{\bf j}^\ast)-\lr{{\bf p},{\bf j}}}
\sum\limits_{[Q],[J']}|{}_{Q}\Hom_{\widetilde{\A}}(P,J)_{J'}|\varrho(u_{Q\oplus J'[-1]})\\
&=q^{-\frac{1}{2}\Lambda({\bf p}^\ast,{\bf j}^\ast)-\lr{{\bf p},{\bf j}}}
\sum\limits_{[Q],[J']}|{}_{Q}\Hom_{\widetilde{\A}}(P,J)_{J'}|v^{\Lambda({{\bf j}'}^\ast,{\bf q}^\ast)}u_{J'}\star u_{\nu(Q)[-1]}\\
&=q^{-\frac{1}{2}\Lambda({\bf p}^\ast,{\bf j}^\ast)-\lr{{\bf p},{\bf j}}}
\sum\limits_{[Q],[J']}|{}_{Q}\Hom_{\widetilde{\A}}(P,J)_{J'}|v^{\Lambda({{\bf j}'}^\ast,{\bf q}^\ast)}u_{J'}\star u_{Q[1]}\\
&=q^{-\frac{1}{2}\Lambda({\bf p}^\ast,{\bf j}^\ast)-\lr{{\bf p},{\bf j}}}
\sum\limits_{[Q],[J']}|{}_{Q}\Hom_{\widetilde{\A}}(P,J)_{J'}|u_{J'\oplus Q[1]}.
\end{flalign*}
Hence, the relation \eqref{fgx1} is preserved under the map $\varrho$.

\noindent{\bf Relation \eqref{fgx2}:} Let us show that the relation \eqref{fgx2}
\begin{equation}\label{zyygx1}
u_{M}\star u_{J[-1]}=q^{-\frac{1}{2}\Lambda({\bf m}^\ast,{\bf j}^\ast)-\lr{{\bf m},{\bf j}}}
\sum\limits_{[G],[J']}|{}_{G}\Hom_{\widetilde{\A}}(M,J)_{J'}|u_{G\oplus J'[-1]},\end{equation}
where $M\in\widetilde{\A}$ has no nonzero projective direct summands and $J\in\I_{\widetilde{\A}}$, is preserved under the map $\varrho$. In fact, let $J=P'\oplus \widetilde{J}$ such that $P'$ is the maximal projective summand of $J$, then
\begin{flalign*}
\varrho(u_M)\star\varrho(u_{J[-1]})=u_{\tau M}\star u_J=v^{\Lambda((\tau{\bf m})^\ast,{\bf j}^\ast)}u_{J\oplus\tau M}.
\end{flalign*}
Note that in $\mathcal {D}\mathcal {H}_\Lambda^{{cl}}(\widetilde{\A})$, by the ideal $\mathfrak{I}$, we have
\begin{flalign*}
&\varrho(u_M)\star\varrho(u_{J[-1]})=\\&v^{-\Lambda({\bf m}^\ast,^*{\bf j})}\sum\limits_{\begin{smallmatrix}[D],[J'],[I]\end{smallmatrix}}v^{\Lambda(({\bf j}+{\tau{\bf m}})^*,{{\bf j}'}^*)+\lr{{\bf j}-{\bf j}',\tau{\bf m}}}|_{\tau D}\Hom_{\widetilde{\A}}(\tau M,\tau J)_{\tau J'\oplus I}|u_{J'}\star u_{\tau D\oplus I[-1]},
\end{flalign*}
where $D$ has no nonzero projective direct summands and $J'=P'\oplus I'\in\I_{\widetilde{\A}}$ has the same maximal projective summand as $J$. Moreover,
\begin{flalign*}|_{\tau D}\Hom_{\widetilde{\A}}(\tau M,\tau J)_{\tau J'\oplus I}|&=|_{\tau D}\Hom_{\widetilde{\A}}(\tau M,\tau \widetilde{J})_{\tau I'\oplus I}|\\&=|\Hom_{\mathcal{D}^b(\widetilde{\A})}(\tau M,\tau \widetilde{J})_{\tau D[1]\oplus \tau I'\oplus I}|\\
&=|\Hom_{\mathcal{D}^b(\widetilde{\A})}( M, \widetilde{J})_{ D[1]\oplus  I'\oplus P[1]}|\\
&=|_{G}\Hom_{\widetilde{\A}}(M,\widetilde{J})_{I'}|,\end{flalign*}
where $P=\nu^{-1}(I)$ and $G=D\oplus P$. Noting that $\Hom_{\widetilde{\A}}(M,P')=0$ and using Proposition \ref{prop:Hall-equality-P-I}(1), we obtain
$$|_{G}\Hom_{\widetilde{\A}}(M,\widetilde{J})_{I'}|=|_{G}\Hom_{\widetilde{\A}}(M,J)_{J'}|.$$
Thus, we have
\begin{equation}\label{48zs}
\varrho(u_M)\star\varrho(u_{J[-1]})=\sum\limits_{\begin{smallmatrix}[G],[J']\end{smallmatrix}}v^{x_2}|_{G}\Hom_{\widetilde{\A}}(M,J)_{J'}|u_{J'}\star u_{\tau D\oplus I[-1]},
\end{equation}
where $x_2=-\Lambda({\bf m}^\ast,{^*{\bf j}})+\Lambda(({\bf j}+{\tau{\bf m}})^*,{{\bf j}'}^*)+\lr{{\bf j}-{\bf j}',\tau{\bf m}}$.
Using Lemma \ref{sjishu} and \cite[Lemma 6.4]{CDZ}, we obtain
\begin{equation*}
x_2=-\Lambda({\bf m}^\ast,{^*{\bf j}})+\Lambda({^*{\bf j}},{^*{\bf j}'})-\Lambda({\bf m}^\ast,{^*{\bf j}'})-\lr{{\bf m},{\bf j}-{{\bf j}'}}.
\end{equation*}

On the other hand, for each $G$ in \eqref{zyygx1}, write $G=D\oplus P$ such that $P$ is the maximal projective direct summand of $G$, then we have
\begin{equation}\label{ffzs}
\begin{split}&q^{-\frac{1}{2}\Lambda({\bf m}^\ast,{\bf j}^\ast)-\lr{{\bf m},{\bf j}}}
\sum\limits_{[G],[J']}|{}_{G}\Hom_{\widetilde{\A}}(M,J)_{J'}|\varrho(u_{G\oplus J'[-1]})\\
&=\sum\limits_{[G],[J']}|{}_{G}\Hom_{\widetilde{\A}}(M,J)_{J'}|v^{y_2}u_{J'}\star u_{\tau D\oplus I[-1]},
\end{split}\end{equation}
where $y_2=-\Lambda({\bf m}^\ast,{\bf j}^\ast)-2\lr{{\bf m},{\bf j}}+\Lambda({{\bf j}'}^\ast,{\bf g}^\ast)$ and $I=\nu(P)$.
Using ${\bf g}={\bf m}-{\bf j}+{\bf j}'$ and Lemma \ref{sjishu}, we obtain
\begin{equation*}
\begin{split}
y_2&=-\Lambda({\bf m}^\ast,{\bf j}^\ast)-2\lr{{\bf m},{\bf j}}+\Lambda({{\bf j}'}^\ast,({\bf m}-{\bf j}+{\bf j}')^\ast)\\
&=-\Lambda({\bf m}^\ast-B(\widetilde{Q}){\bf m},{^*{\bf j}})-2\lr{{\bf m},{\bf j}}+\Lambda({^*{\bf j}'},{\bf m}^\ast-B(\widetilde{Q}){\bf m})-\Lambda({^*{\bf j}'},{^*{\bf j}}).
\end{split}\end{equation*}
By Lemma \ref{sjishu}, we get $x_2=y_2$. Thus, the equations \eqref{48zs} and \eqref{ffzs} are equal. Hence, the relation \eqref{fgx2} is preserved under the map $\varrho$.

Therefore, $\varrho$ is an algebra homomorphism.
\end{proof}

The algebra homomorphism $\varrho$ induces an algebra automorphism of $\mathcal {D}\mathcal {H}_\Lambda^{{cl}}(\widetilde{\A})$ as follows.
\begin{theorem}\label{zmainthm}
The map $\sigma:\mathcal {D}\mathcal {H}_\Lambda^{{cl}}(\widetilde{\A})\longrightarrow\mathcal {D}\mathcal {H}_\Lambda^{{cl}}(\widetilde{\A})$ defined by
$$\sigma(u_{I[-1]\oplus M\oplus P[1]})=v^{\Lambda({\bf i}^\ast,({\bf m}-{\bf p})^\ast)+\Lambda({\bf m}^\ast,{\bf p}^\ast)}u_I\star u_{\tau M'\oplus \nu(P')[-1]}\star u_{\nu(P)}$$ for any $M=M'\oplus P'\in\widetilde{\A}$, $I\in\I_{\widetilde{\A}}$ and $P\in\P_{\widetilde{\A}}$, where $P'$ is the maximal projective direct summand of $M$, is an automorphism of algebras. Moreover, the inverse map of $\sigma$ is given by the map $\sigma':\mathcal {D}\mathcal {H}_\Lambda^{{cl}}(\widetilde{\A})\longrightarrow\mathcal {D}\mathcal {H}_\Lambda^{{cl}}(\widetilde{\A})$ defined by
$$\sigma'(u_{I[-1]\oplus M\oplus P[1]})=v^{\Lambda({\bf i}^\ast,({\bf m}-{\bf p})^\ast)+\Lambda({\bf m}^\ast,{\bf p}^\ast)}u_{\nu^{-1}I}\star u_{\tau^{-1} M'\oplus \nu^{-1}(I')[1]}\star u_{P}$$ for any $M=M'\oplus I'\in\widetilde{\A}$, $I\in\I_{\widetilde{\A}}$ and $P\in\P_{\widetilde{\A}}$, where $I'$ is the maximal injective direct summand of $M$.
\end{theorem}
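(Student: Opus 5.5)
The map $\varrho:\mathcal{D}\mathcal{H}_\Lambda^{ec}(\widetilde{\A})\to\mathcal{D}\mathcal{H}_\Lambda^{cl}(\widetilde{\A})$ of the preceding proposition is already an algebra homomorphism. The plan is in three steps: show that $\varrho$ factors through $\mathfrak{I}_1$ and $\mathfrak{I}$, so that $\sigma$ is a well-defined homomorphism; construct $\sigma'$ in the same way; and check that $\sigma'\sigma$ and $\sigma\sigma'$ fix a generating set.

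\textbf{Step 1: $\varrho$ kills $\mathfrak{I}_1$.} For $I\in\I_{\widetilde{\A}}$ and $P=\nu^{-1}(I)$, the definition gives $\varrho(u_{P[1]})=u_{\nu(P)}=u_I$. The case $I'=I$, $M=0$, $P=0$ of the formula gives $\varrho(u_{I[-1]})=u_I$ as well. So $\varrho$ vanishes on $\mathcal{S}_1$ and descends to $\mathcal{D}\mathcal{H}_\Lambda^{cl_1}(\widetilde{\A})$.

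\textbf{Step 2: $\varrho$ kills $\mathfrak{I}$.} By Lemma \ref{lxdjkh}, it is enough to show that $\varrho(r_{M,N})=0$ and $\varrho(r_{M,P})=0$ when $N$ has no projective summands and $P$ is projective. This is the main obstacle.

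\emph{Case $N$ without projective summands.} Write $M=M'\oplus P'$. Then $\varrho(u_{M\oplus N})$ is, up to a power of $v$, $u_{\tau M'\oplus\tau N\oplus\nu(P')[-1]}$. The terms $u_A\star u_{D\oplus I[-1]}$ have $A=A'\oplus P'$ and $D$ a submodule of $N$. Put $D=D'\oplus Q$ with $Q$ the projective part of $D$. Then $\varrho(u_A\star u_{D\oplus I[-1]})$ becomes a product involving $u_{\tau A'}$, $u_{\nu(P')[-1]}$, $u_{\tau D'}$, $u_{\nu(Q)[-1]}$ and $u_I$.

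I would then apply the defining relation of $\mathfrak{I}$ inside $\mathcal{D}\mathcal{H}_\Lambda^{cl}(\widetilde{\A})$ to $u_{\tau M'\oplus\tau N}$. This expresses it through $\Hom(\tau N,\tau^2M')$, which is isomorphic to $\Hom(N,\tau M')$ because $\tau$ is an equivalence on non-projectives. The object $\nu(P')[-1]$ is handled by \eqref{lgx4}. To match the counting sets, I would use the derived-category rewriting of $|{}_D\Hom(N,\tau M)_{\tau A\oplus I}|$ as a count of $\Hom_{\mathcal{D}^b}$ with prescribed cone, which is $\overline{\tau}$-invariant. This is the same device used for relation \eqref{fgx2} in the previous proof, together with Proposition \ref{prop:Hall-equality-P-I} and \eqref{sxhomjs}, to absorb the projective and injective parts.

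\emph{Case $r_{M,P}$ with $P$ projective.} Here $\varrho(u_P)=u_{\nu(P)[-1]}$. I would expand $r_{M,P}$ via \eqref{lgx7}/\eqref{lgx6}-type relations into terms already handled, using the exact sequence \eqref{zhxl}.

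The exponents of $v$ are then compared using Lemma \ref{sjishu}, \cite[Lemma 6.4]{CDZ} and $B(\widetilde{Q})\mathbf{x}=\mathbf{x}^\ast-{^\ast\mathbf{x}}$. I expect this bookkeeping to be long but mechanical.

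\textbf{Step 3: $\sigma'$ and invertibility.} The argument for $\sigma'$ is dual. Use $\overline{\tau}^{-1}$ and $\nu^{-1}$, check the relations of Propositions \ref{elsubalgebra} and \ref{mint27}, now split according to injective summands, and check that $\mathfrak{I}_1$ and $\mathfrak{I}$ are killed. For the latter I would need the variant of Lemma \ref{lxdjkh} with generators $r_{M,N}$ and $r_{M,I}$, where $N$ has no injective summands and $I$ is injective; its proof is obtained by dualizing.

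Finally, $\mathcal{D}\mathcal{H}_\Lambda^{cl}(\widetilde{\A})$ is generated by the $u_M$ and $u_{I[-1]}$, by Claim 1 of Proposition \ref{prop:span-set}. On these generators:
\begin{itemize}
\item if $M$ has no projective summands, then $\sigma(u_M)=u_{\tau M}$, and $\tau M$ has no injective summands, so $\sigma'(u_{\tau M})=u_M$;
\item for $P$ projective, $\sigma(u_P)=u_{\nu(P)[-1]}$, and $\sigma'(u_{\nu(P)[-1]})=u_{P}$;
\item $\sigma(u_{I[-1]})=u_I$, and $\sigma'(u_I)=u_{\nu^{-1}(I)[1]}=u_{I[-1]}$.
\end{itemize}
Hence $\sigma'\sigma=\mathrm{Id}$, and symmetrically $\sigma\sigma'=\mathrm{Id}$, so $\sigma$ is an automorphism with inverse $\sigma'$.
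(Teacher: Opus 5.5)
Your Steps~1 and~2 and the final check on generators follow the paper. There, $\varrho$ kills $\mathfrak{I}_1$ and Lemma~\ref{lxdjkh} reduces to $r_{M,N}$ and $r_{M,Q'}$. The case $r_{M,N}$ is handled through the $\mathfrak{I}$-relation for $u_{\tau M'\oplus\tau N}$ and a $\overline{\tau}$-rewriting of the counts, and $\sigma'\sigma$, $\sigma\sigma'$ fix generators. One correction on $r_{M,Q'}$: the paper does not reduce it to earlier cases. It writes $\varrho(u_{M\oplus Q'})$ as a multiple of $u_{\nu(P')[-1]}\star u_{\nu(Q')[-1]}\star u_{\tau M'}$, replaces $u_{\nu(Q')[-1]}$ by $u_{Q'[1]}$, and applies \eqref{lgx7}. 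This yields exactly the counts $|{}_Q\Hom_{\widetilde{\A}}(Q',\tau M')_{\tau A'\oplus I}|$ occurring in $r_{M,Q'}$.

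The gap is in Step~3. Your reduction lemma for $\sigma'$ restricts the wrong argument, and it is not what dualizing Lemma~\ref{lxdjkh} gives. The duality ${\rm D}$ turns $\Hom(N,\tau M)$ into $\Hom(\tau^{-1}{\rm D}M,{\rm D}N)$, so it swaps the two arguments of $r_{-,-}$ as well as projectives and injectives.

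\emph{Why your variant does not help.} Since $r_{M,N}$ counts morphisms $N\to\tau M$, injective summands $J$ of the second argument are inert: $\Hom_{\widetilde{\A}}(J,\tau M)\cong{\rm D}\Ext^1_{\widetilde{\A}}(M,J)=0$. Hence $r_{M,J}=0$, and $r_{M,N\oplus J}$ is a scalar multiple of $r_{M,N}\star u_J$. So your variant is true, but only for this trivial reason. It keeps among the generators every $r_{M,N}$ with $M$ having injective summands, and that is exactly the hard case for $\varrho'$. An injective summand $I$ of $M$ is sent to $\nu^{-1}(I)[1]$, while $\Hom_{\widetilde{\A}}(N,\tau M\oplus\tau I)$ picks up the extra, generally nonzero, piece $\Hom_{\widetilde{\A}}(N,\tau I)$. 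For $\sigma$ the picture is mirrored: projective summands of the first argument are inert ($r_{P\oplus M,N}$ is a multiple of $u_P\star r_{M,N}$), and Lemma~\ref{lxdjkh} deals with projective summands of the second.

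\emph{What is missing.} You need the paper's Lemma~\ref{dlxdjkh}: $\mathfrak{I}$ is generated by the $r_{I,N}$ with $I$ injective and the $r_{M,N}$ with $M$ free of injective summands. Its proof writes $u_{I\oplus M\oplus N}$ as a multiple of $u_{M\oplus N}\star u_I$, then expands first by $r_{M,N}$ and then by $r_{I,D}$. It requires a new counting identity for $|{}_K\Hom_{\widetilde{\A}}(N,\tau M\oplus\tau I)_{\tau A\oplus R}|$. This is Lemma~\ref{gjyla4}, proved from associativity of $(u_{N[1]}\ast u_{\tau M})\ast u_{\tau I}$ together with Lemma~\ref{yla3}. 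Proposition~\ref{prop:Hall-equality-P-I} does not apply here, because $\tau I$ is not injective. After that you must still check separately that $\varrho'$ kills $r_{I,N}$. The paper does this by rewriting $u_{\nu^{-1}(I)[1]}=u_{I[-1]}$ and applying \eqref{lgx6} to $u_{\tau^{-1}N'}\star u_{I[-1]}$. Your plan supplies none of these steps.
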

\begin{proof}
By the definition of $\varrho$, we have $$\varrho(u_{\nu^{-1}(I)[1]})=\varrho(u_{I[-1]})=u_I$$ for any $I\in\I_{\widetilde{\A}}$. Thus, $\varrho(\mathfrak{I}_1)=0$. Hence, the algebra homomorphism $\varrho$ induces an algebra homomorphism $\theta:\mathcal {D}\mathcal {H}_\Lambda^{{cl}_1}(\widetilde{\A})\rightarrow\mathcal {D}\mathcal {H}_\Lambda^{{cl}}(\widetilde{\A}), u\mapsto \varrho(u)$.
Now, let us prove that $\theta(\mathfrak{I})=0$. By Lemma \ref{lxdjkh}, we need to prove $\theta(r_{M,Q'})=0$ and $\theta(r_{M,N})=0$ for any $M\in{\widetilde{\A}}$, $Q'\in \mathcal{P}_{\widetilde{\A}}$ and $N\in{\widetilde{\A}}$ without nonzero projective direct summands. Let us write $M=M'\oplus P'$ such that $P'$ is the maximal projective direct summand of $M$.

\noindent{\bf Relation} $\theta(r_{M,Q'})=0$: Set $I'=\nu(P')$ and $J'=\nu(Q')$, then
\begin{flalign*}
\theta(u_{M\oplus Q'})&=\theta(u_{M'\oplus P'\oplus Q'})=u_{\tau M'\oplus (I'\oplus J')[-1]}\\
&=v^{\Lambda(({\bf i}'+{\bf j}')^\ast,(\tau {\bf m}')^\ast)-\Lambda({{\bf i}'}^\ast,{{\bf j}'}^\ast)}u_{I'[-1]}\star u_{J'[-1]}\star u_{\tau M'}\\
&=v^{\Lambda(({\bf i}'+{\bf j}')^\ast,(\tau {\bf m}')^\ast)-\Lambda({{\bf i}'}^\ast,{{\bf j}'}^\ast)}u_{I'[-1]}\star u_{Q'[1]}\star u_{\tau M'}.
\end{flalign*}
Using the relation \eqref{lgx7}, we obtain
\begin{equation}\label{mpgxl}
\theta(u_{M\oplus Q'})=\sum\limits_{[Q],[A'],[I]}v^{x_0}|_Q\Hom_{\widetilde{\A}}(Q',\tau M')_{\tau A'\oplus I}|u_{I'[-1]}\star u_{\tau A'\oplus I}\star u_{Q[1]},
\end{equation}
where each $Q\in\P_{\widetilde{\A}}$, $I\in\I_{\widetilde{\A}}$, $A'\in\widetilde{\A}$ has no nonzero projective direct summands, and
$$x_0=\Lambda(({\bf i}'+{\bf j}')^\ast,(\tau {\bf m}')^\ast)-\Lambda({{\bf i}'}^\ast,{{\bf j}'}^\ast)-\Lambda({{\bf q}'}^\ast,(\tau{\bf m}')^\ast)-2\lr{{\bf q}',\tau{\bf m}'}+\Lambda((\tau {\bf a}'+{\bf i})^\ast,{\bf q}^\ast).$$

On the other hand,
\begin{flalign*}
&\sum\limits_{\begin{smallmatrix}[Q],[A],[I]\end{smallmatrix}}v^{\Lambda(({\bf m}+{\bf q}')^*,{\bf a}^*)+\lr{{\bf m}-{\bf a},{\bf q}'}}|_Q\Hom_{\widetilde{\A}}(Q',\tau M')_{\tau A'\oplus I}|\theta(u_{A})\star \theta(u_{Q\oplus I[-1]})=\\
&\sum\limits_{\begin{smallmatrix}[Q],[A'],[I]\end{smallmatrix}}v^{\Lambda(({\bf m}+{\bf q}')^*,{\bf a}^*)+\lr{{\bf m}-{\bf a},{\bf q}'}+\Lambda({\bf i}^\ast,{\bf q}^\ast)}|_Q\Hom_{\widetilde{\A}}(Q',\tau M')_{\tau A'\oplus I}|u_{\tau A'\oplus I'[-1]}\star u_I\star u_{J[-1]},
\end{flalign*}
where we recall that $A=A'\oplus P'$ for some $A'\in\widetilde{\A}$ without nonzero projective direct summands, $I\in\I_{\widetilde{\A}}$, $Q\in\P_{\widetilde{\A}}$ and $J=\nu(Q)$. Using the relations \eqref{lgx4} and \eqref{lgx3} together with $u_{J[-1]}=u_{Q[1]}$, we get
\begin{equation}\label{mpgxr}
\begin{split}
&\sum\limits_{\begin{smallmatrix}[Q],[A],[I]\end{smallmatrix}}v^{\Lambda(({\bf m}+{\bf q}')^*,{\bf a}^*)+\lr{{\bf m}-{\bf a},{\bf q}'}}|_Q\Hom_{\widetilde{\A}}(Q',\tau M')_{\tau A'\oplus I}|\theta(u_{A})\star \theta(u_{Q\oplus I[-1]})\\
&=\sum\limits_{\begin{smallmatrix}[Q],[A'],[I]\end{smallmatrix}}v^{y_0}|_Q\Hom_{\widetilde{\A}}(Q',\tau M')_{\tau A'\oplus I}|u_{I'[-1]}\star u_{\tau A'\oplus I}\star u_{Q[1]},
\end{split}\end{equation}
where $$y_0=\Lambda(({\bf m}+{\bf q}')^*,{\bf a}^*)+\lr{{\bf m}-{\bf a},{\bf q}'}+\Lambda({\bf i}^\ast,{\bf q}^\ast)+\Lambda({{\bf i}'}^\ast,(\tau{\bf a}')^\ast)+\Lambda((\tau{\bf a}')^\ast,{\bf i}^\ast).$$

Using \cite[Lemma 6.4]{CDZ}, we obtain
\begin{flalign*}
x_0=-\Lambda({{\bf p}'}^\ast+{{\bf q}'}^\ast,{{\bf m}'}^\ast)-\Lambda({{\bf p}'}^\ast,{{\bf q}'}^\ast)+\Lambda({^*{\bf q}'},{{\bf m}'}^\ast)+2\lr{{\bf m}',{\bf q}'}-\Lambda({{\bf a}'}^\ast,{^\ast{\bf q}})+\Lambda({^\ast{\bf i}},{^\ast{\bf q}})
\end{flalign*}
and
\begin{flalign*}
y_0=\Lambda({{\bf m}'}^\ast+{{\bf p}'}^\ast+{{\bf q}'}^\ast,{{\bf a}'}^\ast+{{\bf p}'}^\ast)+\lr{{\bf m}'-{\bf a}',{\bf q}'}+\Lambda({^\ast{\bf i}},{^\ast{\bf q}})-\Lambda({{\bf p}'}^\ast,{{\bf a}'}^\ast)-\Lambda({{\bf a}'}^\ast,{^\ast{\bf i}}).
\end{flalign*}
Noting that ${\bf q}={\bf q}'-\tau{\bf m}'+\tau{\bf a}'+{\bf i}$ and ${^*{\bf q}}={\bf q}^\ast-B(\widetilde{Q}){\bf q}$, by Lemma \ref{sjishu} and \cite[Lemma 6.4]{CDZ}, we obtain $x_0=y_0$. Thus, the equations \eqref{mpgxl} and \eqref{mpgxr} are equal, i.e. $\theta(r_{M,Q'})=0$.

\noindent{\bf Relation} $\theta(r_{M,N})=0$:
Set $I'=\nu(P')$, then
\begin{flalign*}
\theta(u_{M\oplus N})=u_{\tau M'\oplus \tau N\oplus I'[-1]}=v^{\Lambda({{\bf i}'}^\ast,(\tau{\bf m}'+\tau{\bf n})^\ast)}u_{I'[-1]}\star u_{\tau M'\oplus\tau N}.
\end{flalign*}
Note that in $\mathcal {D}\mathcal {H}_\Lambda^{{cl}}(\widetilde{\A})$, by the ideal $\mathfrak{I}$, we have
\begin{equation}\label{kxjih}
\begin{split}
&u_{\tau M'\oplus\tau N}=\\&\sum\limits_{\begin{smallmatrix}[K],[X],[I'']\end{smallmatrix}}v^{\Lambda((\tau{\bf m}'+\tau{\bf n})^*,{\bf x}^*)+\lr{\tau{\bf m}'-{\bf x},\tau{\bf n}}}|_K\Hom_{\widetilde{\A}}(\tau N,\tau(\tau M'))_{\tau X\oplus I''}|u_{X}\star u_{K\oplus I''[-1]},
\end{split}
\end{equation}
where $K$ has no nonzero injective direct summands, $X$ has the same maximal projective direct summands as $\tau M'$.

For each $K$ and $X$ in \eqref{kxjih}, write $K=\tau D'$ and $X=\tau A'\oplus I$ for some $D', A'\in\widetilde{\A}$ and $I\in\I_{\widetilde{\A}}$ such that $D', A', I$ have no nonzero projective direct summands. Thus,
\begin{equation*}
\begin{split}
u_{\tau M'\oplus\tau N}=\sum\limits_{\begin{smallmatrix}[D'],[A'],[I],[I'']\end{smallmatrix}}&v^{\Lambda((\tau{\bf m}'+\tau{\bf n})^*,(\tau{\bf a}'+{\bf i})^*)+\lr{\tau{\bf m}'-\tau{\bf a}'-{\bf i},\tau{\bf n}}}\\&|_{\tau D'}\Hom_{\widetilde{\A}}(\tau N,\tau(\tau M'))_{\tau (\tau A'\oplus I)\oplus I''}|u_{\tau A'\oplus I}\star u_{\tau D'\oplus I''[-1]}.
\end{split}
\end{equation*}
Noting that $u_{\tau A'\oplus I}=v^{-\Lambda((\tau{\bf a}')^\ast,{\bf i}^\ast)}u_{\tau A'}\star u_I$, we obtain
\begin{equation}\label{vzzs}
\begin{split}
&\theta(u_{M\oplus N})=\\&\sum\limits_{\begin{smallmatrix}[D'],[A'],[I],[I'']\end{smallmatrix}}v^{x_1}|_{\tau D'}\Hom_{\widetilde{\A}}(\tau N,\tau(\tau M'))_{\tau (\tau A'\oplus I)\oplus I''}|u_{I'[-1]}\star u_{\tau A'}\star u_I\star u_{\tau D'\oplus I''[-1]},
\end{split}
\end{equation}
where $$x_1=\Lambda({{\bf i}'}^\ast,(\tau{\bf m}'+\tau{\bf n})^\ast)+\Lambda((\tau{\bf m}'+\tau{\bf n})^*,(\tau{\bf a}'+{\bf i})^*)+\lr{\tau{\bf m}'-\tau{\bf a}'-{\bf i},\tau{\bf n}}-\Lambda((\tau{\bf a}')^\ast,{\bf i}^\ast).$$
Recall that $N$ has no nonzero projective direct summand, and assume that $P'''$ is the maximal projective summand of $\tau M'$. Set $P''=\nu^{-1}(I'')$, $D=D'\oplus P''$ and $I'''=\nu(P''')$. Since $\Hom_{D^b(\widetilde{\A})}(\tau N, I'''[-1])=0$, we obtain
\begin{equation*}\begin{split}
&|_{\tau D'}\Hom_{\widetilde{\A}}(\tau N,\tau(\tau M'))_{\tau (\tau A'\oplus I)\oplus I''}|\\
&=|\Hom_{\mathcal{D}^b(\widetilde{\A})}(\tau N,\tau(\tau M'))_{(\tau D')[1]\oplus\tau (\tau A'\oplus I)\oplus I''}|\\
&=|\Hom_{\mathcal{D}^b(\widetilde{\A})}(\tau N,\tau(\tau M')\oplus {I'''}[-1])_{(\tau D')[1]\oplus\tau (\tau A'\oplus I)\oplus I''\oplus{I'''}[-1]}|\\
&=|\Hom_{\mathcal{D}^b(\widetilde{\A})}(\overline{\tau} N,\overline{\tau}(\tau M'))_{\overline{\tau} (\tau A'\oplus I)\oplus \overline{\tau}D[1]}|\\
&=|\Hom_{\mathcal{D}^b(\widetilde{\A})}(N,\tau M')_{\tau A'\oplus I\oplus D[1]}|\\
&=|_D\Hom_{\widetilde{\A}}(N,\tau M')_{\tau A'\oplus I}|.\end{split}\end{equation*}

On the other hand,
\begin{flalign*}
&\sum\limits_{\begin{smallmatrix}[D],[A],[I]\end{smallmatrix}}v^{\Lambda(({\bf m}+{\bf n})^*,{\bf a}^*)+\lr{{\bf m}-{\bf a},{\bf n}}}|_D\Hom_{\widetilde{\A}}(N,\tau M)_{\tau A\oplus I}|\theta(u_{A})\star \theta(u_{D\oplus I[-1]})=\\
&\sum\limits_{\begin{smallmatrix}[D'],[A'],[I],[I'']\end{smallmatrix}}v^{\Lambda(({\bf m}+{\bf n})^*,{\bf a}^*)+\lr{{\bf m}-{\bf a},{\bf n}}}|_D\Hom_{\widetilde{\A}}(N,\tau M')_{\tau A'\oplus I}|u_{\tau A'\oplus I'[-1]}\star v^{\Lambda({\bf i}^\ast,{\bf d}^\ast)}u_{I}\star u_{\tau D'\oplus I''[-1]},
\end{flalign*}
where we recall that $A=A'\oplus P'$ for some $A'\in\widetilde{\A}$, set $D=D'\oplus P''$ for some $D'\in\widetilde{\A}$ and $P''\in\P_{\widetilde{\A}}$ such that $D', A'$ have no nonzero projective direct summands, $I'=\nu(P')$ and $I''=\nu(P'')$. Noting that $u_{\tau A'\oplus I'[-1]}=v^{\Lambda({{\bf i}'}^\ast,(\tau{\bf a}')^\ast)}u_{I'[-1]}\star u_{\tau A'}$, we obtain
\begin{equation}\label{vrzs}
\begin{split}
&\sum\limits_{\begin{smallmatrix}[D],[A],[I]\end{smallmatrix}}v^{\Lambda(({\bf m}+{\bf n})^*,{\bf a}^*)+\lr{{\bf m}-{\bf a},{\bf n}}}|_D\Hom_{\widetilde{\A}}(N,\tau M)_{\tau A\oplus I}|\theta(u_{A})\star \theta(u_{D\oplus I[-1]})\\
&=\sum\limits_{\begin{smallmatrix}[D'],[A'],[I],[I'']\end{smallmatrix}}v^{y_1}|_D\Hom_{\widetilde{\A}}(N,\tau M')_{\tau A'\oplus I}| u_{I'[-1]}\star u_{\tau A'}\star u_{I}\star u_{\tau D'\oplus I''[-1]},
\end{split}\end{equation}
where $$y_1=\Lambda(({\bf m}+{\bf n})^*,{\bf a}^*)+\Lambda({{\bf i}'}^\ast,(\tau{\bf a}')^\ast)+\Lambda({\bf i}^\ast,{\bf d}^\ast)+\lr{{\bf m}-{\bf a},{\bf n}}.$$

Set $P=\nu^{-1}(I)$. Using \cite[Lemma 6.4]{CDZ}, we obtain
\begin{flalign*}
x_1=-\Lambda({{\bf p}'}^\ast,{{\bf m}'}^\ast+{\bf n}^\ast)-\Lambda({{\bf m}'}^\ast+{\bf n}^\ast,{\bf p}^\ast-{{\bf a}'}^\ast)+\lr{{\bf m}'-{\bf a}',{\bf n}}
+\lr{{\bf n},{\bf i}}+\Lambda({{\bf a}'}^\ast,{\bf p}^\ast).
\end{flalign*}
Noting that ${\bf d}={\bf i}+{\bf n}-\tau({\bf m}'-{\bf a}')$ and using \cite[Lemma 6.4]{CDZ}, we obtain
\begin{equation*}
\begin{split}
&y_1=\\&\Lambda(({{\bf m}'}+{{\bf p}'}+{\bf n})^\ast,({{\bf a}'}+{{\bf p}'})^\ast)-\Lambda({{\bf p}'}^\ast,{{\bf a}'}^\ast)+\Lambda({\bf i}^\ast,({\bf i}+{\bf n}-\tau({\bf m}'-{\bf a}'))^\ast)+\lr{{{\bf m}'-{\bf a}'},{\bf n}}\\
&=\Lambda(({{\bf m}'}+{{\bf p}'}+{\bf n})^\ast,({{\bf a}'}+{{\bf p}'})^\ast)-\Lambda({{\bf p}'}^\ast,{{\bf a}'}^\ast)+\Lambda({\bf p}^\ast,({\bf m}'-{\bf a}')^\ast+{^*{\bf n}})+\lr{{{\bf m}'-{\bf a}'},{\bf n}}.
\end{split}
\end{equation*}
Using ${^*{\bf n}}={\bf n}^\ast-B(\widetilde{Q}){\bf n}$ and Lemma \ref{sjishu}, we get $x_1=y_1$. Hence, the equations \eqref{vzzs} and \eqref{vrzs} are equal, i.e. $\theta(r_{M,N})=0$.

Hence, the algebra homomorphism $\theta$ induces the algebra homomorphism $\sigma$.

By similar arguments, we can obtain the algebra homomorphism $\sigma'$, whose proof has been provided in Section Appendix A. Then it is easy to see $\sigma\sigma'={\rm Id}$ and $\sigma'\sigma={\rm Id}$. Therefore, $\sigma$ and $\sigma'$ are automorphisms of algebras.
\end{proof}

\begin{corollary}
We have the following commutative diagram of algebra homomorphisms in which the vertical maps are isomorphisms:
\begin{equation*}
\xymatrix{{\A}_{q}(\Lambda, B(\widetilde{Q}))\,\,\ar@{>->}[r]^-{\varphi'_0}\ar[d]^-{\Sigma}&\mathcal {D}\mathcal {H}_\Lambda^{{cl}}(\widetilde{\A})\ar[d]^-{\sigma}\\
{\A}_{q}(\Lambda, B(\widetilde{Q}))\,\,\ar@{>->}[r]^-{\varphi'_0}&\mathcal {D}\mathcal {H}_\Lambda^{{cl}}(\widetilde{\A}).}
\end{equation*}
\end{corollary}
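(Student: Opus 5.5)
The plan is to \emph{define} $\Sigma$ by transport of structure along $\varphi'_0$. For this I first show that $\sigma$ and its inverse $\sigma'$ from Theorem \ref{zmainthm} both preserve the image of $\varphi'_0$; then $\Sigma:=(\varphi'_0)^{-1}\circ\sigma\circ\varphi'_0$ makes the square commute by construction. Here $\varphi'_0$ is the analogue of Theorem \ref{chtgthm} for the full matrix $B(\widetilde{Q})$, i.e.\ with $Q$ replaced by $\widetilde{Q}$, so that all $m$ vertices are mutable. It is injective, and by Corollaries \ref{congshixian} and \ref{congshixianimg} it is an isomorphism onto the composition subalgebra $\mathcal{C}$ of $\mathcal {D}\mathcal {H}_\Lambda^{{cl}}(\widetilde{\A})$. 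Moreover $\varphi'_0(X_{M\oplus I[-1]})=u_{M\oplus I[-1]}$ for rigid $M$.

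\textbf{Step 1: a convenient generating set for $\mathcal{C}$.} By \cite[Theorem 2]{ZhangP}, as recalled before Corollary \ref{congshixian}, $\mathcal{C}$ is generated by $u_M$ with $M\in\widetilde{\A}$ indecomposable rigid, together with $u_{I[-1]}$ with $I\in\I_{\widetilde{\A}}$ indecomposable. Since an algebra automorphism is determined by its values on generators, it suffices to compute $\sigma$ and $\sigma'$ on these elements.

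\textbf{Step 2: evaluate $\sigma$ on the generators.} From the explicit formula in Theorem \ref{zmainthm}:
\begin{itemize}
\item If $M$ is indecomposable rigid and non-projective, then $\sigma(u_M)=u_{\tau M}$, and $\tau M$ is again indecomposable rigid.
\item If $M=P$ is indecomposable projective, then $\sigma(u_P)=u_{\nu(P)[-1]}$.
\item For $I$ indecomposable injective, $\sigma(u_{I[-1]})=u_I$, and $I$ is indecomposable rigid.
\end{itemize}
All three values lie in the generating set, so $\sigma(\mathcal{C})\subseteq\mathcal{C}$.

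Symmetrically, using the formula for $\sigma'$ (with the identification $u_{\nu^{-1}(I')[1]}=u_{I'[-1]}$ in the quotient):
\begin{itemize}
\item If $M$ is indecomposable rigid and non-injective, then $\sigma'(u_M)=u_{\tau^{-1}M}$.
\item If $M=I'$ is indecomposable injective, then $\sigma'(u_{I'})=u_{I'[-1]}$.
\item For $I$ indecomposable injective, $\sigma'(u_{I[-1]})=u_{\nu^{-1}I}$, a projective and hence rigid module.
\end{itemize}
Hence $\sigma'(\mathcal{C})\subseteq\mathcal{C}$ as well. Since $\sigma\sigma'=\sigma'\sigma={\rm Id}$, the restriction $\sigma|_{\mathcal{C}}$ is an automorphism of $\mathcal{C}$.

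\textbf{Step 3: define $\Sigma$ and identify it.} Set $\Sigma:=(\varphi'_0)^{-1}\sigma|_{\mathcal{C}}\varphi'_0$. This is an algebra automorphism of ${\A}_{q}(\Lambda, B(\widetilde{Q}))$, and $\varphi'_0\Sigma=\sigma\varphi'_0$ holds by definition. Applying $\varphi_0$, which inverts $\varphi'_0$ on $\mathcal{C}$, I would record the explicit form of $\Sigma$ on the cluster-variable generators:
\begin{itemize}
\item $\Sigma(X_M)=X_{\tau M}$ for $M$ indecomposable rigid and non-projective;
\item $\Sigma(X_{P})=X_{\nu(P)[-1]}$ for $P$ indecomposable projective;
\item $\Sigma(X_{I[-1]})=X_I$ for $I$ indecomposable injective.
\end{itemize}
This exhibits $\Sigma$ as the automorphism induced by the Auslander--Reiten translation of the cluster category.

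The main obstacle is Step 1. I must make sure that the generating set from \cite{ZhangP} really lies inside $\mathcal{C}$ when the ambient algebra is $\mathcal {D}\mathcal {H}_\Lambda^{{cl}}(\widetilde{\A})$, with all vertices treated as mutable, rather than the subalgebra $\mathcal {D}\mathcal {H}_\Lambda^{{\tilde{c}l}}({\A})$. I would check this by rerunning the argument of Theorem \ref{chtgthm} and Corollary \ref{congshixianimg} verbatim with $Q$ replaced by $\widetilde{Q}$. I would also verify that the identification $u_{P[1]}=u_{\nu(P)[-1]}$ from $\mathfrak{I}_1$ is used consistently when evaluating $\sigma$ on projectives.
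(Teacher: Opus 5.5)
Your proposal is correct and follows the paper's proof. The paper likewise applies Corollary \ref{congshixian} with $n=m$, so that $\A=\widetilde{\A}$ and the composition subalgebra sits inside $\mathcal{D}\mathcal{H}_\Lambda^{cl}(\widetilde{\A})$; this is exactly how your Step 1 concern is settled. It then notes that $\sigma$ and $\sigma'$ preserve the composition subalgebra and defines $\Sigma$ by transport along $\varphi'_0$. Your evaluation of $\sigma$ and $\sigma'$ on the generators $u_M$ ($M$ indecomposable rigid) and $u_{I[-1]}$ just spells out the step the paper calls ``easy to see''.
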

\begin{proof}
Using Corollary \ref{congshixian} for $n=m$, we have
the algebra isomorphism $$\varphi'_0:{\A}_{q}(\Lambda, B(\widetilde{Q}))\longrightarrow\mathcal {C}\mathcal {H}_\Lambda^{{cl}}(\widetilde{\A}),$$
where $\mathcal {C}\mathcal {H}_\Lambda^{{cl}}(\widetilde{\A})$ is the subalgebra of $\mathcal {D}\mathcal {H}_\Lambda^{{cl}}(\widetilde{\A})$ generated by the elements $u_M, u_{I[-1]}$, where $M\in\widetilde{\A}$ is indecomposable and rigid, and $I\in \mathcal{I}_{\widetilde{\A}}$ is indecomposable. It is easy to see that
$\sigma(\mathcal {C}\mathcal {H}_\Lambda^{{cl}}(\widetilde{\A}))\subseteq\mathcal {C}\mathcal {H}_\Lambda^{{cl}}(\widetilde{\A})$ and $\sigma'(\mathcal {C}\mathcal {H}_\Lambda^{{cl}}(\widetilde{\A}))\subseteq\mathcal {C}\mathcal {H}_\Lambda^{{cl}}(\widetilde{\A})$. It follows that
$\sigma:\mathcal {C}\mathcal {H}_\Lambda^{{cl}}(\widetilde{\A})\to \mathcal {C}\mathcal {H}_\Lambda^{{cl}}(\widetilde{\A})$ is an algebra isomorphism.
Hence, the algebra isomorphism $\Sigma$ is defined by the following commutative diagram
\begin{equation*}
\xymatrix{{\A}_{q}(\Lambda, B(\widetilde{Q}))\ar[r]^-{\varphi'_0}_{\cong}\ar[d]^-{\Sigma}&\mathcal {C}\mathcal {H}_\Lambda^{{cl}}(\widetilde{\A})\ar[d]^-{\sigma}\\
{\A}_{q}(\Lambda, B(\widetilde{Q}))\ar[r]^-{\varphi'_0}_{\cong}&\mathcal {C}\mathcal {H}_\Lambda^{{cl}}(\widetilde{\A}).}
\end{equation*}
\end{proof}

%%%%%%%%%%%%%%%%%%%%%%%%%%%%%%%%%%%%%%%%%%%%%%%%%%%%

\section{BGP-reflection functors and mutation invariance of Hall algebra $\mathcal {D}\mathcal {H}_\Lambda^{{cl}}(\widetilde{\A})$}
In this section, let $\widetilde{Q}$ be a finite acyclic valued quiver as given in the subsection 2.2 and let $k\in \{1,\ldots, m\}$ be always a sink vertex.

Let $\widetilde{Q}'=\mu_k(\widetilde{Q})$ be the valued quiver obtained from $\widetilde{Q}$ by reversing all the arrows attached to $k$ and the valuations of
$\widetilde{Q}'$ equal those of $\widetilde{Q}$. Let $\widetilde{\mathcal{A}'}$ be the category of finite-dimensional representations of $\widetilde{Q}'$ over $\mathbb{F}_q$. In what follows, for each vertex $i$ of $\widetilde{Q}$, denote the projective cover and injective envelope of the simple representation $S_i$ by $P_i$ and $I_i$, respectively. Clearly, $P_k=S_k$. For the sake of distinction, for each vertex $i$ of $\widetilde{Q}'$, denote the projective cover and injective envelope of the simple representation associated to $i$ by $P'_i$ and $I'_i$, respectively.
As the subsection 2.2, we have the matrices $E(\widetilde{Q}')$, $E'(\widetilde{Q}')$, $B(\widetilde{Q}')$. Moreover, $B(\widetilde{Q}')$ is equal to the mutation $\mu_k(B(\widetilde{Q}))$ of $B(\widetilde{Q})$. Let ${\Lambda'}:=\mu_k(\Lambda)$ be the mutation of $\Lambda$, then $\Lambda'B(\widetilde{Q}')=-\operatorname{diag}\{d_1,\cdots, d_m\}$.
We also write $^\ast\x=E(\widetilde{Q}')\x$ and $\x^\ast=E'(\widetilde{Q}')\x$ for any $X\in\widetilde{\mathcal{A}'}$.

Let $\mathbf{r}_k^+:\widetilde{\mathcal{A}}\rightarrow\widetilde{\mathcal{A}'}$ be the classical BGP-reflection functor (cf. \cite{Dlab}). Denote by $\widetilde{\mathcal{A}}\langle k\rangle$ the full subcategory of $\widetilde{\mathcal{A}}$ consisting of all representations which do not contain $S_k$ as a direct summand. Similarly, we define the full subcategory $\widetilde{\mathcal{A}'}\langle k\rangle$ of $\widetilde{\mathcal{A}'}$. Then the BGP-reflection functor $\mathbf{r}_k^+$ restricts to an equivalence of categories $\mathbf{r}_k^+:\widetilde{\mathcal{A}}\langle k\rangle\rightarrow\widetilde{\mathcal{A}'}\langle k\rangle$, whose quasi-inverse may be denoted by $\mathbf{r}_k^-$. Moreover, the BGP-reflection functor $\mathbf{r}_k^+$ induces a triangle equivalence
$\mathbf{R}_k^+:\mathcal{D}^b(\widetilde{\A})\rightarrow\mathcal{D}^b(\widetilde{\A'})$. Note that $\mathbf{R}_k^+(X)=\mathbf{r}_k^+(X)\in\widetilde{\mathcal{A}'}\langle k\rangle$ for any $X\in\widetilde{\mathcal{A}}\langle k\rangle$. Besides, we have the following:
\begin{equation}\label{6.1}
\mathbf{R}_k^+(P_k)=I_k'[-1],~~\mathbf{R}_k^+(P_k[1])=I_k',~~\mathbf{R}_k^+(\tau^{-1}P_k)=P_k',~~\mathbf{R}_k^+(I_k[-1])=\tau I_k'[-1];
\end{equation}
and for any $j\neq k$,
\begin{equation}
\mathbf{R}_k^+(P_j)=P_j',~~\mathbf{R}_k^+(I_j)=I_j',~~\mathbf{R}_k^+(P_j[1])=P_j'[1],~~\mathbf{R}_k^+(I_j[-1])=I_j'[-1],
\end{equation}
where we denote the Auslander--Reiten translation of $\widetilde{\A'}$ by the same notation $\tau$ for $\widetilde{\A}$.

For each $1\leq i\leq m$, define the map $s_i:\mathbb{Z}^m\rightarrow \mathbb{Z}^m$ by $s_i(\alpha)=\alpha-d_i^{-1}(\alpha,e_i)e_i$, called the {\em simple reflection}. It is well-known that
\begin{equation}\label{eulerequal}
\lr{{\bf x},{\bf y}}_{\widetilde{\A}}=\lr{s_k({\bf x}),s_k({\bf y})}_{\widetilde{\A'}}\end{equation} for any $\mathbf{x}, \mathbf{y}\in \mathbb{Z}^m$.
According to \cite[Proposition 2.1]{Dlab}, it is easy to see that
\begin{equation}\label{wsxlbc}
{\Dim}\mathbf{R}_k^+(X)=s_k(\x)
\end{equation}
for any $X\in \mathcal{D}^b(\widetilde{\A})$.

\begin{lemma}\label{lem:compatible-lambda}
For any $\mathbf{x}, \mathbf{y}\in \mathbb{Z}^m$, we have $\Lambda(\mathbf{x}^\ast,\mathbf{y}^\ast)=\Lambda'((s_k(\mathbf{x}))^\ast,(s_k(\mathbf{y}))^\ast)$.
\end{lemma}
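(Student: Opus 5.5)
Since the statement is an identity between bilinear forms on $\mathbb{Z}^m$, I will prove it as a matrix identity:
\[
E'(\widetilde{Q})^{\rm tr}\,\Lambda\, E'(\widetilde{Q}) \;=\; S_k^{\rm tr}\,E'(\widetilde{Q}')^{\rm tr}\,\Lambda'\,E'(\widetilde{Q}')\,S_k ,
\]
where $S_k$ denotes the matrix of $s_k$ and $\Lambda'=G_k^{\rm tr}\Lambda G_k$, with $G_k$ the matrix from the mutation rule for $\Lambda$. It therefore suffices to find a simple matrix identity linking $E'(\widetilde{Q}')S_k$ and $G_kE'(\widetilde{Q})$.

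The key claim is that
\[
E'(\widetilde{Q}')\,S_k = H\,E'(\widetilde{Q}), \qquad\text{with } G_kH \text{ preserving } \Lambda .
\]
I would first try to prove the cleaner form $E'(\widetilde{Q}')S_k = G_k^{-1}E'(\widetilde{Q})$, possibly up to a correction that $\Lambda$ does not see. Since $k$ is a sink in $\widetilde{Q}$, column $k$ of $R(\widetilde{Q})$ vanishes, so $[b_{ik}]_+ = r'_{ik}$, and $G_k$ is explicit: it is the identity except that column $k$ is $-e_k+\sum_i r'_{ik}e_i = -{}^\ast e_k$. Note $G_k^{-1}=G_k$, since $G_k^2=I$.

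The check is column by column, on the basis vectors $e_j$.

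\emph{Column $j\neq k$.} Here $S_ke_j = e_j - d_j^{-1}(e_j,e_k)\,e_k$ up to the appropriate valuation convention, and $(e_j,e_k)$ involves only $r'_{kj}$ or $r'_{jk}$. Reversing the arrows at $k$ swaps the roles of $R$ and $R'$ in row and column $k$. Both sides reduce to $e_j-\sum_i r_{ij}e_i$ with a correction at coordinate $k$.

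\emph{Column $k$.} Here $S_ke_k=-e_k$, and $E'(\widetilde{Q}')e_k = e_k-\sum_i r'_{ik}e_i$ because $k$ is a source in $\widetilde{Q}'$. The right-hand side is $G_ke_k = -{}^\ast e_k$.

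An alternative route, which avoids entrywise bookkeeping, uses the identities already available: $\mathbf{x}^\ast={}^\ast\mathbf{x}+B(\widetilde{Q})\mathbf{x}$, Lemma \ref{sjishu}, the Euler-form invariance \eqref{eulerequal}, and $\Lambda B=-D$, $\Lambda'B'=-D$. Since $E'(\widetilde{Q})$ is invertible over $\mathbb{Z}$, the vectors $\mathbf{x}^\ast$ range over all of $\mathbb{Z}^m$. So I would compare the two forms on a convenient basis, namely $\mathbf{x}=\mathbf{p}_i$, the dimension vectors of projectives. For these, $\mathbf{p}_i^\ast=e_i$, and by \eqref{6.1} together with \eqref{wsxlbc}, $s_k(\mathbf{p}_i)=\mathbf{p}'_i$ for $i\neq k$, while $s_k(\mathbf{p}_k)=-\mathbf{i}'_k$. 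Consequently $(s_k\mathbf{p}_i)^\ast = e_i$ for $i\neq k$, and
\[
(s_k\mathbf{p}_k)^\ast = -(\mathbf{i}'_k)^\ast = -\bigl({}^\ast\mathbf{i}'_k + B'\mathbf{i}'_k\bigr) = -e_k - B'\mathbf{i}'_k .
\]
The claim then reduces to three cases.

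\emph{Both indices $i,j\neq k$.} We need $\Lambda(e_i,e_j)=\Lambda'(e_i,e_j)$, which holds directly from $\Lambda'=G_k^{\rm tr}\Lambda G_k$ since $G_ke_i=e_i$ for $i\neq k$.

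\emph{Exactly one index equal to $k$.} We need $\Lambda(e_i,e_k) = \Lambda'(e_i,-e_k-B'\mathbf{i}'_k)$. Using $\Lambda'(e_i,B'\mathbf{i}'_k)= -(\Lambda'B')_{ii}$-type terms, this equals $d_i\,(\mathbf{i}'_k)_i$ up to sign. Combined with $\Lambda'(e_i,e_k)=-\Lambda(e_i,{}^\ast e_k)$, the identity becomes
\[
\Lambda\bigl(e_i,\; e_k-{}^\ast e_k\bigr) = \pm\, d_i\,(\mathbf{i}'_k)_i .
\]
I would verify this via $\Lambda B=-D$ applied to $e_k-{}^\ast e_k = R'e_k$, checking that $(\mathbf{i}'_k)_i$ matches. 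Note that $\mathbf{i}'_k=e_k$, because $k$ is a source of $\widetilde{Q}'$.

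\emph{Both indices equal to $k$.} This is trivial by skew-symmetry.

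The main obstacle is the mixed case. It requires carefully matching the valuation factors $d_i$ with $r_{ik}$ versus $r'_{ki}$, using the symmetrizability relation $d_ir'_{ik}=d_kr_{ki}$-type identities, and it requires getting the signs right. I expect this to come down to a one-line use of $\Lambda B=-D$ once those identities are written out.
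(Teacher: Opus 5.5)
Your proposal is correct. Your first route is exactly the paper's proof. The paper writes $s_k(\mathbf{x})=F\mathbf{x}$ and asserts by direct calculation that $GE'(\widetilde{Q}')F=E'(\widetilde{Q})$, where $G$ is your $G_k$. Since $G_k^2=I$, this is your identity $E'(\widetilde{Q}')S_k=G_k^{-1}E'(\widetilde{Q})$, and no correction $H$ is needed.

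Your sketch of the columns $j\neq k$ is off in the details, though.
\begin{itemize}
\item The reflection $s_k$ uses $d_k^{-1}$, not $d_j^{-1}$.
\item Since $k$ is a sink, $(e_j,e_k)=-d_jr'_{jk}=-d_kr_{kj}$, so $S_ke_j=e_j+r_{kj}e_k$.
\item Both sides then equal $e_j-\sum_{i\neq k}r_{ij}e_i+r_{kj}E(\widetilde{Q})e_k$. This differs from $E'(\widetilde{Q})e_j$ by $r_{kj}\bigl(e_k+E(\widetilde{Q})e_k\bigr)$, which is not supported only at coordinate $k$.
\end{itemize}

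Your alternative route is complete and organizes the same verification differently. Instead of checking the matrix identity entrywise on the standard basis with $R$, $R'$ and the valuations, you evaluate both forms on the basis $\{\mathbf{p}_i\}$. There, the formulas for $\mathbf{R}_k^+$ on projectives (namely \eqref{6.1} and the display after it) together with \eqref{wsxlbc} do all the work.

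It is shorter than you expect. For $i\neq k$ you have $(s_k\mathbf{p}_i)^\ast=e_i=G_k\mathbf{p}_i^\ast$. For $i=k$,
\[
(s_k\mathbf{p}_k)^\ast=-E'(\widetilde{Q}')e_k=-E(\widetilde{Q})e_k=G_ke_k=G_k\mathbf{p}_k^\ast .
\]
Hence $(s_k\mathbf{x})^\ast=G_k\mathbf{x}^\ast$ for all $\mathbf{x}$, which is precisely the paper's matrix identity. The lemma then follows from $\Lambda'=G_k^{\rm tr}\Lambda G_k$ and $G_k^2=I$, with no case analysis.

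Your mixed case also goes through, but the obstacle you anticipate does not arise.
\begin{itemize}
\item Since $\mathbf{i}'_k=e_k$, the term $\Lambda'(e_i,B(\widetilde{Q}')e_k)$ is the $(i,k)$ entry of $-\operatorname{diag}\{d_1,\ldots,d_m\}$, hence $0$ for $i\neq k$.
\item What remains is $\Lambda(e_i,e_k-E(\widetilde{Q})e_k)=0$. This is the $(i,k)$ entry of $\Lambda B(\widetilde{Q})$, because $e_k-E(\widetilde{Q})e_k=E'(\widetilde{Q})e_k-E(\widetilde{Q})e_k=B(\widetilde{Q})e_k$.
\end{itemize}
No symmetrizability relations or sign bookkeeping are needed.

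The trade-off is that the paper's argument is self-contained linear algebra. Yours relies on the behaviour of $\mathbf{R}_k^+$ on projectives and on dimension vectors, which the paper states before the lemma without proof.
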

\begin{proof}
Let  $F=(f_{ij})$  be the $m\times m$ matrix with the entries given by
\[f_{ij}=\begin{cases}
\delta_{ij} & \text{if $i\ne k$;}\\
-1 & \text{if $i=k=j$;}\\
-b_{kj} & \text{if $i=k\neq j$.}
\end{cases}
\]
Then it is easy to see $s_k(\mathbf{x})=F\mathbf{x}$ for any $\mathbf{x}\in \mathbb{Z}^m$. A direct calculation shows that $GE'(\widetilde{Q}')F=E'(\widetilde{Q})$, where the matrix $G$ is given in the subsection \ref{ss:quantum-cluster-algebras}.
Thus, we have
 \begin{align*}
     \Lambda'(s_k(\mathbf{x})^\ast, s_k(\mathbf{y})^\ast)&=\mathbf{x}^{\rm tr}F^{\rm tr} E'(\widetilde{Q}')^{\rm tr}\Lambda'E'(\widetilde{Q}')F\mathbf{y}\\
     &=\mathbf{x}^{\rm tr}F^{\rm tr} E'(\widetilde{Q}')^{\rm tr}G^{\rm tr}\Lambda GE'(\widetilde{Q}')F\mathbf{y}\\
     &=\mathbf{x}^{\rm tr}E'(\widetilde{Q})^{\rm tr}\Lambda E'(\widetilde{Q})\mathbf{y}\\
     &=\Lambda(\mathbf{x}^\ast,\mathbf{y}^\ast).
 \end{align*}
 Therefore, we finish the proof.
\end{proof}

\begin{lemma}\label{lem:iso-dervied-hall-algebra-reflection}
The equivalence $\mathbf{R}_k^+$ induces an isomorphism of derived Hall algebras
\[R_k^+: \mathcal{DH}_\Lambda(\widetilde{\A})\longrightarrow\mathcal{DH}_{\Lambda'}(\widetilde{\A'})\] defined by $u_X\mapsto u_{\mathbf{R}_k^+(X)}$ for any $X\in \mathcal{D}^b(\widetilde{\A})$.
\end{lemma}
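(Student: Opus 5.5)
The map $R_k^+$ is a bijection on the distinguished bases, because $\mathbf{R}_k^+$ is a triangle equivalence and hence induces a bijection $\Iso(\mathcal{D}^b(\widetilde{\A}))\to\Iso(\mathcal{D}^b(\widetilde{\A'}))$. So it is a linear isomorphism, and only multiplicativity needs checking. I will verify this separately for each of the three layers of the product $\star$: the untwisted derived Hall product $\diamond$, the Euler-form twist giving $\ast$, and the $\Lambda$-twist giving $\star$.

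\emph{Step 1: the untwisted structure constants.} The coefficients $H_{X,Y}^L$ are built from $|\Ext^1_{\mathcal{D}^b(\widetilde{\A})}(X,Y)_L|=|\Hom_{\mathcal{D}^b(\widetilde{\A})}(X,Y[1])_{L[1]}|$, from $|\Hom_{\mathcal{D}^b(\widetilde{\A})}(X,Y)|$, and from $\{X,Y\}=\prod_{i>0}|\Hom(X[i],Y)|^{(-1)^i}$. A triangle equivalence commutes with shifts up to natural isomorphism, preserves Hom-sets, and sends a morphism with cone $L[1]$ to a morphism with cone $\mathbf{R}_k^+(L)[1]$. Therefore
\[
H_{X,Y}^L=H_{\mathbf{R}_k^+X,\mathbf{R}_k^+Y}^{\mathbf{R}_k^+L},
\]
and $R_k^+$ is an isomorphism $\mathcal{DH}(\widetilde{\A})\to\mathcal{DH}(\widetilde{\A'})$.

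\emph{Step 2: the Euler-form twist.} I need $\lr{X,Y}_{\widetilde{\A}}=\lr{\mathbf{R}_k^+X,\mathbf{R}_k^+Y}_{\widetilde{\A'}}$. There are two ways to see this. First, $\lr{X,Y}$ is defined as $\sum_i(-1)^i\dim\Hom(X,Y[i])$, which is clearly invariant under a triangle equivalence. Second, one can combine \eqref{wsxlbc} with \eqref{eulerequal}; this requires noting that the Euler form on $\mathcal{D}^b$ depends only on dimension vectors, which holds since it descends to the Grothendieck group. Either route shows that $R_k^+$ is an isomorphism $\mathcal{DH}_q(\widetilde{\A})\to\mathcal{DH}_q(\widetilde{\A'})$.

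\emph{Step 3: the $\Lambda$-twist.} I need
\[
\Lambda(\mathbf{x}^\ast,\mathbf{y}^\ast)=\Lambda'\big((\Dim\mathbf{R}_k^+X)^\ast,(\Dim\mathbf{R}_k^+Y)^\ast\big).
\]
By \eqref{wsxlbc}, $\Dim\mathbf{R}_k^+X=s_k(\mathbf{x})$, and the identity is then exactly Lemma \ref{lem:compatible-lambda}. Here the starred vector on the right is formed with $E'(\widetilde{Q}')$, matching the convention stated for $\widetilde{\A'}$.

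Combining the three steps gives
\[
R_k^+(u_X\star u_Y)=v^{\Lambda(\mathbf{x}^\ast,\mathbf{y}^\ast)}q^{\lr{X,Y}}\sum_L H_{X,Y}^L\,u_{\mathbf{R}_k^+L}=u_{\mathbf{R}_k^+X}\star u_{\mathbf{R}_k^+Y}.
\]

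The one point needing care is the dimension-vector bookkeeping for objects of $\mathcal{D}^b$ with shifts, since $\Dim$ carries the signs $(-1)^i$. Equation \eqref{wsxlbc} is stated for all $X\in\mathcal{D}^b(\widetilde{\A})$, so I will cite it directly. The rest is formal.
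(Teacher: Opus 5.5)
Your proposal is correct and follows the paper's own route. The paper's one-line proof cites exactly that $\mathbf{R}_k^+$ is a triangle equivalence (so the structure constants $H_{X,Y}^L$ are preserved), together with \eqref{eulerequal}, \eqref{wsxlbc} and Lemma \ref{lem:compatible-lambda} for the two twists; your remark that invariance of $\lr{X,Y}=\sum_i(-1)^i\dim\Hom(X,Y[i])$ also follows directly from the definition is a valid shortcut that bypasses \eqref{eulerequal}.
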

\begin{proof}
Since $\mathbf{R}_k^+$ is a triangle equivalence, by the equations \eqref{eulerequal}, \eqref{wsxlbc} and Lemma \ref{lem:compatible-lambda}, we easily finish the proof.
\end{proof}
In what follows, for the sake of simplicity in notation, we write $\acute{X}$ as $\mathbf{r}_k^+(X)$ for any $X\in\widetilde{\mathcal{A}}\langle k\rangle$. For example, we have $\acute{P}_j=P'_j$ and $\acute{I}_j=I'_j$ for any $j\neq k$, but $\acute{I}_k\neq I'_k$.

\begin{proposition}
The map $\mathfrak{b}_k^+:\mathcal{DH}_{\Lambda}^{ec}(\widetilde{\A})\to \mathcal{DH}_{\Lambda'}^{cl}(\widetilde{\A'})$ defined by
\begin{align*}
&u_{(J\oplus aI_k)[-1]}\mapsto v^{-\Lambda(\mathbf{j}^\ast,a\mathbf{i}_k^\ast)}u_{\acute{J}[-1]}\star u_{aI_k'},\\
&u_{bP_k[1]\oplus Q[1]}\mapsto v^{-\Lambda(b\mathbf{p}_k^\ast, \mathbf{q}^\ast)}u_{bI_k'}\star u_{\acute{Q}[1]},\\
&u_{cP_k\oplus M}\mapsto v^{-\Lambda(c{\mathbf{p}_k^\ast},\mathbf{m}^\ast)}u_{cI_k'[-1]}\star u_{\acute{M}}
\end{align*}
is a homomorphism of algebras,
where $a,b,c\in \mathbb{N}$, $J\in \mathcal{I}_{\widetilde{\A}}$ has no $I_k$ as direct summands,
$M, Q\in \widetilde{\mathcal{A}}\langle k\rangle$ and $Q$ is projective.
\end{proposition}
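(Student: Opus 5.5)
The plan is to realize $\mathfrak{b}_k^+$ as a composite: the derived-Hall isomorphism $R_k^+$ of Lemma \ref{lem:iso-dervied-hall-algebra-reflection}, followed by the identifications that hold in the quotient $\mathcal{DH}_{\Lambda'}^{cl}(\widetilde{\A'})$. Concretely, $R_k^+$ sends $u_{P_k}\mapsto u_{I'_k[-1]}$, $u_{P_k[1]}\mapsto u_{I'_k}$ and $u_{I_k[-1]}\mapsto u_{\tau I'_k[-1]}$ by \eqref{6.1}. It fixes the other generators up to the relabelling $X\mapsto\acute{X}$. The images $R_k^+(u_X)$ therefore lie in $\mathcal{DH}_{\Lambda'}^{ec}(\widetilde{\A'})$, except for objects involving $I_k[-1]$.

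The defining formula modifies $R_k^+$ precisely there: $u_{I_k[-1]}$ is sent to $u_{I'_k}$ rather than $u_{\tau I'_k[-1]}$. The reason is that in the cluster quotient, $u_{I_k[-1]}=u_{P_k[1]}$ and $R_k^+(u_{P_k[1]})=u_{I'_k}$. This is consistent with $\mathfrak{I}_1$, since $u_{aI_k[-1]}$ and $u_{aP_k[1]}$ receive the same image $u_{aI'_k}$.

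By Proposition \ref{elsubalgebra} together with Proposition \ref{mint27}, it suffices to check that the generators $u_{P[1]},u_M,u_{I[-1]}$ satisfy the relations \eqref{lgx2}, \eqref{lgx7}, \eqref{lgx3}, \eqref{lgx1}, \eqref{fgx1}, \eqref{fgx2} and \eqref{lgx8} under $\mathfrak{b}_k^+$. The first step is to check that the three displayed formulas are multiplicative on direct sums. For example, $u_{cP_k\oplus M}=v^{-\Lambda(c\mathbf{p}_k^\ast,\mathbf{m}^\ast)}u_{cP_k}\star u_M$ in the source, by \eqref{lgx3} and the vanishing of $\Ext^1(P_k,M)$ and $\Hom(M,P_k)$ for $M\in\widetilde{\A}\langle k\rangle$. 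In the target, \eqref{lgx4} gives the matching factorization. The twisting exponents agree by Lemma \ref{lem:compatible-lambda} and \eqref{eulerequal}, \eqref{wsxlbc}, since $s_k(\mathbf{p}_k)=-\mathbf{i}'_k$.

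With that in hand, each relation involving only objects whose images lie in the ``good'' range transfers verbatim from $R_k^+$. This uses that $R_k^+$ is an algebra isomorphism of the full derived Hall algebras, that the structure constants are preserved by the triangle equivalence, and that the twists match by Lemma \ref{lem:compatible-lambda}. Examples are \eqref{lgx2}, \eqref{lgx3}, \eqref{lgx7} with $P\not\ni P_k$, and \eqref{lgx1} with $J$ avoiding $I_k$. Note that $\Hom$, $\Ext$ and the sets ${}_X\Hom(M,N)_Y$ are preserved when all objects stay in $\widetilde{\A}\langle k\rangle$. When $P_k$ appears, $\mathbf{R}_k^+$ turns a module-category relation into a relation between objects in degrees $0$ and $-1$, and that relation again holds in $\mathcal{DH}^{ec}_{\Lambda'}$ by Proposition \ref{twistderived}.

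The genuinely new cases are the relations involving $u_{I_k[-1]}$, whose images are not $R_k^+$-images: \eqref{lgx1} with $I_k$, \eqref{fgx1} and \eqref{fgx2} with $I\ni I_k$, and \eqref{lgx8} with $I\ni I_k$. Here the plan is to compare with $R_k^+$ applied to $u_{\tau I'_k[-1]}$. I would rewrite $u_{I'_k}$ in $\mathcal{DH}_{\Lambda'}^{cl}(\widetilde{\A'})$ using the ideal $\mathfrak{I}$, or equivalently the automorphism $\sigma$ of Theorem \ref{zmainthm}. This expresses the relation for $u_{I_k[-1]}$ in the source as the image under $\sigma^{\pm1}$ of a relation already known in the target. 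For instance, for \eqref{fgx2} with $I=I_k$ one uses $u_{I_k[-1]}=u_{P_k[1]}$ in the cluster quotient of the source, together with \cite[Lemma 5.1]{CDZ}. This reduces the relation to \eqref{lgx7} for $P_k[1]$, whose image is handled by $R_k^+$ and then by the identification $u_{I'_k}$.

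The main obstacle is this last family. The identities hold in the source only modulo relations that are not in the ideal generated inside $\mathcal{DH}^{ec}_\Lambda$, so one must verify that the Hall-number sums ${}_G\Hom(M,I_k)_{I'}$ match, after reflection, the sums ${}_{P'}\Hom(P_k,\cdot)_F$ for the image. This should follow from the derived Riedtmann--Peng formula and Proposition \ref{prop:Hall-equality-P-I}, and the exponent bookkeeping from Lemma \ref{sjishu} and \cite[Lemma 6.4]{CDZ}.
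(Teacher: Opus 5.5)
Your skeleton matches the paper's. You verify the presentation of Proposition \ref{elsubalgebra}, with \eqref{lgx6} replaced via Proposition \ref{mint27}. You transfer \eqref{lgx2}, \eqref{lgx7}, \eqref{lgx3} and all $I_k$-free instances from the isomorphism $R_k^+$, since $\mathfrak{b}_k^+$ agrees with $R_k^+$ on $u_{P[1]}$, $u_M$ and $u_{\bar I[-1]}$. The gap is in the only cases with real content, where $\mathfrak{b}_k^+(u_{I_k[-1]})=u_{I'_k}\neq R_k^+(u_{I_k[-1]})$.

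\textbf{The step that fails.} For \eqref{fgx2} with $I=I_k$ you invoke $u_{I_k[-1]}=u_{P_k[1]}$ in the cluster quotient of the source, together with \cite[Lemma 5.1]{CDZ}, which is an identity in $\mathcal{DH}^{cl_1}_\Lambda(\widetilde{\A})$. But the domain of $\mathfrak{b}_k^+$ is $\mathcal{DH}^{ec}_\Lambda(\widetilde{\A})$, where neither holds. Factoring through $\mathfrak{I}_1$ presupposes that $\mathfrak{b}_k^+$ is already a homomorphism, so this is circular. The only legitimate fact is $\mathfrak{b}_k^+(u_{I_k[-1]})=\mathfrak{b}_k^+(u_{P_k[1]})$ in the target.

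Your proposed matching of ${}_G\Hom(M,I_k)_{I'}$ against ${}_{P'}\Hom(P_k,\cdot)_F$, via the Riedtmann--Peng formula and Proposition \ref{prop:Hall-equality-P-I}, also cannot work. The identity to be proved is false in $\mathcal{DH}^{ec}_{\Lambda'}(\widetilde{\A'})$ as soon as $\Hom(M,I_k)\neq0$. The left side $u_{\acute M}\star u_{I'_k}$ is a multiple of $u_{\acute M\oplus I'_k}$. The image of the right side, however, has nonzero components $u_{\acute{\bar G}\oplus(\acute J\oplus aI'_k)[-1]}$, coming from nonzero morphisms $M\to I_k$, and these lie in a different $K(\widetilde{\A'})$-degree. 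So a specific generator of the target's ideal $\mathfrak{I}$ must enter; Hall-number identities alone will not do.

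\textbf{What is missing (the paper's route).} Expand $u_{\acute M\oplus I'_k}$ by the generator $r_{I'_k,\acute M}$ of $\mathfrak{I}$ in $\mathcal{DH}^{cl}_{\Lambda'}(\widetilde{\A'})$. Since $I'_k$ is simple, only $A'\in\{0,I'_k\}$ occurs. Up to powers of $v$, this gives $u_{I'_k}\star u_{\acute M}$ (from the zero morphism) plus $\sum|{}_{D'}\Hom(\acute M,\tau I'_k)_{I'}|\,u_{D'\oplus I'[-1]}$. These terms match the image of the right side termwise through $\mathbf{R}_k^+(I_k)=\tau I'_k$, namely $|{}_{\bar G\oplus aP_k}\Hom(M,I_k)_J|=|{}_{\acute{\bar G}}\Hom(\acute M,\tau I'_k)_{\acute J\oplus aI'_k}|$, followed by an exponent check.

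The same argument with $r_{I'_k,\acute{\bar P}}$ handles \eqref{fgx1} for $(\bar P,I_k)$. The case $(P_k,I_k)$ instead uses $u_{I'_k[-1]}=u_{P'_k[1]}$ in the target and \eqref{lgx7} there, with $|{}_0\Hom(P_k,I_k)_J|=|{}_{\nu^{-1}(\acute J)}\Hom(P'_k,I'_k)_0|$. All of this comes after reducing \eqref{fgx1}, \eqref{fgx2} to $I\in\{\bar I,I_k\}$ and $P\in\{\bar P,P_k\}$ as in Proposition \ref{mint27}. Relations \eqref{lgx1} and \eqref{lgx8} are checked directly.

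\textbf{An alternative from your $\sigma$ remark.} That remark could be turned into a shorter proof, but only once made precise. The map $\overline{\tau}^{-1}\circ R_k^+$ (not $R_k^+$ alone) sends the subalgebra spanned by the $u_{I[-1]\oplus M}$ into $\mathcal{DH}^{ec}_{\Lambda'}(\widetilde{\A'})$; for example $I_k[-1]\mapsto I'_k[-1]$, $P_k\mapsto P'_k$ and $J[-1]\mapsto\nu^{-1}(\acute J)$. Compose this with the homomorphism $\varrho$ constructed before Theorem \ref{zmainthm}, taken for $(\widetilde{Q}',\Lambda')$. The result is a homomorphism agreeing with $\mathfrak{b}_k^+$ on every $u_M$ and $u_{I[-1]}$; this needs a short exponent check via $\overline{\tau}$-invariance of $\Lambda'$ and Lemma \ref{sjishu}. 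That settles \eqref{lgx1} and \eqref{lgx6} at once. Then \eqref{lgx8} with $I_k$ follows from \eqref{lgx2}, because $\mathfrak{b}_k^+(u_{I_k[-1]})=R_k^+(u_{P_k[1]})$. As written, your proposal carries out neither argument.
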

\begin{proof}
It suffices to prove the relations \eqref{lgx2}-\eqref{lgx8} are preserved under $\mathfrak{b}_k^+$.
Note that $$\mathfrak{b}_k^+(u_{bP_k[1]\oplus Q[1]})=R_k^+(u_{bP_k[1]\oplus Q[1]})\in \mathcal{DH}_{\Lambda'}^{ec}(\widetilde{\A'}),~~\mathfrak{b}_k^+(u_{cP_k\oplus M})=R_k^+(u_{cP_k\oplus M})\in \mathcal{DH}_{\Lambda'}^{ec}(\widetilde{\A'}).$$
Since $R_k^+$ is an algebra isomorphism,
we conclude that the relations \eqref{lgx2}-\eqref{lgx3} are preserved under $\mathfrak{b}_k^+$.
	
\noindent{\bf Relation \eqref{lgx1}:} Let $I=\bar{I}\oplus aI_k$ and $J=\bar{J}\oplus bI_k$, where $a,b\in \mathbb{N}$, and $\bar{I}, \bar{J}\in\mathcal{I}_{\widetilde{\A}}$ have no $I_k$ as direct summands. Note that $\Hom_{\widetilde{\A'}}(I'_k,\acute{\bar{J}})\cong\Hom_{\widetilde{\A}}(P_k[1],\bar{J})=0$.
Thus, by the relations \eqref{lgx6} and \eqref{lgx4} of $\mathcal{DH}_{\Lambda'}^{ec}(\widetilde{\A'})$, we have
\[ u_{aI_k'}\star u_{\acute{\bar{J}}[-1]}=q^{-\Lambda'(a{{\bf i}'_k}^\ast,{\acute{\bar{\bf j}}}^\ast)-\lr{ a{\bf i}_k',\acute{\bar{\bf j}}}}u_{\acute{\bar{J}}[-1]}\star u_{aI_k'}.\]
Note that \begin{equation*}\Lambda'({{\bf i}'_k}^\ast, {\acute{\bar{\bf j}}}^\ast)=-\Lambda'((s_k({\bf p}_k))^\ast,(s_k(\bar{\bf j}))^\ast)=-\Lambda({\bf p}_k^\ast,{\bar{\bf j}}^\ast)\end{equation*} and
\begin{equation*}\lr{{\bf i}_k',{\acute{\bar{\bf j}}}}=-\lr{s_k({\bf p}_k),s_k(\bar{\bf j})}=-\lr{{\bf p}_k,\bar{\bf j}}.\end{equation*}
Thus, we obtain
	 \begin{align*}
	 	\mathfrak{b}_k^+(u_{I[-1]})\star\mathfrak{b}_k^+(u_{J[-1]})&=v^{-\Lambda(\bar{\bf i}^\ast, a{\bf i}_k^\ast)-\Lambda(\bar{\bf j}^\ast, b{\bf i}_k^\ast)}u_{\acute{\bar{I}}[-1]}\star u_{aI_k'}\star u_{\acute{\bar{J}}[-1]}\star u_{bI_k'}\\
	 	&=v^{-\Lambda(\bar{\bf i}^\ast, a{\bf i}_k^\ast)-\Lambda(\bar{\bf j}^\ast, b{\bf i}_k^\ast)+2\Lambda(a{\bf p}_k^\ast,{\bar{\bf j}}^\ast)+2\lr{a{\bf p}_k,\bar{\bf j}}}u_{\acute{\bar{I}}[-1]}\star  u_{\acute{\bar{J}}[-1]}\star u_{aI_k'}\star u_{bI_k'}\\
	 	&=v^{x_0}u_{(\acute{\bar{I}}\oplus \acute{\bar{J}})[-1]}\star u_{(a+b)I_k'},
	 \end{align*}
where $x_0=-\Lambda(\bar{\bf i}^\ast, a{\bf i}_k^\ast)-\Lambda(\bar{\bf j}^\ast, b{\bf i}_k^\ast)+2\Lambda(a{\bf p}_k^\ast,{\bar{\bf j}}^\ast)+2\lr{a{\bf p}_k,\bar{\bf j}}+\Lambda(\bar{\bf i}^\ast,\bar{\bf j}^\ast)$.

On the other hand,
\[v^{\Lambda(\bar{\bf i}^\ast+a{\bf i}_k^\ast,\bar{\bf j}^\ast+b{\bf i}_k^\ast)}\mathfrak{b}_k^+(u_{I\oplus J[-1]})=v^{y_0}u_{(\acute{\bar{I}}\oplus \acute{\bar{J}})[-1]}\star u_{(a+b)I_k'},\]
where
$y_0=-\Lambda(\bar{\bf i}^\ast+\bar{\bf j}^\ast, (a+b){\bf i}_k^\ast)+\Lambda(\bar{\bf i}^\ast+a{\bf i}_k^\ast,\bar{\bf j}^\ast+b{\bf i}_k^\ast).$

Noting that \begin{align*}y_0-x_0&=2\Lambda( a{\bf i}_k^\ast,\bar{\bf j}^\ast)-2\Lambda(a{\bf p}_k^\ast,{\bar{\bf j}}^\ast)-2\lr{a{\bf p}_k,\bar{\bf j}}\\&=2a\Lambda( {^*{\bf i}_k},\bar{\bf j}^\ast)+2a\Lambda( {B(\widetilde{Q}){\bf i}_k},\bar{\bf j}^\ast)-2\Lambda(a{\bf p}_k^\ast,{\bar{\bf j}}^\ast)-2\lr{a{\bf p}_k,\bar{\bf j}}\\
&=2a\lr{ {\bar{\bf j},{\bf i}_k}}-2a\lr{{\bf p}_k,\bar{\bf j}}\\&=0,\end{align*} we conclude that the relation \eqref{lgx1} is preserved under the map $\mathfrak{b}_k^+$.

\noindent{\bf Relation \eqref{lgx8}:} Let $I=\bar{I}\oplus aI_k$ and $P=\bar{P}\oplus bP_k$, where $a,b\in \mathbb{N}$, $\bar{I}\in\mathcal{I}_{\widetilde{\A}}$ has no $I_k$ as direct summands and $\bar{P}\in\mathcal{P}_{\widetilde{\A}}$ has no $P_k$ as direct summands.
On the one hand,
	 \begin{equation*}
\begin{split}
	 	&v^{2\Lambda({\bf i}^\ast, {\bf p}^\ast)-2\langle {\bf p}, {\bf i}\rangle}\mathfrak{b}_k^+(u_{P[1]})\star \mathfrak{b}_k^+(u_{I[-1]}) \\
	 	&=v^{2\Lambda({\bf i}^\ast, {\bf p}^\ast)-2\langle {\bf p}, {\bf i}\rangle-\Lambda(\bar{\bf i}^\ast, a{\bf i}_k^\ast)-\Lambda(b{\bf p}_k^\ast,{\bar{\bf p}}^\ast)}u_{bI_k'}\star u_{\acute{\bar{P}}[1]}\star u_{\acute{\bar{I}}[-1]}\star u_{aI_k'}.
	 \end{split}\end{equation*}
By the relation \eqref{lgx8} of $\mathcal{DH}_{\Lambda'}^{ec}(\widetilde{\A'})$, we have
\begin{align*}u_{\acute{\bar{P}}[1]}\star u_{\acute{\bar{I}}[-1]}&=v^{-2\Lambda'({\acute{\bar{\bf i}}}^\ast, {\acute{\bar{\bf p}}}^\ast)+2\langle \acute{\bar{{\bf p}}},\acute{\bar{\bf i}}\rangle}u_{\acute{\bar{I}}[-1]}\star u_{\acute{\bar{P}}[1]}\\
&=v^{-2\Lambda({\bar{\bf i}}^\ast, {\bar{\bf p}}^\ast)+2\langle \bar{{\bf p}},\bar{\bf i}\rangle}u_{\acute{\bar{I}}[-1]}\star u_{\acute{\bar{P}}[1]}.\end{align*}
	 Note that $\Hom_{\widetilde{\A'}}(\acute{\bar{P}}, I_k')=0$ since $I_k'$ is a simple module and $\acute{\bar{P}}$ has no $P'_k$ as direct summands. By the relation \eqref{lgx7} of $\mathcal{DH}_{\Lambda'}^{ec}(\widetilde{\A'})$, we have
	 \[u_{\acute{\bar{P}}[1]}\star u_{aI_k'}=v^{-2\Lambda'({\acute{\bar{\bf p}}}^\ast, a{{\bf i}_k'}^\ast)-2\langle {\acute{\bar{\bf p}}},a{\bf i}_k'\rangle}u_{aI_k'}\star u_{\acute{\bar{P}}[1]},\]
where $\Lambda'({\acute{\bar{\bf p}}}^\ast, {{\bf i}_k'}^\ast)=\Lambda'((s_k(\bar{\bf p}))^\ast,(s_k(-{\bf p}_k))^\ast)=-\Lambda({\bar{\bf p}}^\ast,{\bf p}_k^\ast)$ and $\langle {\acute{\bar{\bf p}}},{\bf i}_k'\rangle=0$.
That is, \[u_{\acute{\bar{P}}[1]}\star u_{aI_k'}=v^{2a\Lambda({\bar{\bf p}}^\ast,{\bf p}_k^\ast)}u_{aI_k'}\star u_{\acute{\bar{P}}[1]}.\]
Similarly, since $\Hom_{\widetilde{\A'}}(I'_k,\acute{\bar{I}})=0$, we can get
\[u_{bI_k'}\star u_{\acute{\bar{I}}[-1]}=v^{2b\Lambda({\bf p}_k^\ast,{\bar{\bf i}}^\ast)}u_{\acute{\bar{I}}[-1]}\star u_{bI_k'}.\]
Hence, we obtain
\begin{align*}v^{2\Lambda({\bf i}^\ast, {\bf p}^\ast)-2\langle {\bf p}, {\bf i}\rangle}\mathfrak{b}_k^+(u_{P[1]})\star \mathfrak{b}_k^+(u_{I[-1]})=v^{x_1}u_{\acute{\bar{I}}[-1]}\star u_{(a+b)I_k'}\star u_{\acute{\bar{P}}[1]},\end{align*}
where \begin{align*}x_1&=2\Lambda({\bf i}^\ast, {\bf p}^\ast)-2\langle {\bf p}, {\bf i}\rangle-\Lambda(\bar{\bf i}^\ast, a{\bf i}_k^\ast)-\Lambda(b{\bf p}_k^\ast,{\bar{\bf p}}^\ast)\\&\quad-2\Lambda({\bar{\bf i}}^\ast, {\bar{\bf p}}^\ast)+2\langle \bar{{\bf p}},\bar{\bf i}\rangle+2a\Lambda({\bar{\bf p}}^\ast,{\bf p}_k^\ast)+2b\Lambda({\bf p}_k^\ast,{\bar{\bf i}}^\ast).\end{align*}
Using ${\bf i}=\bar{{\bf i}}+a{\bf i}_k$, ${\bf p}=\bar{{\bf p}}+b{\bf p}_k$, ${\bf i}_k^\ast={^\ast{{\bf i}_k}}+B(\widetilde{Q}){\bf i}_k={\bf p}_k^\ast+B(\widetilde{Q}){\bf i}_k$ and $\lr{{\bf p}_k,{\bar{\bf i}}}=0$, we get
\begin{align*}x_1&=2a\Lambda({\bf i}_k^\ast,{\bar{\bf p}}^\ast)+2ab\Lambda({\bf i}_k^\ast,{\bf p}_k^\ast)
+2a\Lambda({\bar{\bf p}}^\ast,{\bf p}_k^\ast)-2a\lr{{\bar{\bf p}},{\bf i}_k}\\&\quad-2b\lr{{\bf p}_k,{\bar{\bf i}}}-2ab\lr{{\bf p}_k,{\bf i}_k}-\Lambda(\bar{\bf i}^\ast, a{\bf i}_k^\ast)-\Lambda(b{\bf p}_k^\ast,{\bar{\bf p}}^\ast)\\
&=-\Lambda(\bar{\bf i}^\ast, a{\bf i}_k^\ast)-\Lambda(b{\bf p}_k^\ast,{\bar{\bf p}}^\ast).\end{align*}

On the other hand,
\[\mathfrak{b}_k^+(u_{I[-1]})\star \mathfrak{b}_k^+(u_{P[1]})=v^{-\Lambda(\bar{\bf i}^\ast, a{\bf i}_k^\ast)-\Lambda(b{\bf p}_k^\ast,{\bar{\bf p}}^\ast)}u_{\acute{\bar{I}}[-1]}\star u_{(a+b)I_k'}\star u_{\acute{\bar{P}}[1]}.
	 \]
Hence, we obtain that the relation \eqref{lgx8} is preserved under the map $\mathfrak{b}_k^+$.

According to Proposition \ref{mint27}, the relation \eqref{lgx6} can be replaced by the relations \eqref{fgx1} and \eqref{fgx2}.
	
\noindent{\bf Relation \eqref{fgx1}:} By using similar arguments in Proposition \ref{mint27} together with Proposition \ref{prop:Hall-equality-P-I}, we can prove that the relation \eqref{fgx1} for any $P\in\mathcal{P}_{\widetilde{\A}}$ and $I\in\mathcal{I}_{\widetilde{\A}}$ can replaced by the relations \eqref{fgx1} for the following four cases: (i) $(\bar{P},\bar{I})$; (ii) $(\bar{P}, I_k)$; (iii) $(P_k,\bar{I})$; (iv) $(P_k,I_k)$, where $\bar{I}\in\mathcal{I}_{\widetilde{\A}}$ has no $I_k$ as direct summands and $\bar{P}\in\mathcal{P}_{\widetilde{\A}}$ has no $P_k$ as direct summands.
Hence, it suffices to prove that the relations \eqref{fgx1} for such four cases are preserved under $\mathfrak{b}_k^+$. While, note that in the relation \eqref{fgx1}, $I'$ has no $I_k$ as direct summands if $I$ has no $I_k$ as direct summands. Then since $\mathfrak{b}_k^+(u_{\bar{P}})=R_k^+(u_{\bar{P}})$, $\mathfrak{b}_k^+(u_{P_k})=R_k^+(u_{P_k})$ and $\mathfrak{b}_k^+(u_{\bar{I}[-1]})=R_k^+(u_{\bar{I}[-1]})$, by Lemma \ref{lem:iso-dervied-hall-algebra-reflection}, we conclude that the relation \eqref{fgx1} for the cases (i) and (iii) are preserved under $\mathfrak{b}_k^+$.

\noindent{\bf Case(ii):} On the one hand,
$${\rm LHS}:=\mathfrak{b}_k^+(u_{\bar{P}})\star \mathfrak{b}_k^+(u_{I_k[-1]})=u_{\acute{\bar{P}}}\star u_{I_k'}=v^{\Lambda'({\acute{\bar{\bf p}}}^\ast, {{\bf i}_k'}^\ast)}u_{{\acute{\bar{P}}}\oplus I_k'}=v^{-\Lambda({\bar{\bf p}}^\ast,{\bf p}_k^\ast)}u_{{\acute{\bar{P}}}\oplus I_k'}.$$
Using the ideal $\mathfrak{I}$ relation of $\mathcal{DH}_{\Lambda'}^{cl}(\widetilde{\A'})$, we obtain
\begin{align*}
{\rm LHS}&=\sum_{[Q'],[A'],[J']}v^{-\Lambda(\bar{\bf p}^\ast, {\bf p}_k^\ast)+\Lambda'({\acute{\bar{\bf p}}}^\ast+{{\bf i}_k'}^\ast,{{\bf a}'}^\ast)+\langle {\bf i}_k'-{\bf a}',{\acute{\bar{\bf p}}}\rangle} |_{Q'}\Hom({\acute{\bar{P}}},\tau I_k')_{\tau A'\oplus J'}|u_{A'}\star u_{Q'\oplus J'[-1]}\\
&=q^{-\Lambda(\bar{\bf p}^\ast, {\bf p}_k^\ast)}u_{I_k'}\star u_{\acute{\bar{P}}}+\sum_{[Q']\neq [\acute{\bar{P}}], [J']}v^{-\Lambda(\bar{\bf p}^\ast, {\bf p}_k^\ast)+\langle {\bf i}_k',{\acute{\bar{\bf p}}}\rangle}|_{Q'}\Hom({\acute{\bar{P}}},\tau I_k')_{J'}| u_{Q'\oplus J'[-1]},
\end{align*}
where the last equality follows from the fact that $I_k'$ is simple, $A'\cong I_k'$ or $A'=0$.
	
On the other hand,
\begin{align*}
&{\rm RHS}:=\sum_{[Q],[J]}v^{-\Lambda(\bar{\bf p }^\ast,{\bf i}_k^\ast)-2\langle \bar{\bf p},{\bf i}_k\rangle+\Lambda({\bf j}^\ast,{\bf q}^\ast)}|_Q\Hom(\bar{P},I_k)_J|\mathfrak{b}_k^+(u_{J[-1]})\star \mathfrak{b}_k^+(u_Q)=\\
	 	&q^{-\Lambda(\bar{\bf p }^\ast,{\bf i}_k^\ast)-\langle \bar{\bf p},{\bf i}_k\rangle}u_{I_k'}\star u_{\acute{\bar{P}}}+\sum_{[Q]\neq [\bar{P}],[J]}v^{-\Lambda(\bar{\bf p }^\ast,{\bf i}_k^\ast)-2\langle \bar{\bf p},{\bf i}_k\rangle+\Lambda({\bf j}^\ast,{\bf q}^\ast)}|_Q\Hom(\bar{P},I_k)_J|\mathfrak{b}_k^+(u_{J[-1]})\star \mathfrak{b}_k^+(u_Q).
	 \end{align*}
For each $Q$ in the above sum, write $Q=aP_k\oplus \bar{Q}$, where $a\in\mathbb{N}$ and $\bar{Q}$ has no $P_k$ as direct summands. By applying the triangle equivalence $\mathbf{R}_k^+$, we get
\[|_{\bar{Q}\oplus aP_k}\Hom_{\widetilde{\A}}(\bar{P},I_k)_J|=|_{\acute{{\bar{Q}}}}\Hom_{\widetilde{\A'}}({\acute{\bar{P}}},\tau I_k')_{\acute{J}\oplus aI_k'}|.\]
It follows that
\begin{align*}
{\rm RHS}&=q^{-\Lambda(\bar{\bf p }^\ast,{\bf i}_k^\ast)-\langle \bar{\bf p},{\bf i}_k\rangle}u_{I_k'}\star u_{\acute{\bar{P}}}
+\sum_{[\bar{Q}]\neq [\bar{P}],[J]}v^{x'_2}|_{\acute{{\bar{Q}}}}\Hom({\acute{\bar{P}}},\tau I_k')_{\acute{J}\oplus aI_k'}|u_{\acute{J}[-1]}\star u_{aI_k'[-1]}\star u_{\acute{\bar{Q}}}\\
	 	&=q^{-\Lambda(\bar{\bf p }^\ast,{\bf i}_k^\ast)-\langle \bar{\bf p},{\bf i}_k\rangle}u_{I_k'}\star u_{\acute{\bar{P}}}+\sum_{{[{\acute{\bar{Q}}}]\neq [{\acute{\bar{P}}}],[\acute{J}]}}v^{x_2}
	 	|_{\acute{{\bar{Q}}}}\Hom({\acute{\bar{P}}},\tau I_k')_{\acute{J}\oplus aI_k'}| u_{{\acute{\bar{Q}}}\oplus({\acute{J}}\oplus aI_k')[-1]},
	 \end{align*}
where $x'_2=-\Lambda(\bar{\bf p }^\ast,{\bf i}_k^\ast)-2\langle \bar{\bf p},{\bf i}_k\rangle+\Lambda({\bf j}^\ast,\bar{\bf q}^\ast+a{\bf p}_k^\ast)-\Lambda(a{\bf p}_k^\ast,\bar{\bf q}^\ast)$ and $x_2=x'_2+\Lambda'({\acute{{\bf j}}}^\ast,a{{\bf i}_k'}^\ast)-\Lambda'(a{{\bf i}_k'}^\ast+{\acute{{\bf j}}}^\ast, {\acute{\bar{\bf q}}}^\ast)$.

Note that $\Lambda(\bar{\bf p}^\ast, {\bf p}_k^\ast)=\Lambda(\bar{\bf p}^\ast, {^\ast}{\bf i}_k)=\Lambda(\bar{\bf p}^\ast, {\bf i}_k^\ast)+\langle \bar{\bf p},{\bf i}_k\rangle$ and $\langle {\bf i}_k',{\acute{\bar{\bf p}}}\rangle=-\langle {\bf p}_k,\bar{\bf p}\rangle=-\langle \bar{\bf p},{\bf i}_k\rangle$. Thus, we get
\begin{align*}\Lambda(\bar{\bf p}^\ast, {\bf p}_k^\ast)-\langle {\bf i}_k',{\acute{\bar{\bf p}}}\rangle+x_2&=\Lambda({\bf j}^\ast,\bar{\bf q}^\ast+a{\bf p}_k^\ast)-\Lambda(a{\bf p}_k^\ast,\bar{\bf q}^\ast)+\Lambda'({\acute{{\bf j}}}^\ast,a{{\bf i}_k'}^\ast)-\Lambda'(a{{\bf i}_k'}^\ast+{\acute{{\bf j}}}^\ast, {\acute{\bar{\bf q}}}^\ast)\\
&=\Lambda({\bf j}^\ast,\bar{\bf q}^\ast+a{\bf p}_k^\ast)-\Lambda(a{\bf p}_k^\ast,\bar{\bf q}^\ast)-\Lambda({\bf j}^\ast, a{\bf p}_k^\ast)-\Lambda(-a{\bf p}_k^\ast+{\bf j}^\ast,\bar{\bf q}^\ast)\\
	&=0.\end{align*}
Hence, replacing the notations $Q'$ and $J'$ in {\rm LHS} by $\acute{\bar{Q}}$ and $\acute{J}\oplus aI_k'$, respectively,
we obtain
${\rm LHS}={\rm RHS}$, i.e. the relation \eqref{fgx1} for Case (ii) is preserved under the map $\mathfrak{b}_k^+$.

\noindent{\bf Case (iv):} On the one hand,
$${\rm LHS}:=\mathfrak{b}_k^+(u_{P_k})\star \mathfrak{b}_k^+(u_{I_k[-1]})=u_{I'_k[-1]}\star u_{I_k'}.$$
Since $u_{I_k'[-1]}=u_{P_k'[1]}$ in $\mathcal{DH}_{\Lambda'}^{cl}(\widetilde{\A'})$ and $I'_k$~is simple, we have	
\begin{align*}
{\rm LHS}&=u_{P'_k[1]}\star u_{I_k'}	
=q^{-\frac{1}{2}\Lambda'({{\bf p}_k'}^\ast,{{\bf i}_k'}^\ast)-\lr{{\bf p}'_k,{\bf i}'_k}}
\sum\limits_{[Q'],[I']}v^{\Lambda'({{\bf i}'}^\ast,{{\bf q}'}^\ast)}|{}_{Q'}\Hom(P'_k,I'_k)_{I'}|u_{I'}\star u_{Q'[1]}\\
&=q^{-\Lambda'({{\bf p}_k'}^\ast,{{\bf i}_k'}^\ast)-\lr{{\bf p}'_k,{\bf i}'_k}}u_{I'_k}\star u_{P'_k[1]}+q^{-\frac{1}{2}\Lambda'({{\bf p}_k'}^\ast,{{\bf i}_k'}^\ast)-\lr{{\bf p}'_k,{\bf i}'_k}}
\sum\limits_{[Q']}|{}_{Q'}\Hom(P'_k,I'_k)_{0}|u_{Q'[1]}.
\end{align*}

On the other hand,
\begin{align*}
&{\rm RHS}:=q^{-\frac{1}{2}\Lambda({\bf p}_k^\ast,{\bf i}_k^\ast)-\lr{{\bf p}_k,{\bf i}_k}}
\sum\limits_{[Q],[J]}v^{\Lambda({{\bf j}}^\ast,{{\bf q}}^\ast)}|{}_{Q}\Hom(P_k,I_k)_{J}|\mathfrak{b}_k^+(u_{J[-1]})\star \mathfrak{b}_k^+(u_{Q})\\
&=q^{-\Lambda({\bf p}_k^\ast,{\bf i}_k^\ast)-\lr{{\bf p}_k,{\bf i}_k}}u_{I'_k}\star u_{I'_k[-1]}+
q^{-\frac{1}{2}\Lambda({\bf p}_k^\ast,{\bf i}_k^\ast)-\lr{{\bf p}_k,{\bf i}_k}}
\sum\limits_{[J]}|{}_{0}\Hom(P_k,I_k)_{J}|u_{\acute{J}[-1]}\\
&=q^{-\Lambda({\bf p}_k^\ast,{\bf i}_k^\ast)-\lr{{\bf p}_k,{\bf i}_k}}u_{I'_k}\star u_{I'_k[-1]}+
q^{-\frac{1}{2}\Lambda({\bf p}_k^\ast,{\bf i}_k^\ast)-\lr{{\bf p}_k,{\bf i}_k}}
\sum\limits_{[J]}|{}_{0}\Hom(P_k,I_k)_{J}|u_{\nu^{-1}(\acute{J})[1]},
\end{align*}
where $\nu$ is the Nakayama functor of $\widetilde{{\A'}}$.

Noting that \begin{align*}\Lambda({\bf p}_k^\ast,{\bf i}_k^\ast)=\Lambda(^\ast{\bf i}_k,{\bf i}_k^\ast)=
-\lr{{\bf i}_k,{\bf i}_k}=-\lr{{\bf p}_k,{\bf i}_k},
\end{align*}
we get \begin{align*}&-\Lambda({\bf p}_k^\ast,{\bf i}_k^\ast)-\lr{{\bf p}_k,{\bf i}_k}=-\Lambda'({{\bf p}_k'}^\ast,{{\bf i}_k'}^\ast)-\lr{{\bf p}'_k,{\bf i}'_k}=0,\\
&-\frac{1}{2}\Lambda({\bf p}_k^\ast,{\bf i}_k^\ast)-\lr{{\bf p}_k,{\bf i}_k}=-\frac{1}{2}\lr{{\bf p}_k,{\bf i}_k}~\text{and}~-\frac{1}{2}\Lambda'({{\bf p}_k'}^\ast,{{\bf i}_k'}^\ast)-\lr{{\bf p}'_k,{\bf i}'_k}=-\frac{1}{2}\lr{{\bf p}'_k,{\bf i}'_k}.\end{align*}
It follows that $$-\frac{1}{2}\Lambda({\bf p}_k^\ast,{\bf i}_k^\ast)-\lr{{\bf p}_k,{\bf i}_k}=-\frac{1}{2}\Lambda'({{\bf p}_k'}^\ast,{{\bf i}_k'}^\ast)-\lr{{\bf p}'_k,{\bf i}'_k},$$ since $$\Hom(P'_k,I'_k)\cong \Hom(I'_k,I'_k)\cong \Hom(P_k,P_k)\cong \Hom(P_k,I_k).$$

By applying the triangle equivalence $\mathbf{R}_k^+$ and the Auslander--Reiten translation of $\mathcal{D}^b(\widetilde{{\A'}})$, we get
\begin{align*}|{}_{0}\Hom(P_k,I_k)_{J}|=|{}_{\nu^{-1}(\acute{J})}\Hom(P'_k,I'_k)_{0}|.\end{align*}
Hence, replacing the notations $Q'$ in {\rm LHS} by $\nu^{-1}(\acute{J})$, we obtain
${\rm LHS}={\rm RHS}$, i.e. the relation \eqref{fgx1} for Case (iv) is also preserved under the map $\mathfrak{b}_k^+$.
	
\noindent{\bf Relation \eqref{fgx2}:}
By using similar arguments in Proposition \ref{mint27} together with Proposition \ref{prop:Hall-equality-P-I}, we can prove that the relation \eqref{fgx2} for any $M\in\widetilde{\A}$ without nonzero projective direct summands and $I\in\mathcal{I}_{\widetilde{\A}}$ can replaced by the relations \eqref{fgx2} for the following two cases: (i) $(M,\bar{I})$; (ii) $(M, I_k)$, where $M\in\widetilde{\A}$ has no nonzero projective direct summands and $\bar{I}\in\mathcal{I}_{\widetilde{\A}}$ has no $I_k$ as direct summands.

\noindent{\bf Case (i):}
One the one hand,
\begin{align*}
{\rm LHS}:=\mathfrak{b}_k^+(u_M)\star\mathfrak{b}_k^+(u_{\bar{I}[-1]})=u_{\acute{M}}\star u_{\acute{\bar{I}}[-1]}
=\sum_{[G'],[J']}v^{x_3} |_{G'}\Hom({\acute{M}},\acute{\bar{I}})_{J'}|u_{J'[-1]}\star u_{G'},
\end{align*}
where $x_3=-\Lambda'({\acute{{\bf m}}}^\ast,{\acute{\bar{{\bf i}}}}^\ast)-2\langle \acute{{\bf m}},\acute{\bar{{\bf i}}}\rangle+\Lambda'({\bf j'}^\ast,{\bf g'}^\ast)$.

On the other hand,
\begin{align*}
{\rm RHS}:=v^{-\Lambda({\bf m}^\ast,{\bar{{\bf i}}}^\ast)-2\lr{{\bf m},\bar{{\bf i}}}}
\sum\limits_{[G],[J]}v^{\Lambda({{\bf j}}^\ast,{\bf g}^\ast)}|{}_{G}\Hom(M,\bar{I})_{J}|\mathfrak{b}_k^+(u_{J[-1]})\star \mathfrak{b}_k^+(u_G),\end{align*}
where each $J$ has no $I_k$ as direct summands.

Writing each $G$ in the above sum as $G=aP_k\oplus \bar{G}$, where $a\in\mathbb{N}$ and $\bar{G}$ has no $P_k$ as direct summands, we have
\begin{align*}{\rm RHS}&=v^{-\Lambda({\bf m}^\ast,{\bar{{\bf i}}}^\ast)-2\lr{{\bf m},\bar{{\bf i}}}}
\sum\limits_{[G],[J]}v^{\Lambda({{\bf j}}^\ast,{\bf g}^\ast)}|{}_{G}\Hom(M,\bar{I})_{J}|u_{\acute{J}[-1]}\star v^{-\Lambda(a{\bf p}_k^\ast,\bar{\bf g}^\ast)}u_{aI_k'[-1]}\star u_{\acute{\bar{G}}}\\
&=
\sum\limits_{[G],[J]}v^{y_3}|{}_{G}\Hom(M,\bar{I})_{J}|u_{(\acute{J}\oplus aI_k')[-1]}\star u_{\acute{\bar{G}}},
\end{align*}
where $y_3=-\Lambda({\bf m}^\ast,{\bar{{\bf i}}}^\ast)-2\lr{{\bf m},\bar{{\bf i}}}+\Lambda({{\bf j}}^\ast,{\bf g}^\ast)-\Lambda(a{\bf p}_k^\ast,\bar{\bf g}^\ast)+\Lambda'(\acute{{\bf j}}^\ast,a{{\bf i}'_k}^\ast)$.

Replacing the notations $G'$ and $J'$ in {\rm LHS} by $\acute{\bar{G}}$ and $\acute{J}\oplus aI_k'$, respectively,
we obtain
\[|_{G'}\Hom({\acute{M}},\acute{\bar{I}})_{J'}|=|_{\acute{\bar{G}}}\Hom({\acute{M}},\acute{\bar{I}})_{\acute{J}\oplus aI_k'}|=|_{aP_k\oplus \bar{G}}\Hom(M,\bar{I})_J|=|_G\Hom(M,\bar{I})_J|\]
by applying the  triangle equivalence $\mathbf{R}_k^+$.

Noting that \begin{align*}x_3&=-\Lambda({\bf m}^\ast,{\bar{{\bf i}}}^\ast)-2\lr{{\bf m},\bar{{\bf i}}}+\Lambda'(\acute{{\bf j}}^\ast+a{{\bf i}'_k}^\ast,\acute{\bar{{\bf g}}}^\ast)\\
&=-\Lambda({\bf m}^\ast,{\bar{{\bf i}}}^\ast)-2\lr{{\bf m},\bar{{\bf i}}}+\Lambda({\bf j}^\ast,\bar{{\bf g}}^\ast)-\Lambda(a{{\bf p}_k}^\ast,\bar{{\bf g}}^\ast)\end{align*}
and $y_3-x_3=\Lambda({{\bf j}}^\ast,a{\bf p}_k^\ast)+\Lambda'(\acute{{\bf j}}^\ast,a{{\bf i}'_k}^\ast)=\Lambda({{\bf j}}^\ast,a{\bf p}_k^\ast)-\Lambda({{\bf j}}^\ast,a{\bf p}_k^\ast)=0$, we get ${\rm LHS}={\rm RHS}$, i.e. the relation \eqref{fgx2} for Case (i) is preserved under the map $\mathfrak{b}_k^+$.

\noindent{\bf Case (ii):}  One the one hand,
\begin{align*}
{\rm LHS}:=\mathfrak{b}_k^+(u_M)\star\mathfrak{b}_k^+(u_{I_k[-1]})=u_{\acute{M}}\star u_{I_k'}
=v^{\Lambda'(\acute{{\bf m}}^\ast,{{\bf i}_k'}^\ast)}u_{\acute{M}\oplus I_k'}.\end{align*}

Using the ideal $\mathfrak{I}$ relation of $\mathcal{DH}_{\Lambda'}^{cl}(\widetilde{\A'})$, we obtain
\begin{align*}
&{\rm LHS}=\sum_{[D'], [A'],[I']}v^{-\Lambda({\bf m}^\ast,{\bf p}_k^\ast)+\Lambda'(\acute{{\bf m}}^\ast+{{\bf i}_k'}^\ast, {\bf a'}^\ast)+\lr{{\bf i}_k'-{\bf a'},\acute{{\bf m}}}}|_{D'}\Hom(\acute{M},\tau I_k')_{\tau A'\oplus I'}|u_{A'}\star u_{D'\oplus I'[-1]}.\end{align*}
Noting that $I'_k$ is simple and each $A'$ as above is a quotient of $I'_k$, we have
\begin{align*}{\rm LHS}=v^{-\Lambda({\bf m}^\ast,{\bf p}_k^\ast)+\Lambda'(\acute{{\bf m}}^\ast, {{\bf i}_k'}^\ast)}u_{I_k'}\star u_{\acute{M}}
+\sum_{[D']\neq [\acute{M}],[I']}v^{x_4}|_{D'}\Hom(\acute{M},\tau I_k')_{ I'}|u_{D'\oplus I'[-1]},
\end{align*}
where $x_4=-\Lambda({\bf m}^\ast,{\bf p}_k^\ast)+\lr{{\bf i}_k',\acute{{\bf m}}}$.

On the other hand,	
\begin{align*}
{\rm RHS}:=v^{-\Lambda({\bf m}^\ast,{\bf i}_k^\ast)-2\lr{{\bf m},{{\bf i}_k}}}\sum_{[G],[J]}v^{\Lambda({\bf j}^\ast,{\bf g}^\ast)}|_G\Hom(M,I_k)_J|\mathfrak{b}_k^+(u_{J[-1]})\star \mathfrak{b}_k^+(u_G).\end{align*}
For each $G$ in the above sum, write it as $G=\bar{G}\oplus aP_k$, where $a\in\mathbb{N}$ and $\bar{G}$ has no $P_k$ as direct summands. Then we have
\begin{align*}
{\rm RHS}=v^{-2\Lambda({\bf m}^\ast,{\bf i}_k^\ast)-2\lr{{\bf m},{{\bf i}_k}}}u_{I_k'}\star u_{\acute{M}}
+\sum_{[\bar{G}\oplus aP_k]\neq [M],[J]}v^{y_4}|_{\bar{G}\oplus aP_k}\Hom(M,I_k)_J|u_{{\acute{\bar{G}}}\oplus (\acute{J}\oplus aI_k')[-1]},
\end{align*}
where \begin{align*}y_4=-\Lambda({\bf m}^\ast,{\bf i}_k^\ast)-2\lr{{\bf m},{{\bf i}_k}}+\Lambda({\bf j}^\ast, \bar{\bf g}^\ast+a{\bf p}_k^\ast)-\Lambda(a{\bf p}_k^\ast,\bar{\bf g}^\ast)+\Lambda'(\acute{{\bf j}}^\ast,a{{\bf i}_k'}^\ast)-\Lambda'(\acute{{\bf j}}^\ast+a{{\bf i}_k'}^\ast,\acute{\bar{\bf g}}^\ast).\end{align*}

Replacing the notations $D'$ and $I'$ in {\rm LHS} by $\acute{\bar{G}}$ and $\acute{J}\oplus aI_k'$, respectively,
we obtain
\[|_{D'}\Hom(\acute{M},\tau I_k')_{ I'}|=|_{\acute{\bar{G}}}\Hom(\acute{M},\tau I_k')_{ \acute{J}\oplus aI_k'}|=|_{\bar{G}\oplus aP_k }\Hom(M,I_k)_J|\]
by applying the  triangle equivalence $\mathbf{R}_k^+$.

Noting that
\[-\Lambda({\bf m}^\ast,{\bf p}_k^\ast)+\Lambda'(\acute{{\bf m}}^\ast, {{\bf i}_k'}^\ast)=-2\Lambda({\bf m}^\ast,{\bf p}_k^\ast)=-2\Lambda({\bf m}^\ast,{\bf i}_k^\ast)-2\lr{{\bf m},{\bf{i}_k}}\]
and
\begin{align*}
y_4=&-\Lambda({\bf m}^\ast,{\bf i}_k^\ast)-2\lr{{\bf m},{{\bf i}_k}}+\Lambda({\bf j}^\ast, \bar{\bf g}^\ast+a{\bf p}_k^\ast)\\&-\Lambda(a{\bf p}_k^\ast,\bar{\bf g}^\ast)-\Lambda({\bf j}^\ast, a{\bf p}_k^\ast)-\Lambda({\bf j}^\ast, \bar{\bf g}^\ast)+\Lambda(a{\bf p}_k^\ast,\bar{\bf g}^\ast)\\
=&-\Lambda({\bf m}^\ast,{\bf i}_k^\ast)-2\lr{{\bf m},{{\bf i}_k}}=x_4,
\end{align*}
we obtain ${\rm LHS}={\rm RHS}$, i.e. the relation \eqref{fgx2} for Case (ii) is also preserved under $\mathfrak{b}_k^+$.

Therefore, we complete the proof.
\end{proof}

\begin{proposition}
The homomorphism  $\mathfrak{b}_k^+:\mathcal{DH}_{\Lambda}^{ec}(\widetilde{\A})\to \mathcal{DH}_{\Lambda'}^{cl}(\widetilde{\A'})$ induces an algebra homomorphism
$b_k^+:\mathcal{DH}_{\Lambda}^{cl}(\widetilde{\A})\to \mathcal{DH}_{\Lambda'}^{cl}(\widetilde{\A'})$ defined by
\begin{align*}
&u_{(J\oplus aI_k)[-1]}\mapsto v^{-\Lambda(\mathbf{j}^\ast,a\mathbf{i}_k^\ast)}u_{\acute{J}[-1]}\star u_{aI_k'},\\
&u_{bP_k[1]\oplus Q[1]}\mapsto v^{-\Lambda(b\mathbf{p}_k^\ast, \mathbf{q}^\ast)}u_{bI_k'}\star u_{\acute{Q}[1]},\\
&u_{cP_k\oplus M}\mapsto v^{-\Lambda(c{\mathbf{p}_k^\ast},\mathbf{m}^\ast)}u_{cI_k'[-1]}\star u_{\acute{M}},
\end{align*}
where $a,b,c\in \mathbb{N}$, $J\in \mathcal{I}_{\widetilde{\A}}$ has no $I_k$ as direct summands,
$M, Q\in \widetilde{\mathcal{A}}\langle k\rangle$ and $Q$ is projective.
\end{proposition}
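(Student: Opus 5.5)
Since $\mathcal{DH}_{\Lambda}^{cl}(\widetilde{\A})=\mathcal{DH}_{\Lambda}^{ec}(\widetilde{\A})/(\mathfrak{I}_1+\mathfrak{I})$, we will show that the homomorphism $\mathfrak{b}_k^+$ of the previous proposition kills both ideals. The formulas for $b_k^+$ are then just those of $\mathfrak{b}_k^+$ passed to the quotient.

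\textbf{The ideal $\mathfrak{I}_1$.} It suffices to check the generators $u_{\nu^{-1}(I)[1]}-u_{I[-1]}$, and by multiplicativity it is enough to take $I$ indecomposable.
\begin{itemize}
\item For $I=I_j$ with $j\neq k$, we have $\mathfrak{b}_k^+(u_{P_j[1]})=u_{P'_j[1]}$ and $\mathfrak{b}_k^+(u_{I_j[-1]})=u_{I'_j[-1]}$. These agree in $\mathcal{DH}_{\Lambda'}^{cl}(\widetilde{\A'})$.
\item For $I=I_k$, both $u_{P_k[1]}$ and $u_{I_k[-1]}$ are sent to $u_{I'_k}$, using the cases $a=1$, $b=1$ of the defining formulas.
\end{itemize}
For a general $I$, write $u_{\nu^{-1}(I)[1]}$ and $u_{I[-1]}$ as scalar multiples of ordered products of indecomposable pieces via \eqref{lgx2} and \eqref{lgx1}. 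The scalars $q^{\frac12\Lambda({\bf p}^\ast,{\bf q}^\ast)}$ and $q^{\frac12\Lambda({\bf i}^\ast,{\bf j}^\ast)}$ agree, because ${\bf p}^\ast={^\ast{\bf i}}$ and Lemma \ref{sjishu}(4) gives $\Lambda({\bf i}^\ast,{\bf j}^\ast)=\Lambda({^\ast{\bf i}},{^\ast{\bf j}})$. Hence $\mathfrak{I}_1\subseteq\Ker\mathfrak{b}_k^+$, and we obtain an induced map on $\mathcal{DH}_\Lambda^{cl_1}(\widetilde{\A})$.

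\textbf{The ideal $\mathfrak{I}$.} By Lemma \ref{lxdjkh}, it suffices to show that $\mathfrak{b}_k^+(r_{M,N})=0$ and $\mathfrak{b}_k^+(r_{M,P})=0$, where $N$ has no projective summands and $P$ is projective. The guiding principle is that $\mathbf{R}_k^+$ is a triangle equivalence. It therefore transports the sets ${}_D\Hom_{\widetilde{\A}}(N,\tau M)_{\tau A\oplus I}$, viewed in $\mathcal{D}^b$ as $\Hom(N,\tau M)_{D[1]\oplus\tau A\oplus I}$, to the analogous sets in $\mathcal{D}^b(\widetilde{\A'})$. It also intertwines the AR translations, and by \eqref{eulerequal} and Lemma \ref{lem:compatible-lambda} it preserves every exponent. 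The work lies in matching the summands $S_k$, $P_k[1]$ and $I_k[-1]$, whose images change nature. The plan is to split $M=cP_k\oplus \bar M$ (or $M=cS_k\oplus\bar M$) and $N=\bar N\oplus (\text{copies of }S_k)$. We first reduce, as in the proof of Lemma \ref{lxdjkh}, to the cases where the summand involving $k$ is isolated. Then:
\begin{itemize}
\item \emph{No summand is isomorphic to $S_k$.} Apply $\mathfrak{b}_k^+$ termwise and use $\mathbf{R}_k^+$ to identify $\mathfrak{b}_k^+(r_{M,N})$ with $r_{\acute M,\acute N}$ in $\widetilde{\A'}$. Summands $A$, $D$ or $I$ that contain $P_k$ or $I_k$ are rewritten using the relation $u_{I'_k[-1]}=u_{P'_k[1]}$ and the identity $\mathbf{R}_k^+(I_k[-1])=\tau I'_k[-1]$ from \eqref{6.1}.
\item \emph{$M$ or $N$ contains $S_k=P_k$.} Then $\mathfrak{b}_k^+(u_{P_k})=u_{I'_k[-1]}$. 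Since $I'_k$ is simple injective, the needed identity becomes an instance of the quotient relations in $\mathcal{DH}_{\Lambda'}^{cl}(\widetilde{\A'})$: either $r_{\acute M,I'_k}$, or the relations \eqref{lgx6} and \eqref{lgx7} evaluated at $I'_k$. This is exactly as in Cases (ii) and (iv) of the preceding proposition, where a sum over morphisms into $\tau I'_k$ was matched with a sum over morphisms into $I_k$.
\end{itemize}

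\textbf{Main obstacle.} The hardest step will be the case $r_{M,P}$ with $P_k$ appearing in $P$ or in $M$. Here a morphism $P\to\tau M$ in $\widetilde{\A}$ corresponds under $\mathbf{R}_k^+$ to a morphism whose target involves $I'_k[-1]$. The resulting terms must then be regrouped by applying the $\mathfrak{I}$-relation of $\widetilde{\A'}$ and Proposition \ref{prop:Hall-equality-P-I}. We will attempt this bijection of index sets first. After that, the exponent comparison is routine using Lemma \ref{sjishu}, \cite[Lemma 6.4]{CDZ}, and the identities ${\bf i}_k^\ast={\bf p}_k^\ast+B(\widetilde{Q}){\bf i}_k$ and $s_k({\bf p}_k)=-{\bf i}'_k$.
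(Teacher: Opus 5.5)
Your outline is the paper's proof: you kill $\mathfrak{I}_1$ on generators as the paper does, reduce $\mathfrak{I}$ via Lemma \ref{lxdjkh} (and the same argument with $N$ replaced by a projective) to $r_{M,N}$ with $N$ having no summand $P_k$ and to $r_{M,P_k}$, and then match Hom-sets through $\mathbf{R}_k^+$ and compare exponents. Two points of your case analysis are misdirected, though. First, write $M=aP_k\oplus\bar P_M\oplus\bar M$ with $\bar M$ free of projective summands; the summand $aP_k$ needs no separate treatment, because every $A$ in $r_{M,N}$ also contains $aP_k$, so $u_{aI_k'[-1]}$ factors out on the left of both sides and what remains is the target relation $r_{\acute{\bar P}_M\oplus\acute{\bar M},\acute N}$ (no $r_{\acute M,I_k'}$ enters). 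Second, in $r_{M,P_k}$ the regrouping is not done by an $\mathfrak{I}$-relation of $\widetilde{\A'}$. The main source of $I_k'[-1]$ there is the source $P_k$, which becomes $I_k'[-1]$, rather than the target. The paper handles it as follows:
\begin{enumerate}
\item rewrite $u_{I_k'[-1]}=u_{P_k'[1]}$;
\item expand $u_{P_k'[1]}\star u_{\acute{\bar P}_M\oplus\acute{\bar M}}$ with \eqref{lgx7};
\item discard $\acute{\bar P}_M$ from the target, using $\Hom_{\widetilde{\A'}}(P_k',\acute{\bar P}_M)=0$ and \cite[Lemma 2.5]{PX};
\item identify the remaining sets with ${}_D\Hom_{\widetilde{\A}}(P_k,\tau\bar M)_{\tau A\oplus I}$ via $\overline{\tau}$ and $\mathbf{R}_k^+$.
\end{enumerate}
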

\begin{proof}
Since $\mathfrak{b}_k^+(P_k[1])=\mathfrak{b}_k^+(I_k[-1])=u_{I'_k}$,
$\mathfrak{b}_k^+(P_j[1])=u_{P'_j[1]}$ and $\mathfrak{b}_k^+(I_j[-1])=u_{I'_j[-1]}$ for any $j\neq k$, we get
$\mathfrak{b}_k^+(P_t[1])=\mathfrak{b}_k^+(I_t[-1])$ in $\mathcal{DH}_{\Lambda'}^{cl}(\widetilde{\A'})$ for any $1\leq t\leq m$. Thus, it is easy to see
$\mathfrak{b}_k^+(u_{\nu^{-1}(I)[1]})=\mathfrak{b}_k^+(u_{I[-1]})$ for any $I\in\I_{\widetilde{\A}}$,
i.e. $\mathfrak{b}_k^+(\mathfrak{I}_1)=0$. Hence, the algebra homomorphism $\mathfrak{b}_k^+$ induces an algebra homomorphism $\mathcal {D}\mathcal {H}_\Lambda^{{cl}_1}(\widetilde{\A})\rightarrow\mathcal {D}\mathcal {H}_{\Lambda'}^{{cl}}(\widetilde{\A'})$, also denoted by $\mathfrak{b}_k^+$.
Now, let us prove $\mathfrak{b}_k^+(\mathfrak{I})=0$. By Lemma \ref{lxdjkh}, we need to prove $\mathfrak{b}_k^+(r_{M,N})=0$ and $\mathfrak{b}_k^+(r_{M,P})=0$ for any $M\in{\widetilde{\A}}$, $P\in \mathcal{P}_{\widetilde{\A}}$ and $N\in{\widetilde{\A}}$ without nonzero projective direct summands.

\noindent{\bf Relation} $\mathfrak{b}_k^+(r_{M,N})=0$:
We remark that the following proof holds for any $N\in{\widetilde{\A}}$ which has no $P_k$ as direct summands. Let $M=aP_k\oplus \bar{P}_M\oplus \bar{M}$, where $a\in\mathbb{N}$, $\bar{P}_M\in \mathcal{P}_{\widetilde{\A}}$ has no $P_k$ as direct summands, and $\bar{M}$ has no nonzero projective direct summands. By definition, we have
\begin{align*}
{\rm LHS}:=\mathfrak{b}_k^+(u_{M\oplus N})=v^{-\Lambda(a\mathbf{p}_k^\ast,(\bar{\bf p}_M+\bar{\bf m}+{\bf n})^\ast)}u_{aI_k'[-1]}\star u_{\acute{\bar{P}}_M\oplus \acute{\bar{M}}\oplus \acute{N}}.
\end{align*}
	By the ideal $\mathfrak{I}$ relation of $\mathcal{DH}_{\Lambda'}^{cl}(\widetilde{\A'})$, we have
	\begin{align*}
		u_{\acute{\bar{P}}_M\oplus \acute{\bar{M}}\oplus \acute{N}}&=\sum_{[D'],[A'],[I']}v^{\Lambda'((\acute{\bar{\mathbf{p} }}_M+\acute{\bar{\mathbf{m} }}+\acute{\mathbf{n} })^\ast,{\mathbf{a}'}^\ast)+\lr{\acute{\bar{\mathbf{p} }}_M+\acute{\bar{\mathbf{m} }}-{\mathbf{a}'},\acute{\mathbf{n}}}}|_{D'}\Hom(\acute{N},\tau \acute{\bar{M}})_{\tau A'\oplus I'}|u_{A'}\star u_{D'\oplus I'[-1]},
	\end{align*}
	where $A'$ has the same maximal projective direct summand as $\acute{\bar{P}}_M\oplus\acute{\bar{M}}$. Write $A'=\bar{A}'\oplus \acute{\bar{P}}_M$. We remark that $\bar{A}'$ may have $P_k'$ as direct summands, and the multiplicity of $P_k'$ in $\bar{A}'$ is equal to the multiplicity of $\tau^{-1}P_k$ in $\bar{M}$.  Thus, we get
	\begin{equation}\label{zuoshoubian}
		{\rm LHS}=\sum_{[D'],[A'],[I']}v^{z_0}
		|_{D'}\Hom(\acute{N},\tau \acute{\bar{M}})_{\tau A'\oplus I'}|u_{aI_k'[-1]}\star u_{A'}\star u_{D'\oplus I'[-1]},
	\end{equation}
where $z_0=-\Lambda(a\mathbf{p}_k^\ast,(\bar{\bf p}_M+\bar{\bf m}+{\bf n})^\ast)+\Lambda'((\acute{\bar{\mathbf{p} }}_M+\acute{\bar{\mathbf{m} }}+\acute{\mathbf{n} })^\ast,{\mathbf{a}'}^\ast)+\lr{\acute{\bar{\mathbf{p} }}_M+\acute{\bar{\mathbf{m} }}-{\mathbf{a}'},\acute{\mathbf{n}}}$.

Let
	\begin{align*}
		&{\rm RHS}:=\sum_{[D],[A],[I]}v^{\Lambda(({\bf m}+{\bf n})^\ast, {\bf a}^\ast)+\lr{{\bf m}-{\bf a},{\bf n}}}|_D\Hom(N,\tau \bar{M})_{\tau A\oplus I}|\mathfrak{b}_k^+(u_A)\star \mathfrak{b}_k^+(u_{D\oplus I[-1]})\\
&=\sum_{[D],[A],[I]}v^{\Lambda(({\bf m}+{\bf n})^\ast, {\bf a}^\ast)+\lr{{\bf m}-{\bf a},{\bf n}}+\Lambda({\bf i}^\ast,{\bf d}^\ast)}|_D\Hom(N,\tau \bar{M})_{\tau A\oplus I}|\mathfrak{b}_k^+(u_A)\star \mathfrak{b}_k^+(u_{I[-1]})\star \mathfrak{b}_k^+(u_{D}),
	\end{align*}
where each $A$ has the same maximal projective direct summand as $M$ and $I\in\I_{\widetilde{\A}}$.

Write $A=\bar{A}\oplus \bar{P}_M\oplus aP_k$, $D=\bar{D}\oplus dP_k$ and $I=\bar{I}\oplus cI_k$, where $\bar{A}$ has no nonzero projective direct summands, $c,d\in\mathbb{N}$, $\bar{D}$ has no $P_k$ as direct summands and $\bar{I}$ has no $I_k$ as direct summands. By the assumption of $N$, $\Hom(N,P_k)=0$. It follows that the multiplicity of $\tau^{-1}P_k$ in $\bar{A}$ is equal to the multiplicity of $\tau^{-1}P_k$ in $\bar{M}$. Then we have
\begin{align*}
{\rm RHS}=\sum_{[D],[A],[I]}v^{y'_0}|_D\Hom(N,\tau \bar{M})_{\tau A\oplus I}|u_{aI_k'[-1]}\star u_{\acute{\bar{A}}\oplus \acute{\bar{P}}_M}\star u_{\acute{\bar{I}}[-1]}\star u_{cI'_k}\star u_{dI_k'[-1]}\star u_{\acute{\bar{D}}},
\end{align*}
where
\begin{align*}
y'_0=\Lambda(({\bf m}+{\bf n})^\ast, {\bf a}^\ast)+\lr{{\bf m}-{\bf a},{\bf n}}+\Lambda({\bf i}^\ast,{\bf d}^\ast)-\Lambda(a{\bf p}_k^\ast,(\bar{\bf a}+\bar{\bf p}_M)^\ast)-\Lambda(\bar{\bf i}^\ast,c{\bf i}_k^\ast)-\Lambda(d{\bf p}_k^\ast, \bar{\bf d}^\ast).
\end{align*}
Since $\Hom_{\widetilde{\A'}}(I'_k,\acute{\bar{I}})=0$, we have
$$u_{\acute{\bar{I}}[-1]}\star u_{cI'_k}=q^{-\Lambda'({\acute{\bar{{\bf i}}}}^\ast,c{{\bf i}'_k}^\ast)}u_{cI'_k}\star u_{\acute{\bar{I}}[-1]}=q^{c\Lambda({\bar{{\bf i}}}^\ast,{\bf p}_k^\ast)}u_{cI'_k}\star u_{\acute{\bar{I}}[-1]}.$$
Note that
\begin{align*}2c\Lambda({\bar{{\bf i}}}^\ast,{\bf p}_k^\ast)-\Lambda(\bar{\bf i}^\ast,c{\bf i}_k^\ast)&=-2c\Lambda(^\ast{\bf i}_k,{\bar{{\bf i}}}^\ast)+c\Lambda({\bf i}_k^\ast,\bar{\bf i}^\ast)\\&=-\Lambda(c{\bf i}_k^\ast,\bar{\bf i}^\ast)+2c\lr{\bar{\bf i},{\bf i}_k}\\&=-\Lambda(c{\bf i}_k^\ast,\bar{\bf i}^\ast).\end{align*}
Hence, we obtain
\begin{align}\label{youshoubian}
{\rm RHS}=\sum_{[D],[A],[I]}v^{y_0}|_D\Hom(N,\tau \bar{M})_{\tau A\oplus I}|u_{aI_k'[-1]}\star u_{\acute{\bar{A}}\oplus \acute{\bar{P}}_M\oplus cI_k'}\star u_{\acute{\bar{D}}\oplus \acute{\bar{I}}[-1]\oplus dI_k'[-1]},
\end{align}
where
\begin{align*}
y_0=&\Lambda(({\bf m}+{\bf n})^\ast, {\bf a}^\ast)+\lr{{\bf m}-{\bf a},{\bf n}}+\Lambda({\bf i}^\ast,{\bf d}^\ast)-\Lambda(a{\bf p}_k^\ast,(\bar{\bf a}+\bar{\bf p}_M)^\ast)-\Lambda(c{\bf i}_k^\ast,\bar{\bf i}^\ast)-\Lambda(d{\bf p}_k^\ast, \bar{\bf d}^\ast)\\
&+\Lambda'((\acute{\bar{\mathbf{a}}}+\acute{\bar{\mathbf{p}}}_M)^\ast,c{{\bf i}_k'}^\ast)+\Lambda'(\acute{\bar{\mathbf{i} }}^\ast,d{{\bf i}_k'}^\ast)-\Lambda'(\acute{\bar{\mathbf{i}}}^\ast+d{{\bf i}_k'}^\ast,{\acute{\bar{\mathbf{d} }}}^\ast).
\end{align*}

By applying the  triangle equivalence $\mathbf{R}_k^+$, we get
\[|_{\bar{D}\oplus dP_k}\Hom_{\widetilde{\A}}(N,\tau \bar{M})_{\tau \bar{A}\oplus \bar{I}\oplus cI_k}|=|_{\acute{\bar{D}}}\Hom_{\widetilde{\A'}}(\acute{N},\tau \acute{\bar{M}})_{\tau (\acute{\bar{A}}\oplus cI_k')\oplus ({\acute{\bar{I}}}\oplus dI_k')}|.
\]
Replacing the notations $\bar{A}'$, $D'$ and $I'$ in \eqref{zuoshoubian} by ${\acute{\bar{A}}}\oplus cI'_k$, $\acute{\bar{D}}$ and ${\acute{\bar{I}}}\oplus dI'_k$, respectively,
we have
\[z_0=-\Lambda(a\mathbf{p}_k^\ast,(\bar{\bf p}_M+\bar{\bf m}+{\bf n})^\ast)+\Lambda'((\acute{\bar{\mathbf{p} }}_M+\acute{\bar{\mathbf{m} }}+\acute{\mathbf{n} })^\ast,(\acute{\bar{\mathbf{a} }}+\acute{\bar{\mathbf{p}}}_M+c{\bf i}_k')^\ast)+\lr{\acute{\bar{\mathbf{m} }}-\acute{\bar{\mathbf{a} }}-c{\bf i}_k',\acute{\bf n}}.\]
Noting that ${\bf m}=\bar{\bf m}+a{\bf p}_k+\bar{\bf p}_M$ and ${\bf a}=\bar{\bf a}+a{\bf p}_k+\bar{\bf p}_M$, we obtain
\begin{align*}
	z_0=&-\Lambda(a{\bf p}_k^\ast,({\bf m}+{\bf n}-a{\bf p}_k)^\ast)+\Lambda((\bar{\bf p}_M+\bar{\bf m}+{\bf n})^\ast,{\bf a}^\ast-(a{\bf p}_k+c{\bf p}_k)^\ast)+\lr{\bar{\bf m}-\bar{\bf a}+c{\bf p}_k,{\bf n}}\\
	=&\Lambda(({\bf m}+{\bf n}-a{\bf p}_k)^\ast,a{\bf p}_k^\ast)+\Lambda(({\bf m}+{\bf n})^\ast,{\bf a}^\ast)-\Lambda(({\bf m}+{\bf n})^\ast,a{\bf p}_k^\ast)\\
	&-\Lambda(({\bf m}+{\bf n})^\ast,c{\bf p}_k^\ast)-\Lambda(a{\bf p}_k^\ast, {\bf a}^\ast)+\lr{\bar{\bf m}-\bar{\bf a}+c{\bf p}_k,{\bf n}}\\
	=&\Lambda(({\bf m}+{\bf n})^\ast,{\bf a}^\ast)-\Lambda(({\bf m}+{\bf n})^\ast,c{\bf p}_k^\ast)-\Lambda(a{\bf p}_k^\ast, {\bf a}^\ast)+\lr{{\bf m}-{\bf a}+c{\bf p}_k,{\bf n}}.
\end{align*}
Thus, we have
\begin{align*}
	y_0-z_0=&\Lambda({\bf i}^\ast,{\bf d}^\ast){-\Lambda(a{\bf p}_k^\ast,(\bar{\bf a}+\bar{\bf p}_M)^\ast)}-\Lambda(c{\bf i}_k^\ast,\bar{\bf i}^\ast)-{\Lambda(d{\bf p}_k^\ast, \bar{\bf d}^\ast)}-\Lambda({\bar{\mathbf{i} }}^\ast,d{\bf p}_k^\ast)\\
	&{-\Lambda({\bar{\mathbf{i}}}^\ast-d{\bf p}_k^\ast,{\bar{\bf d}^\ast})}-\Lambda((\bar{\mathbf{a}}+\bar{\mathbf{p}}_M)^\ast,c{\mathbf{p}_k^\ast})+\Lambda(({\bf m}+{\bf n})^\ast,c{\bf p}_k^\ast)+{\Lambda(a{\bf p}_k^\ast, {\bf a}^\ast)}-\lr{c{\bf p}_k,{\bf n}}\\
	=&\Lambda({\bf i}^\ast,{\bf d}^\ast)-\Lambda(c{\bf i}_k^\ast,\bar{\bf i}^\ast)-\Lambda({\bar{\mathbf{i} }}^\ast,d{\bf p}_k^\ast)-\Lambda({\bar{\mathbf{i}}}^\ast,{\bar{\bf d}^\ast})-\Lambda((\bar{\mathbf{a}}+\bar{\mathbf{p}}_M)^\ast,c{\mathbf{p}_k^\ast})\\
	&+\Lambda(({\bf m}+{\bf n})^\ast,c{\bf p}_k^\ast)-\lr{c{\bf p}_k,{\bf n}}.
\end{align*}
Since $\mathbf{i}=\bar{\mathbf{i}}+c{\bf i}_k$ and $\mathbf{d}=\bar{\mathbf{d}}+d\mathbf{p}_k$, we obtain
\begin{align*}
	y_0-z_0=\Lambda(c\mathbf{i}_k^\ast,\bar{\mathbf{d}}^\ast)+\Lambda(c\mathbf{i}_k^\ast,d\mathbf{p}_k^\ast)-\Lambda(c\mathbf{i}_k^\ast,\bar{\mathbf{i}}^\ast)+\Lambda(\bar{\mathbf{m}}^\ast-\bar{\mathbf{a}}^\ast,c\mathbf{p}_k^\ast)+\Lambda(\mathbf{n}^\ast,c\mathbf{p}_k^\ast)-\lr{c\mathbf{p}_k,\mathbf{n}}.
\end{align*}
Noting that $\tau \bar{\bf m}-\tau\bar{\bf a}=\mathbf{i}+\mathbf{n}-\mathbf{d}$, we get
\begin{align*}
	\Lambda(\bar{\mathbf{m}}^\ast-\bar{\mathbf{a}}^\ast,c\mathbf{p}_k^\ast)&=\Lambda( ^\ast(\tau\bar{\mathbf{a}}-\tau \bar{\mathbf{m}}),c ^\ast\mathbf{i}_k)\\
	&=\Lambda( (\tau\bar{\mathbf{a}}-\tau \bar{\mathbf{m}})^\ast,c \mathbf{i}_k^\ast)\\
	&=\Lambda((\mathbf{d}-\mathbf{i}-\mathbf{n})^\ast, c\mathbf{i}_k^\ast).
\end{align*}
So, we obtain
\begin{align*}
	y_0-z_0&=\Lambda(\mathbf{n}^\ast,c(\mathbf{p}_k-\mathbf{i}_k)^\ast)-\lr{c\mathbf{p}_k,\mathbf{n}}\\
	&=c\Lambda(B(\widetilde{Q})\mathbf{i}_k,\mathbf{n}^\ast)-\lr{c\mathbf{p}_k,\mathbf{n}}\\
	&=c\lr{\mathbf{n},\mathbf{i}_k}-c\lr{\mathbf{p}_k,\mathbf{n}}\\
	&=0.
\end{align*}
Hence, we conclude ${\rm LHS}={\rm RHS}$, i.e. $\mathfrak{b}_k^+(r_{M,N})=0$ in this case.

\noindent{\bf Relation} $\mathfrak{b}_k^+(r_{M,P})=0$:
In the proof of Lemma \ref{lxdjkh}, by replacing the object $N$ therein by an arbitrary $R\in\mathcal{P}_{\widetilde{\A}}$, we obtain the following observations:

In the Hall algebra $\mathcal {D}\mathcal {H}_\Lambda^{{cl}_1}(\widetilde{\A})$, for any $M\in\widetilde{\A}$ and $P,R\in\mathcal{P}_{\widetilde{\A}}$, if we impose the relations $r_{M,R}=0$ and $r_{X,P}=0$ for any $X\in \widetilde{\A}$, then we have the relation $r_{M,R\oplus P}=0$.

Using this observation, we can reduce the proof of this case to the following two cases:
\begin{itemize}
	\item[(i)] $\mathfrak{b}_k^+(r_{M,\bar{P}})=0$ for any $M\in\widetilde{\A}$ and any $\bar{P}\in \mathcal{P}_{\widetilde{\A}}$ without $P_k$ as direct summands.
	\item[(ii)] $\mathfrak{b}_k^+(r_{M,P_k})=0$ for any $M\in\widetilde{\A}$.
\end{itemize}
By the remark given at the beginning of the proof of $\mathfrak{b}_k^+(r_{M,N})=0$, the case (i) can be proved by using the same proof as above. So, we only need to prove the case (ii) in the following.

Let $M=aP_k\oplus \bar{P}_M\oplus \bar{M}$, where $a\in\mathbb{N}$, $\bar{P}_M\in \mathcal{P}_{\widetilde{\A}}$ has no $P_k$ as direct summands, and $\bar{M}$ has no nonzero projective direct summands. By definition, we have
\begin{align*}
{\rm LHS}:=\mathfrak{b}_k^+(u_{M\oplus P_k})=v^{-\Lambda((a+1)\mathbf{p}_k^\ast,(\bar{\bf p}_M+\bar{\bf m})^\ast)}u_{aI_k'[-1]}\star u_{I_k'[-1]}\star u_{\acute{\bar{P}}_M\oplus \acute{\bar{M}}}.
\end{align*}
Noting that $u_{I_k'[-1]}=u_{P_k'[1]}$ in $\mathcal{DH}_{\Lambda'}^{cl}(\widetilde{\A'})$, we have
\begin{align*}
{\rm LHS}&=v^{-\Lambda((a+1)\mathbf{p}_k^\ast,(\bar{\bf p}_M+\bar{\bf m})^\ast)}u_{aI_k'[-1]}\star u_{P_k'[1]}\star u_{\acute{\bar{P}}_M\oplus \acute{\bar{M}}}\\
&=\sum\limits_{[F'],[P']}v^{z_1}|_{P'}\Hom(P'_k,\acute{\bar{P}}_M\oplus \acute{\bar{M}})_{F'}|u_{aI_k'[-1]}\star u_{F'}\star u_{P'[1]}\\
&=\sum\limits_{[F'],[P']}v^{z_1}|_{P'}\Hom(P'_k,\acute{\bar{P}}_M\oplus \acute{\bar{M}})_{F'}|u_{aI_k'[-1]}\star u_{F'}\star u_{I'[-1]},
\end{align*}
where $z_1=-\Lambda((a+1)\mathbf{p}_k^\ast,(\bar{\bf p}_M+\bar{\bf m})^\ast)-\Lambda'({{\bf p}_k'}^\ast,({\acute{\bar{\bf p}}}_M+{\acute{\bar{\bf m}}})^\ast)-2\lr{{{\bf p}_k'},{\acute{\bar{\bf p}}}_M+{\acute{\bar{\bf m}}}}+\Lambda'({{\bf f}'}^\ast,{{\bf p}'}^\ast)$ and $I'=\nu(P')$.

Let
\begin{align*}
&{\rm RHS}:=\sum_{[D],[A],[I]}v^{\Lambda(({\bf m}+{\bf p}_k)^\ast, {\bf a}^\ast)+\lr{{\bf m}-{\bf a},{\bf p}_k}}|_D\Hom(P_k,\tau \bar{M})_{\tau A\oplus I}|\mathfrak{b}_k^+(u_A)\star \mathfrak{b}_k^+(u_{D\oplus I[-1]})\\
&=\sum_{[D],[A],[I]}v^{\Lambda(({\bf m}+{\bf p}_k)^\ast, {\bf a}^\ast)+\lr{{\bf m}-{\bf a},{\bf p}_k}+\Lambda({\bf i}^\ast,{\bf d}^\ast)}|_D\Hom(P_k,\tau \bar{M})_{\tau A\oplus I}|\mathfrak{b}_k^+(u_A)\star \mathfrak{b}_k^+(u_{I[-1]})\star \mathfrak{b}_k^+(u_{D}),
\end{align*}
where each $A$ has the same maximal projective direct summand as $M$ and $I\in\I_{\widetilde{\A}}$.

Write $A=\bar{A}\oplus \bar{P}_M\oplus aP_k$, $D=dP_k$ and $I=\bar{I}\oplus cI_k$, where $\bar{A}$ has no nonzero projective direct summands, $d=0,1$, and $\bar{I}$ has no $I_k$ as direct summands. By \eqref{youshoubian}, similarly, we have
\begin{align}\label{youshoubian2}
{\rm RHS}=\sum_{[D],[A],[I]}v^{y_1}|_D\Hom(P_k,\tau \bar{M})_{\tau A\oplus I}|u_{aI_k'[-1]}\star u_{\acute{\bar{A}}\oplus \acute{\bar{P}}_M\oplus cI_k'}\star u_{\acute{\bar{I}}[-1]\oplus dI_k'[-1]},
\end{align}
where
\begin{align*}
y_1=&\Lambda(({\bf m}+{\bf p}_k)^\ast, {\bf a}^\ast)+\lr{{\bf m}-{\bf a},{\bf p}_k}+\Lambda({\bf i}^\ast,{\bf d}^\ast)-\Lambda(a{\bf p}_k^\ast,(\bar{\bf a}+\bar{\bf p}_M)^\ast)-\Lambda(c{\bf i}_k^\ast,\bar{\bf i}^\ast)\\
&+\Lambda'((\acute{\bar{\mathbf{a}}}+\acute{\bar{\mathbf{p}}}_M)^\ast,c{{\bf i}_k'}^\ast)+\Lambda'(\acute{\bar{\mathbf{i} }}^\ast,d{{\bf i}_k'}^\ast).
\end{align*}

Replace the notations $F'$ and $I'$ in LHS by $\acute{\bar{A}}\oplus \acute{\bar{P}}_M\oplus cI_k'$ and $\acute{\bar{I}}\oplus dI_k'$, respectively.
Then the notation $P'$ is replaced by $\acute{\bar{P}}\oplus dP'_k$, where $\bar{P}=\nu^{-1}(\bar{I})$. Thus,
\begin{align*}|_{P'}\Hom(P'_k,\acute{\bar{P}}_M\oplus \acute{\bar{M}})_{F'}|&=|\Hom_{\mathcal{D}^b(\widetilde{\A'})}(P'_k,\acute{\bar{P}}_M\oplus \acute{\bar{M}})_{P'[1]\oplus F'}|\\
&=|\Hom_{\mathcal{D}^b(\widetilde{\A'})}(P'_k,\acute{\bar{M}}\oplus\acute{\bar{P}}_M)_{\acute{\bar{P}}[1]\oplus dP'_k[1]\oplus\acute{\bar{A}}\oplus cI_k'\oplus \acute{\bar{P}}_M}|.\end{align*}
Since $\Hom(P'_k,\acute{\bar{P}}_M)=0$, by \cite[Lemma 2.5]{PX}, we get
\begin{align*}|\Hom_{\mathcal{D}^b(\widetilde{\A'})}(P'_k,\acute{\bar{M}}\oplus\acute{\bar{P}}_M)_{\acute{\bar{P}}[1]\oplus dP'_k[1]\oplus\acute{\bar{A}}\oplus cI_k'\oplus \acute{\bar{P}}_M}|=|\Hom_{\mathcal{D}^b(\widetilde{\A'})}(P'_k,\acute{\bar{M}})_{\acute{\bar{P}}[1]\oplus dP'_k[1]\oplus\acute{\bar{A}}\oplus cI_k'}|.\end{align*}
By applying the Auslander--Reiten translation of $\mathcal{D}^b(\widetilde{{\A'}})$ and the triangle equivalence $\mathbf{R}_k^+$, we get
\begin{align*}|\Hom_{\mathcal{D}^b(\widetilde{\A'})}(P'_k,\acute{\bar{M}})_{\acute{\bar{P}}[1]\oplus dP'_k[1]\oplus\acute{\bar{A}}\oplus cI_k'}|&=|\Hom_{\mathcal{D}^b(\widetilde{\A})}(P_k,\tau\bar{M})_{\bar{I}\oplus dP_k[1]\oplus\tau{\bar{A}}\oplus cI_k}|\\
&=|_D\Hom(P_k,\tau \bar{M})_{\tau A\oplus I}|.\end{align*}
Hence, we obtain \begin{align*}|_{P'}\Hom(P'_k,\acute{\bar{P}}_M\oplus \acute{\bar{M}})_{F'}|=|_D\Hom(P_k,\tau \bar{M})_{\tau A\oplus I}|.\end{align*}
Moreover, \begin{align*}\Lambda'({{\bf f}'}^\ast,{{\bf p}'}^\ast)&=\Lambda'(({\acute{\bar{{\bf a}}}}+{\acute{\bar{{\bf p}}}}_M+c{{\bf i}_k'})^\ast,{^\ast}({\acute{\bar{{\bf i}}}}+d{{\bf i}_k'}))\\
&=\Lambda'(({\acute{\bar{{\bf a}}}}+{\acute{\bar{{\bf p}}}}_M+c{{\bf i}_k'})^\ast,({\acute{\bar{{\bf i}}}}+d{{\bf i}_k'})^\ast)+\lr{{\acute{\bar{{\bf a}}}}+{\acute{\bar{{\bf p}}}}_M+c{{\bf i}_k'},{\acute{\bar{{\bf i}}}}+d{{\bf i}_k'}}\\
&=\Lambda(({\bar{{\bf a}}}+{\bar{{\bf p}}}_M-c{{\bf p}_k})^\ast,({\bar{{\bf i}}}-d{{\bf p}_k})^\ast)+\lr{{\bar{{\bf a}}}+{\bar{{\bf p}}}_M-c{{\bf p}_k},{\bar{{\bf i}}}-d{{\bf p}_k}}.\end{align*}
Thus, we have
\begin{align*}z_1=&-\Lambda((a+1)\mathbf{p}_k^\ast,(\bar{\bf p}_M+\bar{\bf m})^\ast)-\Lambda'({{\bf i}_k'}^\ast,({\acute{\bar{\bf p}}}_M+{\acute{\bar{\bf m}}})^\ast)+\lr{{\acute{\bar{\bf p}}}_M+{\acute{\bar{\bf m}}},{\bf i}_k'}-2\lr{{\acute{\bar{\bf p}}}_M+{\acute{\bar{\bf m}}},{\bf i}_k'}\\&
+\Lambda(({\bar{{\bf a}}}+{\bar{{\bf p}}}_M-c{{\bf p}_k})^\ast,({\bar{{\bf i}}}-d{{\bf p}_k})^\ast)+\lr{{\bar{{\bf a}}}+{\bar{{\bf p}}}_M-c{{\bf p}_k},{\bar{{\bf i}}}-d{{\bf p}_k}}\\
=&-\Lambda((a+1)\mathbf{p}_k^\ast,(\bar{\bf p}_M+\bar{\bf m})^\ast)+\Lambda({\bf p}_k^\ast,({\bar{\bf p}}_M+{\bar{\bf m}})^\ast)+\lr{{\bar{\bf p}}_M+{\bar{\bf m}},{\bf p}_k}\\&
+\Lambda(({\bar{{\bf a}}}+{\bar{{\bf p}}}_M-c{{\bf p}_k})^\ast,({\bar{{\bf i}}}-d{{\bf p}_k})^\ast)+\lr{{\bar{{\bf a}}}+{\bar{{\bf p}}}_M-c{{\bf p}_k},{\bar{{\bf i}}}-d{{\bf p}_k}}\\
=&-\Lambda(a\mathbf{p}_k^\ast,(\bar{\bf p}_M+\bar{\bf m})^\ast)+\lr{{\bar{\bf p}}_M+{\bar{\bf m}},{\bf p}_k}
+\Lambda(({\bar{{\bf a}}}+{\bar{{\bf p}}}_M-c{{\bf p}_k})^\ast,({\bar{{\bf i}}}-d{{\bf p}_k})^\ast)\\&+\lr{{\bar{{\bf a}}}+{\bar{{\bf p}}}_M-c{{\bf p}_k},{\bar{{\bf i}}}-d{{\bf p}_k}}.\end{align*}
Since $\bar{{\bf i}}-d{\bf p}_k=\tau(\bar{{\bf m}}-\bar{{\bf a}})-{\bf p}_k-c{\bf i}_k$, we get
\begin{align*}&\Lambda(({\bar{{\bf a}}}+{\bar{{\bf p}}}_M-c{{\bf p}_k})^\ast,({\bar{{\bf i}}}-d{{\bf p}_k})^\ast)=\Lambda(({\bar{{\bf a}}}+{\bar{{\bf p}}}_M-c{{\bf p}_k})^\ast,(\tau(\bar{{\bf m}}-\bar{{\bf a}}))^\ast-{\bf p}_k^\ast-c{\bf i}_k^\ast)\\
&=-\Lambda({^\ast}({\bar{{\bf a}}}+{\bar{{\bf p}}}_M-c{{\bf p}_k}),({\bar{{\bf m}}}-{\bar{{\bf a}}})^\ast)-\Lambda((\bar{{\bf a}}+\bar{{\bf p}}_M)^\ast,{\bf p}_k^\ast)-\Lambda((\bar{{\bf a}}+\bar{{\bf p}}_M)^\ast,c{\bf i}_k^\ast)+\Lambda(c{\bf p}_k^\ast,c{\bf i}_k^\ast)\\
&=-\Lambda(({\bar{{\bf a}}}+{\bar{{\bf p}}}_M-c{{\bf p}_k})^\ast,({\bar{{\bf m}}}-{\bar{{\bf a}}})^\ast)+\lr{\bar{{\bf m}}-\bar{{\bf a}},\bar{{\bf a}}+\bar{{\bf p}}_M-c{\bf p}_k}\\&\quad-\Lambda((\bar{{\bf a}}+\bar{{\bf p}}_M)^\ast,{\bf p}_k^\ast)-\Lambda((\bar{{\bf a}}+\bar{{\bf p}}_M)^\ast,c{\bf i}_k^\ast)+\Lambda(c{\bf p}_k^\ast,c{\bf i}_k^\ast)\end{align*}
and
\begin{align*}
\lr{{\bar{{\bf a}}}+{\bar{{\bf p}}}_M-c{{\bf p}_k},{\bar{{\bf i}}}-d{{\bf p}_k}}=-\lr{\bar{{\bf m}}-\bar{{\bf a}},{\bar{{\bf a}}}+{\bar{{\bf p}}}_M-c{{\bf p}_k}}-\lr{{\bar{{\bf a}}}+{\bar{{\bf p}}}_M-c{{\bf p}_k},{\bf p}_k+c{\bf i}_k}.
\end{align*}

While
\begin{align*}
y_1=&\Lambda((\bar{{\bf m}}+\bar{{\bf p}}_M+a{\bf p}_k+{\bf p}_k)^\ast,(\bar{{\bf a}}+\bar{{\bf p}}_M+a{\bf p}_k)^\ast)+\lr{\bar{{\bf m}}-\bar{{\bf a}},{\bf p}_k}+\Lambda((\bar{{\bf i}}+c{\bf i}_k)^\ast,d{\bf p}_k^\ast)\\
&-\Lambda(a{\bf p}_k^\ast,(\bar{{\bf a}}+\bar{{\bf p}}_M)^\ast)-\Lambda(c{\bf i}_k^\ast,{\bar{{\bf i}}}^\ast)-\Lambda((\bar{{\bf a}}+\bar{{\bf p}}_M)^\ast,c{\bf p}_k^\ast)-\Lambda({\bar{{\bf i}}}^\ast,d{\bf p}_k^\ast).
\end{align*}
Since $d{\bf p}_k-\bar{{\bf i}}={\bf p}_k-\tau(\bar{{\bf m}}-\bar{{\bf a}})+c{\bf i}_k$, we get
\begin{align*}
\Lambda((\bar{{\bf i}}+c{\bf i}_k)^\ast,d{\bf p}_k^\ast)-\Lambda(c{\bf i}_k^\ast,{\bar{{\bf i}}}^\ast)-\Lambda({\bar{{\bf i}}}^\ast,d{\bf p}_k^\ast)
&=\Lambda(c{\bf i}_k^\ast,{\bf p}_k^\ast)-\Lambda(c{\bf i}_k^\ast,(\tau(\bar{{\bf m}}-\bar{{\bf a}}))^\ast)\\
&=\Lambda(c{\bf i}_k^\ast,{\bf p}_k^\ast)+\Lambda(c{\bf p}_k^\ast,(\bar{{\bf m}}-\bar{{\bf a}})^\ast).
\end{align*}
Thus, we have
\begin{align*}
&\Lambda((\bar{{\bf i}}+c{\bf i}_k)^\ast,d{\bf p}_k^\ast)-\Lambda(c{\bf i}_k^\ast,{\bar{{\bf i}}}^\ast)-\Lambda({\bar{{\bf i}}}^\ast,d{\bf p}_k^\ast)-\Lambda((\bar{{\bf a}}+\bar{{\bf p}}_M)^\ast,c{\bf p}_k^\ast)\\
&=\Lambda(c{\bf i}_k^\ast,{\bf p}_k^\ast)+\Lambda(c{\bf p}_k^\ast,(\bar{{\bf m}}-\bar{{\bf a}})^\ast)+\Lambda(c{\bf p}_k^\ast,(\bar{{\bf a}}+\bar{{\bf p}}_M)^\ast)\\
&=\Lambda(c{\bf i}_k^\ast,{\bf p}_k^\ast)+\Lambda(c{\bf p}_k^\ast,(\bar{{\bf m}}+\bar{{\bf p}}_M)^\ast).
\end{align*}
So, we get
\begin{align*}
y_1=&\Lambda((\bar{{\bf m}}+\bar{{\bf p}}_M)^\ast,(\bar{{\bf a}}+\bar{{\bf p}}_M)^\ast)+\Lambda((\bar{{\bf m}}+\bar{{\bf p}}_M)^\ast,a{\bf p}_k^\ast)
+\Lambda({\bf p}_k^\ast,(\bar{{\bf a}}+\bar{{\bf p}}_M)^\ast)\\&+\lr{\bar{{\bf m}}-\bar{{\bf a}},{\bf p}_k}+\Lambda(c{\bf i}_k^\ast,{\bf p}_k^\ast)+\Lambda(c{\bf p}_k^\ast,(\bar{{\bf m}}+\bar{{\bf p}}_M)^\ast)\\
=&-\Lambda((\bar{{\bf a}}+\bar{{\bf p}}_M-c{\bf p}_k)^\ast,(\bar{{\bf m}}+\bar{{\bf p}}_M)^\ast)+\Lambda((\bar{{\bf m}}+\bar{{\bf p}}_M)^\ast,a{\bf p}_k^\ast)
\\&+\Lambda({\bf p}_k^\ast,(\bar{{\bf a}}+\bar{{\bf p}}_M)^\ast)+\lr{\bar{{\bf m}}-\bar{{\bf a}},{\bf p}_k}+\Lambda(c{\bf i}_k^\ast,{\bf p}_k^\ast).
\end{align*}
Thus, we obtain
\begin{align*}
y_1-z_1=&\Lambda(c{\bf p}_k^\ast,(\bar{{\bf a}}+\bar{{\bf p}}_M)^\ast)+\Lambda((\bar{{\bf a}}+\bar{{\bf p}}_M)^\ast,{\bf p}_k^\ast)+\Lambda((\bar{{\bf a}}+\bar{{\bf p}}_M)^\ast,c{\bf i}_k^\ast)-\Lambda(c{\bf p}_k^\ast,c{\bf i}_k^\ast)\\&+\Lambda({\bf p}_k^\ast,(\bar{{\bf a}}+\bar{{\bf p}}_M)^\ast)+\Lambda(c{\bf i}_k^\ast,{\bf p}_k^\ast)+\lr{\bar{{\bf m}}-\bar{{\bf a}},{\bf p}_k}-\lr{\bar{{\bf m}}-\bar{{\bf a}},\bar{{\bf a}}+\bar{{\bf p}}_M-c{\bf p}_k}\\&-\lr{{\bar{\bf p}}_M+{\bar{\bf m}},{\bf p}_k}+\lr{\bar{{\bf m}}-\bar{{\bf a}},{\bar{{\bf a}}}+{\bar{{\bf p}}}_M-c{{\bf p}_k}}+\lr{{\bar{{\bf a}}}+{\bar{{\bf p}}}_M-c{{\bf p}_k},{\bf p}_k+c{\bf i}_k}\\
=&c\Lambda(({\bf p}_k-{\bf i}_k)^\ast,(\bar{{\bf a}}+\bar{{\bf p}}_M)^\ast)-\Lambda(c{\bf p}_k^\ast,c{\bf i}_k^\ast)+\Lambda(c{\bf i}_k^\ast,{\bf p}_k^\ast)\\&+\lr{\bar{{\bf m}}-\bar{{\bf a}},{\bf p}_k}-\lr{{\bar{\bf p}}_M+{\bar{\bf m}},{\bf p}_k}+\lr{{\bar{{\bf a}}}+{\bar{{\bf p}}}_M-c{{\bf p}_k},{\bf p}_k+c{\bf i}_k}\\
=&-c\lr{\bar{{\bf a}}+\bar{{\bf p}}_M,{\bf i}_k}+c^2\lr{{\bf i}_k,{\bf i}_k}+c\lr{{\bf i}_k,{\bf i}_k}+\lr{\bar{{\bf m}}-\bar{{\bf a}},{\bf p}_k}\\&-\lr{{\bar{\bf p}}_M+{\bar{\bf m}},{\bf p}_k}+\lr{{\bar{{\bf a}}}+{\bar{{\bf p}}}_M-c{{\bf p}_k},{\bf p}_k+c{\bf i}_k}\\
=&0.
\end{align*}
Hence, we conclude ${\rm LHS}={\rm RHS}$, i.e. $\mathfrak{b}_k^+(r_{M,P_k})=0$.

Therefore, we complete the proof.
\end{proof}
Now, we are in a position to give the main theorem of this section as follows.
\begin{theorem}\label{mutationbubian}
The map $b_k^+:\mathcal{DH}_{\Lambda}^{cl}(\widetilde{\A})\to \mathcal{DH}_{\Lambda'}^{cl}(\widetilde{\A'})$ is an algebra isomorphism.
\end{theorem}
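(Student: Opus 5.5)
We will show that $b_k^+$ is an isomorphism by writing down an inverse homomorphism $b_k^-$ and checking that both composites are the identity on generators. Note that $k$ is a source of $\widetilde{Q}'$, so the BGP-reflection $\mathbf{r}_k^-:\widetilde{\A'}\to\widetilde{\A}$ and its derived equivalence $\mathbf{R}_k^-=(\mathbf{R}_k^+)^{-1}$ are available. Lemma \ref{lem:compatible-lambda} also holds in the reverse direction, since $s_k^2=\mathrm{id}$ and $\mu_k(\Lambda')=\Lambda$ at a source. Inverting \eqref{6.1} gives
\[
\mathbf{R}_k^-(I_k'[-1])=P_k,\quad \mathbf{R}_k^-(I_k')=P_k[1],\quad \mathbf{R}_k^-(P_k')=\tau^{-1}P_k,\quad \mathbf{R}_k^-(\tau I'_k[-1])=I_k[-1].
\]
Dually to $\mathfrak{b}_k^+$, we define $\mathfrak{b}_k^-:\mathcal{DH}^{ec}_{\Lambda'}(\widetilde{\A'})\to\mathcal{DH}^{cl}_{\Lambda}(\widetilde{\A})$. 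It sends $u_{aP'_k[1]\oplus Q'[1]}\mapsto v^{(\cdot)}u_{aP_k}\star u_{\mathbf{r}_k^-(Q')[1]}$ and $u_{cI'_k\oplus M'}\mapsto v^{(\cdot)}u_{cP_k[1]}\star u_{\mathbf{r}_k^-(M')}$, and acts as $R_k^-$ on the shifted injectives $u_{J'[-1]}$ with no $I'_k$ summand. The twisting exponents are chosen as in $\mathfrak{b}_k^+$ but with the roles of $P_k$ and $I_k$ interchanged. The summand $P'_k[1]$, which is the one object of $C^e_{\widetilde{\A'}}$ not sent into $C^e_{\widetilde{\A}}$, is sent to $u_{P_k}$; this is the image of $u_{I'_k[-1]}=u_{P'_k[1]}$ under $\mathbf{R}_k^-$.

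Next we prove that $\mathfrak{b}_k^-$ is a homomorphism and that it kills $\mathfrak{I}_1$ and $\mathfrak{I}$. The argument mirrors the two preceding propositions, with every step dualized.
\begin{itemize}
\item Relations \eqref{lgx2}--\eqref{lgx3} follow from Lemma \ref{lem:iso-dervied-hall-algebra-reflection} applied to $R_k^-$.
\item Relations \eqref{lgx1} and \eqref{lgx8} reduce to exponent identities, which we verify using Lemma \ref{sjishu}, \eqref{eulerequal} and Lemma \ref{lem:compatible-lambda}.
\item For \eqref{fgx1} and \eqref{fgx2} we split into cases according to whether $I'_k$ or $P'_k$ occurs. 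The cases involving the source $k$ use the ideal relation $\mathfrak{I}$ in $\mathcal{DH}^{cl}_\Lambda(\widetilde{\A})$ together with $u_{P_k[1]}=u_{I_k[-1]}$. This is exactly as the sink-vertex cases (ii) and (iv) used $\mathfrak{I}$ in $\mathcal{DH}^{cl}_{\Lambda'}(\widetilde{\A'})$.
\item $\mathfrak{b}_k^-(\mathfrak{I}_1)=0$ because $\mathfrak{b}_k^-(u_{P'_t[1]})=\mathfrak{b}_k^-(u_{I'_t[-1]})$ for every $t$.
\item $\mathfrak{b}_k^-(\mathfrak{I})=0$ is checked on the generators of $\mathfrak{I}'$ from Lemma \ref{lxdjkh}. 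As before, the case $r_{M',P'}$ reduces to $P'=P'_k$ and to $P'$ with no $P'_k$ summand.
\end{itemize}
Hence $\mathfrak{b}_k^-$ descends to a homomorphism $b_k^-:\mathcal{DH}^{cl}_{\Lambda'}(\widetilde{\A'})\to\mathcal{DH}^{cl}_\Lambda(\widetilde{\A})$.

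Finally we check $b_k^-b_k^+=\mathrm{Id}$ and $b_k^+b_k^-=\mathrm{Id}$. It suffices to do this on the generators $u_{P[1]}$, $u_M$, $u_{I[-1]}$ of Proposition \ref{elsubalgebra}; by Proposition \ref{prop:span-set} these generate the quotient algebras, and they need only be checked modulo the ideals. Off the $k$-summands both composites are $R_k^-R_k^+=\mathrm{Id}$. On the special generators:
\begin{itemize}
\item $b_k^-b_k^+(u_{P_k})=b_k^-(u_{I'_k[-1]})=u_{P_k}$;
\item $b_k^-b_k^+(u_{P_k[1]})=b_k^-(u_{I'_k})=u_{P_k[1]}$;
\item $b_k^-b_k^+(u_{I_k[-1]})=u_{P_k[1]}=u_{I_k[-1]}$, the last equality holding in $\mathcal{DH}^{cl}_\Lambda(\widetilde{\A})$.
\end{itemize}
Since both composites are algebra homomorphisms, the twisting powers of $v$ automatically cancel on products. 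The remaining point is that $\mathcal{DH}^{cl}_\Lambda(\widetilde{\A})$ is generated by the classes of $u_{P[1]}$, $u_M$ with $M\in\widetilde{\A}\langle k\rangle$, $u_{P_k}$ and $u_{I[-1]}$; this follows from \eqref{lgx5}, \eqref{lgx4} and $u_{cP_k\oplus M}=v^{(\cdot)}u_{P_k}^{c}\star u_M$.

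The main obstacle is the second step: verifying that $\mathfrak{b}_k^-$ annihilates $\mathfrak{I}$, in particular the relation $r_{M',I'_k}$. Here $I'_k$ is simple injective at the source, and its image $u_{P_k[1]}$ must be rewritten via $u_{P_k[1]}=u_{I_k[-1]}$ and \eqref{lgx6}. We plan to imitate the $r_{M,P_k}$ computation above, dualized with the roles of $\Hom(P'_k,-)$ and $\Hom(-,I'_k)$ swapped, and to identify the Hall numbers using $\mathbf{R}_k^+$ and $\overline{\tau}$ as in that computation.
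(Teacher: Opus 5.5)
\paragraph{Verdict.} Your strategy of building an explicit inverse $b_k^-$ is legitimate in principle, but as written it does not go through: $\mathfrak{b}_k^-$ is misdefined at exactly the point you call the main obstacle, and the list of relations still to check is copied from the sink case rather than dualized. It is also a different route from the paper's. The paper never constructs $b_k^-$. It gets injectivity because the composite $b_{i_m}^+\cdots b_{i_1}^+$ along an admissible sink sequence starting at $k$ is the automorphism $\sigma$ of Theorem \ref{zmainthm}. It gets surjectivity because the generators $u_{\bar M'}$, $u_{I'_k}$, $u_{I'_k[-1]}$, $u_{I'_j[-1]}$ of $\mathcal{DH}^{cl}_{\Lambda'}(\widetilde{\A'})$ visibly have preimages.

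\paragraph{The definition of $\mathfrak{b}_k^-$.} You send $u_{cI'_k\oplus M'}$ to a multiple of $u_{cP_k[1]}\star u_{\mathbf{r}_k^-(M')}$, copying the order in $u_{cI'_k[-1]}\star u_{\acute M}$. Dualizing reverses products, so this order is wrong. Put $X=\mathbf{r}_k^-(M')$.
\begin{itemize}
\item In $\mathcal{DH}^{cl}_\Lambda(\widetilde{\A})$ one has $u_{cP_k[1]}=u_{cI_k[-1]}$, so by \eqref{lgx4} your image is a multiple of $u_{X\oplus cI_k[-1]}$.
\item Since $\Ext^1_{\widetilde{\A'}}(M',I'_k)=0$, relation \eqref{lgx3} in the source makes $u_{M'}\star u_{cI'_k}$ a multiple of $u_{M'\oplus cI'_k}$. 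Its image $u_X\star u_{cP_k[1]}$ is a multiple of $u_{X\oplus cP_k[1]}$ by \eqref{lgx5}.
\item These two elements are not proportional once $\Hom_{\widetilde{\A}}(P_k,X)\neq 0$. Apply $\varphi$: the cluster characters have the same monomials, but their coefficient ratio $v^{\lr{\mathbf{x},\mathbf{i}}-2\lr{\mathbf{e},c\mathbf{i}_k}}$ depends on $\mathbf{e}$. A concrete case is $M'=P'_j$ for an arrow $j\to k$ of $\widetilde{Q}$, where $X=P_j$.
\end{itemize}
This instance of \eqref{lgx3} is precisely the condition $\mathfrak{b}_k^-(r_{M',I'_k})=0$. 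So your ``main obstacle'' is an actual failure of the homomorphism property, not a computation still to be done. The fix is $u_{cI'_k\oplus M'}\mapsto v^{(\cdot)}u_{\mathbf{r}_k^-(M')}\star u_{cP_k[1]}$, so that $\mathfrak{b}_k^-$ agrees with $R_k^-$ on all modules and all shifted injectives.

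\paragraph{What the corrected map still requires.} After the fix, the easy and hard cases swap relative to your sketch.
\begin{itemize}
\item $r_{M',I'_k}$ vanishes identically, since $\Hom(I'_k,\tau M')\cong{\rm D}\Ext^1(M',I'_k)=0$.
\item Relations \eqref{lgx3}, \eqref{lgx1}, \eqref{lgx6} (hence \eqref{fgx1}, \eqref{fgx2}) become automatic, since they involve only modules and shifted injectives.
\item Your claim that \eqref{lgx2}--\eqref{lgx3} follow from $R_k^-$ is false for \eqref{lgx2} and \eqref{lgx7}. Indeed $\mathfrak{b}_k^-(u_{P'_k[1]})=u_{P_k}$, while $R_k^-(u_{P'_k[1]})=u_{\tau^{-1}P_k[1]}$.
\end{itemize}
The real work, none of which appears in your sketch, is the following.
\begin{itemize}
\item Relations \eqref{lgx2} and \eqref{lgx8} reduce to exponent identities.
\item Relation \eqref{lgx7} must be checked when $P'_k$ occurs, after replacing it by \eqref{dfgx1}, \eqref{dfgx2} via Proposition \ref{dmint27}. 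This needs the target relation $r_{\mathbf{r}_k^-(M'),P_k}$, matching $\Hom(P'_k,M')$ with $\Hom(P_k,\tau\,\mathbf{r}_k^-(M'))$.
\item One must treat the ideal generators in which $I'_k$ is a summand of the \emph{first} argument (sent to $P_k[1]$), or in which $P'_k$ occurs (sent to the non-projective $\tau^{-1}P_k$). For these the natural generating set is $\mathfrak{I}''$ of Lemma \ref{dlxdjkh}, with special generator $r_{I'_k,N'}$.
\end{itemize}

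\paragraph{Comparison.} Your final check of $b_k^-b_k^+$ and $b_k^+b_k^-$ on generators is fine once $b_k^-$ exists. Producing $b_k^-$, however, means redoing both long propositions preceding the theorem in dual form. The paper avoids this, because the inverse-map work was done once, for $\sigma$, in the appendix.
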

\begin{proof}
Recall that $k$ is a sink vertex and set $i_1=k$. Let $i_1,i_2,\ldots, i_m$ be an admissible sequence of sink vertices in $\widetilde{Q}$, i.e. for each $1<s\leq m$, $i_s$ is a sink vertex in $\mu_{i_{s-1}}\cdots \mu_{i_1}(\widetilde{Q})$. Then $\mu_{i_{m}}\cdots \mu_{i_1}(\widetilde{Q})=\widetilde{Q}$ and $\mu_{i_{m}}\cdots \mu_{i_1}(\Lambda)=\Lambda$.
Set $c^+=\mathbf{r}_{i_m}^+\cdots\mathbf{r}_{i_1}^+$ and $\mathbf{C}^+=\mathbf{R}_{i_m}^+\cdots\mathbf{R}_{i_1}^+$.
Then $\mathbf{C}^+:\mathcal{D}^b(\widetilde{\A})\rightarrow\mathcal{D}^b(\widetilde{\A})$ is a triangle auto-equivalence. Moreover, we have $c^+M=\tau M$ for any $M\in\widetilde{\A}$ without nonzero projective direct summands and $\mathbf{C}^+X=\bar{\tau}X$ for any $X\in\mathcal{D}^b(\widetilde{\A})$.

Let ${b}^+:={b}_{i_m}^+\cdots {b}_{i_1}^+:\mathcal{DH}_\Lambda^{cl}(\widetilde{\A})\to \mathcal{DH}_\Lambda^{cl}(\widetilde{\A})$. Then $b^+$ is just the algebra automorphism $\sigma$ of $\mathcal{DH}_\Lambda^{cl}(\widetilde{\A})$, which is given in Theorem \ref{zmainthm}. Hence, we conclude that the algebra homomorphism $b_k^+=b_{i_1}^+:\mathcal{DH}_{\Lambda}^{cl}(\widetilde{\A})\to \mathcal{DH}_{\Lambda'}^{cl}(\widetilde{\A'})$ is injective.

By Proposition \ref{elsubalgebra}, it is easy to see the Hall algebra $\mathcal{DH}_{\Lambda'}^{cl}(\widetilde{\A'})$ is generated by the elements $u_{\bar{M}'}, u_{I'_k}, u_{I_k'[-1]}$ and $ u_{I_j'[-1]}$ for all $\bar{M}'\in\widetilde{\A'}\langle k\rangle$ and $j\neq k$, whose preimages under $b_k^+$ are $u_{\mathbf{r}_k^-(\bar{M}')}, u_{P_k[1]}, u_{P_k}$ and $ u_{I_j[-1]}$, respectively. It follows that $b_k^+$ is surjective.

Therefore, we complete the proof.
\end{proof}

\begin{corollary}
Let $1\leq k\leq n$ be a sink vertex of $\widetilde{Q}$. Then
we have the following commutative diagram of algebra homomorphisms in which the vertical maps are isomorphisms:
\begin{equation*}
\xymatrix{\A_q(\Lambda,\widetilde{B})\,\,\ar@{>->}[r]^-{\varphi'_0}\ar[d]^-{\mu_k}&\mathcal {D}\mathcal {H}_\Lambda^{{\tilde{c}l}}({\A})\ar[d]^-{b_k^+}\\
\A_q(\Lambda',\widetilde{B}')\,\,\ar@{>->}[r]^-{\varphi'_0}&\mathcal{DH}_{\Lambda'}^{\tilde{c}l}({\A'}).}
\end{equation*}
Here we recall that ${\Lambda'}=\mu_k(\Lambda)$, $\widetilde{B}'=\mu_k(\widetilde{B})$ and $\A'$ is the representation subcategory associated with $\mu_k(\widetilde{Q})$.
\end{corollary}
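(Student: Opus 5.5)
The strategy is to deduce the diagram from Corollary \ref{congshixian}, applied on both sides, together with the compatibility of $b_k^+$ with the generators. We first check that $b_k^+$ restricts to a map $\mathcal {D}\mathcal {H}_\Lambda^{{\tilde{c}l}}({\A})\to\mathcal{DH}_{\Lambda'}^{\tilde{c}l}({\A'})$. Since $k\le n$ is a sink, the vertices $n+1,\dots,m$ are untouched, so $\mathbf r_k^+$ maps representations supported on $Q$ to representations supported on $\mu_k(Q)$. The formulas defining $b_k^+$ therefore send $u_{I[-1]\oplus M\oplus P[1]}$ with $M\in\A$ to products of elements of the same kind over $\widetilde{Q}'$. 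The same holds for the terms $u_{aI_k'}$ and $u_{cI_k'[-1]}$, because $I_k'=S_k'$ is supported on $\mu_k(Q)$.

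Next we define the left vertical map. Put $\mu_k:=(\varphi_0')^{-1}\circ b_k^+\circ\varphi_0'$ on $\A_q(\Lambda,\widetilde{B})$. For this to make sense we must show that $b_k^+$ maps the composition subalgebra $\mathcal{CH}_\Lambda^{\tilde cl}(\A)$ into $\mathcal{CH}_{\Lambda'}^{\tilde cl}(\A')$, and it suffices to check this on generators:
\begin{itemize}
\item $b_k^+(u_{I_j[-1]})=u_{I_j'[-1]}$ for $j\neq k$;
\item $b_k^+(u_{I_k[-1]})=u_{I_k'}=u_{S_k'}$;
\item $b_k^+(u_{S_k})=b_k^+(u_{P_k})=u_{I_k'[-1]}$;
\item $b_k^+(u_{S_j})=u_{\mathbf r_k^+S_j}$ for $j\neq k$, where $\mathbf r_k^+S_j$ is rigid indecomposable. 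Its class lies in the composition subalgebra by \cite[Theorem 2]{ZhangP}, as recalled before Corollary \ref{congshixian}.
\end{itemize}
The inverse direction uses $b_k^-$, or the surjectivity argument from the proof of Theorem \ref{mutationbubian}. The preimages $u_{\mathbf r_k^-(\bar M')}$, $u_{P_k[1]}$, $u_{P_k}$, $u_{I_j[-1]}$ of rigid generators are again rigid exceptional objects. Hence $b_k^+$ restricts to an isomorphism of the composition subalgebras, and $\mu_k$ is an algebra isomorphism making the square commute by construction.

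It remains to justify the label $\mu_k$, namely that this isomorphism agrees with the mutation identification of quantum cluster algebras. Both $\A_q(\Lambda,\widetilde B)$ and $\A_q(\Lambda',\widetilde B')$ are the same subalgebra of the skew field determined by mutation-equivalent seeds, so we show that the map sends each initial cluster variable to the correct cluster variable of the mutated seed. By Corollary \ref{congshixianimg}, $\varphi_0'$ sends cluster monomials $X_{M\oplus I[-1]}$ to $u_{M\oplus I[-1]}$. Also, $b_k^+$ sends $u_{I_j[-1]}$ to $u_{I_j'[-1]}$ for $j\ne k$ and sends $u_{I_k[-1]}$ to $u_{S_k'}$. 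Hence $x_j\mapsto x_j'$ for $j\ne k$ and $x_k\mapsto X_{S_k'}$, which is the exchanged variable of the seed $\mu_k(\Lambda',\widetilde B')$. This is exactly the canonical identification of the seed mutation.

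The main obstacle is the first step. We must verify that $b_k^+(u_{S_j})$ is, up to an exact power of $v$, the class of the rigid object $\mathbf r_k^+S_j$, and that this element lies in $\mathcal{CH}^{\tilde cl}_{\Lambda'}(\A')$. For this we use that $\mathbf r_k^+$ preserves rigidity and indecomposability on $\widetilde\A\langle k\rangle$. The remaining steps are formal consequences of Corollaries \ref{congshixian} and \ref{congshixianimg} and Theorem \ref{mutationbubian}.
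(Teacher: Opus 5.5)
Your argument is essentially the paper's. You restrict $b_k^+$ to $\mathcal{DH}_\Lambda^{\tilde{c}l}(\A)$, using that $\mathbf r_k^+$ keeps supports inside $\{1,\dots,n\}$. You check on generators that it restricts to an isomorphism $\mathcal{CH}_\Lambda^{\tilde{c}l}(\A)\to\mathcal{CH}_{\Lambda'}^{\tilde{c}l}(\A')$, which is a more explicit version of the paper's ``clearly''. You then define $\mu_k$ by conjugating with the isomorphisms $\varphi_0'$ of Corollary~\ref{congshixian}.

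\textbf{Omission.} The statement also asserts that the right-hand vertical map $b_k^+\colon\mathcal{DH}_\Lambda^{\tilde{c}l}(\A)\to\mathcal{DH}_{\Lambda'}^{\tilde{c}l}(\A')$ is an isomorphism, and you never address this. Injectivity is inherited from Theorem~\ref{mutationbubian}. For surjectivity, use \eqref{lgx1}, \eqref{lgx3}, \eqref{lgx4}, \eqref{lgx5} and $u_{P'[1]}=u_{\nu(P')[-1]}$. They show the target is generated by $u_{\bar M'}$ with $\bar M'\in\A'\cap\widetilde{\A'}\langle k\rangle$, together with $u_{I_k'}$ and the $u_{I_i'[-1]}$. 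Their preimages $u_{\mathbf r_k^-(\bar M')}$ (note $\mathbf r_k^-(\bar M')\in\A$), $u_{I_k[-1]}$, $u_{P_k}$ and $u_{I_j[-1]}$ for $j\neq k$ all lie in $\mathcal{DH}_\Lambda^{\tilde{c}l}(\A)$. The paper dismisses this step as easy, but it is part of what is claimed.

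\textbf{Two smaller points.}
\begin{enumerate}
\item Your ``main obstacle'' is not one: $b_k^+(u_{S_j})=u_{\mathbf r_k^+S_j}$ holds exactly, by the third defining formula with $c=0$.
\item Your final paragraph on the label $\mu_k$ is not needed. The paper simply defines $\mu_k$ through the diagram and records its values in the next corollary. If you keep it, agreement on the initial cluster $x_1,\dots,x_m$ determines an algebra map out of $\A_q(\Lambda,\widetilde B)$ only after an extra argument, namely the Laurent phenomenon plus the fact that the target is a domain.
\end{enumerate}
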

\begin{proof}
Since for any $M\in{\mathcal{A}}\langle k\rangle\subseteq\widetilde{\A}$, ${\Dim}\,\mathbf{r}_k^+(M)=s_k({\bf m})={\bf m}-d_k^{-1}({\bf m},e_k)e_k$ has supports only on $\{1,\ldots,n\}$. Thus, $\mathbf{r}_k^+(M)\in\mathcal{A}'$.
Hence, the isomorphism $b_k^+:\mathcal{DH}_{\Lambda}^{cl}(\widetilde{\A})\to \mathcal{DH}_{\Lambda'}^{cl}(\widetilde{\A'})$ can be restricted to an injective homomorphism of algebras $b_k^+:\mathcal {D}\mathcal {H}_\Lambda^{{\tilde{c}l}}({\A}) \to \mathcal{DH}_{\Lambda'}^{\tilde{c}l}({\A'})$.
It is easy to see that the restricted map $b_k^+$ is also surjective, i.e. it is also an isomorphism.
Clearly, the map $b_k^+$ can also be restricted to an algebra isomorphism $b_k^+:\mathcal {C}\mathcal {H}_\Lambda^{{\tilde{c}l}}({\A})\to \mathcal {C}\mathcal {H}_{\Lambda'}^{{\tilde{c}l}}({\A'})$. By Corollary \ref{congshixian}, $\varphi'_0:\A_q(\Lambda,\widetilde{B})\to\mathcal {C}\mathcal {H}_\Lambda^{{\tilde{c}l}}({\A})$ is an isomorphism. Thus, the algebra isomorphism $\mu_k$ is defined by the following commutative diagram
\begin{equation*}
\xymatrix{\A_q(\Lambda,\widetilde{B})\ar[r]^-{\varphi'_0}_{\cong}\ar[d]^-{\mu_k}&\mathcal {C}\mathcal {H}_\Lambda^{{\tilde{c}l}}({\A})\ar[d]^-{b_k^+}\\
\A_q(\Lambda',\widetilde{B}')\ar[r]^-{\varphi'_0}_{\cong}&\mathcal {C}\mathcal {H}_{\Lambda'}^{{\tilde{c}l}}({\A'}).}
\end{equation*}
\end{proof}

In order to distinguish,
we write the quantum cluster character \eqref{qcltz} associated to the quiver $\widetilde{Q}$ as $X^{\widetilde{Q}}_{I[-1]\oplus M\oplus P[1]}$, and for the quiver $\widetilde{Q}':=\mu_k(\widetilde{Q})$, the corresponding quantum cluster character in the quantum torus $\mathcal{T}_{\Lambda'}$ is denoted by
\begin{equation*}\mathbb{X}^{\widetilde{Q}'}_{I[-1]\oplus M\oplus P[1]}=\sum\limits_{\mathbf{e}}v^{\lr{{\bf p}-\mathbf{e},\mathbf{m}-\mathbf{e}-\mathbf{i}}}|\mathrm{Gr}_{\mathbf{e}}M|
\mathbb{X}^{({\bf p}-\mathbf{e})^\ast-^\ast(\mathbf{m}-\mathbf{e}-\mathbf{i})},\end{equation*}
where ${\Lambda'}:=\mu_k(\Lambda)$, $M\in\widetilde{\A'}$, $I\in\I_{\widetilde{\A'}}$ and $P\in\P_{\widetilde{\A'}}$.
\begin{corollary}
Let $1\leq k\leq n$ be a sink vertex of $\widetilde{Q}$. The algebra isomorphism $$\mu_k:\A_q(\Lambda,\widetilde{B})\longrightarrow\A_q(\Lambda',\widetilde{B}')$$
sends $X_M^{\widetilde{Q}}$ and $X_{P[1]}^{\widetilde{Q}}$ to $\mathbb{X}_{\mathbf{R}_k^+(M)}^{\widetilde{Q}'}$ and $\mathbb{X}_{\mathbf{R}_k^+(P)[1]}^{\widetilde{Q}'}$, respectively, where $M\in \A$ is indecomposable and rigid, and $P\in \mathcal{P}_{\widetilde{\A}}$ is indecomposable. In particular, $\mu_k(X_{P_i[1]}^{\widetilde{Q}})=\mathbb{X}_{I'_i[-1]}^{\widetilde{Q}'}$ for any $1\leq i\neq k\leq m$ and $\mu_k(X_{P_k[1]}^{\widetilde{Q}})=\mathbb{X}_{I'_k}^{\widetilde{Q}'}$.
\end{corollary}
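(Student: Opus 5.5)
The plan is to compute $\mu_k$ explicitly through the commutative square of the preceding corollary, $\mu_k=(\varphi'_0)^{-1}\circ b_k^+\circ\varphi'_0$, where the lower $\varphi'_0$ is the injection for $\widetilde{Q}'$. Its left inverse is the analogous $\varphi_0$, i.e. the map $u_X\mapsto\mathbb{X}^{\widetilde{Q}'}_X$. So for any generator $X^{\widetilde{Q}}_Y$ with $Y$ rigid it suffices to do three things. First, identify $\varphi'_0(X^{\widetilde{Q}}_Y)$. Second, apply $b_k^+$. Third, apply $\varphi_0$ on the $\widetilde{Q}'$ side, using that $\varphi_0\varphi'_0=\mathrm{Id}$ and that $\varphi_0$ restricted to the composition subalgebra is the inverse of $\varphi'_0$ (Corollary \ref{congshixian}).

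For $M\in\A$ indecomposable rigid, Corollary \ref{congshixianimg} gives $\varphi'_0(X^{\widetilde{Q}}_M)=u_M$. There are two cases.
\begin{itemize}
\item If $M\not\cong P_k$, then $M\in\widetilde{\A}\langle k\rangle$. Taking $c=0$ in the definition gives $b_k^+(u_M)=u_{\acute M}=u_{\mathbf{R}_k^+(M)}$. Since $\mathbf{R}_k^+(M)=\mathbf{r}_k^+(M)$ is again rigid indecomposable in $\A'$, $\varphi_0$ sends this to $\mathbb{X}^{\widetilde{Q}'}_{\mathbf{R}_k^+(M)}$.
\item If $M=P_k=S_k$, then $b_k^+(u_{P_k})=u_{I'_k[-1]}$. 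By \eqref{6.1} this equals $u_{\mathbf{R}_k^+(P_k)}$, with image $\mathbb{X}^{\widetilde{Q}'}_{I'_k[-1]}=\mathbb{X}^{\widetilde{Q}'}_{\mathbf{R}_k^+(M)}$.
\end{itemize}

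For $P=P_i$ indecomposable projective there are likewise two cases. Note first that $X_{P_i[1]}=X_{I_i[-1]}$, and $\varphi'_0$ sends it to $u_{I_i[-1]}=u_{P_i[1]}$ in the quotient by $\mathfrak{I}_1$.
\begin{itemize}
\item If $i\neq k$, the definition gives $b_k^+(u_{P_i[1]})=u_{\acute P_i[1]}=u_{P'_i[1]}$. In $\mathcal{DH}^{cl}_{\Lambda'}$ this equals $u_{I'_i[-1]}$. It maps to $\mathbb{X}^{\widetilde{Q}'}_{P'_i[1]}=\mathbb{X}^{\widetilde{Q}'}_{I'_i[-1]}$, which is also $\mathbb{X}_{\mathbf{R}_k^+(P)[1]}$ because $\mathbf{R}_k^+(P_i)=P'_i$.
\item If $i=k$, taking $b=1$ gives $b_k^+(u_{P_k[1]})=u_{I'_k}$. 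This is $u_{\mathbf{R}_k^+(P_k[1])}$ by \eqref{6.1}, with image $\mathbb{X}^{\widetilde{Q}'}_{I'_k}$.
\end{itemize}
These four cases give both the general claim and the "in particular" statements. I read $\mathbf{R}_k^+(P)[1]$ as $\mathbf{R}_k^+(P[1])$; this agrees with the statement in both subcases.

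The main point needing care is the last step. It is legitimate because each of the images $u_{\acute M}$, $u_{I'_k[-1]}$, $u_{I'_i[-1]}$ and $u_{I'_k}$ lies in $\mathcal{CH}^{\tilde cl}_{\Lambda'}(\A')$, and $\varphi_0$ inverts $\varphi'_0$ there. For $u_{I'_k}$ this requires $I'_k$ to be a rigid module supported on $\{1,\dots,n\}$, which holds since $k\le n$ and $I'_k=S'_k$ is simple in $\widetilde{Q}'$.

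I would also check that applying $\varphi_0$ introduces no stray powers of $v$. It does not: all the elements here are single basis vectors $u_X$ with no twisting factors, because each normalising exponent in the definition of $b_k^+$ vanishes when only one summand is present.
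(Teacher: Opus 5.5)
Your proposal is correct and is exactly the argument the paper leaves implicit, since the corollary is stated there without proof. You write $\mu_k=(\varphi'_0)^{-1}\circ b_k^+\circ\varphi'_0$ from the commutative square, lift the generators using Corollary \ref{congshixianimg} and $u_{P_i[1]}=u_{I_i[-1]}$, evaluate $b_k^+$ on single-summand elements via its defining formulas and \eqref{6.1}, and return with $\varphi_0$ using $\varphi_0\varphi'_0=\mathrm{Id}$. (Your check that the images lie in $\mathcal{C}\mathcal{H}^{\tilde{c}l}_{\Lambda'}(\A')$ is harmless but automatic: commutativity of the square already places $b_k^+\varphi'_0(x)$ in the image of the lower $\varphi'_0$.)
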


\appendix
\section{Proof of the algebra homomorphism $\sigma'$}
In this section, we prove the map $\sigma'$ given in Theorem \ref{zmainthm} is an algebra homomorphism.
\begin{proposition}\label{dmint27}
In the generating relations of $\mathcal {D}\mathcal {H}_\Lambda^{ec}(\widetilde{\A})$, the relation \eqref{lgx7} can be replaced by the following two relations
\begin{flalign}\label{dfgx1}
u_{P[1]}\star u_{I}=q^{-\frac{1}{2}\Lambda({\bf p}^\ast,{\bf i}^\ast)-\lr{{\bf p},{\bf i}}}
\sum\limits_{[Q],[I']}v^{\Lambda({{\bf i}'}^\ast,{\bf q}^\ast)}|{}_{Q}\Hom_{\widetilde{\A}}(P,I)_{I'}|u_{I'}\star u_{Q[1]},
\end{flalign}
where $P\in\P_{\widetilde{\A}}$~and $I\in\I_{\widetilde{\A}}$;
\begin{flalign}\label{dfgx2}
u_{P[1]}\star u_{M}=q^{-\frac{1}{2}\Lambda({\bf p}^\ast,{\bf m}^\ast)-\lr{{\bf p},{\bf m}}}
\sum\limits_{[F],[P']}v^{\Lambda({{\bf f}}^\ast,{{\bf p}'}^\ast)}|{}_{P'}\Hom_{\widetilde{\A}}(P,M)_{F}|u_{F}\star u_{P'[1]},
\end{flalign}
where $M\in\A$~has~nonzero~injective~direct~summands and $P\in\P_{\widetilde{\A}}$.
\end{proposition}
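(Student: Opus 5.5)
This statement is the dual of Proposition \ref{mint27}, and I would prove it by the mirror-image argument. Note first that \eqref{dfgx1} and \eqref{dfgx2} are special cases of \eqref{lgx7}, taking $M=I$ injective in the first and $M$ with a nonzero injective summand in the second. So it is enough to show that \eqref{lgx7} for an arbitrary $M\in\widetilde{\A}$ follows from \eqref{dfgx1}, \eqref{dfgx2} and the remaining relations of Proposition \ref{elsubalgebra}. (Here ``$M\in\A$'' should be read as $M\in\widetilde{\A}$.)

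\textbf{Step 1: split off the injective part of $M$.} Write $M=M'\oplus I'$, where $I'$ is the maximal injective direct summand of $M$. If $I'=0$ we would argue with the roles exchanged, so the meaningful reduction is this: $u_M$ is a scalar multiple of $u_{M'}\star u_{I'}$. Indeed, $\Ext^1(M',I')=0$ and \eqref{lgx3} give $u_{M'}\star u_{I'}=v^{\Lambda({{\bf m}'}^\ast,{{\bf i}'}^\ast)+2\lr{{\bf m}',{\bf i}'}}\sum|\Ext^1(M',I')_L|/|\Hom(M',I')|\cdots$, and more cleanly one can order the factors so that the Hom term vanishes. For instance, $u_{I'}\star u_{M'}=v^{\Lambda({{\bf i}'}^\ast,{{\bf m}'}^\ast)}u_{M}$, because $\Ext^1(I',M')=0$ and the $\Hom$ and Euler-form factors cancel.

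\textbf{Step 2: commute $u_{P[1]}$ past each factor.} Compute $u_{P[1]}\star u_{I'}\star u_{M'}$ as follows.
\begin{itemize}
\item Apply \eqref{dfgx1} to $u_{P[1]}\star u_{I'}$. This produces terms $u_{I''}\star u_{Q[1]}$ with coefficient $|{}_Q\Hom(P,I')_{I''}|$, where $Q$ is projective (a subobject of $P$) and $I''$ is injective (a quotient of $I'$).
\item Apply \eqref{lgx7} to $u_{Q[1]}\star u_{M'}$. This is permitted in the induction because we may induct on $\dim M$: either $M'$ again has an injective summand and we use \eqref{dfgx2}, or $M'$ has none. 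If $M'$ has no injective summand, we pass to the projective-splitting version, reducing by induction on dimension, with the base case given by the $M=I$ case.
\item Finally recombine $u_{I''}\star u_{F_1}$ into $u_{F}$ using \eqref{lgx3} together with $\Ext^1(I'',-)=0$, and recombine $u_{Q'[1]}$ and $u_{F}$ using \eqref{lgx5}.
\end{itemize}

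\textbf{Step 3: compare coefficients.} Comparing the resulting expression with the right-hand side of \eqref{lgx7} reduces to two facts. The first is the Hall-number identity of Proposition \ref{prop:Hall-equality-P-I}(2), or its variant obtained by associativity of $(u_{P[1]}\star u_{I'})\star u_{M'}$ in $\mathcal{D}\mathcal{H}_q$, which expresses $|{}_{P'}\Hom(P,M'\oplus I')_F|$ as a convolution of $|{}_{X}\Hom(P,I')_{Y}|$ and $|{}_{X'}\Hom(X,M')_{Y'}|$. The second is an identity of exponents. The exponent identity should follow exactly as in the proof of Proposition \ref{mint27}, from the dimension relations ${\bf q}+{\bf i}''={\bf p}+{\bf i}'$ (dimension vectors up to sign) and ${\bf f}={\bf m}-{\bf p}+{\bf p}'$, using Lemma \ref{sjishu}.

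\textbf{Main obstacle.} I expect the main difficulty to be the exact matching of the exponents of $v$, which means tracking the $\Lambda$-twists $\Lambda({\bf f}^\ast,{{\bf p}'}^\ast)$ together with the Euler-form corrections $\lr{{\bf p},{\bf m}}$. One must also get the right form of the Hall identity, with $P$ on the left, so that it matches Proposition \ref{prop:Hall-equality-P-I}(2) when the summands are ordered as $I'$ first. I would first try the ordering $u_M\propto u_{I'}\star u_{M'}$ and mimic the calculation of $x_0$ in Proposition \ref{mint27} line by line, dualizing every step.
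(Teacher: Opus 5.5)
Your overall plan is the right one: split off the maximal injective summand of $M$ and push $u_{P[1]}$ past the two pieces, dually to Proposition \ref{mint27}. But there are two genuine gaps. The ordering you commit to breaks the argument, and your reading of the hypothesis of \eqref{dfgx2} makes Step 2 circular.

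\textbf{The ordering.} Your identity $u_{I'}\star u_{M'}=v^{\Lambda({{\bf i}'}^\ast,{{\bf m}'}^\ast)}u_M$ needs $\Ext^1_{\widetilde{\A}}(I',M')=0$, which is false in general. Injectivity of $I'$ kills $\Ext^1(-,I')$, not $\Ext^1(I',-)$. Already for a quiver $a\to b$ one has $\Ext^1(S_a,S_b)\neq 0$, with $S_a$ injective and $S_b$ not injective. So by \eqref{lgx3}, $u_{I'}\star u_{M'}$ is a combination of the middle terms of all extensions $0\to M'\to L\to I'\to 0$. Hence $u_{P[1]}\star u_{I'}\star u_{M'}$ does not isolate $u_{P[1]}\star u_M$. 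The same false vanishing reappears when you recombine $u_{I''}\star u_{F_1}$ into a single $u_F$.

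The repair is the ordering you wrote down first and then discarded.
\begin{itemize}
\item Since $\Ext^1(M',I')=0$, one has $q^{\lr{{\bf m}',{\bf i}'}}=|\Hom(M',I')|$. So the $\Hom$ factor in \eqref{lgx3} cancels exactly, and $u_M=v^{-\Lambda({{\bf m}'}^\ast,{{\bf i}'}^\ast)}u_{M'}\star u_{I'}$.
\item Apply \eqref{dfgx2} to $u_{P[1]}\star u_{M'}$, giving terms $u_{F_1}\star u_{Q[1]}$.
\item Apply \eqref{dfgx1} to $u_{Q[1]}\star u_{I'}$, giving terms $u_{F_2}\star u_{P'[1]}$ with $F_2$ injective.
\item Now $u_{F_1}\star u_{F_2}=v^{\Lambda({\bf f}_1^\ast,{\bf f}_2^\ast)}u_{F_1\oplus F_2}$ is legitimate, because $\Ext^1(F_1,F_2)=0$.
\end{itemize}
With ${\bf m}'-{\bf p}={\bf f}_1-{\bf q}$ and ${\bf p}'+{\bf i}'={\bf q}+{\bf f}_2$, the total exponent collapses to
$-\Lambda({\bf p}^\ast,{\bf m}^\ast)-2\lr{{\bf p},{\bf m}}+\Lambda(({\bf f}_1+{\bf f}_2)^\ast,{{\bf p}'}^\ast)+2\lr{{\bf p}-{\bf q},{\bf i}'}$.
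The surplus $q^{\lr{{\bf p}-{\bf q},{\bf i}'}}$ is exactly the factor in Proposition \ref{prop:Hall-equality-P-I}(1), applied with $X=Q$, $Y=F_1$, $Z=P'$. That part splits the target $M'\oplus I'$; part (2), which you cite, splits the source and is not the relevant one.

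\textbf{The hypothesis of \eqref{dfgx2}.} It must be read as ``$M$ has \emph{no} nonzero injective direct summands''; the same slip occurs in Proposition \ref{mint27}. Under your literal reading, \eqref{dfgx1} would be a special case of \eqref{dfgx2} and hence redundant, which signals that this reading is not the intended one.

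Under your reading, the case you actually have to prove is $I'=0$. There your splitting is vacuous, and ``argue with the roles exchanged'' supplies nothing. Your Step 2 then applies \eqref{lgx7} itself to $u_{Q[1]}\star u_{M'}$, and the induction meant to justify this is empty. Since $I'$ was chosen maximal, $M'$ never ``again has an injective summand''. The ``projective-splitting version'' with base case $M=I$ is never specified, so it is unclear what it inducts on or how it avoids using \eqref{lgx7}.

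With the correct reading no induction is needed. $M'$ has no injective summands, so \eqref{dfgx2} applies to it directly, and the computation above is the whole proof.
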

\begin{proof}
We only need to show that the relation \eqref{lgx7} can be implied from the relations \eqref{dfgx1}, \eqref{dfgx2} together with the other relations in Proposition \ref{elsubalgebra}. For any $M\in\widetilde{\A}$, write $M=I'\oplus M'$ such that $I'$ is the maximal injective direct summand of $M$. Then $u_M=v^{-\Lambda({{\bf m}'}^\ast,{{\bf i}'}^\ast)}u_{M'}\star u_{I'}$. Thus, we have
\begin{flalign*}
u_{P[1]}\star u_{M}=v^{-\Lambda({{\bf m}'}^\ast,{{\bf i}'}^\ast)}u_{P[1]}\star u_{M'}\star u_{I'}.
\end{flalign*}
By \eqref{dfgx2}, we have
\begin{flalign*}
u_{P[1]}\star u_{M'}=q^{-\frac{1}{2}\Lambda({\bf p}^\ast,{{\bf m}'}^\ast)-\lr{{\bf p},{\bf m}'}}
\sum\limits_{[F_1],[Q]}v^{\Lambda({\bf f}_1^\ast,{{\bf q}}^\ast)}|{}_{Q}\Hom_{\widetilde{\A}}(P,M')_{F_1}|u_{F_1}\star u_{Q[1]}.
\end{flalign*}
Thus, we obtain
\begin{flalign*}
&u_{P[1]}\star u_{M}=\\&v^{-\Lambda({\bf p}^\ast,{{\bf m}'}^\ast)-2\lr{{\bf p},{\bf m}'}-\Lambda({{\bf m}'}^\ast,{{\bf i}'}^\ast)}
\sum\limits_{[F_1],[Q]}v^{\Lambda({\bf f}_1^\ast,{{\bf q}}^\ast)}|{}_{Q}\Hom_{\widetilde{\A}}(P,M')_{F_1}|u_{F_1}\star u_{Q[1]}\star u_{I'}.
\end{flalign*}
Using \eqref{dfgx1}, we get
\begin{flalign*}
u_{P[1]}\star u_{M}&=v^{-\Lambda({\bf p}^\ast,{{\bf m}'}^\ast)-2\lr{{\bf p},{\bf m}'}-\Lambda({{\bf m}'}^\ast,{{\bf i}'}^\ast)}\sum\limits_{[F_1],[Q],[P'],[F_2]}v^{\Lambda({\bf f}_1^\ast,{{\bf q}}^\ast)-\Lambda({{\bf q}}^\ast,{{\bf i}'}^\ast)-2\lr{{\bf q},{\bf i}'}}\\&v^{\Lambda({\bf f}_2^\ast,{{\bf p}'}^\ast)}|{}_{Q}\Hom_{\widetilde{\A}}(P,M')_{F_1}|\cdot |{}_{P'}\Hom_{\widetilde{\A}}(Q,I')_{F_2}|u_{F_1}\star u_{F_2}\star u_{P'[1]}.
\end{flalign*}
Noting that $F_2$ is injective, we obtain
\begin{equation*}
u_{P[1]}\star u_{M}=\sum\limits_{[F_1],[Q],[P'],[F_2]}v^{x_0}|{}_{Q}\Hom_{\widetilde{\A}}(P,M')_{F_1}|\cdot |{}_{P'}\Hom_{\widetilde{\A}}(Q,I')_{F_2}|u_{F_1\oplus F_2}\star u_{P'[1]},
\end{equation*}
where \begin{flalign*}x_0=&-\Lambda({\bf p}^\ast,{{\bf m}'}^\ast)-2\lr{{\bf p},{\bf m}'}-\Lambda({{\bf m}'}^\ast,{{\bf i}'}^\ast)+\Lambda({\bf f}_1^\ast,{{\bf q}}^\ast)\\&-\Lambda({{\bf q}}^\ast,{{\bf i}'}^\ast)-2\lr{{\bf q},{\bf i}'}+\Lambda({\bf f}_2^\ast,{{\bf p}'}^\ast)+\Lambda({\bf f}_1^\ast,{\bf f}_2^\ast).\end{flalign*}
Since ${\bf m}'-{\bf p}={\bf f}_1-{\bf q}$ and ${\bf p}'+{\bf i}'={\bf q}+{\bf f}_2$, we get
\begin{flalign*}x_0&=-\Lambda({{\bf p}}^\ast,{\bf m}^\ast)-2\lr{{\bf p},{\bf m}'}-2\lr{{\bf q},{\bf i}'}+\Lambda({\bf f}_1^\ast+{\bf f}_2^\ast,{{\bf p}'}^\ast)\\&=-\Lambda({{\bf p}}^\ast,{\bf m}^\ast)-2\lr{{\bf p},{\bf m}}+\Lambda({\bf f}_1^\ast+{\bf f}_2^\ast,{{\bf p}'}^\ast)+2\lr{{\bf p}-{\bf q},{\bf i}'}.\end{flalign*}
Hence, by Proposition \ref{prop:Hall-equality-P-I}, we obtain the relation \eqref{lgx7}.
\end{proof}

\begin{proposition}
The map $\varrho':\mathcal {D}\mathcal {H}_\Lambda^{ec}(\widetilde{\A})\longrightarrow\mathcal {D}\mathcal {H}_\Lambda^{{cl}}(\widetilde{\A})$ defined by
$$\varrho'(u_{I[-1]\oplus M\oplus P[1]})=v^{\Lambda({\bf i}^\ast,({\bf m}-{\bf p})^\ast)+\Lambda({\bf m}^\ast,{\bf p}^\ast)}u_{\nu^{-1}I}\star u_{\tau^{-1} M'\oplus \nu^{-1}(I')[1]}\star u_{P}$$ for any $M=M'\oplus I'\in\widetilde{\A}$, $I\in\I_{\widetilde{\A}}$ and $P\in\P_{\widetilde{\A}}$, where $I'$ is the maximal injective direct summand of $M$, is a homomorphism of algebras.
\end{proposition}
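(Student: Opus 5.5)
The plan is to mirror the proof for $\varrho$ given just before Theorem \ref{zmainthm}, with the roles of projectives and injectives interchanged and $\tau$ replaced by $\tau^{-1}$. I would check that $\varrho'$ preserves the defining relations \eqref{lgx2}--\eqref{lgx8} of $\mathcal{D}\mathcal{H}_\Lambda^{ec}(\widetilde{\A})$ from Proposition \ref{elsubalgebra}. Proposition \ref{dmint27} lets me replace \eqref{lgx7} by the two relations \eqref{dfgx1} and \eqref{dfgx2}; this plays the role that Proposition \ref{mint27} played for $\varrho$.

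\textbf{Easy relations.} On $u_M$ with $M$ having no injective summands, and on $u_{I[-1]}$, the map $\varrho'$ coincides with $\overline{\tau}^{-1}$. Indeed $\overline{\tau}^{-1}(I[-1])=\nu^{-1}(I)$ and $\overline{\tau}^{-1}(I')=\nu^{-1}(I')[1]$. Since $\overline{\tau}^{-1}$ is an automorphism of $\mathcal{D}\mathcal{H}_\Lambda(\widetilde{\A})$ and these images lie in $\mathcal{D}\mathcal{H}_\Lambda^{ec}(\widetilde{\A})$, the relations \eqref{lgx1}, \eqref{lgx3} and \eqref{lgx6} are preserved. Relation \eqref{lgx2} is preserved because $u_P\star u_Q=v^{\Lambda({\bf p}^\ast,{\bf q}^\ast)}u_{P\oplus Q}$ for projectives. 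For \eqref{lgx8}, the images are $u_{\nu^{-1}I}$ and $u_P$, two projectives. Their quasi-commutation exponent $\Lambda(({\nu^{-1}{\bf i}})^\ast,{\bf p}^\ast)$ equals $\Lambda({\bf i}^\ast,{\bf p}^\ast)-\lr{{\bf p},{\bf i}}$ by Lemma \ref{sjishu}, using ${\bf i}^\ast={^\ast{\bf i}}+B(\widetilde{Q}){\bf i}$.

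\textbf{Relation \eqref{dfgx1}.} Set $J=\nu^{-1}(I)$. Then $\varrho'(u_{P[1]})\star\varrho'(u_I)=u_{\nu(P)[-1]}\star u_{P\ldots}$; more precisely, $\varrho'(u_I)=u_{\nu^{-1}(I)[1]}$ and $\varrho'(u_{P[1]})=u_{\nu(P)[-1]}\cdot$(up to the shift convention). After applying $\overline{\tau}^{-1}$, the product becomes $u_{J}\star u_{P[1]}$-type terms. Using the ideal relation $u_{P[1]}=u_{\nu P[-1]}$ and \eqref{lgx6} or \eqref{lgx7} in $\mathcal{D}\mathcal{H}^{cl_1}$, the computation is dual to the \eqref{fgx1} case for $\varrho$. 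The structure constants match because $\overline{\tau}^{-1}$ is a triangle equivalence, so it preserves $|{}_Q\Hom(P,I)_{I'}|$ as a Hom-count in $\mathcal{D}^b(\widetilde{\A})$.

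\textbf{Relation \eqref{dfgx2} (main obstacle).} Here $M$ is injective-free, so $\varrho'(u_M)=u_{\tau^{-1}M}$ and $\varrho'(u_{P[1]})=u_P$. The product $u_P\star u_{\tau^{-1}M}=v^{\Lambda}u_{P\oplus\tau^{-1}M}$ must then be expanded using the ideal $\mathfrak{I}$ relation $r_{P,\tau^{-1}M}$. That relation involves $|_D\Hom(\tau^{-1}M,\tau P)_{\tau A\oplus I}|$ with $A$ sharing the projective summand $P$. Since $\tau P=0$, I would first rewrite it via the dual form of Corollary \ref{2CY} (equivalently, via $r_{\tau^{-1}M,P}$). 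Using $\overline{\tau}$ and Proposition \ref{prop:Hall-equality-P-I}(2), as in the \eqref{fgx2} step for $\varrho$, I would identify the counts with $|{}_{P'}\Hom(P,M)_F|$. This step requires dualizing the $\overline{\tau}$-invariance argument, and I expect it to be the hardest. The remaining exponent comparison $x=y$ should then follow from Lemma \ref{sjishu} and \cite[Lemma 6.4]{CDZ}, exactly as before.
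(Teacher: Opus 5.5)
Your plan matches the paper's. You check \eqref{lgx2}--\eqref{lgx8}, with \eqref{lgx7} replaced by \eqref{dfgx1} and \eqref{dfgx2} via Proposition \ref{dmint27}. You get \eqref{lgx1}, \eqref{lgx3} and \eqref{lgx6} from $\varrho'=\overline{\tau}^{-1}$ on $u_{I[-1]}$ and on $u_M$; this must hold for every module $M$, not only injective-free ones, since \eqref{lgx3} needs it. You check \eqref{lgx2} and \eqref{lgx8} directly, and your exponent identity for \eqref{lgx8} is the paper's. You treat \eqref{dfgx2} with the generator $r_{\tau^{-1}M,P}$ of $\mathfrak{I}$.

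\textbf{The \eqref{dfgx1} step is wrong as written.} Taking $I=M=0$ in the definition gives $\varrho'(u_{P[1]})=u_P$, not $u_{\nu(P)[-1]}$. The latter equals $u_{P[1]}$ modulo $\mathfrak{I}_1$, so your value would make $\varrho'$ fix $u_{P[1]}$. Also, no product of the shape $u_J\star u_{P[1]}$ occurs. The correct computation is short. In $\mathcal{D}\mathcal{H}_\Lambda^{cl_1}(\widetilde{\A})$,
\[
\varrho'(u_{P[1]})\star\varrho'(u_I)=u_P\star u_{\nu^{-1}(I)[1]}=u_P\star u_{I[-1]},
\]
\[
\varrho'(u_{I'})\star\varrho'(u_{Q[1]})=u_{\nu^{-1}(I')[1]}\star u_Q=u_{I'[-1]}\star u_Q.
\]
So $\varrho'$ carries \eqref{dfgx1} exactly onto \eqref{fgx1}, with the same coefficients $|{}_Q\Hom_{\widetilde{\A}}(P,I)_{I'}|$ and the same powers of $v$. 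Nothing is transported by $\overline{\tau}^{-1}$ here, so your stated reason for the structure constants agreeing is not the operative one.

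\textbf{\eqref{dfgx2} is easier than you expect.} In $r_{\tau^{-1}M,P}$ the count is
\[
|{}_D\Hom_{\widetilde{\A}}(P,\tau\tau^{-1}M)_{\tau A\oplus I}|=|{}_D\Hom_{\widetilde{\A}}(P,M)_{\tau A\oplus I}|,
\]
because $M$ has no injective summands. Moreover $D\subseteq P$ is projective, and $A$ has no projective summands because $\tau^{-1}M$ has none. Write each $F$ in \eqref{dfgx2} as $\tau A\oplus I$. Then $\varrho'(u_{F\oplus P'[1]})$ is a power of $v$ times $u_A\star u_{I[-1]}\star u_{P'}$, using $u_{\nu^{-1}(I)[1]}=u_{I[-1]}$. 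The Hom-counts on the two sides therefore coincide exactly, and only the exponent comparison remains. Neither $\overline{\tau}$ nor Proposition \ref{prop:Hall-equality-P-I}(2) is needed.

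The contrast with the \eqref{fgx2} step for $\varrho$ comes from the generators $r_{X,Y}$ of $\mathfrak{I}$ applying $\tau$ to the first argument. For $\varrho$ one must use $\Hom(\tau M,\tau J)$, where $\tau$ kills the projective-injective summands of $J$; that is why Proposition \ref{prop:Hall-equality-P-I}(1) is needed there. Here $\tau$ only hits $\tau^{-1}M$.
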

\begin{proof}
It suffices to prove the relations \eqref{lgx2}-\eqref{lgx8} are preserved under the map $\varrho'$.

Noting that $\varrho'(u_M)=\overline{\tau}^{-1}(u_M)$ and $\varrho'(u_{I[-1]})=\overline{\tau}^{-1}(u_{I[-1]})$ for any $M\in\widetilde{\A}$ and $I\in\I_{\widetilde{\A}}$, and $\overline{\tau}^{-1}$ is an algebra automorphism of $\mathcal {D}\mathcal {H}_\Lambda(\widetilde{\A})$,
we conclude that the relations \eqref{lgx3}-\eqref{lgx6} are preserved under the map $\varrho'$. Since $u_P\star u_Q=v^{\Lambda({\bf p}^\ast,{\bf q}^\ast)}u_{P\oplus Q}$ for any $P,Q\in\P_{\widetilde{\A}}$, we get that the relation \eqref{lgx2} is preserved under the map $\varrho'$.

\noindent{\bf Relation \eqref{lgx8}:} For any $P\in\P_{\widetilde{\A}}$ and $I\in\I_{\widetilde{\A}}$, set $Q=\nu^{-1}(I)$, then
\begin{flalign*}
\varrho'(u_{I[-1]})\star \varrho'(u_{P[1]})=u_Q\star u_P=v^{\Lambda({\bf q}^\ast,{\bf p}^\ast)}u_{P\oplus Q}
\end{flalign*}
and
\begin{flalign*}
\varrho'(u_{P[1]})\star \varrho'(u_{I[-1]})=u_P\star u_Q=v^{\Lambda({\bf p}^\ast,{\bf q}^\ast)}u_{P\oplus Q}.
\end{flalign*}
Thus, we obtain
$$\varrho'(u_{I[-1]})\star \varrho'(u_{P[1]})=q^{\Lambda({\bf q}^\ast,{\bf p}^\ast)}\varrho'(u_{P[1]})\star \varrho'(u_{I[-1]}).$$
Since \begin{flalign*}\Lambda({\bf q}^\ast,{\bf p}^\ast)=\Lambda({^*{\bf i}},{{\bf p}^*})
&=\Lambda({\bf i}^*,{\bf p}^\ast)-\Lambda(B(\widetilde{Q})(\bf i),{\bf p}^\ast)\\&=\Lambda({\bf i}^*,{\bf p}^\ast)-\lr{{\bf p},{\bf i}},\end{flalign*}
we get that the relation \eqref{lgx8} is preserved under the map $\varrho'$.

In order to finish proving that $\varrho'$ is an algebra homomorphism, by Proposition \ref{dmint27}, we need to prove the relations \eqref{dfgx1} and \eqref{dfgx2} are preserved under the map $\varrho'$.

\noindent{\bf Relation \eqref{dfgx1}:} Let us show that the relation \eqref{dfgx1}
\begin{flalign*}
u_{P[1]}\star u_{J}=q^{-\frac{1}{2}\Lambda({\bf p}^\ast,{\bf j}^\ast)-\lr{{\bf p},{\bf j}}}
\sum\limits_{[P'],[J']}|{}_{P'}\Hom_{\widetilde{\A}}(P,J)_{J'}|u_{J'\oplus P'[1]},
\end{flalign*}
where $P\in\P_{\widetilde{\A}}$~and $J\in\I_{\widetilde{\A}}$, is preserved under the map $\varrho'$. In fact, set $Q=\nu^{-1}(J)$, then
\begin{flalign*}
&\varrho'(u_{P[1]})\star\varrho'(u_{J})=u_{P}\star u_{Q[1]}=u_{P}\star u_{J[-1]}\\
&=q^{-\frac{1}{2}\Lambda({\bf p}^\ast,{\bf j}^\ast)-\lr{{\bf p},{\bf j}}}\sum\limits_{[P'],[J']}|{}_{P'}\Hom_{\widetilde{\A}}(P,J)_{J'}|u_{P'\oplus J'[-1]}.
\end{flalign*}

On the other hand, we have
\begin{flalign*}
&q^{-\frac{1}{2}\Lambda({\bf p}^\ast,{\bf j}^\ast)-\lr{{\bf p},{\bf j}}}
\sum\limits_{[P'],[J']}|{}_{P'}\Hom_{\widetilde{\A}}(P,J)_{J'}|\varrho'(u_{J'\oplus P'[1]})\\
&=q^{-\frac{1}{2}\Lambda({\bf p}^\ast,{\bf j}^\ast)-\lr{{\bf p},{\bf j}}}
\sum\limits_{[P'],[J']}|{}_{P'}\Hom_{\widetilde{\A}}(P,J)_{J'}|v^{\Lambda({{\bf j}'}^\ast,{{\bf p}'}^\ast)}u_{\nu^{-1}(J')[1]}\star u_{P'}\\
&=q^{-\frac{1}{2}\Lambda({\bf p}^\ast,{\bf j}^\ast)-\lr{{\bf p},{\bf j}}}
\sum\limits_{[P'],[J']}|{}_{P'}\Hom_{\widetilde{\A}}(P,J)_{J'}|v^{\Lambda({{\bf j}'}^\ast,{{\bf p}'}^\ast)}u_{J'[-1]}\star u_{P'}\\
&=q^{-\frac{1}{2}\Lambda({\bf p}^\ast,{\bf j}^\ast)-\lr{{\bf p},{\bf j}}}
\sum\limits_{[P'],[J']}|{}_{P'}\Hom_{\widetilde{\A}}(P,J)_{J'}|u_{P'\oplus J'[-1]}.
\end{flalign*}
Hence, the relation \eqref{dfgx1} is preserved under the map $\varrho'$.

\noindent{\bf Relation \eqref{dfgx2}:} Let us show that the relation \eqref{dfgx2}
\begin{equation}\label{dzyygx1}
u_{P[1]}\star u_{M}=q^{-\frac{1}{2}\Lambda({\bf p}^\ast,{\bf m}^\ast)-\lr{{\bf p},{\bf m}}}
\sum\limits_{[F],[P']}|{}_{P'}\Hom_{\widetilde{\A}}(P,M)_{F}|u_{F\oplus P'[1]},\end{equation}
where $M\in\widetilde{\A}$ has no nonzero injective direct summands and $P\in\P_{\widetilde{\A}}$, is preserved under the map $\varrho'$. In fact,
\begin{flalign*}
\varrho'(u_{P[1]})\star\varrho'(u_{M})=u_{P}\star u_{\tau^{-1}M}=v^{\Lambda({\bf p}^\ast,(\tau^{-1}\bf m)^\ast)}u_{P\oplus\tau^{-1} M}.
\end{flalign*}
Note that in $\mathcal {D}\mathcal {H}_\Lambda^{{cl}}(\widetilde{\A})$, by the ideal $\mathfrak{I}$, we have
\begin{equation*}
\begin{split}
&\varrho'(u_{P[1]})\star\varrho'(u_{M})=\\&v^{-\Lambda({\bf p}^\ast,^*{\bf m})}\sum\limits_{\begin{smallmatrix}[P'],[A],[I]\end{smallmatrix}}v^{\Lambda({\bf p}^\ast+{(\tau^{-1}{\bf m}})^*,{{\bf a}}^*)+\lr{(\tau^{-1}{\bf m})-{\bf a},{\bf p}}}|_{P'}\Hom_{\widetilde{\A}}(P,M)_{\tau A\oplus I}|u_{A}\star u_{P'\oplus I[-1]},
\end{split}\end{equation*}
where $P'\in\P_{\widetilde{\A}}$ and $A$ has no nonzero projective direct summands, since $\tau^{-1}M$ has no nonzero projective direct summands. Thus,
\begin{flalign}\label{dAA}
&\varrho'(u_{P[1]})\star\varrho'(u_{M})=\sum\limits_{\begin{smallmatrix}[P'],[A],[I]\end{smallmatrix}}v^{x_2}|_{P'}\Hom_{\widetilde{\A}}(P,M)_{\tau A\oplus I}|u_{A}\star u_{I[-1]}\star u_{P'},
\end{flalign}
where $x_2=-\Lambda({\bf p}^\ast,{^*{\bf m}})+\Lambda({\bf p}^\ast+{(\tau^{-1}{\bf m}})^*,{{\bf a}}^*)+\lr{(\tau^{-1}{\bf m})-{\bf a},{\bf p}}+\Lambda({\bf i}^\ast,{{\bf p}'}^\ast).$

On the other hand, for each $F$ in \eqref{dzyygx1}, write $F=\tau A\oplus I$ such that $A$ has no nonzero projective direct summands and $I\in\I_{\widetilde{\A}}$, then we have
\begin{equation}\label{dffzs}
\begin{split}&q^{-\frac{1}{2}\Lambda({\bf p}^\ast,{\bf m}^\ast)-\lr{{\bf p},{\bf m}}}
\sum\limits_{[F],[P']}|{}_{P'}\Hom_{\widetilde{\A}}(P,M)_{F}|\varrho'(u_{F\oplus P'[1]})\\
&=\sum\limits_{[P'],[A],[I]}v^{y_2}|{}_{P'}\Hom_{\widetilde{\A}}(P,M)_{\tau A\oplus I}|u_{A}\star u_{Q[1]}\star u_{P'}\\
&=\sum\limits_{[P'],[A],[I]}v^{y_2}|{}_{P'}\Hom_{\widetilde{\A}}(P,M)_{\tau A\oplus I}|u_{A}\star u_{I[-1]}\star u_{P'},
\end{split}\end{equation}
where $y_2=-\Lambda({\bf p}^\ast,{\bf m}^\ast)-2\lr{{\bf p},{\bf m}}+\Lambda((\tau{\bf a}+{\bf i})^\ast,{{\bf p}'}^\ast)+\Lambda({\bf a}^\ast,{\bf q}^\ast)$ and $Q=\nu^{-1}(I)$.

Note that $(\tau^{-1}{\bf m})^*=-{^\ast{\bf m}}$, $\lr{\tau^{-1}{\bf m},{\bf p}}=-\lr{{\bf p},{\bf m}}$ and ${\bf m}-{\bf p}=\tau{\bf a}+{\bf i}-{\bf p}'$. Using \cite[Lemma 6.4]{CDZ} and Lemma \ref{sjishu}, we obtain $x_2=y_2$. Thus, the equations \eqref{dAA} and \eqref{dffzs} are equal. Hence, the relation \eqref{dfgx2} is preserved under the map $\varrho'$.

Therefore, $\varrho'$ is an algebra homomorphism.
\end{proof}

\begin{lemma}\label{yla3}
For any $J,J'\in\mathcal{I}_{\widetilde{\A}}$ such that $J'$ has no nonzero projective direct summands, we have the following equation in the twisted derived Hall algebra $\mathcal {D}\mathcal {H}_q(\A):$
\begin{flalign*}u_{J}\ast u_{\tau J'}=q^{-\lr{{\bf j}',{\bf j}}}
\sum\limits_{[P],[J'']}|{}_{P}\Hom_{\widetilde{\A}}(\nu^{-1}(J),J')_{J''}|u_{\nu(P)\oplus \tau J''}.\end{flalign*}
\end{lemma}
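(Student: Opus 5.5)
Set $P_0=\nu^{-1}(J)$. The idea is to move everything into degrees $[1]$ and $0$ using the triangle auto-equivalence $\overline{\tau}$, which is an algebra automorphism of $\mathcal{D}\mathcal{H}_q(\widetilde{\A})$ because the Euler form is $\overline{\tau}$-invariant. Since $\overline{\tau}(P_0[1])=\nu(P_0)=J$ and $J'$ has no nonzero projective summands, we have
\[
u_J=\overline{\tau}(u_{P_0[1]}),\qquad u_{\tau J'}=\overline{\tau}(u_{J'}).
\]
Hence
\[
u_J\ast u_{\tau J'}=\overline{\tau}\bigl(u_{P_0[1]}\ast u_{J'}\bigr),
\]
and it suffices to compute $u_{P_0[1]}\ast u_{J'}$ and then apply $\overline{\tau}$.

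The product $u_{P_0[1]}\ast u_{J'}$ is given by relation \eqref{ydchgx} of Proposition \ref{twistderived} with $i=0$, $M=P_0$, $N=J'$:
\[
u_{P_0[1]}\ast u_{J'}=q^{-\lr{{\bf p}_0,{\bf j}'}}\sum_{[X],[Y]}|{}_X\Hom_{\widetilde{\A}}(P_0,J')_Y|\,u_Y\ast u_{X[1]}.
\]
Every kernel $X$ of a map $P_0\to J'$ is projective, and every cokernel $Y$ is injective, since $\widetilde{\A}$ is hereditary. Writing $X=P$ and $Y=J''$, the product $u_{J''}\ast u_{P[1]}$ equals $u_{J''\oplus P[1]}$ by the observation $u_{M[i]}\ast u_{N[i+1]}=u_{M[i]\oplus N[i+1]}$. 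Applying $\overline{\tau}$ gives $\overline{\tau}(P[1])=\nu(P)$ and $\overline{\tau}(J'')=\tau J''$ for the part of $J''$ without projective summands. There is a subtlety: projective–injective summands of $J''$ would go to degree $-1$. Since $J''$ is a quotient of $J'$, and $J'$ has no projective summands, I expect this needs an argument. The natural one is that a projective-injective quotient of $J'$ would split off as a summand of $J'$, since it is injective, contradicting the hypothesis. So $\overline{\tau}(u_{J''\oplus P[1]})=u_{\nu(P)\oplus\tau J''}$.

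Finally, the exponent is matched using the Serre duality identity
\[
\lr{{\bf p}_0,{\bf j}'}=\lr{\nu^{-1}{\bf j},{\bf j}'}=\lr{{\bf j}',{\bf j}}.
\]
This follows from $\lr{P_0,X}=\dim\Hom(P_0,X)=\dim D\Hom(X,\nu P_0)=\lr{X,J}$. Replacing $P_0$ by $\nu^{-1}(J)$ in the Hom-set count then yields the stated formula.

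The main obstacle is the bookkeeping for projective–injective summands under $\overline{\tau}$, namely checking that $\overline{\tau}(u_{J''})=u_{\tau J''}$ really stays in degree $0$. The rest is a direct translation.
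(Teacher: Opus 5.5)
Your proof is correct and is essentially the paper's argument. The paper applies \eqref{ydchgx} to $u_{\nu^{-1}(J)}\ast u_{J'[-1]}$, transports by $\overline{\tau}$ and then shifts; you start one degree higher and skip the shift, using the same two facts: quotients $J''$ of $J'$ have no projective summands, and $\lr{\nu^{-1}{\bf j},{\bf j}'}=\lr{{\bf j}',{\bf j}}$.

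One small correction: a projective-injective quotient of $J'$ splits off because it is projective (an epimorphism onto a projective splits), not because it is injective.
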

\begin{proof}
Let $Q=\nu^{-1}(J)$. In $\mathcal {D}\mathcal {H}_q(\A)$, we have
\begin{flalign}\label{dcxgs}u_{Q}\ast u_{J'[-1]}=q^{-\lr{{\bf q},{\bf j}'}}
\sum\limits_{[P],[J'']}|{}_{P}\Hom_{\widetilde{\A}}(Q,J')_{J''}|u_{P\oplus J''[-1]}.\end{flalign}
Applying the algebra automorphism $\overline{\tau}$ of $\mathcal {D}\mathcal {H}_q(\A)$ to the equation \eqref{dcxgs}, we get
\begin{flalign*}u_{J[-1]}\ast u_{\tau J'[-1]}=q^{-\lr{{\bf q},{\bf j}'}}
\sum\limits_{[P],[J'']}|{}_{P}\Hom_{\widetilde{\A}}(Q,J')_{J''}|u_{\nu(P)[-1]\oplus \tau J''[-1]},\end{flalign*}
where we have used the condition that $J'$ has no nonzero projective direct summands.
By the shift automorphism of $\mathcal {D}\mathcal {H}_q(\A)$ and $\lr{{\bf q},{\bf j}'}=\lr{{\bf j}',{\bf j}}$, we finish the proof.
\end{proof}

\begin{lemma}\label{gjyla4}
Let $A,M,N,K\in\widetilde{\A}$ and $I, R\in\mathcal{I}_{\widetilde{\A}}$ such that $A, M$ have the same maximal projective direct summands and $I$ has no nonzero projective direct summands. Then we have
\begin{flalign*}
|_K\Hom_{\widetilde{\A}}(N,\tau M\oplus \tau I)_{\tau A\oplus R}|=&\sum_{\substack{[D],[B],[J], [J']\\ [I'], [P''],[J'']\\B\oplus J''\cong A,I'\oplus \nu(P'')\cong R}}q^{\lr{{\bf i},{\bf d}-{\bf n}}-\lr{{\bf j}',{\bf j}}}|_D\Hom_{\widetilde{\A}}(N,\tau M)_{\tau B\oplus J}|\\& \quad\quad|_K\Hom_{\widetilde{\A}}(D,\tau I)_{\tau J'\oplus I'}|\cdot
|_{P''}\Hom_{\widetilde{\A}}(Q,J')_{J''}|,
\end{flalign*}
where each $B$ has the same maximal projective direct summands as $M$.
\end{lemma}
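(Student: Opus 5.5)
This is an associativity computation in the twisted derived Hall algebra $\mathcal{D}\mathcal{H}_q(\widetilde{\A})$. It follows the pattern of Proposition \ref{prop:Hall-equality-P-I} and of the proof of Lemma \ref{lxdjkh}, with Lemma \ref{yla3} supplying the new ingredient. In the statement $Q$ means $\nu^{-1}(J)$.

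\textbf{Set-up.} The plan is to expand the product
\[
u_{N[1]}\ast u_{\tau M}\ast u_{\tau I}
\]
in two ways and compare coefficients of the basis element $u_{\tau A\oplus R\oplus K[1]}$. By relation \eqref{ydchgx}, the coefficient of $u_{Z'\oplus Z[1]}$ in $u_{X[1]}\ast u_Y$ is $q^{-\lr{X,Y}}|{}_Z\Hom_{\widetilde{\A}}(X,Y)_{Z'}|$.

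\textbf{First bracketing: $u_{N[1]}\ast(u_{\tau M}\ast u_{\tau I})$.} Since $\tau I$ is injective, $\Ext^1(\tau M,\tau I)=0$. Hence $u_{\tau M}\ast u_{\tau I}$ is a scalar multiple of $u_{\tau M\oplus\tau I}$. Applying \eqref{ydchgx} once more then produces exactly the left-hand side $|_K\Hom(N,\tau M\oplus\tau I)_{\tau A\oplus R}|$, up to a power of $q$ depending only on dimension vectors.

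\textbf{Second bracketing: $(u_{N[1]}\ast u_{\tau M})\ast u_{\tau I}$.} This is done in three steps.
\begin{itemize}
\item[(1)] Apply \eqref{ydchgx}. This gives the sum of $|_D\Hom(N,\tau M)_{\tau B\oplus J}|\, u_{\tau B\oplus J}\ast u_{D[1]}$, where the cokernel is written as $\tau B\oplus J$ with $J$ injective and $B$ sharing the projective summands of $M$; this is the standard decomposition \eqref{zhxl}.
\item[(2)] Commute $u_{D[1]}$ past $u_{\tau I}$ using \eqref{ydchgx} again. This yields $|_K\Hom(D,\tau I)_{\tau J'\oplus I'}|$. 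Because $I$ has no projective summands, the cokernel splits as $\tau J'\oplus I'$ with $J'$ injective.
\item[(3)] Collect the degree-zero terms $u_{\tau B}\ast u_J\ast u_{\tau J'}\ast u_{I'}$. Use Lemma \ref{yla3} to rewrite $u_J\ast u_{\tau J'}$ as the sum of $|_{P''}\Hom(Q,J')_{J''}|\,u_{\nu(P'')\oplus\tau J''}$.
\end{itemize}
The remaining products are all direct sums, since $\Ext^1$ into injectives vanishes and $\tau B$, $\tau J''$ carry no extensions against the injective pieces. So they reduce to $u_{\tau(B\oplus J'')\oplus I'\oplus\nu(P'')}$. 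Comparing coefficients, with $A=B\oplus J''$ and $R=I'\oplus\nu(P'')$, gives the stated indexing.

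\textbf{Expected obstacle.} The main difficulty is the $q$-exponent bookkeeping. One must check that the accumulated Euler-form terms reduce to $\lr{{\bf i},{\bf d}-{\bf n}}-\lr{{\bf j}',{\bf j}}$, using:
\begin{itemize}
\item the dimension relations ${\bf d}-{\bf n}=\tau{\bf b}+{\bf j}-\tau{\bf m}$ and ${\bf k}-{\bf d}=\tau{\bf j}'+{\bf i}'-\tau{\bf i}$;
\item the identity $\lr{\tau x,\tau y}=\lr{x,y}$;
\item the Serre-duality formula $\lr{X,\tau Y}=-\lr{Y,X}$.
\end{itemize}

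There is also a subtler point. The splittings in steps (2) and (3) must be genuine isomorphism-class identities, so that the map sending $(B,J'')$ to $A$ and $(I',P'')$ to $R$ is well-defined. The apparent non-uniqueness is absorbed by summing over all decompositions, as the statement's summation indicates.
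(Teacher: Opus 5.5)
Your route is the paper's own: expand $u_{N[1]}\ast u_{\tau M}\ast u_{\tau I}$ in $\mathcal{D}\mathcal{H}_q(\widetilde{\A})$ in both bracketings via \eqref{ydchgx}, use Lemma \ref{yla3} for $u_J\ast u_{\tau J'}$, and compare coefficients of $u_{\tau A\oplus R\oplus K[1]}$. However, two of the vanishing statements that let products collapse to direct sums are not justified correctly.

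\textbf{First bracketing.} You say $\tau I$ is injective. This is false. Since $I$ has no projective summands, $\tau I$ is nonzero and has no injective direct summands at all; for $1\to 2$, $\tau S_1=S_2$ is projective, not injective. The needed fact $\Ext^1_{\widetilde{\A}}(\tau M,\tau I)=0$ is still true, but it comes from Auslander--Reiten duality:
$\Ext^1_{\widetilde{\A}}(\tau M,\tau I)\cong{\rm D}\Hom_{\widetilde{\A}}(\tau^{-1}\tau I,\tau M)={\rm D}\Hom_{\widetilde{\A}}(I,\tau M)\cong\Ext^1_{\widetilde{\A}}(M,I)=0$.

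\textbf{Step (3).} After applying Lemma \ref{yla3} you must collapse $u_{\tau B}\ast u_{\nu(P'')\oplus\tau J''}$. This requires $\Ext^1_{\widetilde{\A}}(\tau B,\tau J'')=0$. Your remark about ``no extensions against the injective pieces'' does not cover it, because $\tau J''$ is not injective; you seem to treat $\tau$ of an injective as injective throughout. The paper states this vanishing explicitly and proves it by the same duality:
$\Ext^1_{\widetilde{\A}}(\tau B,\tau J'')\cong{\rm D}\Hom_{\widetilde{\A}}(J'',\tau B)\cong\Ext^1_{\widetilde{\A}}(B,J'')=0$.
This uses that $J''$, being a quotient of $J'$, has no projective summands, so $\tau^{-1}\tau J''\cong J''$.

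\textbf{Step (2).} The reason you give for $J'$ being injective is off. $J'$ is injective because it is a quotient of $I$ (\cite[Lemma 4.4]{Rupel2}, cf.\ Remark \ref{zhuji}). The hypothesis on $I$ only guarantees that $J'$ and $J''$ have no projective summands, which is what Lemma \ref{yla3} and $\tau^{-1}\tau J''\cong J''$ need.

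With these repairs the argument is complete. The exponent bookkeeping is then a single line. The comparison of coefficients yields $q^{\lr{\mathbf{n}-\mathbf{d},\tau\mathbf{i}}-\lr{\mathbf{j}',\mathbf{j}}}$, and $\lr{\mathbf{n}-\mathbf{d},\tau\mathbf{i}}=\lr{\mathbf{i},\mathbf{d}-\mathbf{n}}$ by $\lr{x,\tau y}=-\lr{y,x}$. The dimension relations you list are not needed.
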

\begin{proof}
Let us calculate $(u_{N[1]}\ast u_{\tau M})\ast u_{\tau I}$ in $\mathcal {D}\mathcal {H}_q(\A):$
by the relation \eqref{ydchgx}, we have
\begin{flalign*}
&(u_{N[1]}\ast u_{\tau M})\ast u_{\tau I}=\sum\limits_{[D],[B],[J]}q^{-\lr{{\bf n},\tau{\bf m}}}|{}_D\Hom_{\A}(N,\tau M)_{\tau B\oplus J}| u_{\tau B\oplus J}\ast u_{D[1]}\ast u_{\tau I}\\
&=\sum\limits_{\substack{[D],[B],[J]\\ [K],[J'],[I']}}q^{x_0}|{}_D\Hom_{\A}(N,\tau M)_{\tau B\oplus J}|\cdot|{}_K\Hom_{\A}(D,\tau I)_{\tau J'\oplus I'}| u_{\tau B\oplus J}\ast u_{\tau J'\oplus I'}\ast u_{K[1]},
\end{flalign*}
where $x_0=-\lr{{\bf n},\tau{\bf m}}-\lr{{\bf d},\tau{\bf i}}$, each $B$ has the same maximal projective direct summands as $M$ and $J,I'\in\mathcal{I}_{\widetilde{\A}}$. Remark that each $J'\in\mathcal{I}_{\widetilde{\A}}$ has no nonzero projective direct summands, since $I$ has no nonzero projective direct summands. Thus, we have
\begin{flalign*}
&(u_{N[1]}\ast u_{\tau M})\ast u_{\tau I}=\\&\sum\limits_{\substack{[D],[B],[J]\\ [K],[J'],[I']}}q^{x_0}|{}_D\Hom_{\A}(N,\tau M)_{\tau B\oplus J}|\cdot|{}_K\Hom_{\A}(D,\tau I)_{\tau J'\oplus I'}| u_{\tau B}\ast (u_{ J}\ast u_{\tau J'})\ast u_{ I'}\ast u_{K[1]}.
\end{flalign*}
By Lemma \ref{yla3}, we obtain
\begin{flalign*}
(u_{N[1]}\ast u_{\tau M})\ast u_{\tau I}=\sum\limits_{\substack{[D],[B],[J],[K]\\ [J'],[I'],[P''],[J'']}}q^{x_1}&|{}_D\Hom_{\A}(N,\tau M)_{\tau B\oplus J}|\cdot|{}_K\Hom_{\A}(D,\tau I)_{\tau J'\oplus I'}|\\& |{}_{P''}\Hom_{\A}(Q,J')_{J''}| u_{\tau B}\ast u_{\tau J''}\ast u_{I'' }\ast u_{ I'}\ast u_{K[1]},
\end{flalign*}
where $x_1=x_0-\lr{{\bf j}',{\bf j}}$, $Q=\nu^{-1}(J)$, $I''=\nu(P'')$ and each $J''\in\mathcal{I}_{\widetilde{\A}}$ has no nonzero projective direct summands.
Noting that $$\Ext^1_{\widetilde{\A}}(\tau B,\tau J'')\cong {\rm D}\Hom_{\widetilde{\A}}(J'',\tau B)\cong\Ext^1_{\widetilde{\A}}(B,J'')=0,$$ we get
\begin{flalign*}
(u_{N[1]}\ast u_{\tau M})\ast u_{\tau I}=\sum\limits_{\substack{[D],[B],[J],[K]\\ [J'],[I'],[P''],[J'']}}q^{x_1}&|{}_D\Hom_{\A}(N,\tau M)_{\tau B\oplus J}|\cdot|{}_K\Hom_{\A}(D,\tau I)_{\tau J'\oplus I'}|\\& |{}_{P''}\Hom_{\A}(Q,J')_{J''}| u_{\tau B\oplus\tau J''\oplus I'\oplus I''\oplus K[1]}.
\end{flalign*}

On the other hand, since $\Ext^1_{\widetilde{\A}}(\tau M,\tau I)=0$, we have
\begin{flalign*}
u_{N[1]}\ast (u_{\tau M}\ast u_{\tau I})&=u_{N[1]}\ast (u_{\tau M\oplus \tau I})\\
&=q^{-\lr{{\bf n},\tau({\bf m}+{\bf i})}}\sum\limits_{[K],[A],[R]}|{}_K\Hom_{\A}(N,\tau M\oplus \tau I)_{\tau A\oplus R}| u_{\tau A\oplus R\oplus K[1]},
\end{flalign*}
where each $A$ has the same maximal projective direct summands as $M$.
By the associativity of derived Hall algebras, comparing the coefficients of the term $u_{\tau A\oplus R\oplus K[1]}$, we get the desired formula.
\end{proof}

Let ${\mathfrak{I}''}$ be the two sided ideal of $\mathcal {D}\mathcal {H}_\Lambda^{{cl}_1}(\widetilde{\A})$ generated by the elements
$$\{r_{I,N}, r_{M,N}~|~M,N\in{\widetilde{\A}}, I\in \mathcal{I}_{\widetilde{\A}}\ \text{and $M$ has no nonzero injective direct summands}\}.$$

\begin{lemma}\label{dlxdjkh}
It holds that $\mathfrak{I}={\mathfrak{I}''}$.
\end{lemma}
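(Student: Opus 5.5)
The inclusion $\mathfrak{I}''\subseteq\mathfrak{I}$ is trivial, because every generator of $\mathfrak{I}''$ is some $r_{X,Y}$. So the work is the reverse inclusion, and I would model it on the proof of Lemma \ref{lxdjkh}, with projectives replaced by injectives and using the first argument instead of the second. Take $M,N\in\widetilde{\A}$ and write $M=M_0\oplus I$, where $I\in\I_{\widetilde{\A}}$ is the maximal injective direct summand of $M$. It suffices to show that $r_{M_0\oplus I,N}$ vanishes in the quotient $\mathcal{D}\mathcal{H}_\Lambda^{cl_1}(\widetilde{\A})/\mathfrak{I}''$. There is a harmless side issue: $I$ may have projective-injective summands, which I would treat by the same bookkeeping as in Lemma \ref{gjyla4}. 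To keep the plan readable I first assume that $I$ has no nonzero projective direct summands.

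The computation runs as follows. In the quotient, write
\[u_{M\oplus N}=v^{\Lambda(\mathbf{i}^\ast,(\mathbf{m}_0+\mathbf{n})^\ast)}\,u_I\star u_{M_0\oplus N},\]
using the twisting and $\Ext^1(I,-)=0$. Then apply $r_{I,\,M_0\oplus N}=0$, which is a generator of $\mathfrak{I}''$. It expands $u_{I\oplus(M_0\oplus N)}$ as a sum of terms $u_{A_1}\star u_{D_1\oplus J_1[-1]}$ indexed by ${}_{D_1}\Hom(M_0\oplus N,\tau I)_{\tau A_1\oplus J_1}$.

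This goes the wrong way: the relation we want has $\Hom(N,\tau M)$, not $\Hom(M_0\oplus N,\tau I)$. So I would proceed in two steps instead. First expand $u_{M_0\oplus N}$ by $r_{M_0,N}$, which is allowed since $M_0$ has no injective summands. This produces terms $u_{B}\star u_{D\oplus J[-1]}$ with coefficients $|{}_D\Hom(N,\tau M_0)_{\tau B\oplus J}|$. Then multiply on the left by $u_I$ and combine $u_I\star u_B$. Since $I$ is injective, $u_I\star u_B$ is a scalar times $u_{I\oplus B}$. I would then expand $u_{I\oplus B}\star u_{D\oplus J[-1]}$ back as a single $u_{A}\star u_{\cdots}$ by using $r_{I,D}$ (or $r_{I,\cdot}$ on suitable objects). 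Using $u_{I'[-1]}=u_{\nu^{-1}I'[1]}$ together with \eqref{lgx7} and \eqref{lgx6}, the leftover product $u_{J[-1]}\star u_{\cdots}$ is rewritten into normal form $u_A\star u_{K\oplus R[-1]}$.

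After these expansions, the coefficient of each normal-form term is a triple sum of products of Hom-counts. These are exactly of the shape
\[|{}_D\Hom(N,\tau M)_{\tau B\oplus J}|\cdot|{}_K\Hom(D,\tau I)_{\tau J'\oplus I'}|\cdot|{}_{P''}\Hom(\nu^{-1}J,J')_{J''}|,\]
which is the reason Lemma \ref{yla3} and Lemma \ref{gjyla4} were prepared. Lemma \ref{gjyla4} collapses this sum, with its power of $q$, into the single count $|{}_K\Hom(N,\tau M_0\oplus\tau I)_{\tau A\oplus R}|$. That count is the coefficient required in $r_{M,N}$, because $\tau M=\tau M_0\oplus\tau I$ on the nonprojective part.

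It then remains to check that the accumulated exponent of $v$ equals $\Lambda((\mathbf{m}+\mathbf{n})^\ast,\mathbf{a}^\ast)+\lr{\mathbf{m}-\mathbf{a},\mathbf{n}}$ plus the correction $q^{\lr{\mathbf{i},\mathbf{d}-\mathbf{n}}-\lr{\mathbf{j}',\mathbf{j}}}$ coming from Lemma \ref{gjyla4}. I would check this with Lemma \ref{sjishu}, \cite[Lemma 6.4]{CDZ}, $B(\widetilde{Q})\mathbf{x}=\mathbf{x}^\ast-{}^\ast\mathbf{x}$, and the dimension relations
\[\mathbf{d}-\mathbf{j}=\mathbf{n}+\tau\mathbf{b}-\tau\mathbf{m}_0,\qquad \mathbf{k}-\mathbf{j}'-\mathbf{i}'=\mathbf{d}-\tau\mathbf{i}.\]
The main obstacle is this exponent bookkeeping. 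The less obvious point is arranging the intermediate rewriting so that the Hall-number sum matches Lemma \ref{gjyla4} exactly. This includes using $\Ext^1(\tau B,\tau J'')=0$ to merge $u_{\tau B}\star u_{\tau J''}$, and making sure that the projective summands of $A$ and $M$ agree. I would try to derive this directly by applying the quotient-compatible form of $\overline{\tau}$ (the map $\sigma$ from Theorem \ref{zmainthm}) to a rearranged version of Lemma \ref{lxdjkh}. If that fails, I would fall back on the brute-force expansion above.
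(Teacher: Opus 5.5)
Your overall skeleton matches the paper's proof: reduce to $I$ without projective summands, expand with $r_{M_0,N}$, then with $r_{I,D}$, then rewrite $u_{J[-1]}\star u_{J'}$ through $u_{J[-1]}=u_{\nu^{-1}J[1]}$ and \eqref{lgx7}, and collapse with Lemma \ref{gjyla4}. The concrete rewriting you propose, however, rests on a false vanishing. You use $\Ext^1_{\widetilde{\A}}(I,-)=0$ twice: to write $u_{M\oplus N}$ as a multiple of $u_I\star u_{M_0\oplus N}$, and to merge $u_I\star u_B$ into $u_{I\oplus B}$. Injectivity gives $\Ext^1_{\widetilde{\A}}(-,I)=0$, not $\Ext^1_{\widetilde{\A}}(I,-)=0$. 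In fact $\Ext^1_{\widetilde{\A}}(I,X)\cong{\rm D}\Hom_{\widetilde{\A}}(X,\tau I)$, which is nonzero already for $X=\tau I\neq 0$. So by \eqref{lgx3}, $u_I\star u_X$ is a genuine sum over extensions and not a multiple of $u_{I\oplus X}$. What holds is $u_X\star u_I=v^{\Lambda(\mathbf{x}^\ast,\mathbf{i}^\ast)}u_{X\oplus I}$, so $u_I$ must sit on the right. The correct version of the vanishing you wanted, $\Ext^1(P,-)=0$ for $P$ projective, also settles the projective--injective side issue at once: $r_{P\oplus I\oplus M,N}=v^{-\Lambda(\mathbf{p}^\ast,(\mathbf{i}+\mathbf{m}+\mathbf{n})^\ast)}u_P\star r_{I\oplus M,N}$.

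The position of $u_I$ is not cosmetic, because it decides which relations can be applied. If you merge $u_I$ into $u_B$, then $I$ lands in the slot of $A$, whereas Lemma \ref{gjyla4} has $A\cong B\oplus J''$. Its middle factor $|{}_K\Hom_{\widetilde{\A}}(D,\tau I)_{\tau J'\oplus I'}|$ can only come from applying $r_{I,D}$ to $u_{I\oplus D}$. In $u_{I\oplus B}\star u_{J[-1]}\star u_D$ the factors $u_I$ and $u_D$ are not adjacent, so your plan gives no way to use $r_{I,D}$. The working order is as follows.
\begin{enumerate}
\item Write $u_{I\oplus M\oplus N}=v^{-\Lambda((\mathbf{m}+\mathbf{n})^\ast,\mathbf{i}^\ast)}u_{M\oplus N}\star u_I$ and expand $u_{M\oplus N}$ by $r_{M,N}$.
\item By \eqref{lgx4} and $\Ext^1(D,I)=0$, $u_{D\oplus J[-1]}\star u_I$ is a multiple of $u_{J[-1]}\star u_{I\oplus D}$.
\item Expand $u_{I\oplus D}$ by $r_{I,D}$ into terms $u_{J'}\star u_{I'[-1]}\star u_K$.
\item Rewrite $u_{J[-1]}\star u_{J'}=u_{\nu^{-1}J[1]}\star u_{J'}$ by \eqref{lgx7}. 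This yields the third factor $|{}_{P''}\Hom(\nu^{-1}J,J')_{J''}|$ and terms $u_{J''}\star u_{P''[1]}$.
\item Merge $u_B\star u_{J''}$, which is legitimate because $J''$ is injective and on the right, and use $u_{P''[1]}=u_{\nu(P'')[-1]}$.
\item Finish with Lemma \ref{gjyla4}, taking $A=B\oplus J''$ and $R=I'\oplus\nu(P'')$.
\end{enumerate}
Your fallback via $\sigma$ is not usable. It is defined on $\mathcal{D}\mathcal{H}_\Lambda^{cl}(\widetilde{\A})$, where $\mathfrak{I}$ is already zero, so it cannot detect whether $\mathfrak{I}\subseteq\mathfrak{I}''$. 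Moreover, its invertibility is proved through $\sigma'$, whose construction uses this very lemma. A minor point: your second dimension relation should read $\mathbf{k}+\tau\mathbf{i}=\mathbf{d}+\tau\mathbf{j}'+\mathbf{i}'$.
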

\begin{proof}
It suffices to show that $\mathfrak{I}\subseteq \mathfrak{I}''$. Let $M,N\in \widetilde{\A}$ and $I\in \mathcal{I}_{\widetilde{\A}}$ such that $M$ has no nonzero injective direct summands. It remains to show that $r_{I\oplus M,N}$ belongs to $\mathfrak{I}''$.

First of all, it is easy to see
$$r_{P\oplus I\oplus M,N}=v^{-\Lambda({\bf p}^\ast,{\bf i}^\ast+{\bf m}^\ast+{\bf n}^\ast)}u_P\star r_{I\oplus M,N}$$ for any $P\in \mathcal{P}_{\widetilde{\A}}$.
Hence, for proving $r_{I\oplus M,N}\in\mathfrak{I}''$, without loss of generality, we can assume that $I$ has no nonzero projective direct summands.

In the quotient algebra $\mathcal {D}\mathcal {H}_\Lambda^{{cl}_1}(\widetilde{\A})/\mathfrak{I}''$, we have
\begin{equation*}
\begin{split}
&u_{I\oplus M\oplus N}
=v^{-\Lambda((\mathbf{m}+\mathbf{n})^\ast, \mathbf{i}^\ast)} u_{M\oplus N}\star u_I\\
&=v^{-\Lambda((\mathbf{m}+\mathbf{n})^\ast, \mathbf{i}^\ast)} (\sum_{[D],[B],[J]}v^{\Lambda((\mathbf{m}+\mathbf{n})^\ast,\mathbf{b}^\ast)+\langle \mathbf{m}-\mathbf{b},\mathbf{n}\rangle}|_D\Hom_{\widetilde{\A}}(N,\tau M)_{\tau B\oplus J}|u_B\star u_{D\oplus J[-1]})\star u_I\\
&=\sum_{[D],[B],[J]}v^{\Lambda((\mathbf{m}+\mathbf{n})^\ast,(\mathbf{b}-\mathbf{i})^\ast)+\langle \mathbf{m}-\mathbf{b},\mathbf{n}\rangle+\Lambda({\bf j}^\ast,{\bf d}^\ast)+\Lambda({\bf d}^\ast,{\bf i}^\ast)}|_D\Hom_{\widetilde{\A}}(N,\tau M)_{\tau B\oplus J}|u_B\star u_{J[-1]}\star u_{I\oplus D},
\end{split}\end{equation*}
where each $B$ has the same maximal projective direct summand as $M$ and $J\in\mathcal{I}_{\widetilde{\A}}$. Moreover,
\begin{equation*}
\begin{split}
u_{I\oplus D}=\sum_{[K],[J'],[I']}v^{\Lambda(({\bf i}+{\bf d})^\ast,{{\bf j}'}^\ast)+\lr{{\bf i}-{\bf j}',{\bf d}}+\Lambda({{\bf i}'}^\ast,{\bf k}^\ast)}|_K\Hom_{\widetilde{\A}}(D,\tau I)_{\tau J'\oplus I'}|u_{J'}\star u_{I'[-1]}\star u_K,
\end{split}\end{equation*}
where each $J'\in\mathcal{I}_{\widetilde{\A}}$ has no nonzero projective direct summands and $I'\in\mathcal{I}_{\widetilde{\A}}$,
and
\begin{equation*}
\begin{split}
u_{J[-1]}\star u_{J'}=u_{Q[1]}\star u_{J'}
=\sum_{[P''],[J'']}v^{-\Lambda({\bf q}^\ast,{{\bf j}'}^\ast)+\Lambda({{\bf j}''}^\ast,{{\bf p}''}^\ast)-2\lr{{\bf q},{\bf j}'}}|_{P''}\Hom_{\widetilde{\A}}(Q,J')_{J''}|u_{J''}\star u_{P''[1]},
\end{split}\end{equation*}
where $Q=\nu^{-1}(J)$ and $J''\in\mathcal{I}_{\widetilde{\A}}$ has no nonzero projective direct summands. Hence, we obtain
\begin{equation*}
\begin{split}
u_{I\oplus M\oplus N}
=\sum_{\substack{[D],[B],[J], [K]\\ [J'],[I'], [P''],[J'']}}&v^{x'_0}|_D\Hom_{\widetilde{\A}}(N,\tau M)_{\tau B\oplus J}|\cdot |_K\Hom_{\widetilde{\A}}(D,\tau I)_{\tau J'\oplus I'}|\\
&|_{P''}\Hom_{\widetilde{\A}}(Q,J')_{J''}| u_B\star u_{J''}\star u_{P''[1]}\star u_{I'[-1]}\star u_K,
\end{split}\end{equation*}
where $x'_0=\Lambda((\mathbf{m}+\mathbf{n})^\ast,(\mathbf{b}-\mathbf{i})^\ast)+\langle \mathbf{m}-\mathbf{b},\mathbf{n}\rangle+\Lambda({\bf j}^\ast,{\bf d}^\ast)+\Lambda({\bf d}^\ast,{\bf i}^\ast)+\Lambda(({\bf i}+{\bf d})^\ast,{{\bf j}'}^\ast)+\lr{{\bf i}-{\bf j}',{\bf d}}+\Lambda({{\bf i}'}^\ast,{\bf k}^\ast)-\Lambda({\bf q}^\ast,{{\bf j}'}^\ast)+\Lambda({{\bf j}''}^\ast,{{\bf p}''}^\ast)-2\lr{{\bf q},{\bf j}'}.$
Thus, we have
\begin{equation*}
\begin{split}
u_{I\oplus M\oplus N}
=\sum_{\substack{[D],[B],[J], [K]\\ [J'],[I'], [P''],[J'']}}&v^{x_0}|_D\Hom_{\widetilde{\A}}(N,\tau M)_{\tau B\oplus J}|\cdot |_K\Hom_{\widetilde{\A}}(D,\tau I)_{\tau J'\oplus I'}|\\
&|_{P''}\Hom_{\widetilde{\A}}(Q,J')_{J''}| u_{B\oplus J''}\star u_{K\oplus(I'\oplus I'')[-1]},
\end{split}\end{equation*}
where $x_0=x'_0+\Lambda({\bf b}^\ast,{{\bf j}''}^\ast)+\Lambda({{\bf i}''}^\ast,{{\bf i}'}^\ast)-\Lambda({{\bf i}'}^\ast+{{\bf i}''}^\ast,{\bf k}^\ast)$ and $I''=\nu(P'')$.

Set $x_1=x_0-2\lr{{\bf i},{\bf d}-{\bf n}}+2\lr{{\bf j}',{\bf j}}$ and note that
\[
{\bf d}+{\bf \tau m}={\bf n}+{\bf \tau b}+{\bf j},~{\bf k}+ \tau{\bf i}={\bf d}+\tau{\bf  j}'+{\bf i}'~\text{and}~{\bf p}''+{\bf j}'={\bf q}+{\bf j}''.
\]
Then we have
\begin{equation}\label{fduan}
\begin{split}
&\Lambda({\bf i}^\ast+{\bf d}^\ast,{{\bf j}'}^\ast)+{\Lambda({{\bf i}'}^\ast,{\bf k}^\ast)}-\Lambda({\bf q}^\ast,{{\bf j}'}^\ast)+\Lambda({{\bf j}''}^\ast,{{\bf p}''}^\ast)+\Lambda({\bf b}^\ast,{{\bf j}''}^\ast)+\Lambda({{\bf i}''}^\ast,{{\bf i}'}^\ast)-{\Lambda({{\bf i}'}^\ast+{{\bf i}''}^\ast,{\bf k}^\ast)}\\
&=\Lambda({\bf i}^\ast+{\bf d}^\ast,{{\bf j}'}^\ast)-\Lambda({\bf q}^\ast,{{\bf j}'}^\ast)+\Lambda({{\bf j}''}^\ast,{{\bf p}''}^\ast)+\Lambda({\bf b}^\ast,{{\bf j}''}^\ast)+{\Lambda({{\bf i}''}^\ast,{{\bf i}'}^\ast)}-{\Lambda({{\bf i}''}^\ast,{\bf k}^\ast)}\\
&=\Lambda({\bf i}^\ast+{\bf d}^\ast,{{\bf j}'}^\ast)-\Lambda({\bf q}^\ast,{{\bf j}'}^\ast)+\Lambda({{\bf j}''}^\ast,{{\bf p}''}^\ast)+\Lambda({\bf b}^\ast,{{\bf j}''}^\ast)+\Lambda({{\bf i}''}^\ast, (\tau{\bf i}-{\bf d}-\tau {{\bf j}'})^\ast)\\
&=\Lambda({\bf i}^\ast+{\bf d}^\ast,{{\bf j}'}^\ast)-{\Lambda({\bf q}^\ast,{{\bf j}'}^\ast)}+\Lambda({{\bf j}''}^\ast,{{\bf p}''}^\ast)+\Lambda({\bf b}^\ast,{{\bf j}''}^\ast)+{\Lambda({{\bf p}''}^\ast,-{\bf i}^\ast- {^\ast {\bf d}}+{{\bf j}'}^\ast)}\\
&=\Lambda({\bf i}^\ast+{\bf d}^\ast,{{\bf j}'}^\ast)+\Lambda({{\bf j}''}^\ast,{{\bf p}''}^\ast)+\Lambda({\bf b}^\ast,{{\bf j}''}^\ast)+{\Lambda(({\bf p}''-{\bf q})^\ast,{{\bf j}'}^\ast)}-\Lambda({{\bf p}''}^\ast,{\bf i}^\ast)-\Lambda({{\bf p}''}^\ast, {{^\ast {\bf d}}})\\
&=\Lambda({\bf i}^\ast+{\bf d}^\ast,{{\bf j}'}^\ast)+{\Lambda({{\bf j}''}^\ast,{{\bf p}''}^\ast)}+\Lambda({\bf b}^\ast,{{\bf j}''}^\ast)+ {\Lambda({{\bf j}''}^\ast,{{\bf j}'}^\ast)}-\Lambda({{\bf p}''}^\ast,{\bf i}^\ast)-\Lambda({{\bf p}''}^\ast, {^\ast {\bf d}})\\
&=\Lambda({\bf i}^\ast+{\bf d}^\ast,{{\bf j}'}^\ast)+{\Lambda({\bf b}^\ast,{{\bf j}''}^\ast)}+{\Lambda({{\bf j}''}^\ast,{\bf q}^\ast)}-\Lambda({{\bf p}''}^\ast,{\bf i}^\ast)-\Lambda({{\bf p}''}^\ast,{^\ast {\bf d}})\\
&=\Lambda({\bf i}^\ast+{\bf d}^\ast,{{\bf j}'}^\ast)+{\Lambda({\bf b}^\ast,{{\bf j}''}^\ast)}+{\Lambda({{\bf j}''}^\ast,{^\ast{\bf j}})}-\Lambda({{\bf p}''}^\ast,{\bf i}^\ast)-\Lambda({{\bf p}''}^\ast,{^\ast {\bf d}}).
\end{split}\end{equation}
Since ${^\ast{\bf j}}={^\ast({\bf d}-{\bf n})}+{^\ast(\tau({\bf m}-{\bf b}))}={^\ast({\bf d}-{\bf n})}-({\bf m}-{\bf b})^\ast$, we get
\begin{flalign*} \Lambda({\bf b}^\ast,{{\bf j}''}^\ast)+\Lambda({{\bf j}''}^\ast,{^\ast{\bf j}})&=\Lambda({\bf b}^\ast,{{\bf j}''}^\ast)+\Lambda({{\bf j}''}^\ast,{^\ast({\bf d}-{\bf n})})-\Lambda({{\bf j}''}^\ast,({\bf m}-{\bf b})^\ast)\\
&=\Lambda({{\bf j}''}^\ast,{^\ast({\bf d}-{\bf n})})-\Lambda({{\bf j}''}^\ast,{\bf m}^\ast).\end{flalign*}
Thus, the equations \eqref{fduan} are equal to the following
\begin{flalign*}
&\Lambda({\bf i}^\ast+{\bf d}^\ast,{{\bf j}'}^\ast)+\Lambda({{\bf j}''}^\ast,{^\ast({\bf d}-{\bf n})})-\Lambda({{\bf j}''}^\ast,{\bf m}^\ast)-\Lambda({{\bf p}''}^\ast,{\bf i}^\ast)-\Lambda({{\bf p}''}^\ast,{^\ast {\bf d}})\\
&=\Lambda({\bf i}^\ast,{{\bf j}'}^\ast)+\Lambda({^\ast{\bf d}},{{\bf j}'}^\ast)+\lr{{\bf j}',{\bf d}}+\Lambda({{\bf j}''}^\ast,{^\ast{\bf d}})-\Lambda({{\bf j}''}^\ast,{^\ast{\bf n}})\\&\quad-\Lambda({{\bf j}''}^\ast,{\bf m}^\ast)+\Lambda({\bf i}^\ast,{{\bf p}''}^\ast)-\Lambda({{\bf p}''}^\ast,{^\ast{\bf d}})\\
&=\Lambda({\bf i}^\ast,{{\bf j}'}^\ast+{{\bf p}''}^\ast)+\Lambda(({\bf j}''-{\bf p}''-{\bf j}')^\ast,{^\ast{\bf d}})+\lr{{\bf j}',{\bf d}}-\Lambda({{\bf j}''}^\ast,{^\ast{\bf n}})-\Lambda({{\bf j}''}^\ast,{\bf m}^\ast)\\
&=\Lambda({\bf i}^\ast,{{\bf q}}^\ast+{{\bf j}''}^\ast)-\Lambda({\bf j}^\ast,{\bf d}^\ast)+\lr{{\bf j}',{\bf d}}-\Lambda({{\bf j}''}^\ast,{^\ast{\bf n}})-\Lambda({{\bf j}''}^\ast,{\bf m}^\ast).
\end{flalign*}
Set $A=B\oplus J''$ and $R=I'\oplus I''$. Note that ${\bf a}={\bf b}+{\bf {j''}}$ and $\langle {\bf q},{\bf j'}\rangle =\lr{{\bf j}',{\bf j}}$. Putting all of these together, we compute
\begin{flalign*}
&x_1-\Lambda(({\bf m}+{\bf i}+{\bf n})^*,{\bf a}^*)-\lr{{\bf m}+{\bf i}-{\bf a},{\bf n}}\\
&=-\Lambda({\bf m}^\ast+{\bf n}^\ast,{\bf i}^\ast)-{\Lambda({\bf m}^\ast+{\bf n}^\ast,{{\bf j}''}^\ast)}-\Lambda({\bf i}^\ast,{\bf b}^\ast+{{\bf j}''}^\ast)+\Lambda({\bf d}^\ast,{\bf i}^\ast)+\Lambda({\bf i}^\ast,{\bf q}^\ast+{{\bf j}''}^\ast)\\
&\quad -{\Lambda({{\bf j}''}^\ast,{^\ast{\bf  n}})}-{\Lambda({{\bf j}''}^\ast,{\bf m}^\ast)}-\lr{{\bf i},{\bf d}} +\lr{ {\bf i},{\bf n}}+\lr{ {{\bf j}''},{\bf n}}\\
&=-\Lambda({\bf m}^\ast+{\bf n}^\ast,{\bf i}^\ast)-\Lambda({\bf i}^\ast,{\bf b}^\ast+{{\bf j}''}^\ast)+\Lambda({\bf d}^\ast,{\bf i}^\ast)+\Lambda({\bf i}^\ast,{\bf q}^\ast+{{\bf j}''}^\ast)-\lr{ {\bf i},{\bf d}} +\lr{ {\bf i},{\bf n}}\\
&=\Lambda({\bf i}^\ast,({\bf m}-{\bf b})^\ast+({\bf n}-{\bf d})^\ast+{\bf q}^\ast)-\lr{ {\bf i},{\bf d}} +\lr{ {\bf i},{\bf n}}\\
&=-\Lambda({\bf i}^\ast,{^\ast(\tau{\bf m}-\tau{\bf b})})+\Lambda({\bf i}^\ast,{^\ast({\bf n}-{\bf d})})-\lr{{\bf i},{\bf n}-{\bf d}}+\Lambda({\bf i}^\ast,{^\ast{\bf j}})-\lr{ {\bf i},{\bf d}} +\lr{ {\bf i},{\bf n}}\\
&=0.
\end{flalign*}
That is,
\begin{flalign*}
x_1=\Lambda(({\bf m}+{\bf i}+{\bf n})^*,{\bf a}^*)+\lr{{\bf m}+{\bf i}-{\bf a},{\bf n}}.
\end{flalign*}
Thus, by Lemma \eqref{gjyla4}, we obtain
\begin{equation*}
\begin{split}
u_{I\oplus M\oplus N}
=\sum\limits_{\begin{smallmatrix}[K],[A],[R]\end{smallmatrix}}v^{\Lambda(({\bf m}+{\bf i}+{\bf n})^*,{\bf a}^*)+\lr{{\bf m}+{\bf i}-{\bf a},{\bf n}}}|_K\Hom_{\widetilde{\A}}(N,\tau M\oplus \tau I)_{\tau A\oplus R}|u_{A}\star u_{K\oplus R[-1]},
\end{split}\end{equation*}
where each $A$ has the same maximal projective direct summand as $M$ and $R\in\I_{\widetilde{\A}}$.

\end{proof}

\begin{proposition}
The map $\sigma':\mathcal {D}\mathcal {H}_\Lambda^{{cl}}(\widetilde{\A})\longrightarrow\mathcal {D}\mathcal {H}_\Lambda^{{cl}}(\widetilde{\A})$ defined by
$$\sigma'(u_{I[-1]\oplus M\oplus P[1]})=v^{\Lambda({\bf i}^\ast,({\bf m}-{\bf p})^\ast)+\Lambda({\bf m}^\ast,{\bf p}^\ast)}u_{\nu^{-1}I}\star u_{\tau^{-1} M'\oplus \nu^{-1}(I')[1]}\star u_{P}$$ for any $M=M'\oplus I'\in\widetilde{\A}$, $I\in\I_{\widetilde{\A}}$ and $P\in\P_{\widetilde{\A}}$, where $I'$ is the maximal injective direct summand of $M$, is an endomorphism of algebras.
\end{proposition}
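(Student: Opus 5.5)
The plan mirrors the proof of Theorem \ref{zmainthm} for $\sigma$, with the roles of projectives and injectives exchanged. By the preceding proposition, $\varrho':\mathcal{D}\mathcal{H}_\Lambda^{ec}(\widetilde{\A})\to\mathcal{D}\mathcal{H}_\Lambda^{cl}(\widetilde{\A})$ is already an algebra homomorphism. It therefore suffices to show that $\varrho'$ kills $\mathfrak{I}_1$ and $\mathfrak{I}$; then $\sigma'$ is the induced map on the double quotient.

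\textbf{Step 1: the ideal $\mathfrak{I}_1$.} By definition, $\varrho'(u_{I[-1]})=u_{\nu^{-1}I}$. Taking $I'=\nu(P)$ as the maximal injective summand of $M=0\oplus I'$, we also get $\varrho'(u_{P[1]})=u_{P}$ for $P=\nu^{-1}(I)$. Hence $\varrho'(u_{\nu^{-1}(I)[1]}-u_{I[-1]})=0$, and $\varrho'$ induces a homomorphism $\theta':\mathcal{D}\mathcal{H}_\Lambda^{cl_1}(\widetilde{\A})\to\mathcal{D}\mathcal{H}_\Lambda^{cl}(\widetilde{\A})$.

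\textbf{Step 2: reduce $\mathfrak{I}$ to generators.} By Lemma \ref{dlxdjkh}, $\mathfrak{I}=\mathfrak{I}''$, so it is enough to check two families of vanishing:
\begin{itemize}
\item $\theta'(r_{M,N})=0$ for $M$ without injective summands, with $N$ arbitrary;
\item $\theta'(r_{I,N})=0$ for $I\in\I_{\widetilde{\A}}$.
\end{itemize}

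\textbf{Step 3: the case $r_{M,N}$ with $M$ free of injective summands.} Write $N=N'\oplus J'$ with $J'$ the maximal injective summand, so that $\theta'(u_{M\oplus N})$ is a power of $v$ times $u_{\tau^{-1}M\oplus\tau^{-1}N'\oplus \nu^{-1}(J')[1]}$. Expand $u_{\tau^{-1}M\oplus\tau^{-1}N'}$ with the $\mathfrak{I}$-relation in the target; the Hom-sets appearing are $\Hom(\tau^{-1}N',M)$-type sets. Also compute $\theta'$ applied to each term $u_A\star u_{D\oplus I[-1]}$ of $r_{M,N}$, using $u_{J[-1]}=u_{\nu^{-1}J[1]}$ together with \eqref{lgx5}, \eqref{lgx4} and \eqref{lgx3}. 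The two sides are then matched term by term in three moves:
\begin{itemize}
\item identify the Hall counts via $\overline{\tau}^{-1}$ on $\mathcal{D}^b(\widetilde{\A})$, in the same way as the chain of equalities in the $\theta(r_{M,N})$ case of Theorem \ref{zmainthm};
\item absorb the extra shifted summand using \cite[Lemma 2.5]{PX} or Proposition \ref{prop:Hall-equality-P-I};
\item check equality of the $v$-exponents using Lemma \ref{sjishu}, \cite[Lemma 6.4]{CDZ} and $(\tau^{-1}\mathbf{x})^\ast=-{^\ast\mathbf{x}}$.
\end{itemize}

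\textbf{Step 4: the case $r_{I,N}$.} This is the analogue of $r_{M,Q'}$ in Theorem \ref{zmainthm}. Here $\theta'(u_{I\oplus N})$ contains $u_{\nu^{-1}(I)[1]}$, which equals $u_{I[-1]}$ in the quotient. Move it past $u_{\tau^{-1}N'}$ using \eqref{lgx6} (dually \eqref{lgx7}); this produces exactly the sets ${}_{D}\Hom(N,\tau I)_{\tau A\oplus R}$ up to $\overline{\tau}^{-1}$. Compare with $\sum \theta'(u_A)\star\theta'(u_{D\oplus R[-1]})$, noting that $A$ is a quotient of $I$ and hence injective, and finish with an exponent identity as before.

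\textbf{Main obstacle.} The expected hardest part is the bookkeeping of which summands are projective versus injective after applying $\tau^{-1}$, and in particular the summands $\tau^{-1}$ sends into $\P$-shifts, together with the exponent verification. For the latter, one would first rewrite every exponent in terms of $^\ast$-vectors via $B(\widetilde{Q})\mathbf{x}=\mathbf{x}^\ast-{^\ast\mathbf{x}}$ and reduce to Euler-form identities.
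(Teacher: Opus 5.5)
Your proposal is correct and follows the paper's proof essentially step for step. The paper likewise kills $\mathfrak{I}_1$ and reduces via Lemma \ref{dlxdjkh} to $r_{I,N}$ and $r_{M,N}$. It then expands $\theta'(u_{I\oplus N})$ with \eqref{lgx6}, and $\theta'(u_{M\oplus N})$ with the $\mathfrak{I}$-relation for $(\tau^{-1}M,\tau^{-1}N')$. Finally it matches the Hom-counts via $\overline{\tau}$ and the vanishing of $\Hom_{\mathcal{D}^b(\widetilde{\A})}(N',J[-1])$ for injective $J$, and compares exponents.

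One small slip in Step 1: $\varrho'(u_{P[1]})=u_P$ comes directly from the third factor of the defining formula with $I=0=M$. Treating $\nu(P)$ as the injective summand of $M$, as you do, would instead compute $\varrho'(u_{\nu(P)})=u_{P[1]}$.
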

\begin{proof}
By the definition of $\varrho'$, we have $$\varrho'(u_{\nu^{-1}(I)[1]})=\varrho'(u_{I[-1]})=u_{\nu^{-1}(I)}$$ for any $I\in\I_{\widetilde{\A}}$. Thus, $\varrho'(\mathfrak{I}_1)=0$. Hence, the algebra homomorphism $\varrho'$ induces an algebra homomorphism $\theta':\mathcal {D}\mathcal {H}_\Lambda^{{cl}_1}(\widetilde{\A})\longrightarrow\mathcal {D}\mathcal {H}_\Lambda^{{cl}}(\widetilde{\A}), u\mapsto \varrho'(u)$.
Now, let us prove that $\theta'(\mathfrak{I})=0$. By Lemma \ref{dlxdjkh}, we need to prove $\theta'(r_{I,N})=0$ and $\theta'(r_{M,N})=0$ for any $N\in{\widetilde{\A}}$, $I\in \mathcal{I}_{\widetilde{\A}}$ and $M\in{\widetilde{\A}}$ without nonzero injective direct summands. Let us write $N=N'\oplus J'$ such that $J'$ is the maximal injective direct summand of $N$.

\noindent{\bf Relation} $\theta'(r_{I,N})=0$: Set $P=\nu^{-1}(I)$ and $Q'=\nu^{-1}(J')$, then
\begin{flalign*}
\theta'(u_{I\oplus N})&=\theta'(u_{N'\oplus J'\oplus I})=u_{\tau^{-1} N'\oplus (P\oplus Q')[1]}\\&=v^{\Lambda((\tau^{-1}{{\bf n}'})^\ast,({\bf p}+{\bf q}')^\ast)-\Lambda({\bf p}^\ast,{{\bf q}'}^\ast)}u_{\tau^{-1}N'}\star u_{P[1]}\star u_{Q'[1]}\\&=v^{\Lambda((\tau^{-1}{{\bf n}'})^\ast,({\bf p}+{\bf q}')^\ast)-\Lambda({\bf p}^\ast,{{\bf q}'}^\ast)}u_{\tau^{-1}N'}\star u_{I[-1]}\star u_{Q'[1]}.
\end{flalign*}
Using the relation \eqref{lgx6}, we obtain
\begin{equation}\label{dmpgxl}
\theta'(u_{I\oplus N})=\sum\limits_{[D],[J]}v^{x'_0}|_D\Hom_{\widetilde{\A}}(\tau^{-1} N',I)_{J}|u_{D\oplus J[-1]}\star u_{Q'[1]},
\end{equation}
where $J\in\I_{\widetilde{\A}}$ has the same maximal projective direct summand as $I$, and $$x'_0=\Lambda((\tau^{-1}{{\bf n}'})^\ast,({\bf p}+{\bf q}')^\ast)-\Lambda({\bf p}^\ast,{{\bf q}'}^\ast)-\Lambda((\tau^{-1}{{\bf n}'})^\ast,{\bf i}^\ast)-2\lr{\tau^{-1}{{\bf n}'},{\bf i}}.$$
Writing each $D=\tau^{-1}D'\oplus P'$ in \eqref{dmpgxl} such that $P'\in\P_{\widetilde{\A}}$ is the maximal projective direct summand of $D$ and $D'$ has no nonzero injective direct summands, we have
\begin{equation}\label{dmpgxl2}
\begin{split}
\theta'(u_{I\oplus N})&=\sum\limits_{[D'],[P'],[J]}v^{x'_0}|_{\tau^{-1}D'\oplus P'}\Hom_{\widetilde{\A}}(\tau^{-1} N',I)_{J}|u_{\tau^{-1}D'\oplus P'\oplus J[-1]}\star u_{Q'[1]}\\
&=\sum\limits_{[D'],[P'],[J]}v^{x_0}|_{\tau^{-1}D'\oplus P'}\Hom_{\widetilde{\A}}(\tau^{-1} N',I)_{J}|u_{Q[1]}\star u_{P'}\star u_{\tau^{-1}D'\oplus Q'[1]},
\end{split}
\end{equation}
where $x_0=x'_0+\Lambda({\bf j}^\ast,(\tau^{-1}{\bf d}'+{\bf p}')^\ast)-\Lambda({{\bf p}'}^\ast,(\tau^{-1}{\bf d}')^\ast)-\Lambda((\tau^{-1}{\bf d}')^\ast,{{\bf q}'}^\ast)$ and $Q=\nu^{-1}(J)$.

On the other hand,
\begin{equation}\label{dmpgxl3}
\begin{split}
&\sum\limits_{\begin{smallmatrix}[D],[J],[I']\end{smallmatrix}}v^{\Lambda(({\bf n}+{\bf i})^*,{\bf j}^*)+\lr{{\bf i}-{\bf j},{\bf n}}}|_D\Hom_{\widetilde{\A}}(N,\tau I)_{\tau J\oplus I'}|\theta'(u_{J})\star \theta'(u_{D\oplus I'[-1]})=\\
&\sum\limits_{\begin{smallmatrix}[D'],[J],[I']\end{smallmatrix}}v^{\Lambda(({\bf n}+{\bf i})^*,{\bf j}^*)+\lr{{\bf i}-{\bf j},{\bf n}}+\Lambda({{\bf i}'}^\ast,{\bf d}^\ast)}|_D\Hom_{\widetilde{\A}}(N,\tau I)_{\tau J\oplus I'}|u_{Q[1]}\star u_{P'}\star u_{\tau^{-1}D'\oplus Q'[1]},
\end{split}\end{equation}
where $J\in\I_{\widetilde{\A}}$ has the same maximal projective direct summand as $I$, and $D=D'\oplus J'$ has the same maximal injective direct summand $J'$ as $N$, set $Q=\nu^{-1}(J)$, $P'=\nu^{-1}(I')$ and $Q'=\nu^{-1}(J')$.

Let $I=I''\oplus Q''$ such that $Q''$ is the maximal projective direct summand of $I$.
In \eqref{dmpgxl2}, writing $J=R''\oplus Q''$ such that $J$ has the same maximal projective direct summand $Q''$ as $I$, and setting $J''=\nu(Q'')$, $I'=\nu(P')$, we have
\begin{equation*}
\begin{split}
|_{\tau^{-1}D'\oplus P'}\Hom_{\widetilde{\A}}(\tau^{-1} N',I)_{J}|&=|\Hom_{\mathcal{D}^b(\widetilde{\A})}(\tau^{-1} N',I)_{\tau^{-1}D'[1]\oplus P'[1]\oplus J}|\\
&=|\Hom_{\mathcal{D}^b(\widetilde{\A})}(\tau^{-1} N',I''\oplus Q'')_{\tau^{-1}D'[1]\oplus P'[1]\oplus R''\oplus Q''}|\\
&=|\Hom_{\mathcal{D}^b(\widetilde{\A})}( N',\tau I''\oplus J''[-1])_{D'[1]\oplus I'\oplus \tau R''\oplus J''[-1]}|.
\end{split}\end{equation*}
Since $\Hom_{\mathcal{D}^b(\widetilde{\A})}(N',J''[-1])=0$, we get
\begin{flalign*}
|\Hom_{\mathcal{D}^b(\widetilde{\A})}( N',\tau I''\oplus J''[-1])_{D'[1]\oplus I'\oplus \tau R''\oplus J''[-1]}|
&=|\Hom_{\mathcal{D}^b(\widetilde{\A})}( N',\tau I'')_{D'[1]\oplus I'\oplus \tau R''}|\\
&=|_{D'}\Hom_{\widetilde{\A}}(N',\tau I'')_{I'\oplus \tau R''}|\\
&=|_{D'}\Hom_{\widetilde{\A}}(N',\tau I)_{\tau J\oplus I' }|.
\end{flalign*}
While in \eqref{dmpgxl3}, since $\Hom_{\widetilde{\A}}(J',\tau I)=0$, we have
\begin{flalign*}
|_D\Hom_{\widetilde{\A}}(N,\tau I)_{\tau J\oplus I'}|=|_{D'}\Hom_{\widetilde{\A}}(N',\tau I)_{\tau J\oplus I' }|.
\end{flalign*}

Using $(\tau^{-1}{\bf d}')^*=-{^\ast{\bf d}'}$ and $\lr{\tau^{-1}{\bf d}',{\bf i}}=-\lr{{\bf i},{\bf d}'}$, we obtain
\begin{flalign*}
x_0=&-\Lambda({^\ast{\bf n}'},{^\ast{\bf i}}+{^\ast{\bf j}'})-\Lambda({^\ast{\bf i}},{^\ast{\bf j}'})+\Lambda({^\ast{\bf n}'},{{\bf i}^\ast})
+2\lr{{\bf i},{\bf n}'}\\&-\Lambda({\bf j}^\ast,{^\ast{\bf d}'})+\Lambda({\bf j}^\ast,{^\ast{\bf i}'})+\Lambda({^\ast{\bf i}'},{^\ast{\bf d}'})+\Lambda({^\ast{\bf d}'},{^\ast{\bf j}'}).
\end{flalign*}
By Lemma \ref{sjishu}, we get
\begin{flalign*}
x_0
=\Lambda({^\ast({\bf d}'-{\bf n}')},{^\ast{\bf j}'})-\Lambda({{\bf i}}^\ast,{{\bf j}'}^\ast)+\Lambda({^\ast{\bf j}},{^\ast({\bf i}'-{\bf d}')})+\Lambda({{\bf i}'}^\ast,{{\bf d}'}^\ast)+\lr{{\bf i},{\bf n}'}-\lr{{\bf j},{\bf d}'-{\bf i}'}.
\end{flalign*}
Noting that ${\bf d}'-{\bf n}'={\bf i}'-\tau({\bf i}-{\bf j})$ and ${\bf i}'-{\bf d}'=\tau({\bf i}-{\bf j})-{\bf n}'$, we get
$$\Lambda({^\ast({\bf d}'-{\bf n}')},{^\ast{\bf j}'})=\Lambda({{\bf i}'}^\ast,{{\bf j}'}^\ast)-\Lambda({{\bf j}}^\ast,{{\bf j}'}^\ast)+\Lambda({{\bf i}}^\ast,{{\bf j}'}^\ast)+\lr{{\bf i}-{\bf j},{\bf j}'}$$
and
$$\Lambda({^\ast{\bf j}},{^\ast({\bf i}'-{\bf d}')})=\Lambda({{\bf n}'}^\ast,{{\bf j}}^\ast)+\Lambda({{\bf i}}^\ast,{{\bf j}}^\ast)+\lr{{\bf i}-{\bf j},{\bf j}}.$$
Thus, we obtain $$x_0=\Lambda(({\bf n}+{\bf i})^*,{\bf j}^*)+\lr{{\bf i}-{\bf j},{\bf n}}+\Lambda({{\bf i}'}^\ast,{\bf d}^\ast).$$
Hence, the equations \eqref{dmpgxl2} and \eqref{dmpgxl3} are equal, i.e. $\theta'(r_{I,N})=0$.

\noindent{\bf Relation} $\theta'(r_{M,N})=0$:
Set $Q'=\nu^{-1}(J')$, then
\begin{flalign*}
\theta'(u_{M\oplus N})=\theta'(u_{M\oplus N'\oplus J'})=u_{\tau^{-1} M\oplus \tau^{-1} N'\oplus Q'[1]}=v^{\Lambda((\tau^{-1}{\bf m}+\tau^{-1}{\bf n}')^\ast,{{\bf q}'}^\ast)}u_{\tau^{-1} M\oplus \tau^{-1}N'}\star u_{Q'[1]}.
\end{flalign*}
Note that in $\mathcal {D}\mathcal {H}_\Lambda^{{cl}}(\widetilde{\A})$, by the ideal $\mathfrak{I}$, we have
\begin{equation}\label{dkxjih}
\begin{split}
u_{\tau^{-1} M\oplus\tau^{-1} N'}=\sum\limits_{\begin{smallmatrix}[K],[X],[J'']\end{smallmatrix}}v^{x'_1}|_K\Hom_{\widetilde{\A}}(\tau^{-1} N',M)_{\tau X\oplus J''}|u_{X}\star u_{K\oplus J''[-1]},
\end{split}
\end{equation}
where $x'_1=\Lambda((\tau^{-1}{\bf m}+\tau^{-1}{\bf n}')^\ast,{\bf x}^*)+\lr{\tau^{-1}{\bf m}-{\bf x},\tau^{-1}{\bf n}'}$, each $X$ has no nonzero projective direct summands and $J''\in\I_{\widetilde{\A}}$.

Let $M=M'\oplus P'$ such that $P'$ is the maximal projective direct summand of $M$. For each $K$ and $X$ in \eqref{dkxjih}, write $K=\tau^{-1} D'\oplus P$ and $X=\tau^{-1} X'$ for some $P\in P_{\widetilde{\A}}$ such that $D'$ and $X'$ have no nonzero injective direct summands. Thus,
\begin{equation*}
\begin{split}
&\theta'(u_{M\oplus N})=\\&\sum\limits_{\begin{smallmatrix}[D'],[P],[X'],[J'']\end{smallmatrix}}v^{x''_1}|_{\tau^{-1} D'\oplus P}\Hom_{\widetilde{\A}}(\tau^{-1} N',M'\oplus P')_{X'\oplus J''}|u_{\tau^{-1} X'}\star u_{\tau^{-1} D'\oplus P\oplus J''[-1]}\star u_{Q'[1]},
\end{split}
\end{equation*}
where $x''_1=x'_1+\Lambda((\tau^{-1}{\bf m}+\tau^{-1}{\bf n}')^\ast,{{\bf q}'}^\ast)$.

Note that $X'\oplus J''$ has the same maximal projective direct summand $P'$ as $M$, since $\tau^{-1}N'$ has no nonzero projective direct summands.
Thus, if we write $X'=A''\oplus S'$ and $J''=I''\oplus R'$ such that $S'$ and $R'$ are the maximal projective direct summands of $X'$ and $J''$, respectively, then $S'\oplus R'=P'$. Remark that $R'$ is both projective and injective.

Set $I'=\nu(P')$, $I=\nu(P)$, $T'=\nu(S')$ and $T''=\nu(R')$, then $T'\oplus T''=I'$. Since $\Hom_{\mathcal{D}^b(\widetilde{\A})}(N',I'[-1])=0$, we get
\begin{flalign*}
&|_{\tau^{-1} D'\oplus P}\Hom_{\widetilde{\A}}(\tau^{-1} N',M'\oplus P')_{X'\oplus J''}|\\
&=|\Hom_{\mathcal{D}^b(\widetilde{\A})}( \tau^{-1} N',M'\oplus P')_{ \tau^{-1}D'[1]\oplus P[1]\oplus A''\oplus S'\oplus I''\oplus R'}|\\
&=|\Hom_{\mathcal{D}^b(\widetilde{\A})}( N',\tau M'\oplus I'[-1])_{ D'[1]\oplus I\oplus \tau A''\oplus T'[-1]\oplus \tau I''\oplus T''[-1]}|\\
&=|\Hom_{\mathcal{D}^b(\widetilde{\A})}( N',\tau M'\oplus I'[-1])_{ D'[1]\oplus I\oplus \tau A''\oplus \tau I''\oplus I'[-1]}|\\
&=|\Hom_{\mathcal{D}^b(\widetilde{\A})}( N',\tau M')_{ D'[1]\oplus I\oplus \tau A''\oplus \tau I''}|\\
&=|_{D'}\Hom_{\widetilde{\A}}(N',\tau M)_{\tau (A''\oplus I'')\oplus I }|\\
&=|_{D'}\Hom_{\widetilde{\A}}(N',\tau M)_{\tau (X'\oplus J'')\oplus I }|.
\end{flalign*}
Set $A=X'\oplus J''$ and $A'=A''\oplus I''$, then $A=A'\oplus P'$.
Thus, we have
\begin{equation}\label{dstt2}
\begin{split}
&\theta'(u_{M\oplus N})=\\&\sum\limits_{\begin{smallmatrix}[D'],[P],[X'],[J'']\end{smallmatrix}}v^{x''_1}|_{D'}\Hom_{\widetilde{\A}}(N',\tau M)_{\tau A\oplus I }|u_{\tau^{-1} (A''\oplus S')}\star u_{\tau^{-1} D'\oplus P\oplus (I''\oplus R')[-1]}\star u_{Q'[1]}
\\&=\sum\limits_{\begin{smallmatrix}[D'],[P],[X'],[J'']\end{smallmatrix}}v^{x_1}|_{D'}\Hom_{\widetilde{\A}}(N',\tau M)_{\tau A\oplus I }|u_{\tau^{-1} A''\oplus \tau^{-1}S'\oplus P''[1]\oplus T[1]}\star u_{P}\star u_{\tau^{-1} D'\oplus Q'[1]},
\end{split}
\end{equation}
where $P''=\nu^{-1}(I'')$, $T=\nu^{-1}(R')$ and \begin{flalign*}x_1=&x''_1+\Lambda({{\bf i}''}^\ast+{{\bf r}'}^\ast,(\tau^{-1}{\bf d}')^\ast+{\bf p}^\ast)-\Lambda({\bf p}^\ast,(\tau^{-1}{\bf d}')^\ast)\\&-\Lambda((\tau^{-1}{\bf d}')^\ast,{{\bf q}'}^\ast)-\Lambda((\tau^{-1}{{\bf a}''}+\tau^{-1}{{\bf s}'})^\ast,({\bf p}''+{\bf t})^\ast).\end{flalign*}
Hence,  \begin{flalign*}x_1=&-\Lambda({^\ast{\bf m}}+{^\ast{\bf n}'},{\bf x}^\ast)+\lr{{\bf m}-{\bf x}',{\bf n}'}
-\Lambda({^\ast{\bf m}},{{\bf q}'}^\ast)-\Lambda({^\ast{\bf n}'},{{\bf q}'}^\ast)-\Lambda({{\bf i}''}^\ast,{^\ast{\bf d}'})+\Lambda({{\bf i}''}^\ast,{\bf p}^\ast)\\
&-\Lambda({{\bf r}'}^\ast,{^\ast{\bf d}'})+\Lambda({{\bf r}'}^\ast,{{\bf p}}^\ast)+\Lambda({{\bf p}}^\ast,{^\ast{\bf d}'})+\Lambda({^\ast{\bf d}'},{{\bf q}'}^\ast)+\Lambda({^\ast{\bf a}''}+{^\ast{\bf s}'},{{\bf p}''}^\ast+{\bf t}^\ast).
\end{flalign*}
Since ${\bf d}'-{\bf n}'={\bf i}-\tau({\bf m}-{\bf a})$, we have ${^\ast({\bf d}'-{\bf n}')}=({\bf p}+{\bf m}-{\bf a})^\ast$, and then
\begin{flalign*}
-\Lambda({^\ast{\bf m}},{{\bf q}'}^\ast)-\Lambda({^\ast{\bf n}'},{{\bf q}'}^\ast)+\Lambda({^\ast{\bf d}'},{{\bf q}'}^\ast)=\Lambda({\bf p}^\ast-{\bf a}^\ast,{{\bf q}'}^\ast)+\lr{{\bf q}',{\bf m}}.
\end{flalign*}
Noting that ${\bf x}^\ast=-{^\ast{\bf x}'}$, ${^\ast{\bf a}''}+{^\ast{\bf s}'}={^\ast{\bf x}'}$ and ${{\bf p}''}^\ast+{\bf t}^\ast={^\ast{\bf i}''}+{^\ast{\bf r}'}={^\ast{\bf j}''}$, we obtain
\begin{flalign*}x_1=&\Lambda({^\ast{\bf m}}+{^\ast{\bf n}'},{^\ast{\bf x}'})+\lr{{\bf m}-{\bf x}',{\bf n}'}+\Lambda({\bf p}^\ast-{\bf a}^\ast,{{\bf q}'}^\ast)+\lr{{\bf q}',{\bf m}}\\
&-\Lambda({{\bf j}''}^\ast,{^\ast{\bf d}'})+\Lambda({{\bf j}''}^\ast,{{\bf p}^\ast})+\Lambda({^\ast{\bf i}},{^\ast{\bf d}'})+\Lambda({^\ast{\bf x}'},{^\ast{\bf j}''}).
\end{flalign*}
By ${\bf x}'={\bf a}-{\bf j}''$, we get
\begin{flalign*}x_1=&\Lambda({{\bf m}^\ast}+{{\bf n}'}^\ast,{\bf a}^\ast)-\Lambda({{\bf m}^\ast}+{{\bf n}'}^\ast,{{\bf j}''}^\ast)+\lr{{\bf m}-{\bf a},{\bf n}'}+\lr{{{\bf j}''},{\bf n}'}+\Lambda({^\ast{\bf i}},{^\ast{\bf j}'})\\
&-\Lambda({\bf a}^\ast,{^\ast{\bf j}'})+\lr{{\bf q}',{\bf m}}-\Lambda({{\bf j}''}^\ast,{^\ast{\bf d}'})+\Lambda({{\bf j}''}^\ast,{^\ast{\bf i}})+\Lambda({^\ast{\bf i}},{^\ast{\bf d}'})+\Lambda({\bf a}^\ast,{{\bf j}''}^\ast).
\end{flalign*}
Using ${^\ast({\bf i}-{\bf d}')}=-({\bf m}-{\bf a})^\ast-{^\ast{\bf n}'}$, we get
\begin{flalign*}-\Lambda({{\bf m}^\ast}+{{\bf n}'}^\ast,{{\bf j}''}^\ast)-\Lambda({{\bf j}''}^\ast,{^\ast{\bf d}'})+\Lambda({{\bf j}''}^\ast,{^\ast{\bf i}})+\Lambda({\bf a}^\ast,{{\bf j}''}^\ast)=-\lr{{{\bf j}''},{\bf n}'}.\end{flalign*} Set $D=D'\oplus J'$, then we obtain
$x_1=\Lambda(({\bf m}+{\bf n})^*,{\bf a}^*)+\lr{{\bf m}-{\bf a},{\bf n}}+\Lambda({\bf i}^\ast,{\bf d}^\ast)$.

On the other hand,
\begin{equation}\label{ddstt2}
\begin{split}
&\sum\limits_{\begin{smallmatrix}[D],[A],[I]\end{smallmatrix}}v^{\Lambda(({\bf m}+{\bf n})^*,{\bf a}^*)+\lr{{\bf m}-{\bf a},{\bf n}}}|_D\Hom_{\widetilde{\A}}(N,\tau M)_{\tau A\oplus I}|\theta'(u_{A})\star \theta'(u_{D\oplus I[-1]})=\\
&\sum\limits_{\begin{smallmatrix}[D'],[A'],[I],[I'']\end{smallmatrix}}v^{y_1}|_D\Hom_{\widetilde{\A}}(N,\tau M)_{\tau A\oplus I}|u_{\tau^{-1} A''\oplus \tau^{-1}S'\oplus P''[1]\oplus T[1]}\star u_{P}\star u_{\tau^{-1} D'\oplus Q'[1]},
\end{split}\end{equation}
where $y_1=\Lambda(({\bf m}+{\bf n})^*,{\bf a}^*)+\lr{{\bf m}-{\bf a},{\bf n}}+\Lambda({\bf i}^\ast,{\bf d}^\ast)$, each $A$ has the same maximal projective direct summand $P'$ as $M$, $D=D'\oplus J'$ has the same maximal injective direct summand $J'$ as $N$, write $A=A'\oplus P'$, $A'=A''\oplus I''$ and $P'=S'\oplus R'$ such that $I''$ and $R'$ are the maximal injective direct summands of $A'$ and $P'$, respectively, $P''=\nu^{-1}(I'')$, $T=\nu^{-1}(R')$ and $P=\nu^{-1}(I)$.

Since $\Hom_{\widetilde{\A}}(J',\tau M)=0$, we have
\begin{flalign*}
|_D\Hom_{\widetilde{\A}}(N,\tau M)_{\tau A\oplus I}|=|_{D'}\Hom_{\widetilde{\A}}(N',\tau M)_{\tau A\oplus I }|.
\end{flalign*}
Hence, the equations \eqref{dstt2} and \eqref{ddstt2} are equal, i.e. $\theta'(r_{M,N})=0$.

Therefore, the algebra homomorphism $\theta'$ induces the algebra homomorphism $\sigma'$.
\end{proof}

\end{document}